\newcommand{\nc}{\newcommand}
\newenvironment{rouge}
{\relax\color{red}}
{\hspace*{.3ex}\relax}
\newcommand{\ber}{\begin{rouge}{}\marginnote{\mbox{$\bullet$}}{}}
\newcommand{\er}{\end{rouge}}
\newcommand{\bera}{\begin{rouge}{}\marginnote{\fbox{\scshape\lowercase{A}}}{}}
\newcommand{\berm}{\begin{rouge}{}\marginnote{\fbox{\scshape\lowercase{M}}}{}}
\newcommand{\erm}{\end{rouge}}
\newenvironment{bleu}
{\relax\color{blue}}
{\hspace*{.3ex}\relax}
\newcommand{\beb}{\begin{bleu}}
\newcommand{\bebm}{\begin{bleu}{}\marginnote{\fbox{\scshape\lowercase{M}}}{}}
\newcommand{\beba}{\begin{bleu}{}\marginnote{\fbox{\scshape\lowercase{A}}}{}}
\newcommand{\eb}{\end{bleu}}
\theoremstyle{plain}
\newtheorem{introthm}{Theorem}[section]
\newtheorem{theorem}{Theorem}[subsection]
\newtheorem*{theorem*}{Theorem}
\newtheorem{corollary}[theorem]{Corollary}
\newtheorem{proposition}[theorem]{Proposition}
\newtheorem{lemma}[theorem]{Lemma}
\newtheorem{sublemma}[theorem]{Sublemma}
\newtheorem{lemmadefinition}[theorem]{Lemma-Definition}
\theoremstyle{definition}
\newtheorem{definition}[theorem]{Definition}
\newtheorem*{example*}{Example}
\newtheorem{notation}[theorem]{Notation}
\newtheorem{remark}[theorem]{Remark}
\newtheorem{convention}[theorem]{Convention}
\nc{\Lemma}{\begin{lemma}}
\nc{\enlemma}{\end{lemma}}
\nc{\Prop}{\begin{proposition}}
\nc{\enprop}{\end{proposition}}
\nc{\Def}{\begin{definition}}
\nc{\edf}{\end{definition}}
\renewcommand{\emptyset}{\varnothing}
\nc{\scup}{\mathop{\scalebox{.8}{$\displaystyle\bigcup$}}\mspace{1mu}\limits}
\nc{\scap}{\mathop{\scalebox{.8}{$\displaystyle\bigcap$}}\limits}
\nc{\ssqcup}{\mathop{\scalebox{.8}{$\displaystyle\bigsqcup$}}\limits}
\newcommand{\DUnion}{\bigsqcup\limits}
\newcommand{\union}{\cup}
\newcommand{\Union}{\bigcup\limits}
\newcommand{\C}{\mathbb{C}}
\newcommand{\R}{\mathbb{R}}
\newcommand{\Q}{\mathbb{Q}}
\newcommand{\Z}{\mathbb{Z}}
\DeclareMathOperator{\id}{id}
\newcommand{\derived}[1]{\mathrm{#1}}
\newcommand{\derd}{\derived{D}}
\newcommand{\dere}{\derived{E}}
\newcommand{\dereb}{\dere^{\mathrm{b}}}
\newcommand{\derr}{\derived{R}}
\newcommand{\derl}{\derived{L}}
\nc{\derb}{\derd^{\mathrm{b}}}
\newcommand{\BDC}{\derd^{\mathrm{b}}}
\newcommand{\TDC}{\derd}
\nc{\soplus}{\scalebox{.65}{\raisebox{.2ex}{$\displaystyle\bigoplus$}}}
\newcommand{\DSum}{\mathop{\bigoplus}}
\newcommand{\dsum}[1][]{\mathbin{\oplus_{#1}}}
\newcommand{\ilim}[1][]{\mathop{\varinjlim}\limits_{#1}}
\renewcommand{\to}[1][]{\xrightarrow{#1}}
\newcommand{\from}[1][]{\xleftarrow{#1}}
\newcommand{\isoto}[1][]{\xrightarrow[#1]{%
{\raisebox{-.6ex}[0ex][0ex]{$\mspace{1mu}\sim\mspace{2mu}$}}}}
\newcommand{\Endo}[1][]{\mathrm{End}_{\raise1.5ex\hbox to.1em{}#1}}
\newcommand{\Hom}[1][]{\mathrm{Hom}_{\raise1.5ex\hbox to.1em{}#1}}
\newcommand{\RHom}[1][]{\derr\mathrm{Hom}_{\raise1.5ex\hbox to.1em{}#1}}
\newcommand{\Ext}[2][]{\mathrm{Ext}_{\raise1.5ex\hbox to.1em{}#1}^{#2}}
\newcommand{\Mod}{\mathrm{Mod}}
\newcommand{\Tens}[1][]{\mathbin{\otimes_{\raise1.5ex\hbox to-.1em{}#1}}}
\newcommand{\LTens}[1][]{\mathbin{\otimes_{\raise1.5ex\hbox to-.1em{}#1}^{\derl}}}
\newcommand{\Tor}[2][]{\mathrm{Tor}^{\raise1.5ex\hbox to.1em{}#1}_{#2}}
\newcommand{\sheaffont}[1]{\mathcal{#1}}
\def\she{\sheaffont{E}}
\def\shi{\sheaffont{I}}
\def\shl{\sheaffont{L}}
\def\shm{\sheaffont{M}}
\def\shn{\sheaffont{N}}
\def\shp{\sheaffont{P}}
\def\shr{\sheaffont{R}}
\newcommand{\sect}{\varGamma}
\newcommand{\rsect}{\derr\varGamma}
\newcommand{\shendo}[1][]{{\sheaffont{E}nd}_{\raise1.5ex\hbox to.1em{}#1}}
\renewcommand{\hom}[1][]{{\sheaffont{H}om}_{\raise1.5ex\hbox to.1em{}#1}}
\newcommand{\aut}[1][]{{\sheaffont{A}ut}_{\raise1.5ex\hbox to.1em{}#1}}
\newcommand{\inn}[1][]{{\sheaffont{I}nn}_{\raise1.5ex\hbox to.1em{}#1}}
\newcommand{\rhom}[1][]{{\derr\sheaffont{H}om}_{\raise1.5ex\hbox to.1em{}#1}}
\newcommand{\ext}[2][]{{\sheaffont{E}xt}_{\raise1.5ex\hbox to.1em{}#1}^{#2}}
\newcommand{\thom}[1][]{{\sheaffont{T}hom}_{\raise1.5ex\hbox to.1em{}#1}}
\newcommand{\tens}[1][]{\mathbin{\otimes_{\raise1.5ex\hbox to-.1em{}#1}}}
\newcommand{\ltens}[1][]{\mathbin{\otimes_{\raise1.5ex\hbox to-.1em{}#1}^{\derl}}}
\newcommand{\tor}[2][]{{\sheaffont{T}or}^{\raise1.5ex\hbox to.1em{}#1}_{#2}}
\newcommand{\etens}[1][]{\mathbin{\boxtimes_{\raise1.5ex\hbox to-.1em{}#1}}}
\DeclareMathOperator{\supp}{supp}
\newcommand{\oim}[1]{#1_*}
\newcommand{\roim}[1]{\derr#1_*}
\newcommand{\reim}[1]{\derr#1_{\mspace{.5mu}!}\mspace{2mu}}
\newcommand{\reeim}[1]{\derr#1_{\mspace{1mu}!!}\mspace{1mu}}
\newcommand{\opb}[1]{#1^{-1}}
\newcommand{\epb}[1]{#1^{\mspace{1.5mu}!}\mspace{2mu}}
\newcommand{\Gr}{\mathop{\mathrm{Gr}}\nolimits}
\newcommand{\tenstop}[1][]{\mathbin{\hat{\otimes}_{\raise1.5ex\hbox to-.1em{}#1}}}
\newcommand{\homtop}[1][]{\sheaffont{L}_{\raise1.5ex\hbox to.1em{}#1}}
\newcommand{\Homtop}[1][]{\mathrm{L}_{\raise1.5ex\hbox to.1em{}#1}}
\DeclareMathOperator{\ord}{ord}
\newcommand{\D}{\sheaffont{D}}
\renewcommand{\O}{\sheaffont{O}}
\DeclareMathOperator{\chv}{char}
\newcommand{\detens}[1][]%
{\mathbin{\boxtimes_{\raise1.5ex\hbox to-.1em{}#1}^{\mspace{2mu}\mathsf{D}}}}
\newcommand{\doim}[1]{{\mathsf{D}#1}_*\mspace{1mu}}
\newcommand{\dopb}[1]{{\mathsf{D}#1}^{\mspace{1mu}*}}
\newcommand{\dtens}[1][]{\mathbin{\otimes_{\raise1.5ex\hbox to-.1em{}#1}^{\mathsf{D}}}}
\newcommand{\coh}{\mathrm{coh}}
\newcommand{\hol}{\mathrm{hol}}
\renewcommand{\leq}{\leqslant}
\renewcommand{\geq}{\geqslant}
\newcommand{\field}[1][]{\mathbf{k}}
\newcommand{\ind}{\mathrm{I}\mspace{2mu}}
\newcommand{\ifield}{\ind\field}
\newcommand{\Rc}{{\R\text-\mathrm{c}}}
\newcommand{\cconv}{\mathbin{\mathop\star\limits^+}}
\newcommand{\ctens}{\mathbin{\mathop\otimes\limits^+}}
\newcommand{\cetens}{\mathbin{\mathop\boxtimes\limits^+}}
\newcommand{\cihom}{{\derr\shi hom}^+}
\renewcommand{\Re}{\operatorname{Re}}
\renewcommand{\Im}{\operatorname{Im}}
\newcommand{\ihom}[1][]{{\shi hom}_{\raise1.5ex\hbox to.1em{}#1}}
\newcommand{\rihom}[1][]{{\derr\mspace{2mu}\shi hom}_{\raise1.5ex\hbox to.1em{}#1}}
\newcommand{\ii}[1][]{{\sheaffont{I}h}_{\raise1.5ex\hbox to.1em{}#1}}
\newcommand{\indlim}[1][]{\mathop{\text{\rm``$\varinjlim$''}}\limits_{#1}}
\newcommand{\dcomp}[1][]{\mathbin{\circ_{\raise1.5ex\hbox to-.1em{}#1}^{\mathsf{D}}}}
\newcommand{\enh}{\derived{E}}
\newcommand{\solE}[1][X]{\mathcal{S}ol^{\mspace{1mu}\enh}_{#1}}
\newcommand{\fhom}{\rhom^\enh}
\newcommand{\FHom}{\RHom^\enh}
\newcommand{\fihom}{\rihom^\enh}
\newcommand{\nuhom}[1][]{\nu{hom}^\enh_{#1}}
\newcommand{\Eoim}[1]{{\enh#1}_*}
\newcommand{\Eeeim}[1]{{\enh#1}_{!!}}
\newcommand{\Eopb}[1]{{\enh#1}^{-1}}
\newcommand{\Eepb}[1]{{\enh\mspace{1mu}#1}^{\mspace{1.5mu}!}}
\newcommand{\LE}{\operatorname{L^\enh}}
\newcommand{\RE}{\operatorname{R^\enh}}
\newcommand{\tot}{{\operatorname{tot}}}
\newcommand{\semicolon}{\nobreak \mskip2mu\mathpunct{}\nonscript\mkern-\thinmuskip{;}\mskip6mu plus1mu\relax}
\newcommand{\defeq}{\mathbin{:=}}
\newcommand{\eqdef}{\mathbin{=:}}
\newcommand{\bl}{\bigl(}
\newcommand{\br}{\bigr)}
\newcommand{\To}[1][]{\xrightarrow[]{\mspace{10mu}{#1}\mspace{10mu}}}
\newenvironment{myarray}[1]{\relax\setlength{\arraycolsep}{1pt}

\begin{array}{#1}}{\end{array}\relax}
\newcommand{\ba}{\begin{myarray}}
\newcommand{\ea}{\end{myarray}}
\newcommand{\be}{\begin{enumerate}}
\newcommand{\ee}{\end{enumerate}}
\newcommand{\bnum}{\be[{\rm(i)}]}
\nc{\bwr}{\mbox{\large{$\wr$}}}
\nc{\vphi}{\varphi}
\nc{\seteq}{\mathbin{:=}}
\nc{\noi}{\noindent}
\nc{\ro}{{\rm(}}
\nc{\rf}{{\rm)}\xspace}
\nc{\ms}{\mspace}
\nc{\sbcup}{\mathop{\scalebox{0.75}{$\displaystyle\bigcup$}}}
\nc{\ol}{\overline}
\nc{\scbul}{{\,\raise1pt\hbox{$\scriptscriptstyle\bullet$}\,}}
\nc{\set}[2]{\left\{#1\;\semicolon\; #2 \right\}}
\nc{\extp}{\mathop{\raisebox{.3ex}{\scalebox{0.8}{$\displaystyle\bigwedge$}}}\limits}
\newenvironment{myequation}
{\relax\setlength{\arraycolsep}{1pt}\begin{eqnarray}}
{\end{eqnarray}}
\newenvironment{myequationn}
{\relax\setlength{\arraycolsep}{1pt}\begin{eqnarray*}}
{\end{eqnarray*}}
\newenvironment{myalign}
{\relax\begin{align}}
{\end{align}}
\newenvironment{myalignn}
{\relax\begin{align*}}
{\relax\end{align*}}
\nc{\eq}{\begin{myequation}}
\nc{\eneq}{\end{myequation}}
\nc{\eqn}{\begin{myequationn}}
\nc{\eneqn}{\end{myequationn}}
\nc{\eqa}{\begin{myalign}}
\nc{\eneqa}{\end{myalign}}
\nc{\eqan}{\begin{myalignn}}
\nc{\eneqan}{\end{myalignn}}
\nc{\on}{\operatorname}
\nc{\Ind}{\on{Ind}}
\nc{\Proof}{\begin{proof}}
\nc{\QED}{\end{proof}}
\nc{\cor}{\field}
\nc{\tone}{\To[+1]}
\renewcommand{\ge}{\geq}
\renewcommand{\le}{\leq}
\newcommand{\chom}{\rhom^+}
\newcommand{\Eeim}[1]{{\enh#1}_{!}}
\renewcommand{\tor}{\mathrm{tor}}
\newcommand{\bclose}[1]{{\accentset{\vee}{#1}}}
\newcommand{\unbordered}[1]{{\accentset{\circ}{#1}}}
\newcommand{\cR}{{\overline\R}}
\nc{\unb}{\unbordered}
\nc{\eps}{\varepsilon}
\nc{\inb}{\inbordered}
\nc{\colim}{\varinjlim\limits}
\nc{\ssubset}{\subset\ms{-3mu}\subset}
\nc{\al}{\alpha}
\nc{\qtq}[1][and]{\quad\text{#1}\quad}
\nc{\qt}[1]{\quad\text{#1}}
\nc{\olG}[1][f]{{\overset{\ms{4mu}\rule[-.05ex]{1.6ex}{.115ex}}{\Gamma}}_{%
\ms{-3mu}#1}}
\newcommand{\cM}{\bclose{M}}
\newcommand{\cN}{\bclose{N}}
\nc{\cf}{\bclose{f}}
\newcommand{\inbordered}[1]{{#1_\infty}}
\newcommand{\quot}{\derived Q}
\newcommand{\Efield}{\field^\enh}
\newcommand{\V}{\mathbb{V}}
\newcommand{\W}{\V^*}
\newcommand{\PP}{\mathbb{P}}
\newcommand{\bb}{\PP^*}
\newcommand{\Ex}{\mathbb{E}}
\newcommand{\ex}{\mathsf{E}}
\newcommand{\Lap}{\mathsf{L}}
\newcommand{\lap}{{}^\Lap}
\newcommand{\Lapa}{\Lap^r}
\newcommand{\lapa}{{}^{\Lapa}}
\newcommand{\SSE}{SS^\enh}
\newcommand{\SSEo}{\SSE_\rho}
\newcommand{\F}{\mathsf{F}}
\renewcommand{\Gr}{\mathsf{Gr}}
\newcommand{\G}{\mathsf{G}}
\newcommand{\cX}{\bclose{X}}
\newcommand{\cY}{\bclose{Y}}
\newcommand{\etale}[1]{\operatorname{\acute{e}t}\bigl(#1\bigr)}
\newcommand{\ram}{p}
\nc{\tM}{\widetilde{M}}
\nc{\tX}{\widetilde{X}}
\nc{\ti}{{\tilde\imath}}
\nc{\tj}{{\tilde\jmath}}
\newcommand{\eprec}{\preccurlyeq}
\newcommand{\aleq}[2][]{\mathrel{\preccurlyeq^{#1}_{#2}}}
\newcommand{\aless}[2][]{\mathrel{\prec^{#1}_{#2}}}
\newcommand{\asim}[1]{\underset{#1}\sim}
\newcommand{\St}{\operatorname{St}}
\newcommand{\range}[2]{#1 \mathbin{\rhd} #2}
\newcommand{\dotin}{\mathbin{\dot\in}}
\newcommand{\dotowns}{\mathbin{\dot\owns}}
\newcommand{\oshp}{\smash{\overline\shp}}
\newcommand{\ens}[1]{\{#1\}}
\begin{document}
\title[Enhanced Fourier transform]{A microlocal approach to 
the enhanced Fourier-Sato transform in dimension one}

\author[A.~D'Agnolo]{Andrea D'Agnolo}
\address[Andrea D'Agnolo]{Dipartimento di Matematica\\
Universit{\`a} di Padova\\
via Trieste 63, 35121 Padova, Italy}
\thanks{The research of A.D'A.\
was supported by GNAMPA/INdAM and by grant CPDA159224 of
Padova University.}
\email{dagnolo@math.unipd.it}

\author[M.~Kashiwara]{Masaki Kashiwara}
\thanks{The research of M.K.\
was supported by Grant-in-Aid for Scientific Research (B)
15H03608, Japan Society for the Promotion of Science.}
\address[Masaki Kashiwara]{Research Institute for Mathematical Sciences, Kyoto University,
Kyoto 606-8502, Japan \& Korea Institute for Advanced Study, Seoul 02455, Korea}
\email{masaki@kurims.kyoto-u.ac.jp}

\keywords{Fourier transform, holonomic D-modules, Riemann-Hilbert
correspondence, enhanced ind-sheaves, stationary phase formula, Stokes filtered local systems, microlocal study of sheaves}
\subjclass[2010]{Primary 34M35, 32S40, 32C38, 30E15}
\date{September, 1, 2017}

\maketitle

\begin{abstract}
Let $\shm$ be a holonomic algebraic $\D$-module on the affine line.
Its exponential factors are Puiseux germs describing the growth 
of holomorphic solutions to $\shm$ at irregular points.
The stationary phase formula states that the exponential factors
of the Fourier transform of $\shm$ are obtained by Legendre transform
from the exponential factors of $\shm$. We give a microlocal proof
of this fact, by translating it in terms of enhanced ind-sheaves through the Riemann-Hilbert correspondence.
\end{abstract}

\tableofcontents

\section{Introduction}
\addtocontents{toc}{\protect\setcounter{tocdepth}{1}}
\numberwithin{equation}{section}

\subsection{}\label{sse:begin}
Let $\V$ be a one-dimensional complex vector space, with coordinate $z$, and
$\W$ its dual, with dual coordinate $w$.
The Fourier transform, originally introduced as an integral transform with kernel associated to $e^{-zw}$, has various realizations.

At the Weyl algebra level, it is the isomorphism $P\mapsto\lap P$ given by $z\mapsto -\partial_w$, $\partial_z\mapsto w$.
This induces an equivalence between holonomic algebraic $\D$-modules on $\V$ and on $\W$, that we still denote by $\shm\mapsto\lap\shm$.

At a microlocal level, the Fourier transform is attached to the symplectic transformation
\begin{equation}
\label{eq:introchi}
\chi_\rho\colon T^*\V \to T^*\W, \quad (z,w)\mapsto (w,-z),
\end{equation}
where we used the identifications $T^*\V = \V\times\W$ and $T^*\W = \W\times\V$ of the cotangent bundles. This induces the Legendre transform from Puiseux germs on $\V$ (i.e., holomorphic functions on small sectors, which admit a Puiseux series expansion) to Puiseux germs on $\W$.

The exponential factors of a holonomic $\D_\V$-module $\shm$ are Puiseux germs on $\V$ describing the growth 
of its holomorphic solutions at irregular points.
We are interested here in the stationary phase formula, 
which states that the exponential factors
of $\lap\shm$ are obtained by Legendre transform
from the exponential factors of $\shm$.

The Riemann-Hilbert correspondence allows a restatement of this fact in terms
of yet another realization of the Fourier transform, that is the Fourier-Sato transform for enhanced ind-sheaves. 
In this setting, we provide a microlocal proof of the stationary phase formula.

\smallskip
Let us explain all of the above in more details.

\subsection{}\label{sse:LT}
Let $a$ be a singular point of $\shm$ (this includes $a=\infty$, where $\shm$ is naturally extended as a meromorphic connection).
Let $z_a$ be the local coordinate given by $z_a=z-a$ if $a\in\V$, and $z_\infty=z^{-1}$.
Denote by $S_a\V$ the circle of tangent directions at $a$.
Let $\shp_{S_a\V}$ be the sheaf on $S_a\V$ whose stalk
at $\theta\in S_a\V$ is the set of holomorphic functions on small sectors around $\theta$, which admit a Puiseux series expansion at $a$. Its sections are called \emph{Puiseux germs}.
The sheaf $\oshp_{S_a\V}$ is the quotient of $\shp_{S_a\V}$ modulo bounded functions.

The Hukuhara-Levelt-Turrittin theorem describes both the formal and the asymptotic structure of $\shm$ at $a$, as follows. 

At the formal level, after a ramification $u^\ram=z_a$ and formal completion by $\C(\!(u)\!)$, $\shm$ decomposes as a finite direct sum of modules of the form $\she^f\dtens\shr_f$.  Here, $f\in \C\{u\}[u^{-1}]$ is a meromorphic germ, the $\D$-module $\she^f$ corresponds to the meromorphic connection $d+d f$, we denote by $\dtens$ the tensor product for $\D$-modules, and $\shr_f$ is a regular holonomic $\D$-module.

For a chosen determination of $z_a^{1/\ram}$ at $\theta\in S_a\V$, let us still denote by $f$ the Puiseux germ with expansion $f\in \C\{z_a^{1/\ram}\}[z_a^{-1/\ram}]$ as above. (We write $(a,\theta,f)$ instead of $f$ if we need more precision.)
As the isomorphism class of $\she^f$ only depends on the equivalence class $[f]\in\oshp_{S_a\V}$, we can assume that different summands correspond to different equivalence classes.
Then, the set $N_\theta^{>0}$ of those $f$'s with $\shr_f\neq0$ is called a system of exponential factors of $\shm$ at $\theta$, and the rank $N(f)$ of $\shr_f$ is called the multiplicity of $f$.

At the asymptotic level, the Hukuhara-Levelt-Turrittin theorem 
states that for any $\theta\in S_a\V$ there is a basis $\{u_{f,i}\}_{f\in N^{>0}_\theta,i=1,\dots,N(f)}$ of holomorphic solutions to $\shm$ on a small sector around $\theta$, such that $e^{-f}u_{f,i}$ has moderate asymptotic growth at $a$.

\subsection{}\label{sse:Airy}
Let us illustrate the stationary phase formula by an example.

\smallskip
Let $\shn=\D_{\W}/\D_{\W}Q$ be the $\D_{\W}$-module associated with the Airy operator $Q=\partial_w^2-w$. It is regular everywhere except at 
$w=\infty$. Recall that the Airy equation $Q\psi=0$ has two entire solutions
$\psi_\pm$ with the following asymptotics at $\eta=1\cdot\infty\in S_\infty\W$:
\eqn
\psi_\pm(w)=e^{\pm\frac{2}{3}w^{3/2}}w^{-1/4}\bl 1+\mathrm{O}(w^{-1})\br
\qt{on $|\arg(w)|<\frac\pi3$,}
\eneqn
where we chose the determination of $w^{1/4}$ with $1^{1/4} = 1$.
Then, $g_\pm(w) = \pm\frac{2}{3}w^{3/2}$ are the exponential factors of $\shn$ at $\eta$. 

At the level of the Weyl algebra, one has $Q=\lap P$, for $P=z^2-\partial_z$. Hence $\shn\simeq\lap\shm$, where $\shm=\D_{\V}/\D_{\V} P$ is regular everywhere except at $z=\infty$.
The equation $P\vphi=0$ has the entire solution $\vphi(z) = e^{z^3/3}$.
Here there is no ramification, and $f(z)=z^3/3$ is the only exponential factor of $\shm$ at any $\theta\in S_\infty\V$.

Let us show how to compute the exponential factors $g_\pm$ of $\shn$ from the exponential factor $f$ of $\shm$.

Recall the symplectic transformation $\chi_\rho$ in \eqref{eq:introchi}.
To $f$, we associate the Lagrangian
\[
C_f \defeq (\text{graph of }df \text{ in } T^*\V)
=\{(z,w)\semicolon w=z^2\}.
\]
As pictured in Figure~\ref{fig:lagrangians}, one has
\[
\chi_\rho(C_f)=\{(w,z)\semicolon z^2 = w\}\subset T^*\W,
\]
which is a ramified double cover of $\W$. 

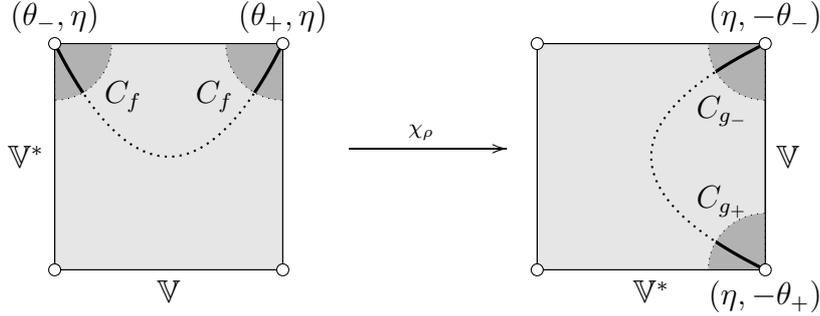
\begin{figure}
\xymatrix@C=5em{
\begin{tikzpicture}[scale=1.5,
	baseline=(O.base)]
\draw (0,0) coordinate (O) ;
\draw[fill=black!10] (-1,-1) -- (-1,1) -- (1,1) -- (1,-1) -- cycle ;
\filldraw[fill=black!30,draw,dotted] (-1,1) -- +(-90:.5) arc (-90:0:.5) ; 
\filldraw[fill=black!30,draw,dotted] (1,1) -- +(-90:.5) arc (-90:-180:.5) ; 
\draw (-1,-1) -- (-1,1) -- (1,1) -- (1,-1) -- cycle ;
\draw[domain=-1:1,smooth,variable=\x,thick,dotted] plot ({\x},{\x*\x});
\begin{scope}
    \clip (-1,1) circle (.5);
    \draw[domain=-1:1,smooth,variable=\x,very thick] plot ({\x},{\x*\x});
\end{scope}
\begin{scope}
    \clip (1,1) circle (.5);
    \draw[domain=-1:1,smooth,variable=\x,very thick] plot ({\x},{\x*\x});
\end{scope}
\foreach \i in {-1,1} {
\foreach \j in {-1,1} {
\draw[fill=white,draw=black] (\i,\j) circle (1.5pt) ;
}}
\draw (-1,0) node[left]{$\W$} ;
\draw (0,-1) node[below]{$\V$} ;
\draw (-1,1) node[above]{$(\theta_-,\eta)$} ;
\draw (1,1) node[above]{$(\theta_+,\eta)$} ;
\draw (.4,.3) node[above]{$C_f$} ;
\draw (-.4,.3) node[above]{$C_f$} ;
\end{tikzpicture}
\ar[r]^{\chi_\rho}
& 
\begin{tikzpicture}[scale=1.5,
	baseline=(O.base)]
\draw (0,0) coordinate (O) ;
\draw[fill=black!10] (-1,-1) -- (-1,1) -- (1,1) -- (1,-1) -- cycle ;
\filldraw[fill=black!30,draw,dotted] (1,-1) -- +(90:.5) arc (90:180:.5) ; 
\filldraw[fill=black!30,draw,dotted] (1,1) -- +(-90:.5) arc (-90:-180:.5) ; 
\draw (-1,-1) -- (-1,1) -- (1,1) -- (1,-1) -- cycle ;
\draw[domain=-1:1,smooth,variable=\x,thick,dotted] plot ({\x*\x},{\x});
\begin{scope}
    \clip (1,1) circle (.5);
    \draw[domain=-1:1,smooth,variable=\x,very thick] plot ({\x*\x},{\x});
\end{scope}
\begin{scope}
    \clip (1,-1) circle (.5);
    \draw[domain=-1:1,smooth,variable=\x,very thick] plot ({\x*\x},{\x});
\end{scope}
\foreach \i in {-1,1} {
\foreach \j in {-1,1} {
\draw[fill=white,draw=black] (\i,\j) circle (1.5pt) ;
}}
\draw (-1,0) node[left]{} ;
\draw (1,0) node[right]{$\V$} ;
\draw (0,-1) node[below]{$\W$} ;
\draw (1,1) node[above]{$(\eta,-\theta_-)$} ;
\draw (1,-1) node[below]{$(\eta,-\theta_+)$} ;
\draw (.3,.4) node[right]{$C_{g_-}$} ;
\draw (.3,-.4) node[right]{$C_{g_+}$} ;
\end{tikzpicture}
} 
\caption{The microlocal Fourier transform.}\label{fig:lagrangians}
\end{figure}

For $(z,w)\in C_f$, one has
$w\to\eta$ if and only if either $z\to\theta_+$ or $z\to\theta_-$,
where $\theta_\pm=\pm1\cdot \infty\in S_\infty\V$. 
Thus, denoting by $C_{(\infty,\theta_\pm,f)}$ the restriction of $C_f$ for $z$ in a small sector around $\theta_\pm$, one has
\[
\chi_\rho(C_{(\infty,\theta_\pm,f)})
= \{z = \pm w^{1/2}\}
= C_{(\infty,\eta,g_\pm)},
\]
since $g_\pm(w) = \pm \frac23 w^{3/2}$ are primitives of $\pm w^{1/2}$.

The above microlocal construction describes the Legendre transform
at the level of Puiseux germs, that we denote by $\Lap$ (see \S\ref{sse:Legendre}, where it is also explained how to fix the choice of a primitive). 
In this example, as pictured in Figure~\ref{fig:Puiseux}, it gives $\Lap(\infty,\theta_\pm,f)=(\infty,\eta,g_\pm)$.

\begin{figure}
\xymatrix@C=5em{
\begin{tikzpicture}[scale=.5,
	baseline=(O.base)]
\draw (0,0) coordinate (O) ;
\filldraw[fill=black!10] (0,0) circle (3) ;
\filldraw[fill=black!30, draw=black,dotted] (-15:2) -- (-15:3) arc (-15:15:3) -- (15:2) arc (15:-15:2) ; 
\draw[fill] (3,0) circle (3pt) node[right]{$\theta_+$} ;
\draw (2,0) node[left]{$f$} ;
\filldraw[fill=black!30, draw=black,dotted] (165:2) -- (165:3) arc (165:195:3) -- (195:2) arc (195:165:2) ; 
\draw (0,0) circle (3) ;
\draw[fill] (-3,0) circle (3pt) node[left]{$\theta_-$} ;
\draw (-2,0) node[right]{$f$} ;
\draw (45:3) node[above right] {$S_\infty\V$} ;
\draw (0,2) node {$\V$} ;
\end{tikzpicture}
\ar[r]^\Lap
& 
\begin{tikzpicture}[scale=.5,
	baseline=(O.base)]
\draw (0,0) coordinate (O) ;
\filldraw[fill=black!10] (0,0) circle (3) ;
\filldraw[fill=black!30, draw=black,dotted] (-30:2) -- (-30:3) arc (-30:30:3) -- (30:2) arc (30:-30:2) ; 
\draw[fill] (3,0) circle (3pt) node[right]{$\eta$};
\draw (0,0) circle (3) ;
\draw (-3,0) node[left]{\empty} ;
\draw (2,0) node[left]{$g_\pm$} ;
\draw (0,2) node {$\W$} ;
\draw (45:3) node[above right] {$S_\infty\W$} ;
\end{tikzpicture}
} 
\caption{The Legendre transform.}\label{fig:Puiseux}
\end{figure}
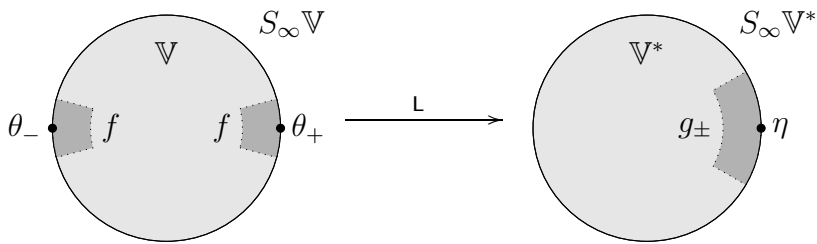

\subsection{}\label{sse:FouM}
Note that the above microlocal construction fails if the Puiseux germ
$f$ is linear, that is, if $f(z)= bz$ for some $b\in\W$. In fact, in this case $C_f=\{w=b\}$, so that $\chi_\rho(C_f)$ is not of graph type.

Let $a\in\V\union\{\infty\}$ and $\theta\in S_a\V$. We say that a Puiseux germ $(a,\theta,f)$ is \emph{admissible} if $f$ is unbounded at $a$ and (if $a=\infty$) not linear modulo bounded functions.
The Legendre transform $\Lap$ establishes a one-to-one correspondence from admissible Puiseux germs on $\V$ to admissible Puiseux germs on $\W$.

We can now state the stationary phase formula.

\begin{introthm}\label{thm:introstatio}
Let $(a,\theta,f)$ be an admissible Puiseux germ on $\V$,
and $\shm$ a holonomic algebraic $\D_\V$-module.
Then, $(a,\theta,f)$ is an exponential factor of $\shm$ if and only if $\Lap(a,\theta,f)$ is an exponential factor of $\lap\shm$. 
Moreover, they have the same multiplicity.	
\end{introthm}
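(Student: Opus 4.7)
The strategy is to transport the statement through the Riemann--Hilbert correspondence for holonomic $\D$-modules into the category of enhanced ind-sheaves, where the Fourier transform is realized as the enhanced Fourier--Sato transform, denoted $\F$. Two structural facts will drive the proof. First, the enhanced solution functor $\solE[\V]$ is fully faithful on holonomic $\D$-modules and intertwines the $\D$-module Fourier transform with $\F$, via a natural isomorphism $\solE[\W](\lap\shm) \simeq \F(\solE[\V](\shm))$. Second, $\F$ is microlocal with respect to the symplectic map $\chi_\rho$ of \eqref{eq:introchi}: Lagrangian components of the enhanced solutions on $T^*\V$ are transported by $\chi_\rho$ to Lagrangian components on $T^*\W$.

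The first step is to give a microlocal reading of exponential factors on the enhanced side. Starting from the Hukuhara--Levelt--Turrittin decomposition recalled in \S\ref{sse:LT}, the enhanced solutions of the summand $\she^f \dtens \shr_f$ are, near the germ at $(a,\theta)$, concentrated along the Lagrangian $C_f = \{w = df(z)\}$: away from the zero section, the enhanced microsupport of $\solE[\V](\she^f \dtens \shr_f)$ lies on $C_f$, and the regular factor $\shr_f$ contributes as a constructible local system of rank $N(f)$ on the appropriate sector. Summing over $f \in N_\theta^{>0}$ gives a microlocal extraction of each exponential factor with its multiplicity.

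The second step applies $\F$ and exploits admissibility. When $(a,\theta,f)$ is admissible, $C_f$ is not of the form $\{w = b\}$, so $\chi_\rho(C_f)$ is again a graph, and an elementary computation identifies it with $C_{\Lap f}$; moreover $\chi_\rho$ carries the germ at $(a,\theta)$ to the germ at the base point of $\Lap(a,\theta,f)$. Microlocality of $\F$ thus turns the component of $\solE[\V](\shm)$ attached to $C_f$ at $(a,\theta)$ into a component of $\solE[\W](\lap\shm)$ attached to $C_{\Lap f}$ at $\Lap(a,\theta,f)$, preserving multiplicity. Reading off exponential factors on the $\W$-side via the converse of step one then yields the theorem in both directions, since admissibility is stable under $\Lap$.

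The principal obstacle is step one: setting up ``microsupport along $C_f$ with multiplicity $N(f)$'' as an intrinsic invariant of the enhanced solutions which both detects the summand $\she^f \dtens \shr_f$ and is visibly transported by $\F$ through $\chi_\rho$. The classical sheaf-theoretic microsupport is insufficient, as it cannot distinguish between different exponents $f$; one must work with a refinement adapted to the enhanced setting, together with a local structure theorem asserting that, microlocally along $C_f$, the enhanced solutions of $\she^f \dtens \shr_f$ encode precisely the local system associated with $\shr_f$. Once such an invariant is available, step two reduces to the elementary geometric verification $\chi_\rho(C_f) = C_{\Lap f}$ for admissible $f$.
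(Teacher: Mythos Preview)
Your strategic outline is correct and matches the paper's approach: pass through Riemann--Hilbert, intertwine the Fourier transforms, and read off exponential factors microlocally via $\chi_\rho(C_f)=C_{\Lap f}$. But you have stopped precisely at the hard part. You yourself identify the ``principal obstacle'' as constructing a microlocal invariant that extracts $(f,N(f))$ from the enhanced solutions and is manifestly transported by the enhanced Fourier--Sato transform---and then you do not construct it. That is not a gap in exposition; it is the entire content of the theorem.

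The paper fills this gap with two concrete devices. First, the \emph{multiplicity test functor} $\G_{(a,\theta,f)}\colon\dereb_+(\ifield_X)\to\BDC(\field)$, which satisfies $\G_{(a,\theta,f)}(\Ex^{\Re h})\simeq\field$ if $[h]=[f]$ and $0$ otherwise, so that $\G_{(a,\theta,f)}(K)\simeq\field^{\overline N(f)}$ whenever $K$ has normal form at $a$. This is the invariant you are asking for. Second---and this is what your ``step two reduces to elementary geometry'' misses---verifying that $\G_{(b,\eta,g)}(\lap K)\simeq\field^{\overline N(f)}$ is \emph{not} a formal consequence of $\chi_\rho(C_f)=C_{\Lap f}$. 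One must (i) d\'evisser $K$ to isolate an enhanced sheaf $F$ with normal form near $a$; (ii) decompose $\lap F$ on a generic sector at $\eta$ into elementary pieces $\ex^{\varphi_i}$, $\ex^{\range{\varphi_j^+}{\varphi_j^-}}$; (iii) compare microsupports to see that each $\varphi$ is either $\Re g$ or contributes nothing to $\G_{(b,\eta,g)}$, the latter requiring separate vanishing lemmas for the spurious boundary pieces coming from $\{|z|=R\}$; and (iv) check that the surviving pieces sit in degree zero, which is a Maslov index computation showing that the Fourier--Sato transform, viewed as a quantized contact transformation in the sense of \cite[\S7]{KS90}, preserves ``type $\field$ with shift $1/2$'' along the relevant Lagrangians. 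None of (i)--(iv) is addressed in your proposal, and (iv) in particular is not elementary geometry.
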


This result is classical (see \S\ref{sse:refs} for some references).
Let us translate it through the Riemann-Hilbert correspondence.

\subsection{}\label{sse:introRH}
In order to keep track of the behavior at $\infty$, instead of $\V$ consider the pair
$\V_\infty = (\V,\PP)$, where $\PP=\V\union\{\infty\}$. Such a pair is called a \emph{bordered space}.
By definition, a holonomic $\D_{\V_\infty}$-module is a holonomic analytic $\D_\PP$-module such that $\shm\simeq\shm(*\infty)$. Then, the category of algebraic holonomic $\D$-modules on $\V$ is equivalent to that of holonomic $\D_{\V_\infty}$-modules.

The Riemann-Hilbert correspondence of \cite{DK16} provides an embedding (i.e., a fully faithful functor)
\[
\xymatrix{
\solE[\V_\infty]\colon\BDC_\hol(\D_{\V_\infty}) \ar@{ >->}[r] & 
\dereb_\Rc(\ind\C_{\V_\infty})
}
\]
from the triangulated category of holonomic $\D_{\V_\infty}$-modules to that of $\R$-constructible enhanced ind-sheaves on $\V_\infty$. Let us briefly recall the construction of the latter category, and the behavior of the ``enhanced solution functor'' $\solE[\V_\infty]$.

\smallskip
\emph{Note that, to simplify the presentation, some of the definitions that we give in \S\ref{sse:introERc} and \S\ref{sse:introEIRc} are different, but equivalent, to those that we recall in \S\ref{sec:not}.}

\subsection{}\label{sse:introERc}
Denote by $\BDC_\Rc(\C_{\V_\infty})$ the triangulated category of $\R$-constructible sheaves on $\V_\infty$, that is, the restrictions to $\V$ of $\R$-constructible sheaves (in the derived sense) on $\PP$. 

Consider the bordered space $\R_\infty=(\R,\cR)$, where $\cR=\R\cup\{-\infty,+\infty\}$, and denote by $t\in\R$ the coordinate.

The category $\dereb_\Rc(\C_{\V_\infty})$ of $\R$-constructible enhanced sheaves on $\V_\infty$ is the full triangulated subcategory of $\BDC_\Rc(\C_{\V_\infty\times\R_\infty})$ whose objects $F$ satisfy 
$\C_{\{t\geq 0\}}\ctens F \isoto F$,
where $\ctens$ denotes the convolution in the $t$ variable (see \eqref{eq:ctens}).
Denote by $\quot(F)=\C_{\{t\geq 0\}}\ctens F$ the projector from $\BDC_\Rc(\C_{\V_\infty\times\R_\infty})$ to $\dereb_\Rc(\C_{\V_\infty})$.

Let $U$ be an open subset of $\V$, subanalytic in $\PP$.
Let $\varphi\colon U\to \R$ be a globally subanalytic function, that is, a function
whose graph is subanalytic in $\PP\times\cR$. Set
\[
\ex^\varphi_{U|\V} \defeq \quot\C_{\{(z,t)\in\V\times\R\semicolon z\in U,\ t+\varphi(z)\geq 0\}}.
\]
It belongs to the heart of the natural $t$-structure on $\dereb_\Rc(\C_{\V_\infty})$. If 
$\varphi^+,\varphi^-$ are globally subanalytic functions on $U$ with $\varphi^-\le\varphi^+$, define $\ex^{\range{\varphi^+}{\varphi^-}}_{U|\V}$ by the short exact sequence
\[
0 \to \ex^{\range{\varphi^+}{\varphi^-}}_{U|\V} \to
\ex^{\varphi^+}_{U|\V} \to
\ex^{\varphi^-}_{U|\V} \to 0.
\]

A structure theorem asserts that for any $F\in\dereb_\Rc(\C_{\V_\infty})$ there is a subanalytic stratification of $\V$ such that $F$ decomposes on each stratum $U$ as a finite direct sum  of shifts of objects of the form $\ex^\varphi_{U|\V}$ or $\ex^{\range{\varphi^+}{\varphi^-}}_{U|\V}$.

\subsection{}\label{sse:introEIRc}
The category $\dereb_\Rc(\ind\C_{\V_\infty})$ of $\R$-constructible enhanced ind-sheaves on $\V_\infty$ is the  triangulated category with the same objects as $\dereb_\Rc(\C_{\V_\infty})$, and morphisms
\[
\Hom[\dereb_\Rc(\ind\C_{\V_\infty})](F_1,F_2) =
\ilim[c\to+\infty]\Hom[\dereb_\Rc(\C_{\V_\infty})](F_1,\C_{\{t\geq c\}}\ctens F_2).
\]
In order to avoid confusion, denote by $\C_{\V_\infty}^\enh\ctens F$ the enhanced ind-sheaf corresponding to the enhanced sheaf $F$.

As for enhanced sheaves, any $K\in\dereb_\Rc(\ind\C_{\V_\infty})$ locally decomposes as a finite direct sum of shifts of objects of the form
\[
\Ex^\varphi_{U|\V_\infty} \defeq \C_{\V_\infty}^\enh\ctens\ex^\varphi_{U|\V}, \quad
\Ex^{\range{\varphi^+}{\varphi^-}}_{U|\V_\infty} \defeq \C_{\V_\infty}^\enh\ctens\ex^{\range{\varphi^+}{\varphi^-}}_{U|\V}.
\]

There is a natural embedding
\[
\xymatrix{e \colon \BDC_\Rc(\C_{\V_\infty}) \ar@{ >->}[r] & \dereb_\Rc(\ind\C_{\V_\infty})}, \quad
L \mapsto \C_{\V_\infty}^\enh\ctens \quot(\C_{\{t= 0\}}\tens\opb\pi L),
\]
where $\pi\colon\V\times\R\to\V$ is the projection.

\subsection{}\label{sse:intronormal}

Without recalling its definition, let us illustrate the behavior of the functor $\solE[\V_\infty]$ by two examples.

The classical Riemann-Hilbert correspondence of \cite{Kas84} associates to
a regular holonomic $\D_{\V_\infty}$-module $\shl$, the $\C$-constructible complex of its holomorphic solutions $L$. One has
\[
\solE[\V_\infty](\shl) \simeq e(L).
\]

Let $f$ be a meromorphic function on $\PP$ with possible poles at $a\in\PP$ and $\infty$.
Denote by $\she^f_{\V\setminus\{a\}|\V_\infty}$ the holonomic $\D_{\V_\infty}$-module associated with the meromorphic connection $d+d f$.
Then, one has
\[
\solE[\V_\infty](\she^f_{\V\setminus\{a\}|\V_\infty}) \simeq \Ex^{\Re f}_{\V\setminus\{a\}|\V_\infty}.
\]

\subsection{}\label{sse:enhHLT}
Let us translate the Hukuhara-Levelt-Turrittin theorem in terms of enhanced ind-sheaves.

\smallskip
Let $a\in\PP$. A multiplicity at $a$ is a morphism of sheaves of sets 
\[
N\colon \shp_{S_a\V } \to (\Z_{\geq0})_{S_a\V }
\]
such that $N^{>0}_\theta \defeq N_\theta^{-1}(\Z_{>0})$ is finite for any $\theta\in S_a\V $, and $f,g\in N^{>0}_\theta$ with $f\neq g$ implies $[f]\neq[g]$ in $\oshp_{S_a\V }$ (see \S\ref{sse:Normenh} for the precise definition).
Consider the induced multiplicity class
\[
\overline N\colon \oshp_{S_a\V } \to (\Z_{\geq0})_{S_a\V }
\]
defined by $\overline N([f]) = N(g)$ if $[f]=[g]$ for some $g\in N^{>0}_\theta$, and $\overline N([f]) =0$ otherwise.

Let us say that $F\in \dereb_\Rc(\C_{\V_\infty})$ has normal form at $a$
if there exists a multiplicity $N$ such that any $\theta\in S_a \V $ has an open sectorial neighborhood $V_\theta$ where
\begin{equation}
\label{eq:introCVF}
\opb\pi\C_{V_\theta}\tens F \simeq 
\DSum_{f\in N_\theta^{>0}} \bl\ex^{\Re f}_{V_\theta|\V_\infty}\br^{N(f)}.
\end{equation}
Note that the multiplicity $N$ is uniquely determined by $F$.

Let us say that $K\in \dereb_\Rc(\ind\C_{\V_\infty})$ has normal form at $a$
if there exists $F\in \dereb_\Rc(\C_{\V_\infty})$ with normal form at $a$
such that
\[
K \simeq \C^\enh_{\V_\infty} \ctens F.
\]
(This definition is different, but equivalent, to the one we give in \S\,\ref{sse:normenhind}.)
Note that, if $N$ is the multiplicity of $F$, the multiplicity class $\overline N$ is uniquely determined by $K$.

Let $\shm$ be a holonomic $\D_{\V_\infty}$-module.
In terms of enhanced solutions,
the Hukuhara-Levelt-Turrittin theorem states that  $K=\solE[\V_\infty](\shm)$ has normal form at any singular point of $\shm$.

More precisely, as it was observed in \cite{Moc16}, there is an equivalence of categories between germs of meromorphic connections $\shm$ with poles at $a$, and germs of enhanced ind-sheaves $K$ with normal form at $a$, such that $\opb\pi\C_{\V\setminus\{a\}}\tens K \simeq K$.

\subsection{}
As a technical tool, to any admissible Puiseux germ $(a,\theta,f)$ on $\V$ we associate the \emph{multiplicity test functor} (see \S\ref{sse:ilim})
\[
\G_{(a,\theta,f)} \colon \dereb_\Rc(\ind\C_{\V_\infty}) \to \BDC(\C),
\]
with values in the bounded derived category of $\C$-vector spaces.

If $V_\theta$ is a sectorial neighborhood of $\theta$ and $K\in \dereb_\Rc(\ind\C_{\V_\infty})$, one has
\begin{equation}
\label{eq:introGEV}
\G_{(a,\theta,f)}(K) \simeq \G_{(a,\theta,f)}(\opb\pi\C_{V_\theta}\tens K).
\end{equation}
Moreover, if $(a,\theta,h)$ is another Puiseux germ,
\begin{equation}
\label{eq:introGEf}
\G_{(a,\theta,f)}(\Ex^{\Re h}_{V_\theta|\V_\infty}) \simeq
\begin{cases}
\C &\text{if }[f]=[h], \\
0 &\text{otherwise.}
\end{cases}
\end{equation}
In particular, if $K$ has normal form at $a$ with multiplicity class $\overline N$, then
\[
\G_{(a,\theta,f)}(K) \simeq
\C^{\overline N([f])}.
\]

\subsection{}\label{sse:FouK}
Consider the correspondence
\[
\xymatrix{\V_\infty & \V_\infty\times \W_\infty \ar[l]_-p \ar[r]^-q & \W_\infty,}
\]
where $\W_\infty = (\W,\bb)$ for $\bb=\W\cup\{\infty\}$.

The Fourier-Laplace transform for $\D$-modules and the Fourier-Sato transform for enhanced ind-sheaves are the integral transforms with kernel associated to $e^{-zw}$. More precisely,
for $\shm\in\BDC_\hol(\D_{\V_\infty})$ and $K\in\dereb_\Rc(\ind\C_{\V_\infty})$, one sets
\begin{align*}
\lap \shm &= \doim q(\she^{-zw}_{\V\times\W|\V_\infty\times\W_\infty}\dtens\dopb p \shm), \\
\lap K &= \Eeeim q(\ex^{-\Re zw}_{\V\times\W}[1]\ctens\Eopb p K),
\end{align*}
where $\dtens$, $\doim q$, $\dopb p$ and $\ctens$, $\Eeeim q$, $\Eopb p$ denote the operations for $\D$-modules and for enhanced ind-sheaves, respectively.

Since the enhanced solution functor is compatible with these operations,  one has
\[
\lap\solE[\V_\infty](\shm) \simeq \solE[\W_\infty](\lap\shm).
\]
We can thus deduce the stationary phase formula of Theorem~\ref{thm:introstatio} from the following analogue for enhanced ind-sheaves.

\begin{introthm}\label{thm:introstatio2}
Let $(a,\theta,f)$ be an admissible Puiseux germ on $\V$, 
and set $(b,\eta,g) =\Lap(a,\theta,f)$.
Let $K\in\dereb_\Rc(\ind\C_\V )$ have normal form at $a$
with multiplicity class $\overline N$.
Then, for generic $\eta$, one has
\[
\G_{(b,\eta,g)}(\lap K) \simeq \C^{\overline N([f])}.
\]
\end{introthm}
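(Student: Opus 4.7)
The plan is to reduce to the case of a single standard summand $\Ex^{\Re h}_{V_\theta|\V_\infty}$ and then invoke the microlocal content of the Legendre transform summarized in \S\ref{sse:LT}--\S\ref{sse:Airy}. By the normal-form hypothesis, on a sectorial neighborhood $V_\theta$ of $\theta$ one has
\[
\opb\pi\C_{V_\theta}\tens K \simeq \bigoplus_{h\in N^{>0}_\theta}\bl\Ex^{\Re h}_{V_\theta|\V_\infty}\br^{N(h)}.
\]
For $\eta$ generic, the microlocal stationary-phase principle read off the symplectic correspondence $\chi_\rho$ says that the only critical directions on $S_a\V$ relevant to $\G_{(b,\eta,g)}\bl\lap K\br$ are those in $V_\theta$, so that the contribution of $K$ on the complement of $V_\theta$ vanishes. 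Combined with the additivity of $\lap$, this reduces the theorem to computing $\G_{(b,\eta,g)}\bl\lap\Ex^{\Re h}_{V_\theta|\V_\infty}\br$ for each $h\in N^{>0}_\theta$.

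The microlocal core of the proof is to show that $\lap\Ex^{\Re h}_{V_\theta|\V_\infty}$ has normal form at $b$ and that, in a generic direction $\eta\in S_b\W$, its unique exponential factor is $g_h\defeq\Lap(a,\theta,h)$, with multiplicity $1$. This is the enhanced-ind-sheaf incarnation of classical stationary phase: the critical-point equation for the integrand of
\[
\lap\Ex^{\Re h}_{V_\theta|\V_\infty} = \Eeeim q\bl\ex^{-\Re zw}_{\V\times\W}[1]\ctens\Eopb p\Ex^{\Re h}_{V_\theta|\V_\infty}\br
\]
is $h'(z)=w$, and the critical value of the phase is $-g_h(w)$. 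Geometrically, this is exactly the statement $\chi_\rho(C_h)=C_{g_h}$ of \S\ref{sse:LT}. Admissibility of $(a,\theta,h)$ and genericity of $\eta$ keep us away from the branch locus of $\chi_\rho|_{C_h}$ and from linear germs, so the statement is well posed.

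With the normal-form statement in hand, \eqref{eq:introGEf} yields
\[
\G_{(b,\eta,g)}\bl\lap\Ex^{\Re h}_{V_\theta|\V_\infty}\br \simeq \begin{cases}\C & \text{if } [g_h]=[g],\\ 0 & \text{otherwise.}\end{cases}
\]
Since $\Lap$ is a bijection on admissible Puiseux germs (\S\ref{sse:FouM}), $[g_h]=[g]$ is equivalent to $[h]=[f]$. Summing over $h\in N^{>0}_\theta$ with multiplicities $N(h)$, and using that $f\in N^{>0}_\theta$ is the unique representative of its class so that $N(f)=\overline N([f])$, we obtain $\G_{(b,\eta,g)}\bl\lap K\br\simeq\C^{\overline N([f])}$.

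The hard part is the microlocal core step: turning the heuristic stationary-phase computation into a rigorous normal-form statement for $\lap\Ex^{\Re h}_{V_\theta|\V_\infty}$. One must control the behavior near the branch points of $\chi_\rho|_{C_h}$ (excluded by the genericity of $\eta$), handle the linear Puiseux germs (excluded by admissibility), and keep track of the interplay between the auxiliary variable $t$ entering the definition of $\dereb_\Rc$ and the integration along $q$. This is where the paper's microlocal theory of the enhanced Fourier-Sato transform via $\chi_\rho$ does the essential work.
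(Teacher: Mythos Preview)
Your proposal has two genuine gaps, and the paper's actual route is quite different from what you outline.

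\medskip
\textbf{Gap 1: the localization step.} You restrict $K$ to a single sectorial neighborhood $V_\theta$ and assert that ``the contribution of $K$ on the complement of $V_\theta$ vanishes'' by a stationary-phase principle. But $K$ is a global object on $\V_\infty$ about which you know nothing away from $a$; its Fourier transform depends on all of $K$. The paper handles this by a d\'evissage: it cuts $K$ along $\{|z|<r\}$ versus $\{|z|\geq r\}$ (and removes the point $a$ itself), and the vanishing of the far piece is a separate result (Proposition~\ref{pro:Gzetabdd}) requiring its own microsupport estimate and the vanishing Lemma~\ref{lem:phi'rR}. Even after this, one must still account for \emph{all} sectors around $a$, not just $V_\theta$; the identification $\tilde\theta=\theta$, $[\tilde f]=[f]$ is proved rather than assumed.

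\medskip
\textbf{Gap 2: the core claim is circular.} Your ``microlocal core step'' asserts that $\lap\Ex^{\Re h}_{V_\theta|\V_\infty}$ has normal form at $b$ with unique exponential factor $\Lap(a,\theta,h)$ and multiplicity one. This is essentially the theorem itself for a rank-one input, and you do not prove it; you defer to ``the paper's microlocal theory''. But the paper never establishes such a normal-form statement for $\lap\Ex^{\Re h}$. In fact the Fourier transform of a single exponential on a sector is not obviously of normal form: boundary effects from $\partial V_\theta$ and from $\{|z|=r\}$ produce extra pieces whose exponents are not Puiseux, and a priori there may be degree shifts.

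\medskip
\textbf{What the paper actually does.} The argument runs in the opposite direction. After the d\'evissage it passes from the enhanced ind-sheaf $K$ to an enhanced \emph{sheaf} $F$ with normal form (Proposition~\ref{pro:KF}), so that microsupports are available. It then decomposes $\lap F$ \emph{a posteriori} on a generic sector $W\dotowns\eta$ using the $\R$-constructible structure Lemma~\ref{lem:RcStruct}, obtaining unknown subanalytic exponents $\varphi_i,\varphi_j^\pm$. Tamarkin's theorem $\SSE(\lap F)=\chi(\SSE(F))$ forces each $\Lambda^{\varphi}$ to lie either in $\{|w^*|=r\}$, in a constant piece, or in some $\Lambda^{\Re\tilde g}$ with $(\infty,\tilde\eta,\tilde g)=\Lap(a,\tilde\theta,\tilde f)$; Lemmas~\ref{lem:phi'rR} and~\ref{lem:GEf12} kill the first two and all $\tilde g\not\asim{}g$. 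Finally, the correct multiplicity and the absence of shifts are obtained by viewing $\Lap$ as a quantization of $\chi$ and computing a Maslov index (Proposition~\ref{pro:simp}). None of these steps is captured by your sketch; in particular, the ``type with shift $1/2$'' bookkeeping is what pins down $d'_k=0$ and $\#(I_g\cup J_g^+\cup J_g^-)=N(f)$.
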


In fact, the statement still holds when replacing $\dereb_\Rc(\ind\C_\V )$ with $\dereb_\Rc(\ifield_\V )$, for an arbitrary base field $\field$.

As we now briefly explain, our proof of this result is based on microlocal arguments.

\subsection{}
The microlocal theory of sheaves of \cite{KS90} associates to an object $H$ of $\BDC(\C_{\V\times\R})$ its microsupport $SS(H)\subset T^*(\V\times\R)$,
a closed conic involutive subset of the cotangent bundle. 
Denote by $(t,t^*)\in T^*\R$ the natural homogeneous symplectic coordinates.
Recall that there is an equivalence
\[
\dereb_\Rc(\C_\V) \simeq \BDC_\Rc(\C_{\V_\infty\times\R_\infty})/\{F\semicolon SS(F)\subset\{t^*\leq 0\}\}.
\]
In particular, considering the map
\[
\xymatrix@R=0ex{
T^*(\V\times\R)\supset\{t^*>0\} \ar[r]^-{\rho'} & T^*\V,
}\quad
(z,t;z^*,t^*) \mapsto (z,z^*/t^*),
\]
there is a well defined microsupport for enhanced sheaves given by
\[
\SSEo(\quot (H)) \defeq \overline{\rho'(SS(H)\cap\{t^*>0\})} \subset T^*\V,
\]
for $H\in\BDC_\Rc(\C_{\V_\infty\times\R_\infty})$.

To be more precise, the microsupport is a subset of the cotangent bundle to the real affine plane $\V^\R$ underlying $\V$. Here, for $\varphi$ a real valued smooth function, we use the identification $(T^*\V)^\R\isoto T^*(\V^\R)$, $\partial\varphi\mapsto \frac12 d\varphi$, where $d=\partial+\overline\partial$ is the exterior differential. Then, we have $\partial f\mapsto d(\Re f)$ if $f$ is holomorphic.

\subsection{}
Using the same definitions as for enhanced ind-sheaves, 
there is a Fourier-Sato transform for enhanced sheaves which, for $F\in\dereb_\Rc(\C_{\V_\infty})$, satisfies
\[
\C_{\W_\infty}^\enh\ctens \lap F \simeq \lap(\C_{\V_\infty}^\enh\ctens F) .
\]
It was proved in \cite{Tam08} that
\[
\SSEo(\lap F) = \chi_\rho(\SSEo(F)).
\]

Note that, if $\varphi$ is a real valued smooth function defined on an open subset $U\subset \V$, one has
\[
\SSEo(\ex_{U|\V}^\varphi)\cap T^*U = C_\varphi \defeq (\text{graph of }d\varphi).
\]

In particular, for $(a,\theta,f)$ a Puiseux germ on $\V$, one has
\[
\SSEo(\ex_{V_\theta|\V}^{\Re f})\cap T^*V_\theta = C_{(a,\theta,f)},
\]
for $V_\theta$ a sectorial neighborhood of $\theta$ in $\V$.
Thus, one has the following  link between the Fourier-Sato transform and the Legendre transform:
\[
\SSEo(\lap\ex_{V_\theta|\V}^{\Re f})\cap T^*W_\eta = C_{\Lap(a,\theta,f)},
\]
for $W_\eta$ a sectorial neighborhood of $\eta$ in $\W$.

\subsection{}
Our proof of Theorem~\ref{thm:introstatio2} proceeds by the following arguments.
Assume for example that $a=\infty$ and $b=\infty$ as in \S\ref{sse:Airy}.
(The other cases are treated in a similar way.)

Since $K$ has normal form at $\infty$, we can write $K=\C^\enh_{\V_\infty}\ctens F'$ for $F'\in\dereb_\Rc(\C_{\V_\infty})$ with normal form at $\infty$.
Then, since $\C_{\W_\infty}^\enh\ctens\lap F' \simeq \lap K$, it is enough to show that
\[
\G_{(\infty,\eta,g)}(\lap F') \simeq \C^{\overline N(f)}, 
\]
where the above multiplicity test functor for enhanced sheaves has the same
definition as the one for enhanced ind-sheaves.

For $R>0$, set $F = \opb\pi\C_{\{|z|>R\}}\tens F'$. A microlocal argument similar to the one sketched below shows that
\[
\G_{(\infty,\eta,g)}(\lap F') \simeq \G_{(\infty,\eta,g)}(\lap F).
\]

We can take $R$ big enough so that $\{|z|>R\}$ is covered by sectors $V_\theta$ where $F$ decomposes as in \eqref{eq:introCVF}. 

Moreover, since $\lap F$ is $\R$-constructible, a generic $\eta\in S_\infty\W$ has a sectorial neighborhood $W_\eta$ where there is a decomposition
\begin{equation}
\label{eq:introDec}
\opb\pi\C_{W_\eta} \tens \lap F \simeq
\bl \DSum_{i\in I} \ex_{W_\eta|\W}^{\varphi_i}[d_i] \br
\oplus
\bl \DSum_{j\in J} \ex_{W_\eta|\W}^{\range{\varphi_j^+}{\varphi_j^-}}[d_j] \br,
\end{equation}
with $d_i,d_j\in\Z$, and
$\varphi_i,\varphi^\pm_j$ analytic and globally subanalytic functions such that
$\varphi^-_j < \varphi^+_j$.

By \eqref{eq:introGEV}, we are left to show
\[
\G_{(\infty,\eta,g)}(\opb\pi\C_{W_\eta}\tens \lap F) \simeq \C^{\overline N(f)}.
\]

On one hand, \eqref{eq:introCVF} implies
\[
\SSEo( \lap F) = \chi_\rho( \SSEo(F) ) \subset Z\union \Union_{\tilde\theta\in S_\infty\V,\ \tilde f\in N^{>0}_{\tilde \theta}}C_{\Lap(\infty,\tilde\theta,\tilde f)},
\]
where $Z$ is due to spurious contributions from $\{|z|=R\}$.

On the other hand, \eqref{eq:introDec} gives
\[
\SSEo( \lap F) \cap T^*W_\eta = \bl \Union_{i\in I} C_{\varphi_i} \br
\union
\bl \Union_{j\in J} (C_{\varphi_j^+}\union C_{\varphi_j^-}) \br.
\]

Let $\phi$ be either $\varphi_i$ or $\varphi_j^\pm$. We can show that 
\[
\G_{(\infty,\eta,g)}(\ex_{W_\eta|\W}^\phi)\simeq 0
\]
if $C_\phi \subset Z$. 
By \eqref{eq:introGEf}, the same holds for $C_\phi=C_{\Lap(\infty,\tilde\theta,\tilde f)}$
unless $\Lap(\infty,\tilde\theta,\tilde f) = (\infty,\eta,g)$.
It follows that
\[
\G_{(\infty,\eta,g)}(\opb\pi\C_{W_\eta}\tens \lap F) \simeq \DSum_{k\in I_g\cup J_g^+ \cup J_g^-}\C[d'_k],
\]
where $I_g = \{ i\in I \semicolon \varphi_i=\Re g\}$, $J^\pm_g = \{ j\in J \semicolon \varphi^\pm_j=\Re g\}$, and
\[
d'_k=
\begin{cases}
d_k &\text{if }k\in I_g\cup J^+_g, \\
d_k-1 &\text{if }k\in J^-_g.
\end{cases}
\]

It remains to show that $d'_k=0$ for any $k$, and $\overline N(f) = \# (I_g\cup J^+_g \cup J_g^-)$. For this, we use a result of \cite{KS90} which allows to keep track microlocally of multiplicities and shifts, by viewing the Fourier-Sato transform as a quantization of the symplectic transformation $\chi_\rho$.

\subsection{}\label{sse:refs}
Let us mention some related literature.

\smallskip
The Fourier-Laplace transform for holonomic $\D$-modules in dimension one has 
been studied in \cite{Mal91}, and more  systematically in \cite{Moc10}, where the
Stokes phenomenon is also considered. See \cite{Sab16,HS15} 
for explicit computations in some special cases.

Classically, the stationary phase formula is stated in terms of the so-called
local Fourier-Laplace transform for formal holonomic $\D$-modules.
This was introduced in \cite{BE04} (see also \cite{Gar04,Ari10}), 
by analogy with the $\ell$-adic case treated in~\cite{Lau87}. 
(For related results in the $p$-adic case, see e.g.\ \cite{Ram12} for analytic
\'etale sheaves and \cite{AM15} for arithmetic $\D$-modules.)

An explicit stationary phase formula was obtained in \cite{Sab08,Fan11} 
(see also \cite{Gra13}) for $\D$-modules, and in \cite{Fu10,AS10} for $\ell$-adic sheaves.

The fact that the Riemann-Hilbert correspondence of \cite{DK16} intertwines the
Fourier-Laplace transform of holonomic $\D$-modules with the enhanced Fourier-Sato
transform was observed in \cite{KS16L}, where the non-holonomic case is also discussed.
In dimension one, the enhanced Fourier-Sato transform of perverse sheaves has been studied in \cite{DHMS17}, where the Stokes phenomenon is also considered.

In the present paper we do not discuss linear exponential factors.
Note that a point $a\in\V$ gives a linear function $w\mapsto aw$ on $\W$. 
Let $\shm$ be a holonomic $\D_{\V_\infty}$-module.
In \cite{DK17} we relate, in the framework of enhanced ind-sheaves,
the vanishing cycles of $\shm$ at $a$ with the graded component 
$\Gr_{aw}\Psi_\infty(\lap \shm)$ of its Fourier transform.

\subsection{}
The contents of this paper is as follows.

\smallskip
After recalling some notations in Section~\ref{sec:not}, we study in Section~\ref{sec:exp} the objects of the form $\ex^\varphi_{U|\V}$ or $\ex^{\range{\varphi^+}{\varphi^-}}_{U|\V}$ (resp. $\Ex^\varphi_{U|\V_\infty}$ or $\Ex^{\range{\varphi^+}{\varphi^-}}_{U|\V_\infty}$), which are the building blocks of $\R$-constructible enhanced sheaves (resp.\ ind-sheaves).

We study their behavior on sectorial neighborhoods in Section~\ref{sec:blow}.
This is used in Section~\ref{sec:normal} to discuss the notion of enhanced (ind-)sheaves with normal form at a point. In Section~\ref{sec:FStokes}, to an enhanced ind-sheaf with normal form we attach a filtered Stokes local system. In particular, we get a notion of exponential factor. We also introduce the multiplicity test functor, which detects the multiplicities of exponential factors.

In Section~\ref{sec:statement} we recall the Legendre transform for Puiseux germs, highlighting its microlocal nature. We can then state the stationary phase formula in terms of enhanced ind-sheaves. Our proof of this formula uses techniques of the  microlocal study of sheaves, which are detailed in Section~\ref{sec:micro}.

\subsection*{Acknowledgments} 
The first author acknowledges the kind hospitality at RIMS, 
Kyoto University, during the preparation of this paper.

\addtocontents{toc}{\protect\setcounter{tocdepth}{2}}
\numberwithin{equation}{subsection}

\section{Notations and complements}\label{sec:not}

In this paper, $\field$ denotes a base field.

We say that a topological space is good if it is Hausdorff, locally compact, countable at
infinity, and has finite soft dimension.

\medskip

We recall here some notions and results, mainly to fix notations, referring to the literature for details. In particular, we refer to \cite{KS90} for sheaves and their microsupport, to \cite{Tam08} (see also \cite{GS14,DK16bis}) for enhanced sheaves and their microsupport, to \cite{KS01} for ind-sheaves, to \cite{DK16} for bordered spaces and enhanced ind-sheaves (see also \cite{KS16L,KS16D,Kas16,DK16bis}), and to \cite{Kas03} for $\D$-modules.

\subsection{A remark on inductive limits}\label{sse:ilim}
Denote by $\Mod(\field)$ the Grothendieck category of $\field$-vector spaces,
by $\TDC(\field)$ its derived category, and by $\BDC(\field)\subset\TDC(\field)$ its bounded derived category, whose objects $K$ satisfy $H^nK=0$ for $|n|\gg 0$. 

Since $\Mod(\field)$ is semisimple, there is an equivalence of additive categories
\begin{equation}
\label{eq:Dk}
\TDC(\field)\isoto\Mod(\field)^\Z,\quad
K\mapsto \bl H^nK[-n]\br_{n\in\Z}.
\end{equation}
Note that a triangle $X\to[f]Y\to[g]Z\to[h]X[1]$ in $\TDC(\field)$ is distinguished if and only if $H^nX\to[H^nf]H^nY\to[H^ng]H^nZ\to[H^nh]H^{n+1}X\to[H^{n+1}f]H^{n+1}Y$ is exact in $\Mod(\field)$ for any $n\in\Z$.

It follows from \eqref{eq:Dk} that small filtrant inductive limits exist in $\TDC(\field)$.

Let us say that a small filtrant inductive system $u\colon I\to\TDC(\field)$ is uniformly bounded if there exist integers $c\leq d$ such that $H^nu(i)=0$ for any $i\in I$ and $n\notin[c,d]$.

Clearly, if $u\colon I\to\TDC(\field)$ is uniformly bounded, then
$\ilim u \in \BDC(\field)$.

\subsection{Enhanced sheaves}\label{sse:enhs}
Let $M$ be a good topological space. Denote by $\BDC(\field_M)$ the bounded derived category of sheaves of $\field$-vector spaces on $M$, and by $\tens$, $\reim f$, $\opb f$, $\rhom$, $\roim f$, $\epb f$ the six operations (here $f\colon M\to N$ is a continuous map). Write $\RHom(L_1,L_2)=\rsect(M;\rhom(L_1,L_2))$.

Denote by $t$ the coordinate in $\R$, and consider the maps
\[
\xymatrix@C=2em{
M\times\R^2 \ar[rr]^{p_1,p_2,\mu} && M\times \R \ar[r]^-\pi & M,
}
\]
where $p_1$, $p_2$, $\pi$ are the projections, and $\mu(x,t_1,t_2)=(x,t_1+t_2)$.
The convolution functors with respect to the $t$ variable in $\BDC(\field_{M\times\R})$ are defined by 
\begin{align}
\label{eq:ctens}
F_1\ctens F_2 &= \reim\mu(\opb p_1 F_1 \tens \opb p_2 F_2), \\ \notag
\chom(F_1,F_2) &= \roim{{p_1}}\rhom(\opb{p_2} F_1, \epb\mu F_2).
\end{align}

Note that the object $\field_{\{t\geq 0\}}$ is idempotent for $\ctens$.
The triangulated category of enhanced sheaves is defined by
\[
\dereb_+(\field_M) = \BDC(\field_{M\times\R})/\{F\semicolon \field_{\{t\geq 0\}}\ctens F \simeq 0 \}.
\]
The quotient functor
\begin{equation}
\label{eq:QE}
\quot\colon \BDC(\field_{M\times\R})\to\dereb_+(\field_M)
\end{equation}
has fully faithful left and right adjoints, respectively given by
\[
\LE(\quot F) = \field_{\{t\geq 0\}}\ctens F, \quad
\RE(\quot F) = \chom(\field_{\{t\geq 0\}}, F).
\]

Enhanced sheaves are endowed with the six operations $\ctens$, $\Eeim f$, $\Eopb f$, $\chom$, $\Eoim f$, $\Eepb f$.
Here, $\ctens$ and $\chom$ descend from $\BDC(\field_{M\times\R})$, and the exterior operations are defined by
\[
\Eeim f (\quot F) = \quot (\derr f_{\R!} F),\quad \Eopb f (\quot G) = \quot (\opb{f_\R} G),\quad \dots,
\]
where $f_\R=f\times\id_\R$.

The natural $t$-structure of $\BDC(\field_{M\times\R})$ induces by $\LE$ a $t$-structure for enhanced sheaves, and we consider its heart
\[
\dere^0_+(\field_M) = \{F\in\dereb_+(\field_M) \semicolon H^j\LE(F)=0 \text{ for any }j\neq 0\}.
\]
Then $U\mapsto\dere^0_+(\field_U)$ is a stack on $M$.

\subsection{Ind-sheaves on bordered spaces}\label{sse:bddspaces}
Let $M$ be a good topological space. Denote by $\BDC(\ind\field_M)$ the bounded derived category of ind-sheaves of $\field$-vector spaces on $M$, that is, ind-objects with values in the category of sheaves with compact support. 
There is a natural exact embedding $\xymatrix{\BDC(\field_M) \ar@{ >->}[r] & \BDC(\ind\field_M)}$,
which has an exact left adjoint $\alpha$, that has in turn an exact fully faithful left adjoint $\beta$.

Denote by $\tens$, $\reeim f$, $\opb f$, $\rihom$, $\roim f$, $\epb f$ the six operations for ind-sheaves. Ind-sheaves on $M$ form a stack, whose sheaf-valued hom functor has $\rhom=\alpha\rihom$ as its right derived functor.

A \emph{bordered space} is a pair $M_\infty = (M, \cM)$ of a good topological space $\cM$ and an open subset $M\subset\cM$. A morphism $f\colon M_\infty\to (N, \cN)$ is a continuous map $f\colon M\to N$ such that the first projection $\cM\times\cN\to\cM$ is proper on the closure $\overline\Gamma_f$ of the graph $\Gamma_f$ of $f$. 
If also the second projection $\overline\Gamma_f\to\cN$ is proper, $f$ is called \emph{semiproper}.
The category of topological spaces embeds into that of bordered spaces by the identification $M=(M,M)$.

The triangulated category of ind-sheaves on $M_\infty$ is defined by
\[
\BDC(\ind\field_{M_\infty}) = \BDC(\ind\field_{\cM})/\BDC(\ind\field_{\cM\setminus M}).
\]
The quotient functor
\[
\mathsf q\colon  \BDC(\ind\field_{\cM}) \to \BDC(\ind\field_{M_\infty})
\]
has a left adjoint $\mathsf l$ and a right adjoints $\mathsf r$, both fully faithful, given by
\[
\mathsf l(\mathsf q F) = \field_M\tens F, \quad
\mathsf r(\mathsf q F) = \rihom(\field_M, F).
\]

For $f\colon M_\infty\to (N, \cN)$ a morphism of bordered spaces, the six operations for ind-sheaves on bordered spaces are defined by
\begin{align*}
&\mathsf q F_1 \tens \mathsf q F_2 = \mathsf q (F_1 \tens F_2), &
&\rihom(\mathsf q F_1, \mathsf q F_2) = \mathsf q \rihom(F_1, F_2), \\
&\reeim f (\mathsf q F) = \mathsf q \reeim q (\field_{\Gamma_f} \tens \opb p F), & 
&\roim f (\mathsf q F) = \mathsf q \roim q \rihom(\field_{\Gamma_f}, \epb p F), \\
&\opb f (\mathsf q G) = \mathsf q \reeim p (\field_{\Gamma_f} \tens \opb q G), & 
&\epb f (\mathsf q G) = \mathsf q \roim p \rihom(\field_{\Gamma_f}, \epb q G).
\end{align*}
where $p\colon \cM\times\cN\to\cM$ and $q\colon \cM\times\cN\to\cN$ are the projections.

There is a natural exact embedding $\xymatrix{\BDC(\field_M) \ar@{ >->}[r] & \BDC(\ind\field_{M_\infty})}$, which has an exact left adjoint $\alpha$.

\subsection{Enhanced ind-sheaves}
Let $M_\infty=(M,\cM)$ be a bordered space. 
Set $\R_\infty= (\R,\cR)$, with $\cR=\R\union\{-\infty,+\infty\}$, and recall that $t\in\R$ denotes the coordinate.
The triangulated category of enhanced ind-sheaves on $M_\infty$ is defined by
\[
\dereb_+(\ind\field_{M_\infty}) = \BDC(\ind\field_{M_\infty\times\R_\infty})/\{F\semicolon \field_{\{t\geq 0\}}\ctens F \simeq 0 \},
\]
where the convolution functor $\ctens$ is defined as in \S\ref{sse:enhs}, replacing $\R$ with $\R_\infty$ and $\reim\mu$ with $\reeim\mu$.
The quotient functor
\begin{equation}
\label{eq:QEI}
\quot\colon \BDC(\ind\field_{M_\infty\times\R_\infty}) \to \dereb_+(\ind\field_{M_\infty})
\end{equation}
has a fully faithful left and right adjoint $\LE$ and $\RE$, respectively, defined as in \S\ref{sse:enhs}.

The six operations for enhanced ind-sheaves are denoted\footnote{In \cite{DK16}, $\cihom$ is denoted by ${\shi hom}^+$.} by $\ctens$, $\Eeeim f$, $\Eopb f$, $\cihom$, $\Eoim f$, $\Eepb f$ (here $f\colon M_\infty\to N_\infty$ is a morphism of bordered spaces). As in \S\ref{sse:enhs}, the exterior operations are defined via the associated morphism $f_\R=f\times\id_{\R_\infty}$.
Denote by $\cetens$ the external tensor product.
There are outer hom functors\footnote{In \cite{DK16}, $\fhom$ is denoted by $\hom^\enh$.}
\begin{align*}
\fihom(K_1,K_2) &\defeq \roim\pi\rihom(\LE K_1,\RE K_2) \\
&\simeq \roim\pi\rihom(\LE K_1,\LE K_2), \\
\fhom(K_1,K_2) &\defeq \alpha\fihom(K_1,K_2),
\end{align*}
with values in $\BDC(\ind\field_{M_\infty})$ and $\BDC(\field_M)$, respectively.  We write 
\[
\FHom(K_1,K_2)=\rsect(M;\fhom(K_1,K_2))\ \in\BDC(\field).
\]

The triangulated category $\dereb_+(\ind\field_{M_\infty})$ has a natural $t$-structure, and we denote by $\dere^0_+(\ind\field_{M_\infty})$ its heart. Then $U\mapsto\dere^0_+(\ind\field_U)$ is a stack on $M$.

There are natural embeddings  $\xymatrix{\dereb_+(\field_M) \ar@{ >->}[r] & \dereb_+(\ind\field_{M_\infty})}$ and
\begin{align*}
\xymatrix{\epsilon\colon\BDC(\ind\field_{M_\infty}) \ar@{ >->}[r] & \dereb_+(\ind\field_{M_\infty})}, \quad L\mapsto &\quot(\field_{\{t= 0\}} \tens \opb\pi L) \\
&\simeq \quot(\field_{\{t\geq 0\}} \tens \opb\pi L).
\end{align*}
Note that $\epsilon(L)\in\dereb_+(\field_M)$ if $L\in\BDC(\field_M)$.

For $L\in\BDC(\ind\field_{M_\infty})$ and $K\in\dereb_+(\ind\field_{M_\infty})$, the operations
\[
\opb\pi L \tens K , \quad
\rihom(\opb\pi L,K) 
\]
are well defined.  In fact, one has
\begin{align*}
\opb\pi L \tens K &\simeq \epsilon(L) \ctens K , \\
\rihom(\opb\pi L,K) &\simeq \cihom(\epsilon(L),K).
\end{align*}

For $K\in\dereb_+(\ind\field_{M_\infty})$ one has
$\quot\field_{\{t\geq 0\}} \ctens K \simeq K$, and more generally for $c\in\R$
\[
\quot\field_{\{t\geq c\}} \ctens K \simeq \Eoim{{\mu_c}}K,
\]
where $\mu_c(x,t)=(x,t+c)$ is the translation.

The following lemma will be of use later.

\begin{lemma}\label{lem:stalk}
For $i\in\Z_{\ge0}$, let $U_i\subset M$ be a family of open subsets such that $U_{i+1}\subset\subset U_i$. Let $K_1,K_2\in\dereb_+(\ifield_{M_\infty})$. Then, one has an isomorphism in $\BDC(\field)$
\[
\ilim[i] \rsect(U_i;\fhom(K_1,K_2)) \simeq
\ilim[i] \FHom(\opb\pi\field_{U_i}\tens K_1,\opb\pi\field_{U_i}\tens K_2).
\]
\end{lemma}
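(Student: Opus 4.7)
The plan is to show, for each $i$ individually, a natural isomorphism
\[
\FHom(\opb\pi\field_{U_i}\tens K_1,\opb\pi\field_{U_i}\tens K_2) \simeq \rsect(U_i;\fhom(K_1,K_2));
\]
the lemma then follows by passing to $\ilim[i]$. (The condition $U_{i+1}\ssubset U_i$ plays no essential role at this level.) Both sides will be bridged through the intermediate object $\FHom(\opb\pi\field_{U_i}\tens K_1,K_2)$: the first via an adjunction formula, the second via locality of $\fhom$ in its second argument.

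The key ingredient is the identity
\[
\fihom(\opb\pi L\tens K_1,K_2) \simeq \rihom(L,\fihom(K_1,K_2)) \quad\text{in } \BDC(\ifield_{M_\infty}),
\]
valid for every $L\in\BDC(\ifield_{M_\infty})$. To establish it, note first that $\opb\pi L\tens(-)$ commutes with the convolution $\ctens$, as a consequence of the projection formula applied to $\mu\colon M_\infty\times\R_\infty^2\to M_\infty\times\R_\infty$ (using $\pi\mu=\pi p_1$); in particular it commutes with $\LE=\field_{\{t\geq 0\}}\ctens(-)$, so $\LE(\opb\pi L\tens K_1)\simeq\opb\pi L\tens\LE K_1$. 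Unrolling $\fihom=\roim\pi\rihom(\LE-,\LE-)$ and applying the tensor--hom adjunction in $\BDC(\ifield_{M_\infty\times\R_\infty})$ followed by the adjunction $(\opb\pi,\roim\pi)$ yields the claim. Taking $L=\field_{U_i}$, applying $\rsect(M;\alpha(-))$, and using $\alpha\rihom(\field_{U_i},G)\simeq\rhom(\field_{U_i},\alpha G)$ (a formal consequence of the adjunction between sheaves and ind-sheaves together with the monoidality of the embedding), one deduces
\[
\FHom(\opb\pi\field_{U_i}\tens K_1,K_2) \simeq \rsect(U_i;\fhom(K_1,K_2)).
\]
The same identity, now with $K_2$ replaced by $\opb\pi\field_{U_i}\tens K_2$, produces
\[
\FHom(\opb\pi\field_{U_i}\tens K_1,\opb\pi\field_{U_i}\tens K_2) \simeq \rsect(U_i;\fhom(K_1,\opb\pi\field_{U_i}\tens K_2)).
\]

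To match the two right-hand sides, one uses that $\fihom$ (hence $\fhom$) is local in its second argument: since $\rihom(-,-)$ is local in its second argument and $\roim\pi$ commutes with restriction along open inclusions, whenever $K_2|_{U\times\R_\infty}\simeq K_2'|_{U\times\R_\infty}$ one has $\fhom(K_1,K_2)|_U\simeq\fhom(K_1,K_2')|_U$. Applied with $K_2'=\opb\pi\field_{U_i}\tens K_2$ on $U=U_i$, this gives $\rsect(U_i;\fhom(K_1,\opb\pi\field_{U_i}\tens K_2))\simeq\rsect(U_i;\fhom(K_1,K_2))$. Chaining the three displayed isomorphisms yields the pointwise identity announced above, and $\ilim[i]$ gives the lemma. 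The main technical care lies in verifying the adjunction formula $\fihom(\opb\pi L\tens K_1,K_2)\simeq\rihom(L,\fihom(K_1,K_2))$ and the compatibility of $\alpha$ with $\rihom(\field_{U_i},-)$ in the ind-enhanced, bordered-space setting; once these formal pieces are in place, the remaining argument is essentially mechanical.
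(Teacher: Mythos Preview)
Your proof has a genuine gap at the step where you claim
\[
\alpha\,\rihom(\field_{U_i},G)\;\simeq\;\rhom(\field_{U_i},\alpha G),
\]
calling it ``a formal consequence of the adjunction between sheaves and ind-sheaves together with the monoidality of the embedding.'' This identity is \emph{not} a formal consequence: $\alpha$ is a left adjoint, while $\rihom(\field_{U_i},-)$ is a right adjoint, and there is no general reason for them to commute. Concretely, with $G=\fihom(K_1,K_2)$ your computation gives
\[
\FHom(\opb\pi\field_{U_i}\tens K_1,K_2)\;\simeq\;\RHom\bigl(\opb\pi\field_{U_i},\rihom(\LE K_1,\RE K_2)\bigr),
\]
whereas unwinding the left-hand side of the lemma gives
\[
\rsect\bigl(U_i;\fhom(K_1,K_2)\bigr)\;\simeq\;\RHom\bigl(\opb\pi\beta\field_{U_i},\rihom(\LE K_1,\RE K_2)\bigr),
\]
using the adjunction $\beta\dashv\alpha$. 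The two expressions differ by $\field_{U_i}$ versus $\beta\field_{U_i}\simeq\indlim_{V\ssubset U_i}\field_V$, and these are genuinely different objects of $\BDC(\ifield_{M_\infty})$ when $U_i$ is not relatively compact. Thus the pointwise isomorphism you assert simply does not hold, and your remark that ``the condition $U_{i+1}\ssubset U_i$ plays no essential role'' is incorrect.

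The paper's proof is built exactly around this obstruction: it exhibits the factorizations
$\beta\field_{U_{i+1}}\to\field_{U_{i+1}}\to\beta\field_{U_i}\to\field_{U_i}$,
which exist precisely because $U_{i+1}\ssubset U_i$, and uses them to show that the two inductive systems become cofinal \emph{after} passing to $\ilim_i$. Your key identity $\fihom(\opb\pi L\tens K_1,K_2)\simeq\rihom(L,\fihom(K_1,K_2))$ is correct and useful, but it only gets you as far as the $\RHom(\opb\pi\field_{U_i},\ldots)$ expression above; closing the remaining gap requires the nesting hypothesis, not a formal compatibility of $\alpha$ with internal hom.
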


\begin{proof}
One has
\begin{align}\label{eq:homenh}
\rsect(U_i;\fhom(K_1,K_2)) 
&\simeq \RHom(\field_{U_i},\alpha\roim\pi\rihom(\LE K_1,\RE K_2)) \\
&\simeq \RHom(\opb\pi\beta\field_{U_i},\rihom(\LE K_1,\RE K_2)). \notag
\end{align}
Recall that 
\[
\beta\field_{U_i}\simeq\indlim[V\subset\subset U_i]\field_V.
\]
Since there is a natural commutative diagram
\[
\xymatrix{
\beta\field_{U_{i+1}} \ar[r] \ar@/^2ex/[rr] & \field_{U_{i+1}} \ar[r] \ar@/_2ex/[rr]
& \beta\field_{U_i} \ar[r] & \field_{U_i} ,
}
\]
we can replace $\beta\field_{U_i}$ with $\field_{U_i}$ in the limit, so that
\[
\ilim[i] \rsect(U_i;\fhom(K_1,K_2)) \simeq
\ilim[i] \RHom(\opb\pi\field_{U_i},\rihom(\LE K_1,\RE K_2)).
\]
Finally, one has
\begin{align*}
\RHom(\opb\pi\field_{U_i},{}&\rihom(\LE K_1,\RE K_2)) \\
&\simeq \RHom(\opb\pi\field_{U_i}\tens \LE K_1,\RE K_2) \\
&\simeq \RHom(\LE(\opb\pi\field_{U_i}\tens K_1),\RE K_2) \\
&\simeq \FHom(\opb\pi\field_{U_i}\tens K_1, K_2) \\
&\simeq \FHom(\opb\pi\field_{U_i}\tens K_1, \opb\pi\field_{U_i}\tens K_2).
\end{align*}
\end{proof}

\subsection{Constructible enhanced ind-sheaves}\label{sse:EIRc}
Let $M$ be a subanalytic space. Denote by $\BDC_\Rc(\field_M)\subset\BDC(\field_M)$ the full triangulated subcategory whose objects have $\R$-constructible cohomologies.

A subanalytic bordered space is a pair $M_\infty = (M, \cM)$ of a subanalytic space $\cM$ and an open subanalytic subset $M\subset\cM$. A morphism $f\colon M_\infty\to (N, \cN)$ of subanalytic bordered spaces is a morphism of bordered spaces whose graph is subanalytic in $\cM\times\cN$.

Let $\BDC_\Rc(\field_{M_\infty})\subset\BDC(\field_M)$ be the full triangulated subcategory of $\R$-constructible sheaves on $M_\infty$, that is, objects which are restriction to $M$ of $\R$-constructible objects on $\cM$. 

The triangulated category $\dereb_\Rc(\field_{M_\infty})\subset\dereb_+(\field_M)$ of $\R$-constructible enhanced sheaves on $M_\infty$ is  the essential image of $\BDC_\Rc(\field_{M_\infty\times\R_\infty})$ by the quotient map \eqref{eq:QE}. The family of $\R$-constructible enhanced sheaves is stable by the six operations, assuming semiproperness for direct images.

One sets
\begin{align*}
\field^\enh_M &\defeq \quot\bl\indlim[c\to+\infty] \field_{\{t\geq c\}}\br\ \in\dereb(\ifield_M), \\
\field^\enh_{M_\infty} &\defeq \Eopb j\bl\field^\enh_\cM\br\ \in\dereb(\ifield_{M_\infty}),
\end{align*}
where $\quot$ is the quotient map \eqref{eq:QEI}, and $j\colon M_\infty\to\cM$ is the natural morphism.

The triangulated category of $\R$-constructible enhanced ind-sheaves is the full subcategory  $\dereb_\Rc(\ind\field_{M_\infty})\subset\dereb_+(\ind\field_{M_\infty})$ whose objects $K$ satisfy the following condition. There exists $F\in\BDC_\Rc(\field_{M_\infty\times\R_\infty})$ such that
\[
K \simeq \field^\enh_{M_\infty}\ctens \quot F.
\]
(The equivalence of this description with that in \S\ref{sse:introEIRc} follows from \cite[Proposition 4.7.9]{DK16}.)

The family of $\R$-constructible enhanced ind-sheaves is stable by the six operations, assuming semiproperness for direct images.

There is a natural embedding
\[
\xymatrix{e\colon\BDC_\Rc(\field_{M_\infty}) \ar@{ >->}[r] & \dereb_\Rc(\ifield_{M_\infty})}, \quad L\mapsto \opb\pi L \tens\field^\enh_{M_\infty} \simeq \field^\enh_{M_\infty} \ctens \epsilon(L).
\]

Note that the canonical functor
\[
\dereb_\Rc(\field_{M_\infty})\to\dereb_\Rc(\ifield_{M_\infty}), \quad
F \mapsto \field_{M_\infty}^\enh\ctens F
\] 
is essentially surjective but not fully faithful (see \S\ref{sse:introEIRc}).

\subsection{Microsupport}\label{sse:SS}

Let $M$ be a real analytic manifold.
To $L\in\BDC(\field_M)$ one associates its microsupport $SS(L)\subset T^*M$, a closed conic involutive subset of the cotangent bundle. 

Denote by $(t,t^*)\in T^*\R$ the homogeneous symplectic coordinates.
There is an equivalence
\[
\dereb_+(\field_M) \simeq \BDC(\field_{M\times\R})/\{F\semicolon SS(F)\subset\{t^*\leq 0\}\}.
\]

Denote by $\omega_M$ the canonical 1-form on $T^*M$, that is $\omega_M=\sum x_i^*dx_i$ in local homogeneous symplectic coordinates $(x,x^*)\in T^*M$.
Then, the space $(T^*M)\times\R$ has a contact structure given by $dt+\omega_M$.

Consider the maps
\[
\xymatrix@R=0ex{
&\llap{$T^*(M\times\R)\supset{}$} \{t^*>0\} \ar[r]^-{\gamma} & (T^*M)\times\R \ar[r]^-\rho &
T^*M, \\ 
&(x,t;x^*,t^*) \ar@{|->}[r] & ((x;x^*/t^*),t),
}
\]
where $\rho$ is the projection.
For $F\in\dereb_+(\field_M)$, one sets
\begin{align*}
\SSE(F) &\defeq \gamma \bl SS(F')\cap\{t^*>0\} \br \subset (T^*M)\times\R, \\
\SSEo(F) &\defeq \overline{\rho(\SSE(F))} \subset T^*M,
\end{align*}
where $F'\in\derb(\cor_{M\times\R})$ satisfies $\quot(F')\simeq F$.
The definition of $\SSE(F)$ does not depend on the choice of $F'$.

Note that $\SSEo(\epsilon(L)) = SS(L)$ for $L\in\BDC(\field_M)$.

\medskip 
For a complex manifold $X$, denote by $X^\R$ the underlying real analytic manifold.
For $F\in\dereb_+(\field_X)$, its microsupport $\SSEo(F)$ is a subset of the cotangent bundle $T^*(X^\R)$. In this paper, we use the identification\footnote{This choice differs from the usual one (see e.g.\ \cite[\S11.1]{KS90}), where the identification is given by $\theta\mapsto\theta+\overline\theta$.}
\[
(T^*X)^\R\isoto T^*(X^\R), \quad
\theta\mapsto\Re\theta=\tfrac12(\theta+\overline\theta).
\] 
Hence, denoting by $d=\partial+\overline\partial$ the exterior differential, one has $\partial\varphi\mapsto\frac12 d\varphi$
for $\varphi$ a real valued smooth function. Then, $d f\mapsto d(\Re f)$ if $f$ is holomorphic.

\subsection{$\D$-modules}

Let $X$ be a complex manifold, and denote by $\O_X$ and $\D_X$ the sheaves of holomorphic functions and of differential operators, respectively. 
Let $\BDC(\D_X)$ be the bounded derived category of left $\D_X$-modules.
For $f \colon X \to Y$ a morphism of complex manifolds, denote by $\dtens$, $\doim f$, $\dopb f$ the operations for $\D$-modules.

Denote by $\BDC_\coh(\D_X)\subset\BDC(\D_X)$ the full triangulated
subcategory of  objects with coherent cohomologies.
To $\shm\in\BDC_\coh(\D_X)$ one associates its characteristic variety $\chv(\shm)\subset T^*X$, a closed conic involutive subset of the cotangent bundle. If $\chv(\shm)$ is Lagrangian, $\shm$ is called holonomic. 
Denote by $\BDC_{\hol}(\D_X)\subset\BDC_\coh(\D_X)$ the full triangulated
subcategory whose objects have holonomic cohomologies.

If $Y\subset X$ is a closed hypersurface, denote by $\O_X(*Y)$ the sheaf of meromorphic functions with poles at $Y$.  For $\shm\in\BDC(\D_X)$, set
\[
\shm(*Y) = \shm \dtens \O_X(*Y).
\]
For $f\in\O_X(*Y)$ and $U=X\setminus Y$, set
\[
\D_X e^f = \D_X/\{P\semicolon P e^f=0 \text{ on } U\}, \quad
\she^f_{U|X} = \D_X e^f(*Y).
\]
These are holonomic $\D_X$-modules.

A complex bordered manifold is a pair $X_\infty=(X,\cX)$ of a complex manifold $\cX$ and an open subset $X\subset \cX$ such that $\cX\setminus X$ is a complex analytic subset of $\cX$.
A morphism $f\colon X_\infty\to(Y,\cY)$ of complex bordered manifolds is a morphism of bordered spaces such that the closure of its graph is a complex analytic subset of $\cX\times\cY$.

The triangulated category $\BDC_\hol(\D_{X_\infty})$ is the full triangulated subcategories of $\BDC_\hol(\D_{\cX})$ of objects $\shm$ satisfying $\shm\simeq\shm(*Z)$ for $Z=\cX\setminus X$.

The operations for $\D$-modules extend to bordered spaces. 
The family of holonomic $\D$-modules is stable by the operations, assuming semiproperness for direct images.

\section{Enhanced exponentials}\label{sec:exp}

As explained in \cite{DK16,DK16bis}, constant sheaves on the epigraphs of subanalytic functions are the building blocks of $\R$-constructible enhanced \mbox{(ind-)}sheaves. 
Here we consider their analogues on topological spaces, and state some of their properties.
(See \cite[\S3]{Moc16} for similar results.)

\subsection{Exponential enhanced sheaves}\label{sse:exp}

Let $M$ be a good topological space, and $U\subset M$ an open subset.
Let $\varphi,\varphi^+,\varphi^-\colon U\to\R$ be continuous
functions with $\varphi^-(x)\le\varphi^+(x)$ for any $x\in U$.
The associated exponential enhanced sheaves are defined by
\[
\ex^\varphi_{U|M} \defeq \quot\field_{\{t+ \varphi\geq 0\}}, \quad
\ex^{\range{\varphi^+}{\varphi^-}}_{U|M} \defeq \quot\field_{\{-\varphi^+ \leq t <-\varphi^-\}}.
\]
Here, $\{t+ \varphi\geq 0\}$ is short for $\{(x,t)\in U\times\R\semicolon t+ \varphi(x)\geq 0\}$, and similarly for $\{-\varphi^+ \leq t <-\varphi^-\}$.

By the definitions, one has
\[
\opb\pi\field_U\tens\ex^\varphi_{U|M}\simeq \ex^\varphi_{U|M}, \quad
\opb\pi\field_U\tens\ex^{\range{\varphi^+}{\varphi^-}}_{U|M}\simeq \ex^{\range{\varphi^+}{\varphi^-}}_{U|M}.
\]
Moreover,
$$\LE(\ex^\varphi_{U|M})\simeq \field_{\{t+ \varphi\geq 0\}}, \quad
\LE(\ex^{\range{\varphi^+}{\varphi^-}}_{U|M})
\simeq\field_{\{-\varphi^+ \leq t <-\varphi^-\}},
$$
so that in particular
\[
\ex^\varphi_{U|M},\ \ex^{\range{\varphi^+}{\varphi^-}}_{U|M} \in \dere^0_+(\field_M).
\]
Then, the natural exact sequence
\begin{equation}
\label{eq:SESex}
0\to
\field_{\{-\varphi^+ \leq t <-\varphi^-\}} \to \field_{\{t+ \varphi^+\geq 0\}}
\to \field_{\{t+ \varphi^-\geq 0\}} \to0
\end{equation}
induces an exact sequence in $\dere^0_+(\field_M)$
\begin{equation}
\label{eq:exphipsi}
0 \to \ex^{\range{\varphi^+}{\varphi^-}}_{U|M} \to
\ex^{\varphi^+}_{U|M} \to
\ex^{\varphi^-}_{U|M} \to 0.
\end{equation}

\begin{lemma}\label{lem:fgex}
Let $\varphi,\psi\colon U\to\R$ be continuous
functions.
\bnum
\item One has
$$\fhom(\ex^{\varphi}_{U|M},\ex^{\psi}_{U|M})
\simeq\rhom(\cor_{\{\psi\le\vphi\}},\cor_M),$$
where $\{\psi\le\vphi\} \defeq \{x\in U\semicolon\psi(x)\le\vphi(x)\}$.
\item
Assume that $\hom(\cor_U,\cor_M)\simeq\cor_{\ol{U}}$.
Then one has
$$
H^0\fhom(\ex^{\varphi}_{U|M},\ex^{\psi}_{U|M})
\simeq\cor_S,$$
where
\begin{align*}
S \defeq \{ x\in \ol{U} \semicolon{}&
\psi(y) \le\varphi(y) \text{ for any $y\in U\cap \Omega$ } \\
&\text{for some open neighborhood $\Omega$ of $x$} \}.
\end{align*}
\ee
\end{lemma}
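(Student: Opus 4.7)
The plan is to unfold $\fhom$ in terms of $\LE$, normalize the exponents by a vertical translation, and identify the resulting pushforward by a stalk analysis on the fibres of $\pi$.

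First I would apply the formula $\fhom(K_1,K_2)\simeq\alpha\roim\pi\rihom(\LE K_1,\LE K_2)$ together with $\LE(\ex^\varphi_{U|M})\simeq\cor_{\{t+\varphi\ge 0\}}$ from \S\ref{sse:exp}, to rewrite the left-hand side of (i) as $\roim\pi\rhom(\cor_A,\cor_B)$, where $A=\{(x,t)\in U\times\R\semicolon t+\varphi(x)\ge 0\}$ and $B$ is the analogous set for $\psi$, both viewed on $M\times\R$ by extension by zero. On $U\times\R$ the homeomorphism $\tau(x,t)=(x,t-\varphi(x))$ commutes with $\pi$ and satisfies $\opb\tau\cor_A\simeq\cor_{U\times[0,\infty)}$ and $\opb\tau\cor_B\simeq\cor_{\{t\ge h\}}$ with $h=\varphi-\psi$. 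So (i) reduces to the universal identity
\[
\roim\pi\rhom(\cor_{U\times[0,\infty)},\cor_{\{t\ge h\}})\simeq\rhom(\cor_{\{h\ge 0\}},\cor_M),
\]
whose right-hand side is exactly $\rhom(\cor_{\{\psi\le\varphi\}},\cor_M)$.

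To verify this identity I would use the fibre-wise formula
\[
\rhom(\cor_{[0,\infty)},\cor_{[a,\infty)})\simeq\begin{cases}\cor_{[a,\infty)}&\text{if }a\ge 0,\\ \cor_{(0,\infty)}&\text{if }a<0,\end{cases}
\]
together with $\rsect(\R;\cor_{[a,\infty)})\simeq\cor$ and $\rsect(\R;\cor_{(a,\infty)})\simeq 0$. The stalk at $x_0\in M$ of the pushforward is computed as $\rsect$ over a germ of vertical fibres and comes out to $\cor$ when $h(x_0)>0$ and $0$ when $h(x_0)<0$. The full sheaf-theoretic matching with $\rhom(\cor_{\{h\ge 0\}},\cor_M)$ is then obtained either fibrewise, by unwinding the distinguished triangle $\cor_{\{h<0\}\cap U}\to\cor_U\to\cor_{\{h\ge 0\}}\tone$ and using the analogous computation for each term, or by a Yoneda-style test: applying $\RHom(\cor_V,-)=\rsect(V;-)$ for small open $V\subset M$ and using $\opb\pi\cor_V\simeq\cor_{V\times\R}$ together with tensor-hom adjunction reduces both sides to the same $\RHom$ in $M\times\R$.

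For part (ii), applying (i) reduces the problem to computing $H^0\rhom(\cor_T,\cor_M)$ for $T=\{\psi\le\varphi\}\subset U$. The short exact sequence $0\to\cor_{U\setminus T}\to\cor_U\to\cor_T\to 0$, valid because $T$ is closed in the open set $U$, gives the exact sequence
\[
0\to\hom(\cor_T,\cor_M)\to\hom(\cor_U,\cor_M)\to\hom(\cor_{U\setminus T},\cor_M).
\]
The assumption identifies the middle term with $\cor_{\overline U}$. The same assumption, being local on $M$ and a statement about connectedness of $U\cap\Omega$ for small $\Omega$, applies equally to the open subset $U\setminus T\subset M$ and identifies the last term with $\cor_{\overline{U\setminus T}}$. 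The restriction map $\cor_{\overline U}\to\cor_{\overline{U\setminus T}}$ has kernel the constant sheaf on $\overline U\setminus\overline{U\setminus T}$, and unwinding the definition of the closure shows that a point $x\in\overline U$ lies in this set exactly when some open neighbourhood $\Omega$ of $x$ in $M$ satisfies $\Omega\cap U\subset T$, which is the definition of the set $S$ in the statement.

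The main obstacle I expect is the corner bookkeeping in the second paragraph, at points where $h(x_0)=0$ and $t_0=0$ simultaneously: the asymmetry between closed and open half-lines, reflected in the two cases of the fibre-wise formula, is exactly what makes $\cor_{\{h\ge 0\}}$ and not some closed or open variant appear as the correct answer, and keeping track of this uniformly in $x_0$ is where the Yoneda reformulation is most useful.
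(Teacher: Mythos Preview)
Your approach to (i) differs from the paper's and is not fully carried out. The paper avoids the ``universal identity'' entirely: it rewrites $\cor_{\{t+\psi\ge0\}}\simeq\rhom(\cor_{\{t+\psi>0\}},\cor_{M\times\R})$, uses tensor--hom to get $\rhom(\cor_{\{t+\varphi\ge0\}}\tens\cor_{\{t+\psi>0\}},\cor_{M\times\R})$, and then the adjunction $(\reim\pi,\epb\pi)$ together with $\epb\pi\cor_M\simeq\cor_{M\times\R}[1]$ turns $\roim\pi$ of this into $\rhom(\reim\pi(\cdots)[1],\cor_M)$. The $\reim\pi$ of a half-line bundle is then a one-line computation. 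Your translation idea is sound, but neither of your suggested endings is rigorous as written: stalks of $\roim\pi$ are not $\rsect$ over fibres (no proper base change here), and a ``Yoneda test'' over all open $V$ does not determine an object of $\BDC(\field_M)$ without first producing a natural morphism between the two sides and then checking it is an isomorphism on stalks. You could construct such a morphism, but at that point the paper's duality trick is shorter.

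There is a genuine gap in (ii). You claim that the hypothesis $\hom(\cor_U,\cor_M)\simeq\cor_{\overline U}$ ``applies equally to the open subset $U\setminus T$''. It does not: that hypothesis is a local connectedness condition on $U$ near $\partial U$, and says nothing about $U\setminus T$ near $\partial T\cap U$. For instance with $M=U=\R^2$, $\varphi=0$, $\psi(x,y)=-xy$, the set $U\setminus T=\{xy<0\}$ has two local components at the origin, so $\hom(\cor_{U\setminus T},\cor_M)_0\simeq\field^2\ne(\cor_{\overline{U\setminus T}})_0$. Your kernel computation therefore rests on a false identification. (Curiously, the kernel of $\cor_{\overline U}\to\hom(\cor_{U\setminus T},\cor_M)$ \emph{is} still $\cor_S$, since the diagonal map $\field\to\field^{\pi_0(\Omega\cap(U\setminus T))}$ is injective whenever the target is nonzero; but that is not the argument you gave.) The paper's route is cleaner: write $\{\psi\le\varphi\}=Z\cap U$ with $Z$ its closure in $M$, so $\cor_{\{\psi\le\varphi\}}=\cor_Z\tens\cor_U$, and then tensor--hom gives $\hom(\cor_{\{\psi\le\varphi\}},\cor_M)\simeq\hom(\cor_Z,\hom(\cor_U,\cor_M))\simeq\hom(\cor_Z,\cor_{\overline U})\simeq\Gamma_Z(\cor_{\overline U})\simeq\cor_{\operatorname{Int}_{\overline U}(Z)}=\cor_S$.
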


\begin{proof}
(i)\ We have
\eqn
\fhom(\ex^{\varphi}_{U|M}\ex^{\psi}_{U|M})
&\simeq&\roim{\pi}\rhom(\cor_{\ens{t+ \vphi\ge0}},
\cor_{\ens{t+ \psi\ge0}})\\
&\simeq&\roim{\pi}\rhom\bl\cor_{\ens{t+ \vphi\ge0}},
\rhom(\cor_{\ens{t+ \psi>0}},\cor_{M\times\R})\br\\
&\simeq&\roim{\pi}\rhom\bl\cor_{\ens{t+ \vphi\ge0}}\tens
\cor_{\ens{t+ \psi>0}},\cor_{M\times\R}\br\\
&\underset{(*)}\simeq&\rhom\bl\reim{\pi}(\cor_{\ens{t+ \vphi\ge0}}\tens
\cor_{\ens{t+ \psi>0}}[1]),\cor_{M}\br\\
&\simeq&\rhom\bl\cor_{\ens{\psi\le\vphi}},\cor_{M}\br,
\eneqn
where $(*)$ follows from $\field_{M\times\R}[1]\simeq\epb\pi\field_M$.

\medskip\noi
(ii) Let $Z$ be the closure of $\ens{\psi\le\vphi}$ in $M$.
Note that $\ens{\psi\le\vphi} = Z\cap U$.
Then, by (i), one has
\begin{align*}
H^0\fhom(\ex^{\varphi}_{U|M},\ex^{\psi}_{U|M})
&\simeq \hom\bl\cor_{\ens{\psi\le\vphi}},\cor_{M}\br \\
&\simeq \hom(\cor_Z \tens \cor_U,\cor_M) \\
&\simeq \hom(\cor_Z,\hom(\cor_U,\cor_M)) \\
&\underset{(*)}\simeq \hom(\cor_Z,\cor_{\ol{U}}) \simeq \cor_S,
\end{align*}
where $(*)$ follows from the assumption, and the last isomorphism follows from the fact that $Z\subset\ol{U}$ and $S=\operatorname{Int}_{\ol{U}}(Z)$, the interior of $Z$ relative to $\ol{U}$. 
\end{proof}

\begin{corollary}\label{cor:fgex}
Let $\varphi,\psi\colon U\to\R$ be continuous
functions. Then
\begin{itemize}
\item[(i)] $\psi\leq\varphi$ on $U$ if and only if
there is an epimorphism
$\xymatrix@C=1.5em{\ex^{\varphi}_{U|M} \ar@{->>}[r] &
\ex^{\psi}_{U|M}}$ in $\dere^0(\field_M)$.
\item[(ii)] $\psi=\varphi$ on $U$ if and only if
there is an isomorphism $\ex^{\varphi}_{U|M} \simeq
\ex^{\psi}_{U|M}$ in $\dere^0(\field_M)$.
\end{itemize}
\end{corollary}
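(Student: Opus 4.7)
\emph{Plan for Corollary~\ref{cor:fgex}.}

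The sufficiency direction of~(i) is built into the construction of \S\ref{sse:exp}: if $\psi \leq \varphi$ on $U$, then the short exact sequence \eqref{eq:exphipsi}, applied with $\varphi^+ = \varphi$ and $\varphi^- = \psi$, provides the desired epimorphism $\ex^{\varphi}_{U|M} \to \ex^{\psi}_{U|M}$ as its right-hand arrow. For~(ii), sufficiency is tautological, while necessity reduces to two applications of~(i): an isomorphism is an epimorphism in both directions, forcing simultaneously $\psi \leq \varphi$ and $\varphi \leq \psi$ on $U$, whence $\psi = \varphi$.

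All the real content therefore lies in the necessity of~(i). The plan is to transport the assumed epimorphism to a statement about genuine sheaves by applying $\LE$. By the very definition of the $t$-structure on $\dereb_+(\field_M)$ recalled in~\S\ref{sse:enhs}, the functor $\LE$ is $t$-exact; being also fully faithful, it restricts to an exact embedding of the heart $\dere^0_+(\field_M)$ into the abelian category of sheaves on $M \times \R$. Consequently, given an epimorphism $u \colon \ex^{\varphi}_{U|M} \to \ex^{\psi}_{U|M}$ in the heart, the morphism
\[
\LE u \colon \field_{\{t+\varphi \geq 0\}} \to \field_{\{t+\psi \geq 0\}}
\]
is an epimorphism of sheaves on $M \times \R$, hence surjective at every stalk.

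To conclude, fix an arbitrary $x_0 \in U$ and evaluate at the point $p_0 \defeq (x_0, -\psi(x_0))$, which lies in the closed set $\{t + \psi \geq 0\}$. The stalk of $\field_{\{t+\psi \geq 0\}}$ at $p_0$ equals $\field$, while that of $\field_{\{t+\varphi \geq 0\}}$ equals $\field$ if $\psi(x_0) \leq \varphi(x_0)$ and vanishes otherwise. Stalkwise surjectivity then forces $\psi(x_0) \leq \varphi(x_0)$, establishing~(i).

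I do not anticipate any real obstacle: the only non-formal ingredient is the $t$-exactness of $\LE$, which is built into the definition of the $t$-structure on $\dereb_+(\field_M)$, combined with the classical stalkwise criterion for epimorphisms of sheaves. An alternative route through Lemma~\ref{lem:fgex}(i) would also be feasible, but identifying which global sections of $\rhom(\field_{\{\psi \leq \varphi\}}, \field_M)$ actually define epimorphisms would essentially reinstate the same stalk computation.
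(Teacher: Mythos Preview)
Your argument is correct, but it takes a different route from the paper's.

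The paper proves necessity in~(i) via Lemma~\ref{lem:fgex}(i): given an epimorphism $\ex^{\varphi}_{U|M}\twoheadrightarrow\ex^{\psi}_{U|M}$, the stack property of $U\mapsto\dere^0_+(\field_U)$ guarantees that its restriction to any small open $V$ with $V\cap U\neq\emptyset$ remains a nonzero morphism (the target being nonzero there), so $H^0\fhom(\ex^{\varphi}_{U|M},\ex^{\psi}_{U|M})\simeq\hom(\field_{\{\psi\leq\varphi\}},\field_M)$ has nonzero stalk at every point of $U$; since for the closed subset $\{\psi\leq\varphi\}\subset U$ this stalk is $\field_{\operatorname{Int}_U\{\psi\leq\varphi\}}$, one concludes $\{\psi\leq\varphi\}=U$.

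Your route bypasses Lemma~\ref{lem:fgex} entirely: you transport the epimorphism through the $t$-exact fully faithful embedding $\LE$ and read off the inequality from a single stalk of the resulting surjection of constant sheaves on epigraphs. This is more elementary and self-contained. The paper's approach, on the other hand, exercises the $\fhom$ computation that is reused throughout (e.g.\ in Lemmas~\ref{lem:fgEx} and~\ref{lem:homEfgtheta}), so it fits the narrative better even if it is slightly less direct here.
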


\begin{proof}
(i)
The only non trivial implication is the ``if'' part. 
Assume that there exists an epimorphism
$\xymatrix@C=1.5em{\ex^{\varphi}_{U|M} \ar@{->>}[r] &
\ex^{\psi}_{U|M}}$.
Then, the sheaf $$H^0\fhom(\ex^{\varphi}_{U|M},\ex^{\psi}_{U|M})\simeq
\hom(\cor_{\{\psi\le\vphi\}},\cor_M)$$
has non-zero stalk at every point of $U$.
Hence we have $\psi(x)\le\vphi(x)$ for any $x\in U$.

\medskip\noi
(ii) follows from (i).
\end{proof}

\subsection{Exponential enhanced ind-sheaves}\label{sse:iexp}

Let $M_\infty=(M,\cM)$ be a bordered space. 
We say that a subset $A$ of $M$ is a relatively compact subset of $M_\infty$ if the closure of $A$ in $\cM$ is compact.

Let $U\subset M$ be an open subset, and $\varphi,\varphi^+,\varphi^-\colon U\to\R$ be continuous functions with $\varphi^-\le\varphi^+$, as in the previous subsection. 
The associated exponential enhanced ind-sheaves are defined by
\[
\Ex^\varphi_{U|M_\infty} \defeq \Efield_{M_\infty}\ctens\ex^\varphi_{U|M}, \quad
\Ex^{\range{\varphi^+}{\varphi^-}}_{U|M_\infty} \defeq \Efield_{M_\infty}\ctens\ex^{\range{\varphi^+}{\varphi^-}}_{U|M},
\]
where we used the natural embedding $\dereb_+(\field_M)\subset\dereb_+(\ifield_{M_\infty})$.
By the definitions, one has
\[
\opb\pi\field_U\tens\Ex^\varphi_{U|M_\infty}\simeq \Ex^\varphi_{U|M_\infty}, \quad
\opb\pi\field_U\tens\Ex^{\range{\varphi^+}{\varphi^-}}_{U|M_\infty}\simeq \Ex^{\range{\varphi^+}{\varphi^-}}_{U|M_\infty}.
\]

Since the functor $\Efield_{M_\infty}\ctens\ast$ is exact by \cite[Lemma 2.8.2]{DK16}, one has
\[
\Ex^\varphi_{U|M_\infty},\
\Ex^{\range{\varphi^+}{\varphi^-}}_{U|M_\infty} \in\dere^0_+(\ifield_{M_\infty}).
\]
Moreover, \eqref{eq:exphipsi}
induces an exact sequence in $\dere^0_+(\ifield_{M_\infty})$
\[
0 \to \Ex^{\range{\varphi^+}{\varphi^-}}_{U|M_\infty} \to
\Ex^{\varphi^+}_{U|M_\infty} \to
\Ex^{\varphi^-}_{U|M_\infty} \to 0.
\]
One has
\begin{align*}
\Ex^\varphi_{U|M} 
&\simeq \quot \bl \indlim[c\to+\infty] \field_{\{t+\varphi\geq c\}} \br, \\
\Ex^{\range{\varphi^+}{\varphi^-}}_{U|M} 
&\simeq \quot \bl \indlim[c,d\to+\infty] \field_{\{-\varphi^++c\leq t < -\varphi^-+d\}} \br.
\end{align*}

\begin{notation}
Let  $\varphi,\psi\colon U\to\R$ be continuous functions.
\begin{itemize}
\item[(i)]
$\psi\aleq{U}\varphi$ means that $\psi-\varphi$ is bounded from above on 
$K\cap U$ for any relatively compact subset $K$ of $M_\infty$.
\item[(ii)]
$\psi\asim U\varphi$ means that $\psi\aleq{U}\varphi$ and $\varphi\aleq{U}\psi$,
i.e.\ that $\psi-\varphi$ is bounded on
$K\cap U$ for any relatively compact subset $K$ of $M_\infty$.
\end{itemize}
\end{notation}

\begin{lemma}\label{lem:fgEx}
Let $\varphi,\psi\colon U\to\R$ be continuous
functions.
\bnum
\item For any $n\in\Z$, one has
$$H^n\fhom(\Ex^{\varphi}_{U|M},\Ex^{\psi}_{U|M})
\simeq\ilim[{c\to+\infty}]H^n\rhom(\cor_{\{\psi\le\vphi+c\}},\cor_M),$$
where $\{\psi\le\vphi+c\} \defeq \{x\in U\semicolon\psi(x)\le\vphi(x)+c\}$.
\item
Assume that $\hom(\cor_U,\cor_M)\simeq\cor_{\ol{U}}$.
Then one has
$$
H^0\fhom(\Ex^{\varphi}_{U|M},\Ex^{\psi}_{U|M})
\simeq\cor_T,$$
where
\[
T \defeq \{ x\in \ol{U} \semicolon
\psi\aleq{U\cap \Omega}\varphi \text{ for some open neighborhood $\Omega$ of $x$} \}.
\]
\ee
\end{lemma}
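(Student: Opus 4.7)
The plan is to reduce the computation to the classical case handled by Lemma~\ref{lem:fgex}, exploiting the description of morphisms in the ind-enhanced category as a filtered colimit. Specifically, I would first establish the reduction formula
\[
\fhom(\Ex^{\varphi}_{U|M},\Ex^{\psi}_{U|M})
\simeq \ilim[c\to+\infty]\fhom(\ex^{\varphi}_{U|M},\ex^{\psi-c}_{U|M}),
\]
where the right-hand $\fhom$ is the classical enhanced one. This is a sheaf-valued version of the $\Hom$-formula recalled in \S\ref{sse:introEIRc}, combined with the identification $\field_{\{t\ge c\}}\ctens\ex^{\psi}_{U|M}\simeq\ex^{\psi-c}_{U|M}$. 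Concretely, one unwinds $\LE\Ex^{\varphi}_{U|M}\simeq\indlim[c]\field_{\{t+\varphi\ge c\}}$, computes $\rihom$ in the ind-sheaf setting (which produces a projective limit of filtered colimits), and observes by translation invariance in the $t$ variable that the projective limit collapses.

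Taking $H^n$ of the reduction formula, which commutes with filtered colimits, and applying Lemma~\ref{lem:fgex}(i) to the pair $(\varphi,\psi-c)$, together with the identity $\{\psi-c\le\varphi\}=\{\psi\le\varphi+c\}$, yields part~(i). For part~(ii), I would specialize (i) to $n=0$ and invoke Lemma~\ref{lem:fgex}(ii), obtaining $H^0\fhom(\Ex^{\varphi}_{U|M},\Ex^{\psi}_{U|M})\simeq\ilim[c\to+\infty]\field_{S_c}$, where
\[
S_c\defeq\{x\in\ol{U}\semicolon\psi\le\varphi+c\text{ on }U\cap\Omega\text{ for some open neighborhood $\Omega$ of $x$}\}.
\]
Since $\{S_c\}_c$ is an increasing family of open subsets of $\ol U$, the colimit equals $\field_{\bigcup_c S_c}$, and it remains to identify $\bigcup_c S_c$ with $T$: after shrinking $\Omega$ to be relatively compact in $M_\infty$, the condition $\psi\aleq{U\cap\Omega}\varphi$ is equivalent to the existence of some $c\in\R$ with $\psi-\varphi\le c$ on $U\cap\Omega$.

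The main obstacle is the reduction formula itself. The ``simplified'' presentation of $\dereb_\Rc(\ind\field_{M_\infty})$ in \S\ref{sse:introEIRc} is phrased at the level of $\Hom$ between classical enhanced sheaves, whereas $\fhom$ takes values in $\BDC(\ifield_{M_\infty})$ and, after $\alpha$, in $\BDC(\field_M)$; bridging the two requires some care. A clean alternative is to apply Lemma~\ref{lem:stalk} to a shrinking system of open neighborhoods and then invoke the morphism formula at each level. Once the reduction formula is in place, the remainder is a routine combination of Lemma~\ref{lem:fgex} and standard filtered-colimit manipulations.
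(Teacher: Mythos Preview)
Your approach is essentially the same as the paper's: reduce to Lemma~\ref{lem:fgex} by expressing the target $\Ex^{\psi}_{U|M}$ as a filtered colimit over translations, then identify $\bigcup_c S_c = T$ for part~(ii). The difference is that the paper never states the derived-level ``reduction formula'' you propose. Instead, it works directly at the $H^n$ level and invokes \cite[Proposition 4.7.9]{DK16}, which says precisely that $H^n\fhom$ into $\field^\enh_M\ctens(\indlim_c\cdots)$ is the colimit of the individual $H^n\fhom$'s. This sidesteps your ``main obstacle'' entirely: there is no need to make sense of $\ilim_c$ in $\BDC(\field_M)$, and your suggested alternative via Lemma~\ref{lem:stalk} (which only computes stalks, not the sheaf) is unnecessary. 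Once the $H^n$-level colimit is in hand, the rest of your argument matches the paper's verbatim.
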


\begin{proof}
(i) We have
\begin{align*}
H^n\fhom{}&(\Ex^{\varphi}_{U|M},\Ex^{\psi}_{U|M}) \\
& \simeq H^n\fhom(\quot \field_{\{t+\varphi\geq 0\}} , \quot \bl \indlim[c\to+\infty] \field_{\{t+\psi\geq c\}} \br) \\
&\underset{(*)}\simeq \ilim[{c\to+\infty}]H^n\fhom(\quot\field_{\{t+\varphi\geq 0\}}, \quot\field_{\{t+\psi\geq c\}}) \\
&\underset{(**)}\simeq \ilim[{c\to+\infty}]H^n\rhom(\field_{\{\psi\le\vphi+c\}},\field_M),
\end{align*}
where $(*)$ follows from \cite[Proposition 4.7.9]{DK16} and $(**)$ from Lemma~\ref{lem:fgex}.

\medskip\noi
(ii) By (i) and Lemma~\ref{lem:fgex}~(ii), one has
\[
H^0\fhom(\Ex^{\varphi}_{U|M},\Ex^{\psi}_{U|M})
\simeq \ilim[{c\to+\infty}]\field_{S_c},
\]
where
\begin{align*}
S_c \defeq \{ x\in \ol{U} \semicolon{}&
\psi(y)\le\varphi(y)+c \text{ for any $y\in U\cap \Omega$ } \\
&\text{for some open neighborhood $\Omega$ of $x$} \}.
\end{align*}
The statement follows.
\end{proof}

The following result was stated in \cite[\S3.3]{DK16bis} in the subanalytic case.

\begin{corollary}\label{cor:fgEx}
Let $\varphi^+,\varphi^-\colon U\to\R$ be continuous functions such that $\varphi^-\le\varphi^+$. Then
$\Ex^{\range{\varphi^+}{\varphi^-}}_{U|M_\infty}\simeq 0$ if and only if 
$\varphi^+ \asim {U} \varphi^-$.
\end{corollary}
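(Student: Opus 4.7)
The plan is to reduce to the vanishing of an explicit ind-sheaf. By the short exact sequence
\[
0 \to \Ex^{\range{\varphi^+}{\varphi^-}}_{U|M_\infty} \to \Ex^{\varphi^+}_{U|M_\infty} \to \Ex^{\varphi^-}_{U|M_\infty} \to 0
\]
in $\dere^0_+(\ifield_{M_\infty})$, the vanishing $\Ex^{\range{\varphi^+}{\varphi^-}}_{U|M_\infty}\simeq 0$ is equivalent to the natural surjection $\Ex^{\varphi^+}_{U|M_\infty} \to \Ex^{\varphi^-}_{U|M_\infty}$ being an isomorphism. Using $\Ex^{\varphi^\pm}_{U|M_\infty} \simeq \quot(\indlim[c\to+\infty] \field_{\{t+\varphi^\pm\geq c\}})$, the kernel of this surjection is represented by the ind-system $\indlim[c]\field_{A_c}$, where
\[
A_c \defeq \{(x,t)\in U\times\R \semicolon -\varphi^+(x)+c\leq t<-\varphi^-(x)+c\},
\]
with structure maps $\field_{A_c}\to\field_{A_{c'}}$ (for $c\leq c'$) induced by the restrictions $\field_{\{t+\varphi^+\geq c\}}\to\field_{\{t+\varphi^+\geq c'\}}$.

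The key computation is fiberwise: over a point $x\in U$, both $A_c$ and $A_{c'}$ restrict to translates of the same half-open interval of length $\varphi^+(x)-\varphi^-(x)$, and the structure map is the natural morphism between their constant sheaves. A direct analysis using the short exact sequence $0\to\field_{[a,b)}\to\field_{[a,\infty)}\to\field_{[b,\infty)}\to 0$ and functoriality under translation shows that this fiberwise map vanishes if and only if the two intervals are disjoint, i.e., if and only if $c'-c\geq \varphi^+(x)-\varphi^-(x)$. Consequently, for a relatively compact open $V_0\subset M_\infty$, the map $\field_{A_c}|_{V_0\times\R}\to\field_{A_{c'}}|_{V_0\times\R}$ vanishes if and only if $c'-c\geq \sup_{V_0\cap U}(\varphi^+-\varphi^-)$.

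Both directions then follow from this criterion. For the ``if'' direction, assuming $\varphi^+\asim{U}\varphi^-$, each relatively compact open $V_0\subset M_\infty$ admits a finite bound $C_{V_0}\defeq\sup_{V_0\cap U}(\varphi^+-\varphi^-)$, so choosing $c'=c+C_{V_0}+1$ makes every structure map zero on $V_0$; the local vanishing of the ind-sheaf on each $V_0$ then yields $\Ex^{\range{\varphi^+}{\varphi^-}}_{U|M_\infty}\simeq 0$ by the stack property of $\dere^0_+(\ifield_{-})$. For the ``only if'' direction, vanishing of the enhanced ind-sheaf forces the structure maps of $\{\field_{A_c}\}$ to be eventually zero on each relatively compact piece of $M_\infty\times\R_\infty$, whence $\sup_{V_0\cap U}(\varphi^+-\varphi^-)<+\infty$ for every relatively compact $V_0$, giving $\varphi^+\asim{U}\varphi^-$.

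The main technical obstacle will be justifying rigorously the equivalence between the vanishing of $\Ex^{\range{\varphi^+}{\varphi^-}}_{U|M_\infty}$ in $\dereb_+(\ifield_{M_\infty})$ and the eventual vanishing of the structure maps of $\{\field_{A_c}\}$ on each relatively compact open of $M_\infty\times\R_\infty$; this requires careful handling of the bordered space ind-sheaf formalism and of the behavior of the quotient functor $\quot$ applied to ind-systems of constant sheaves on locally closed subsets, and in the non-vanishing case one must exhibit a persistent section of the ind-system from the unboundedness hypothesis.
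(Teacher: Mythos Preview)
Your approach is correct in outline but takes a different route from the paper's. The paper's proof is very short: it first reduces from the bordered space $M_\infty$ to the ambient good space $\cM$ via the natural morphism $j\colon M_\infty\to\cM$, using that $\Ex^{\range{\varphi^+}{\varphi^-}}_{U|\cM}\simeq\Eeeim j(\Ex^{\range{\varphi^+}{\varphi^-}}_{U|M_\infty})$ and $\Ex^{\range{\varphi^+}{\varphi^-}}_{U|M_\infty}\simeq\Eopb j(\Ex^{\range{\varphi^+}{\varphi^-}}_{U|\cM})$, so that vanishing on one side is equivalent to vanishing on the other. It then simply invokes Lemma~\ref{lem:fgEx} to obtain the equivalence on $\cM$; that lemma already packages the $\fhom$ computation needed to detect when $\Ex^{\varphi^+}_{U|\cM}\to\Ex^{\varphi^-}_{U|\cM}$ is an isomorphism.

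Your hands-on analysis of the ind-system $\{\field_{A_c}\}_c$ is a legitimate alternative, and your fiberwise criterion (the transition map $\field_{A_c}\to\field_{A_{c'}}$ vanishes over $x$ precisely when $c'-c\geq\varphi^+(x)-\varphi^-(x)$) is correct. Once one verifies that $\LE$ applied to $\Ex^{\range{\varphi^+}{\varphi^-}}_{U|M_\infty}$ really is $\indlim_c\field_{A_c}$ (so that $\quot$ loses no information here), the standard vanishing criterion for ind-objects yields exactly the local boundedness of $\varphi^+-\varphi^-$. The trade-off is that the paper's route hides all the ind-sheaf bookkeeping inside Lemma~\ref{lem:fgEx}, whereas your route makes the mechanism explicit but, as you yourself flag in the last paragraph, leaves several foundational checks (behavior of $\quot$ and $\LE$ on this ind-system, passage between local and global vanishing via the stack property) to be carried out.
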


\begin{proof}
By Lemma~\ref{lem:fgEx}, $\varphi^+ \asim {U} \varphi^-$ is equivalent to $\Ex^{\range{\varphi^+}{\varphi^-}}_{U|\cM}\simeq 0$. 

Let $j\colon M_\infty\to \cM$ be the natural morphism.
Then, one concludes by noticing that $\Ex^{\range{\varphi^+}{\varphi^-}}_{U|\cM}\simeq\Eeeim j(\Ex^{\range{\varphi^+}{\varphi^-}}_{U|M_\infty})$ and $\Ex^{\range{\varphi^+}{\varphi^-}}_{U|M_\infty}\simeq\Eopb j(\Ex^{\range{\varphi^+}{\varphi^-}}_{U|\cM})$.
\end{proof}

\subsection{$\R$-constructibility}

Assume now that $M_\infty=(M,\cM)$ is a subanalytic bordered space.
We say that a subset $A$ of $M$ is subanalytic in $M_\infty$ if $A$ is a subanalytic subset of $\cM$.

Let $U$ be an open subanalytic subset of $M_\infty$.
One says that a function $\varphi\colon U \to \R$ is globally subanalytic if its 
graph is subanalytic in $M_\infty\times\R_\infty$.

Let $\varphi,\varphi^+,\varphi^-\colon U\to\R$ be continuous and globally subanalytic functions with $\varphi^-\le\varphi^+$. Then
\begin{align*}
\ex^\varphi_{U|M},\ \ex^{\range{\varphi^+}{\varphi^-}}_{U|M} &\in \dere^0_\Rc(\field_{M_\infty}), \\
\Ex^\varphi_{U|M_\infty},\ \Ex^{\range{\varphi^+}{\varphi^-}}_{U|M_\infty} &\in \dere^0_\Rc(\ifield_{M_\infty}).
\end{align*}

As explained in \cite[Lemma 4.2.9]{DK16} and \cite[\S3.3]{DK16bis}, these are in fact the building blocks of $\R$-constructible enhanced (ind-)sheaves.

\section{Real blow-up}\label{sec:blow}

In this section $M$ is a smooth manifold, except in \S\ref{sse:RcSect} where it is a real analytic manifold.

\medskip 
We use here the real blow-up of a point on a manifold to describe morphisms of exponential enhanced \mbox{(ind-)}sheaves on small sectors.

\subsection{Notations}\label{sse:NotBlow}

Let $M$ be a smooth manifold of dimension $n\geq 1$, and let $a\in M$.
The total real blow-up 
\[
\varpi_a^\tot\colon \widetilde M_a^\tot \to M
\]
of $M$ along $a$ is the map of smooth manifolds locally defined as follows,
in local coordinates $(x_1,\dots,x_n)$ with $a=(0,\dots,0)$,
\begin{align*}
&\widetilde M_a^\tot = \{(\rho,\xi)\in \R\times\R^n \semicolon |\xi|=1 \}, \\
&\varpi_a^\tot\colon \widetilde M_a^\tot \to M, \quad (\rho,\xi) \mapsto \rho\xi.
\end{align*}
The real blow up $\tM_a$ of $M$ at $a$, is the closed subset $\{\rho\geq 0\}$ of $\widetilde M_a^\tot$.
Setting $\varpi_a = \varpi_a^\tot|_{\widetilde M_a}$, consider the commutative diagram
\begin{equation}
\label{eq:blow}
\xymatrix@R=1ex{
S_a M \ar@{^(->}[r]^-{\ti_a} & \widetilde M_a \ar[dd]_{\varpi_a} & \\
&& M\setminus\{a\}, \ar@{_(->}[dl]^-{j_a} \ar@{_(->}[ul]_-{\tj_a} \\
& M
}
\end{equation}
where $S_a M \defeq \varpi_a^{-1}(a)\simeq S^{n-1}$ is the sphere of tangent directions at $a$.

Let $\theta\in S_aM$ and $V\subset M$. One says that $V$ is a \emph{sectorial neighborhood} of $\theta$ if $V\subset M\setminus\{a\}$ and $S_a M\cup \tj_a (V)$ is a
neighborhood of $\theta$ in $\tM_a$. This is equivalent to saying that $V=\opb\tj_a(U)$ for some neighborhood $U$ of $\theta$ in
$\widetilde M_a$. 

For $\theta\in S_a X$, we will:
\begin{itemize}
\item
write for short $x\to\theta$ instead of $\tj_a(x)\to\theta$,
\item
write $\theta\dotin V$ to indicate that $V$ is a sectorial neighborhood of $\theta$,
\item
say that a property $P(x)$ holds at $\theta$, if there exists $V\dotowns\theta$ such that $P(x)$ holds for any $x\in V$.
\end{itemize}

One says that $U\subset M\setminus\{a\}$ is a sectorial neighborhood of $I\subset S_a X$ if $U\dotowns\theta$ for any $\theta\in I$.

\subsection{Nearby morphisms}

Consider the maps \eqref{eq:blow}.

\begin{lemmadefinition}\label{def:nuhom}
For $K_1,K_2\in\dereb_+(\ind\field_M)$, consider the object of $\BDC(\field_{S_a M})$
\begin{align*}
\nuhom[a](K_1,K_2) 
&\defeq \opb{\ti_a}\fhom(\Eopb\varpi_a(\opb\pi\field_{M\setminus\{a\}}\tens K_1),\Eepb\varpi_a K_2) \\
&\simeq \opb{\ti_a}\fhom(\Eopb\varpi_a(\opb\pi\field_{M\setminus\{a\}}\tens K_1),\Eopb\varpi_a K_2) .
\end{align*}
\end{lemmadefinition}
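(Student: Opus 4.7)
The plan is to prove the stronger statement that the two expressions agree already on $\widetilde M_a$, before pulling back by $\opb{\ti_a}$. Set $L \defeq \opb\pi\field_{M\setminus\{a\}}\tens K_1$ and $U \defeq \widetilde M_a \setminus S_aM$. The starting point is that $\opb{\varpi_a}\field_{M\setminus\{a\}} \simeq \field_U$ (extension by zero), which, by compatibility of $\Eopb\varpi_a$ with $\tens$ and with pullback of sheaves on the base through $\pi$, yields the ``supported on $U$'' identity
\[
\Eopb\varpi_a L \simeq \opb\pi\field_U \tens \Eopb\varpi_a K_1.
\]

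For any $G \in \dereb_+(\ind\field_{\widetilde M_a})$, I would apply $\fhom(\Eopb\varpi_a L, \dummy)$ to the distinguished triangle
\[
\opb\pi\field_U \tens G \to G \to \opb\pi\field_{S_aM} \tens G \to[+1].
\]
The crucial step will be the vanishing $\fhom(\Eopb\varpi_a L, \opb\pi\field_{S_aM} \tens G) \simeq 0$. Using the displayed identity for $\Eopb\varpi_a L$ and the tensor-hom adjunction $\fhom(\opb\pi\field_U \tens A, B) \simeq \fhom(A, \rihom(\opb\pi\field_U, B))$ (which is available via $\opb\pi\field_U\tens K \simeq \epsilon(\field_U)\ctens K$ and $\rihom(\opb\pi\field_U,K) \simeq \cihom(\epsilon(\field_U),K)$ as recalled in Section~\ref{sec:not}), this reduces to showing $\rihom(\opb\pi\field_U, \opb\pi\field_{S_aM}\tens G) \simeq 0$. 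That vanishing is the enhanced ind-sheaf analogue of the classical $\rhom(j_!\field_U, i_*H)\simeq 0$: setting $j \defeq j_U\times\id_\R$ and $i \defeq \ti_a\times\id_\R$, we have $\opb\pi\field_U \simeq j_!\field_{U\times\R}$ and $\opb\pi\field_{S_aM}\tens G \simeq i_*(\opb i G)$, so the $j_!$-$j^!$ adjunction and $\opb j i_* \simeq 0$ (disjoint closed/open strata) force the result. This gives $\fhom(\Eopb\varpi_a L, G)\simeq \fhom(\Eopb\varpi_a L, \opb\pi\field_U\tens G)$ for every $G$.

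Finally, apply this identity to $G = \Eopb\varpi_a K_2$ and to $G = \Eepb\varpi_a K_2$. Since $\varpi_a|_U$ equals the homeomorphism $\opb{\tj_a}$, so in particular an open embedding, $\Eopb\varpi_a K_2$ and $\Eepb\varpi_a K_2$ have canonically isomorphic restrictions to $U\times\R$, and hence
\[
\opb\pi\field_U \tens \Eopb\varpi_a K_2 \simeq \opb\pi\field_U \tens \Eepb\varpi_a K_2.
\]
Combining the three isomorphisms gives $\fhom(\Eopb\varpi_a L, \Eepb\varpi_a K_2) \simeq \fhom(\Eopb\varpi_a L, \Eopb\varpi_a K_2)$ already on $\widetilde M_a$, and a fortiori after applying $\opb{\ti_a}$. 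The main technical hurdle I anticipate is carrying out the tensor-hom adjunction and the vanishing of $\rihom$ rigorously through the definition $\fhom = \alpha\roim\pi\rihom(\LE(\dummy),\RE(\dummy))$, but the standard formalism of enhanced ind-sheaves makes this essentially formal.
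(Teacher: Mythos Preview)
Your proposal is correct and lands on the same core fact as the paper: after pulling $\opb\pi\field_{M\setminus\{a\}}$ through $\Eopb\varpi_a$, the first argument of $\fhom$ is supported on $U=\tj_a(M\setminus\{a\})$, and over $U$ the map $\varpi_a$ is a homeomorphism so $\Eopb\varpi_a K_2$ and $\Eepb\varpi_a K_2$ agree there.

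The only difference is packaging. The paper moves $\opb\pi\field_U$ to the second slot once, via the tensor--hom adjunction, and then observes directly that
\[
\rihom(\opb\pi\field_U,\Eepb\varpi_a K_2)\simeq\rihom(\opb\pi\field_U,\Eopb\varpi_a K_2),
\]
since $\rihom(\opb\pi\field_U,\dummy)$ depends only on the restriction to $U$. You instead run the open--closed triangle $\opb\pi\field_U\tens G\to G\to\opb\pi\field_{S_aM}\tens G$, use adjunction to kill the closed piece, and then invoke $\opb\pi\field_U\tens\Eopb\varpi_a K_2\simeq\opb\pi\field_U\tens\Eepb\varpi_a K_2$. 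Both routes are valid; the paper's is one step shorter because it bypasses the triangle and the vanishing argument entirely.
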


\begin{proof}
One has
\begin{align*}
\fhom(&\Eopb\varpi_a(\opb\pi\field_{M\setminus\{a\}}\tens K_1),\Eepb\varpi_a K_2) \\
&\simeq \fhom(\opb\pi\field_{\tj_a(M\setminus\{a\})}\tens \Eopb\varpi_a K_1,\Eepb\varpi_a K_2) \\
&\simeq \fhom(\Eopb\varpi_a K_1,\rihom(\opb\pi\field_{\tj_a(M\setminus\{a\})},\Eepb\varpi_a K_2)),
\end{align*}
and similarly for $\Eepb\varpi_a K_2$ replaced with $\Eopb\varpi_a K_2$.
It is then enough to remark that
\[
\rihom(\opb\pi\field_{\tj_a(M\setminus\{a\})},\Eepb\varpi_a K_2)
\simeq
\rihom(\opb\pi\field_{\tj_a(M\setminus\{a\})},\Eopb\varpi_a K_2).
\]
\end{proof}

With the above notations, 
Lemma~\ref{lem:stalk} implies

\begin{lemma}
\label{lem:nustalk}
For $K_1,K_2\in\dereb_+(\ind\field_M)$ and $\theta\in S_aX$, one has
\[
\nuhom[a](K_1,K_2)_\theta \simeq
\ilim[V\dotowns\theta] \FHom(\opb\pi\field_V\tens K_1,\opb\pi\field_V\tens K_2).
\]
\end{lemma}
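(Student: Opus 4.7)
Since $\ti_a$ is a closed embedding, the stalk at $\theta$ of $\nuhom[a](K_1,K_2) = \opb\ti_a\fhom(K_1', K_2')$ equals $\fhom(K_1', K_2')_\theta$ computed on $\widetilde M_a$, where $K_1' = \Eopb\varpi_a(\opb\pi\field_{M\setminus\{a\}}\tens K_1)$ and $K_2' = \Eopb\varpi_a K_2$. Fix a cofinal decreasing sequence $\{U_i\}$ of relatively compact open neighborhoods of $\theta$ in $\widetilde M_a$ with $U_{i+1}\ssubset U_i$, and set $V_i = \opb\tj_a(U_i)$; then the $V_i$ form a cofinal system of sectorial neighborhoods of $\theta$. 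Rewriting the stalk as $\ilim[i]\rsect(U_i;\fhom(K_1',K_2'))$ and applying Lemma~\ref{lem:stalk} on $\widetilde M_a$ yields
\begin{equation*}
\nuhom[a](K_1,K_2)_\theta \simeq \ilim[i] \FHom(\opb\pi\field_{U_i}\tens K_1', \opb\pi\field_{U_i}\tens K_2'),
\end{equation*}
the hom taken on $\widetilde M_a$.

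To transfer this to $M$, note that $\Eopb\varpi_a$ commutes with $\opb\pi\tens$ and that $\varpi_a^{-1}(M\setminus\{a\}) = \tj_a(M\setminus\{a\})$, which gives $\opb\pi\field_{U_i}\tens K_1' = \opb\pi\field_{\tj_a(V_i)}\tens \Eopb\varpi_a K_1 = \Eopb\varpi_a(\opb\pi\field_{V_i}\tens K_1)$. For the target, the exact sequence $0\to\field_{\tj_a(V_i)}\to\field_{U_i}\to\field_{U_i\cap S_aM}\to 0$ on $\widetilde M_a$ (with $\tj_a(V_i)$ open in $U_i$ and $U_i\cap S_aM$ its closed complement) induces a distinguished triangle relating $\opb\pi\field_{U_i}\tens K_2'$ to $\opb\pi\field_{\tj_a(V_i)}\tens K_2' \simeq \Eopb\varpi_a(\opb\pi\field_{V_i}\tens K_2)$ and $\opb\pi\field_{U_i\cap S_aM}\tens K_2'$. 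The plan is to apply $\FHom(\opb\pi\field_{U_i}\tens K_1',-)$ and argue that the contribution from the third term vanishes by a disjoint-support argument: the source $\opb\pi\field_{U_i}\tens K_1'$ is concentrated over $\tj_a(V_i)\subset\{\rho>0\}$ while $\opb\pi\field_{U_i\cap S_aM}\tens K_2'$ is concentrated over $U_i\cap S_aM\subset\{\rho=0\}$. This will give
\begin{equation*}
\FHom(\opb\pi\field_{U_i}\tens K_1', \opb\pi\field_{U_i}\tens K_2') \simeq \FHom(\Eopb\varpi_a(\opb\pi\field_{V_i}\tens K_1), \Eopb\varpi_a(\opb\pi\field_{V_i}\tens K_2)).
\end{equation*}

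Since $\varpi_a$ restricts to a diffeomorphism $\tj_a(V_i)\to V_i$, and both arguments of the last $\FHom$ are concentrated over $\tj_a(V_i)\times\R$, this hom coincides with $\FHom(\opb\pi\field_{V_i}\tens K_1, \opb\pi\field_{V_i}\tens K_2)$ computed on $M$. Passing to the inductive limit over $i$, which is cofinal in the filtered family of all sectorial neighborhoods $V\dotowns\theta$, completes the proof. The main technical point to verify is the disjoint-support vanishing at the second step; once this is granted, the rest is a careful unravelling of the definitions of the enhanced operations together with the fact that the real blow-up $\varpi_a$ is an isomorphism away from its exceptional fiber $S_aM$.
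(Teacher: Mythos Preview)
Your argument is correct and matches the paper's intent; the paper gives no proof beyond stating that Lemma~\ref{lem:stalk} implies the result. The disjoint-support vanishing you flag as the main technical point does hold: since $\tj_a(V_i)$ is open, the identity $\FHom(\opb\pi\field_W\tens A, B) \simeq \FHom(\opb\pi\field_W\tens A, \opb\pi\field_W\tens B)$ for $W$ open (which is precisely the last isomorphism in the proof of Lemma~\ref{lem:stalk}) applied with $W=\tj_a(V_i)$ yields the vanishing immediately, and in fact lets you bypass the distinguished triangle altogether by passing directly from $\opb\pi\field_{U_i}\tens K_2'$ to $\opb\pi\field_{\tj_a(V_i)}\tens K_2' \simeq \Eopb\varpi_a(\opb\pi\field_{V_i}\tens K_2)$.
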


\subsection{Comparing exponentials on small sectors}\label{sse:sectmorph}

Let $U\subset M$ be an open subset, and $\varphi\colon U \to \R$ a continuous function. 
For $V\subset U$ open, let us set for short
\[
\ex^{\varphi}_{V|M} \defeq \ex^{\varphi|_V}_{V|M} \simeq  \opb\pi\field_V\tens\ex^{\varphi}_{U|M},
\]
and similarly for $\Ex^{\varphi}_{V|M}$.

\begin{lemma}\label{lem:Ephi12}
Let $a\in M$, $\theta\in S_a M$, and $U\subset M$ an open sectorial neighborhood of $\theta$.
Let $\varphi, \psi\colon U \to \R$ continuous functions. 
\begin{itemize}
\item[(i)]
If $\psi-\varphi$ is bounded from above at $\theta$, then
\[
\nuhom[a]\bl\Ex^{\varphi}_{U|M}, \Ex^{\psi}_{U|M}\br_\theta \simeq\field.
\]
\item[(ii)]
If $\psi \leq \varphi$ at $\theta$, then
\[
\field \simeq \nuhom[a]\bl\ex^{\varphi}_{U|M}, \ex^{\psi}_{U|M}\br_\theta
\isoto \nuhom[a]\bl\Ex^{\varphi}_{U|M}, \Ex^{\psi}_{U|M}\br_\theta,
\]
where the last isomorphism is induced by $\field^\enh_M\ctens\ast$.
\item[(iii)]
If $\lim\limits_{x\to \theta}\bl\psi(x)-\varphi(x)\br = +\infty$, then
\[
\nuhom[a]\bl\ex^{\varphi}_{U|M}, \ex^{\psi}_{U|M}\br_\theta \simeq 0 \simeq \nuhom[a]\bl\Ex^{\varphi}_{U|M}, \Ex^{\psi}_{U|M}\br_\theta.
\]
\end{itemize}
\end{lemma}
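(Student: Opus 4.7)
The plan is to reduce all three parts, via Lemma~\ref{lem:nustalk}, to computing the filtered colimits
\[
\ilim[V \dotowns \theta] \FHom(\opb\pi\field_V \tens K_1, \opb\pi\field_V \tens K_2)
\]
over sectorial neighborhoods $V$ of $\theta$. I work throughout with a cofinal family of contractible open sectors $V \subset U$; for such $V$, $\opb\pi\field_V \tens \ex^\varphi_{U|M} \simeq \ex^\varphi_{V|M}$ and analogously for $\Ex^\varphi_{U|M}$, so Lemmas~\ref{lem:fgex}(i) and~\ref{lem:fgEx}(i), applied with $U$ replaced by $V$, reduce every term of the colimit to an expression of the form $\RHom(\field_{\{\psi\le\varphi+c\}\cap V}, \field_M)$, possibly itself a filtered colimit in $c$.

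For part~(i), the hypothesis gives $V_0 \dotowns \theta$ and $c_0 \in \R$ with $\psi \le \varphi + c_0$ on $V_0$. For any contractible $V \subset V_0$ in the cofinal system and any $c \ge c_0$, $\{\psi \le \varphi + c\} \cap V = V$; Lemma~\ref{lem:fgEx}(i) then gives $H^n\FHom(\Ex^\varphi_{V|M}, \Ex^\psi_{V|M}) \simeq H^n(V; \field)$, which is $\field$ in degree $0$ and zero otherwise, and the transition maps as $V$ shrinks are identities on $\field$, so the colimit is $\field$. Part~(ii) follows by the same computation using Lemma~\ref{lem:fgex}(i) under the hypothesis $\psi \le \varphi$ at $\theta$, yielding $\FHom(\ex^\varphi_{V|M}, \ex^\psi_{V|M}) \simeq \RHom(\field_V, \field_M) \simeq \field$. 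The comparison map to the ind-sheaf version, induced by $\field^\enh_M \ctens (-)$, corresponds at the $V$-level to the injection of the $c = 0$ term of Lemma~\ref{lem:fgEx}(i) into its filtered colimit over $c$; since that colimit already stabilizes at $\RHom(\field_V, \field_M)$ for all $c \ge 0$, the map is an isomorphism.

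For part~(iii), I verify vanishing in each cohomological degree by element-chasing. Any class in $H^n\nuhom[a](K_1, K_2)_\theta$ is represented at some pair $(V, c_0)$ (with $c_0 = 0$ in the sheaf version) by $\xi \in H^n\RHom(\field_{\{\psi\le\varphi+c_0\}\cap V}, \field_M)$. The hypothesis $\lim_{x \to \theta}(\psi - \varphi) = +\infty$ supplies $V_{c_0} \dotowns \theta$ on which $\psi > \varphi + c_0$; refining $V$ to any $V' \subset V \cap V_{c_0}$ in the cofinal system makes $\{\psi \le \varphi + c_0\} \cap V' = \emptyset$, so $\xi$ maps to $H^n\RHom(0, \field_M) = 0$. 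Hence the colimit vanishes.

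The main technical nuisance is the bookkeeping for the two filtered colimits (over $V$ shrinking to $\theta$ and over $c \to +\infty$) that appear in the enhanced ind-sheaf statements of (i) and (iii). Since both are filtered, they commute and may be treated as a single joint colimit; the element-chasing argument above then handles the vanishing uniformly, while for (i) the simultaneous stabilization $\{\psi \le \varphi + c\} \cap V = V$ for $V \subset V_0$ and $c \ge c_0$ makes the colimit essentially constant.
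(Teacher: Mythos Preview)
Your argument is correct and follows essentially the same route as the paper's own proof: reduce via Lemma~\ref{lem:nustalk} to a filtered colimit of $\FHom$'s over shrinking contractible sectors, apply Lemmas~\ref{lem:fgex}(i) and~\ref{lem:fgEx}(i) with $U$ replaced by $V$, and observe that the relevant sublevel set $V\cap\{\psi\le\varphi+c\}$ is either all of $V$ or empty under the respective hypotheses. Your treatment of the comparison map in~(ii), identifying it with the inclusion of the $c=0$ term into a colimit that has already stabilized, is slightly more explicit than the paper, which simply computes both sides separately.
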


\begin{proof}
(a) Let us first prove the statements concerning $\Ex^{\varphi}_{U|M}$ and $\Ex^{\psi}_{U|M}$, namely
\[
\ilim[V\dotowns\theta]\FHom(\Ex^{\varphi}_{V|M}, \Ex^{\psi}_{V|M}) \simeq
\begin{cases}
\field & \text{if {\rm (i)} holds,} \\
0 & \text{if {\rm (iii)} holds.}
\end{cases}
\]

By Lemma~\ref{lem:fgEx}, one has
\[
\ilim[V\dotowns\theta]\FHom(\Ex^{\varphi}_{V|M}, \Ex^{\psi}_{V|M}) \simeq
\ilim[V\dotowns\theta,\ {c\to+\infty}]\RHom(\cor_{V\cap\{\psi-\vphi\le c\}},\cor_M).
\]

\medskip\noindent(a-1) Under hypothesis (i), there exist $C>0$ and an open sectorial neighborhood $V_\circ\subset U$ 
of $\theta$ such
that $\psi(x)-\varphi(x)\leq C$ for any $x\in V_\circ$. Then
\[
V\cap\{\psi-\varphi\leq c \} = V
\]
if $V\subset V_\circ$ and $c\geq C$. 
Up to shrinking, we may further assume that $V$ is contractible.
Under these conditions, one has
\[
\RHom(\cor_{V\cap\{\psi-\vphi\le c\}},\cor_M)\simeq
\RHom(\field_V,\field_M)\simeq
\rsect(V; \field_V)\simeq\field.
\]

\medskip\noindent(a-2) Assume the condition in (iii).
Then, for any $c>0$ there exists an open subset $V_c\subset U$ with $\theta\dotin V_c$ such that $V_c\cap\{\psi-\varphi\leq c\}=\emptyset$. Hence, for $V\subset V_c$ one has
\[
\RHom(\cor_{V\cap\{\psi-\vphi\le c\}},\cor_M)\simeq0.
\]

\medskip\noindent(b) The statements concerning $\ex^{\varphi}_{U|M}$ and $\ex^{\psi}_{U|M}$, namely
\[
\ilim[V\dotowns\theta,\ V\subset U]\FHom(\ex^{\varphi}_{V|M}, \ex^{\psi}_{V|M}) \simeq
\begin{cases}
\field & \text{if {\rm (ii)} holds,} \\
0 & \text{if {\rm (iii)} holds,}
\end{cases}
\] 
are proved as in (a) above, using Lemma~\ref{lem:fgex} instead of Lemma~\ref{lem:fgEx}.
\end{proof}

\subsection{$\R$-constructible enhanced sheaves on generic sectors}\label{sse:RcSect}

Let $M$ be a real analytic smooth surface, and recall the diagram \eqref{eq:blow}.

We say that a statement $P(\theta)$ on $\theta\in S_aM$ holds for \emph{generic} $\theta$ if it holds for $\theta$ outside a finite subset of $S_aM$.

\begin{lemma}\label{lem:RcStruct}
Let  $M$ be a real analytic smooth surface, and let $F\in\dereb_\Rc(\field_M)$. Then, for generic $\theta\in S_aM$,  there exists a subanalytic open subset $V\subset M$ such that $\theta\dotin V$ and
\begin{equation}
\label{eq:VG}
\opb\pi\field_V \tens F \simeq
\bl \DSum_{i\in I} \ex_{V|M}^{\varphi_i}[d_i] \br
\oplus
\bl \DSum_{j\in J} \ex_{V|M}^{\range{\varphi_j^+}{\varphi_j^-}}[d_j] \br.
\end{equation}
Here, $I, J$ are finite sets,
$d_i,d_j\in\Z$, and
$\varphi_i,\varphi^\pm_j\colon V\to\R$ are
analytic and globally subanalytic functions,
such that $\varphi^-_j(x) < \varphi^+_j(x)$ for any $x\in V$.
\end{lemma}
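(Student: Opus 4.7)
The plan is to reduce to the global structure theorem for $\R$-constructible enhanced sheaves (the analogue for general bordered spaces of \cite[Lemma~4.2.9]{DK16}, recalled in \S\ref{sse:EIRc}), and then to use a dimension count on the exceptional circle of the real blow-up $\varpi_a\colon \tM_a\to M$ to discard finitely many bad directions in $S_aM$.

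First I would apply the structure theorem to $F$ to obtain a subanalytic stratification $\{M_\al\}$ of $M$ such that, on each open two-dimensional stratum $M_\al$, the restriction $F|_{M_\al}$ decomposes as a finite direct sum of shifts of objects $\ex^{\vphi_{\al,i}}_{M_\al|M}$ and $\ex^{\range{\vphi^+_{\al,j}}{\vphi^-_{\al,j}}}_{M_\al|M}$, with continuous globally subanalytic phases $\vphi_{\al,i}, \vphi^\pm_{\al,j}\colon M_\al\to\R$. Using the classical fact that a continuous globally subanalytic function is real analytic outside a closed subanalytic subset of codimension at least one, I would then refine the stratification so that every such phase is real analytic on every two-dimensional stratum on which it is defined.

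Next, I would pull the refined stratification back to $\tM_a$ via $\varpi_a$ and refine once more so that $S_aM=\varpi_a^{-1}(a)$ is a union of strata. Since $\tM_a$ has dimension two and $S_aM\simeq S^1$ has dimension one, subanalyticity forces the set $\Sigma\subset S_aM$ of zero-dimensional strata meeting $S_aM$ to be \emph{finite}; these are the bad directions to be excluded. For $\theta\in S_aM\setminus\Sigma$, I would pick an open sectorial neighborhood $W\subset\tM_a$ of $\theta$ entirely contained in a single two-dimensional stratum of the refined stratification of $\tM_a$, and set $V\defeq\opb{\tj_a}(W)$. Then $V$ is a subanalytic open sectorial neighborhood of $\theta$, contained in a single $M_\al$, on which all phases appearing in the decomposition of $F|_{M_\al}$ are analytic.

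Finally, tensoring the decomposition of $F|_{M_\al}$ with $\opb\pi\cor_V$ and using the tautological identity $\opb\pi\cor_V\tens\ex^{\vphi}_{M_\al|M}\simeq\ex^{\vphi|_V}_{V|M}$ (and its $\range{\vphi^+}{\vphi^-}$ analogue recalled in \S\ref{sse:sectmorph}) would yield the decomposition~\eqref{eq:VG}. The main non-routine point is the finiteness of $\Sigma$: it crucially uses $\dim M=2$, so that $S_aM$ is one-dimensional and any subanalytic stratification of $\tM_a$ meets it in only finitely many isolated singular points. In higher dimensions the analogous statement would fail and one would need more delicate microlocal input to describe how the strata accumulate on $S_aM$.
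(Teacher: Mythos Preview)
Your approach is essentially the same as the paper's: invoke the global structure theorem for $\R$-constructible enhanced sheaves to get a decomposition over the open two-dimensional strata, refine so that the phases become real analytic, and then use a dimension count on $S_aM$ in the blow-up to see that only finitely many directions are bad. The paper cites \cite[Lemma~4.9.9]{DK16} (equivalently \cite[Remark~3.5.12]{DK16bis}) rather than \cite[Lemma~4.2.9]{DK16}, but this is the same underlying result packaged slightly differently.

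There is one genuine omission, however. The statement requires the \emph{strict} inequality $\varphi^-_j(x)<\varphi^+_j(x)$ for all $x\in V$, whereas the structure theorem only produces phases with $\varphi^-_j\le\varphi^+_j$. You never address this. The paper handles it explicitly: after making the phases analytic, it discards those indices $j$ for which $\varphi^+_j-\varphi^-_j\equiv 0$ on $V$ (since then $\ex_{V|M}^{\range{\varphi_j^+}{\varphi_j^-}}=0$), and for the remaining $j$'s it further removes from $V$ the zero locus of the analytic function $\varphi^+_j-\varphi^-_j$. This additional shrinking again only enlarges the exceptional set $Z$ by a subanalytic subset of dimension $\le 1$, so the finiteness argument on $S_aM$ still goes through. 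You should insert this step between your analyticity refinement and your passage to the blow-up.
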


A similar statement holds for $K\in\dereb_\Rc(\ifield_M)$, replacing $\ex$ with $\Ex$.

\begin{proof}
By \cite[Lemma 4.9.9]{DK16} (see also \cite[Remark 3.5.12]{DK16bis}), there exists an open subanalytic subset $U\subset M\setminus\{a\}$ such that $Z\defeq M\setminus U$ has dimension $\leq1$ and the isomorphism \eqref{eq:VG} holds for any connected component $V$ of $U$.
However, in loc.\ cit.\ the functions $\varphi_i,\varphi^\pm_j$ are not supposed to be analytic, and only satisfy the weak inequalities $\varphi^-_j \leq \varphi^+_j$. 

We can assume that the functions $\varphi_i,\varphi^\pm_j$ are analytic, after removing from $V$ their singular loci. Then, we can remove those indices $j\in J$ for which $\varphi^-_j - \varphi^+_j\equiv 0$ on $V$. Moreover, we can remove from $V$ the zero loci of $\varphi^-_j - \varphi^+_j$ for the remaining indices $j\in J$.
This shrinking of the connected components of $U$ leaves $Z$ of dimension $\leq1$.

To conclude, note that $\theta\dotin U$ if $\theta\in S_aM$ is outside the finite set
$S_aM\cap\overline{\bl\opb\varpi_a Z\setminus S_aM\br}$.
Hence, $\theta\dotin V$ for some connected component $V$ of $U$.
\end{proof}

\section{Enhanced normal form}\label{sec:normal}

In this section, $X$ denotes a smooth complex analytic curve.

\medskip

As recalled in \S\ref{sse:LT}, the Hukuhara-Levelt-Turrittin theorem describes the formal and asymptotic structure of holonomic $\D$-modules.
We discuss here an analogous condition for enhanced ind-sheaves.

\subsection{Puiseux germs}

Let $X$ be a smooth complex analytic curve, and take $a\in X$.
Recall from Section~\ref{sse:NotBlow} the notations relative to the real blow up $\tX_a$ of $X$ at $a$, i.e.\ the blow-up of the smooth real analytic surface underlying $X$:
\begin{equation}
\label{eq:IStX}
\xymatrix@R=1ex{
S_a X \ar@{^(->}[r]^-{\ti_a} & \widetilde X_a \ar[dd]_{\varpi_a} & \\
&& X\setminus\{a\}. \ar@{_(->}[dl]^-{j_a} \ar@{_(->}[ul]_-{\tj_a} \\
& X
}
\end{equation}
Here, $S_a X\simeq S^1$ is the circle of tangent directions at $a$.

A local coordinate $z_a$ at $a$ is a holomorphic function $z_a$ defined on a neighborhood of $a$ such that $z_a(a)=0$ and $(d z_a)(a)\neq0$.

\begin{definition}\label{def:Puis} \hfill
\begin{itemize}
\item[(i)]
Let $\theta\in S_aX$ and $U\dotowns\theta$. We say that $f\in\O_X(U)$ \emph{admits a
Puiseux expansion} at $\theta$ if there exist $\ram\in\Z_{> 0}$, a local coordinate $z_a$ at $a$, an open subset
$V\subset U$ with $\theta\dotin V$, and a determination of $z_a^{1/\ram}$ on $V$, such
that $f(x)=h(z_a(x)^{1/\ram})$ for $x\in V$, where $h$ is a section
of $\O_\C(*0)$ in a neighborhood of $0$.
\item[(ii)]
We denote by $\shp_{\widetilde X_a}$ the subsheaf of $\oim{{\tj_a}}\opb j_a\O_X$ whose
sections on $U\subset \widetilde X_a$ are holomorphic functions on
$\opb{\tj_a} U$ admitting a Puiseux expansion at any point of
$U\cap S_aX$.
\item[(iii)]
The sheaf $\shp_{S_aX}\defeq\opb{\ti_a}\shp_{\widetilde X_a}$ is called the
\emph{sheaf of Puiseux germs} on $S_aX$. It is a locally constant sheaf.
\item[(iv)]
For $\lambda\in\Q$, let $\shp^{\le\lambda}_{a|X}\subset\shp_{a|X}$ be the subsheaf of sections of \emph{pole order} $\le\lambda$, i.e.\ of sections that locally belong to
\[
\Union_{\ram\in\Z_{\geq 1}}z_a^{-\lambda}\C\{z_a^{1/\ram}\}
\]
for some (hence, any)  local coordinate $z_a$ at $a$, and some (hence, any) determination of $z_a^{1/\ram}$ at $\theta$.
\item[(v)]
Set $\oshp_{S_aX} \defeq \shp_{S_aX}/\shp^{\leq 0}_{S_aX}$
and, for $f\in\shp_{S_aX}$, denote by $[f]$ its image in $\oshp_{S_aX}$.
\item[(vi)]
For $\lambda\in\Q$, let $\shp^{\lambda}_{S_aX}\subset\shp_{S_aX}$ be the
subsheaf of sections of \emph{pole order} $\lambda$, i.e.\ of sections that
locally at any $\theta\in S_aX$ belong to
\[
\Union_{\ram\in\Z_{\geq 1}}z_a^{-\lambda}\bigl(\C^\times+z_a^{1/\ram}\C\{z_a^{1/\ram}\}\bigr),
\]
for some (hence, any)  local coordinate $z_a$ at $a$, and some (hence, any) determination of $z_a^{1/\ram}$ at $\theta$.
\item[(vii)]
Let $I\subset S_aX$ be an open connected subset.
For non-zero $f\in\shp_{S_aX}(I)$ we set $\ord_a(f) \defeq \lambda$, where $\lambda$ is the unique rational such that $f\in\shp^{\lambda}_{S_aX,\theta}$ for some (hence, any) $\theta\in I$. It is called the \emph{pole order} of $f$. We set $\ord_a(0)=-\infty$.
\item[(viii)]
For $K\subset\R$ an interval, set $\shp_{S_aX}^K \defeq \coprod_{\lambda\in
K\cap\Q}\shp_{S_aX}^\lambda$.
\end{itemize}
\end{definition}

The sheaves $\shp_{S_aX}$, and $\shp^{\leq\lambda}_{S_aX}$ are locally constant sheaves on $S_aX$,
and one has
\[
\sect\bl S_aX ;\shp_{S_aX}\br \simeq \bigl(\O_X(*a)\bigr)_a,
\quad
\sect\bl S_aX ;\shp_{S_aX}^{\leq 0}\br \simeq (\O_X)_a.
\]
The sheaf $\shp^\lambda_{S_aX}$ is a locally constant subsheaf of $\shp_{S_aX}$. 

\begin{definition}\label{def:wellsep}
Let $\theta\in S_aX$ and $\Phi\subset\shp_{S_aX,\theta}$. One says that $\Phi$ is well separated if for any $f,h\in\Phi$
\begin{enumerate}
\item $[f]=0$ 
implies $f = 0$,
\item $[f] = [h]$
implies $f = h$.
\end{enumerate}
In particular, this implies that there is a natural bijection between $\Phi$ and its class $\overline\Phi\subset\oshp_{S_aX,\theta}$.
\end{definition}

\begin{definition}\label{def:wellrep}
A subsheaf $\shp_{S_aX}'\subset\shp_{S_aX}$ of $\C$-vector spaces is called representative if
the quotient map $f\mapsto[f]$ induces an isomorphism  $\shp_{S_aX}'\isoto\overline\shp_{S_aX}$.
\end{definition}

For example, for a choice of local coordinate $z_a$ at $a$, let $\shp_{S_aX}'\subset\shp_{S_aX}$ be the subsheaf of sections that belong to
\[
\Union_{\ram\in\Z_{\geq 1}}z_a^{-1/p}\C[z^{-1/p}]
\]
for some (hence, any) local determination of $z_a^{1/\ram}$.
Then, $\shp_{S_aX}'$ is representative.

Note that if $\shp_{S_aX}'\subset\shp_{S_aX}$ is a representative subsheaf, then it is a locally constant sheaf, and its stalks are well separated.

\begin{notation}\label{not:aleq}
Let $\theta\in S_aX$ and $f,h\in\shp_{S_aX,\theta}$.
\begin{itemize}
\item[(i)]
$h \asim\theta f$ means that $\Re(h-f)$ is bounded at $\theta$.
\item[(ii)]
$h \aleq\theta f$ means that $\Re(h-f)$ is bounded from above at $\theta$,
\item[(iii)]
$h \aless\theta f$ means that $h \aleq\theta f$ and $h
\not\asim\theta f$.
\end{itemize}
\end{notation}

Note that all of these conditions are open in $\theta$.

\begin{lemma}
Let $f,h\in\shp_{S_aX}(I)$, with $I\subset S_aX$ an open connected subset.
Then the following are equivalent
\begin{itemize}
\item[(i)] $f \asim\theta h$ for some $\theta\in I$,
\item[(ii)] $f \asim\theta h$ for any $\theta\in I$,
\item[(iii)] $h-f\in\shp_{S_aX}^{\le0}(I)$.
\end{itemize}
In other words, $[f]=[h]$ in $\oshp_{S_aX}(I)$ if and only if $h-f$ is bounded at $\theta$ for some {\rm(}hence, any{\rm)} $\theta\in I$.
\end{lemma}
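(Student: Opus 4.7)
The plan is to set $g\defeq h-f$ and recast the three conditions as statements about $g$: condition (i) becomes the boundedness of $\Re g$ on a sectorial neighborhood of some $\theta\in I$, (ii) the boundedness on a sectorial neighborhood of every $\theta\in I$, and (iii) the membership $g\in\shp_{S_aX}^{\le 0}(I)$. The implication (ii)$\Rightarrow$(i) is then immediate, and for (iii)$\Rightarrow$(ii) I would simply invoke Definition~\ref{def:Puis}(iv): a section of $\shp_{S_aX}^{\le 0}$ is locally a convergent power series in $z_a^{1/\ram}$ with only nonnegative exponents, hence bounded near $a$, so $\Re g$ is bounded on a sectorial neighborhood of every $\theta\in I$.

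The substantive implication is (i)$\Rightarrow$(iii). Fixing $\theta$ at which $\Re g$ is bounded on a sectorial neighborhood $V$, I would choose a local coordinate $z_a$, a ramification index $\ram$, and a determination of $z_a^{1/\ram}$ on $V$, and consider the Puiseux expansion
\[
g(x)=\sum_{k\ge k_0}c_k\, z_a(x)^{k/\ram}, \qquad c_{k_0}\neq 0.
\]
The aim is to show $k_0\ge 0$. I would argue by contradiction, assuming $k_0<0$, and write $z_a(x)^{1/\ram}=re^{i\varphi}$. The real part of the leading summand is then $|c_{k_0}|\,r^{k_0}\cos\bl k_0\varphi+\arg c_{k_0}\br$, while the remainder contributes $\mathrm{O}(r^{k_0+1})=\mathrm{o}(r^{k_0})$ as $r\to 0^+$.

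The main obstacle is to produce a single ray inside $V$ along which $\Re g$ actually blows up. The cosine factor could happen to vanish at the argument of $\theta$ itself, so one cannot simply approach $a$ along that direction. The key observation is that, since $k_0\neq 0$, the map $\varphi\mapsto\cos(k_0\varphi+\arg c_{k_0})$ is a non-constant sinusoid, with only isolated zeros; hence within any open arc of angles corresponding to $V$ there is some $\varphi_1$ at which this cosine is nonzero. Letting $r\to 0^+$ along the ray $\varphi\equiv\varphi_1$ would then force $|\Re g|\to\infty$, contradicting boundedness on $V$, so $k_0\ge 0$ as wanted, i.e.\ $g_\theta\in\shp_{S_aX,\theta}^{\le 0}$.

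Finally, to upgrade (iii) from the stalk at $\theta$ to a statement on the whole of $I$, I would use that $\shp_{S_aX}^{\le 0}$ is a locally constant subsheaf of $\shp_{S_aX}$, as already recorded after Definition~\ref{def:Puis}: the set of $\theta'\in I$ at which $g_{\theta'}\in\shp_{S_aX,\theta'}^{\le 0}$ is both open and closed, so connectedness of $I$ forces it to be all of $I$. This closes the equivalence, and also yields the equivalent reformulation $[f]=[h]$ in $\oshp_{S_aX}(I)$ since $\oshp_{S_aX}=\shp_{S_aX}/\shp_{S_aX}^{\le0}$ by Definition~\ref{def:Puis}(v).
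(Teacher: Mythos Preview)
Your proof is correct. The paper states this lemma without proof, so there is nothing to compare against; your argument supplies the routine verification the authors left implicit. The only step requiring any care is (i)$\Rightarrow$(iii), and your treatment is the natural one: isolate the leading Puiseux term of $g=h-f$, observe that if its exponent is negative then $\Re g$ blows up along almost every ray in the sector (the cosine factor $\cos(k_0\varphi+\arg c_{k_0})$ being non-constant since $k_0\neq 0$), and conclude by contradiction. The passage from the stalk at $\theta$ to all of $I$ via local constancy of $\shp_{S_aX}^{\le 0}\subset\shp_{S_aX}$ is exactly right.
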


\begin{definition}
\label{def:Stokes} 
Let $I\subset S_aX$ be a connected open subset.
The \emph{Stokes directions} of a pair $f,h\in\shp_{S_aX}(I)$ are the points
of $I$ where $f$ and $h$ are not comparable, i.e.
\[
\St_a(f,h) \defeq \{ \theta\in I \semicolon f \not\aleq\theta h,\ h
\not\aleq\theta f \}.
\]
\end{definition}

The set $\St_a(f,h)$ is a finite subset of $I$.

Note that $\St_a(f,h)\neq\emptyset$ implies $h-f\notin\shp_{S_aX}^{\leq0}(I)$.

For $\Phi_1,\Phi_2\subset\shp_{S_aX}(I)$, one sets
\[
\St_a(\Phi_1,\Phi_2) \defeq \Union_{f\in\Phi_1,\ h\in\Phi_2}\St_a(f,h).
\]

Assume that $h-f\notin\shp_{S_aX}^{\le0}(I)$, and let $U$ be a sectorial open neighborhood of $I$ where $f$ and $h$ are defined (i.e.\ $f,h\in\O_X(U)$).
The Stokes curves of $f,h$ are the real analytic arcs $\Gamma_s\subset U$ defined by
\[
\{x\in U\semicolon \Re\bl h(x)-f(x)\br=0 \} = \Union_{s\in S}\Gamma_s.
\]
After shrinking $U$, we may assume that the $\Gamma_s$'s do not cross, and that
they are non singular with $|z_a|$ as parameter, for $z_a$ a  local coordinate at $a$. Then $\St_a(f,h)$ is the set of tangent directions at $a$ of
the Stokes curves.

\begin{lemma}
\label{lem:Stepsilon}
Let $f,h\in\shp_{S_aX,\theta}$ and assume that $\theta\in\St_a(f,h)$.
Then, $\theta\in\St_a(f+k,h)$ for any $k\in\shp_{S_aX,\theta}$ such that $\ord_a(k)<\ord_a(h-f)$.
\end{lemma}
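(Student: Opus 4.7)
The plan is to reduce the statement to a comparison of leading Puiseux terms. Set $g\defeq h-f$, so the hypothesis reads $\theta\in\St_a(g,0)$ and we must show $\theta\in\St_a(g-k,0)$. Since $\ord_a(k)<\ord_a(g)\eqdef\lambda$, the function $g-k$ has the same pole order $\lambda$ and the same leading Puiseux coefficient as $g$; so it suffices to prove that the membership of $\theta$ in $\St_a(\dummy,0)$ depends only on these leading data.

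To that end, fix a common ramification $u^p = z_a$ and a determination of $u$ on a sectorial neighborhood $V\dotowns\theta$, and write $g = c u^{-N} + \text{(higher powers of $u$)}$ with $c\in\C^\times$ and $N = p\lambda$. Along $u = re^{i\psi}$ with $r\to 0^+$, one has
\[
\Re g(re^{i\psi}) = r^{-N}\bl\Re(c e^{-iN\psi}) + O(r^{1/p})\br,
\]
uniformly on compact subsets of the lift of $V$. Let $\psi_0$ denote the value of $\psi$ lifting $\theta$.

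The key step is the equivalence $\theta\in\St_a(g,0)\iff\Re(c e^{-iN\psi_0})=0$. If $\Re(c e^{-iN\psi_0})\neq 0$, then by continuity and compactness one finds a subsector around $\theta$ on which the leading factor $\Re(c e^{-iN\psi})$ has fixed nonzero sign and dominates the $O(r^{1/p})$ remainder; hence $\Re g$ is of constant sign on this subsector, so one of $g\aleq{\theta}0$ or $0\aleq{\theta}g$ holds and $\theta\notin\St_a(g,0)$. Conversely, if $\Re(c e^{-iN\psi_0})=0$, then $\psi\mapsto\Re(c e^{-iN\psi})$ changes sign at $\psi_0$, producing sequences approaching $\theta$ along which $\Re g\to +\infty$ and others along which $\Re g\to -\infty$, so $\theta\in\St_a(g,0)$. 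The same equivalence applied to $g-k$, which has the same leading Puiseux pair $(N,c)$, yields the conclusion.

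The main thing to be careful about is the uniformity in $\psi$ of the Puiseux remainder estimate and the passage between the chosen local determination of $u$ and the intrinsic direction $\theta\in S_aX$. Both are essentially routine bookkeeping, so I do not anticipate a substantive obstacle.
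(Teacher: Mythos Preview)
The paper states this lemma without proof, so there is no published argument to compare against. Your approach is the natural one and is essentially correct: reduce to $\St_a(g,0)$ with $g=h-f$, observe that $g$ and $g-k$ share the same leading Puiseux monomial $cu^{-N}$, and show that membership in $\St_a(\dummy,0)$ is governed entirely by whether $\Re(ce^{-iN\psi_0})=0$.

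Two small cleanups. First, with $u=re^{i\psi}$ and the expansion in integer powers of $u$, the remainder inside the parentheses is $O(r)$, not $O(r^{1/p})$; this does not affect the argument since all that matters is that it tends to $0$. Second, in the converse direction you should make explicit why the zero of $\psi\mapsto\Re(ce^{-iN\psi})$ at $\psi_0$ is a sign change: since $c\neq 0$ and $N>0$, this is $|c|\cos(N\psi+\text{const})$, whose zeros are all simple. With these minor clarifications the proof is complete.
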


\subsection{A lemma on nearby morphisms}

Let $a\in X$ and  $\theta\in S_aX$. 
Recall the maps \eqref{eq:IStX}.

\begin{lemma}
\label{lem:homEfgtheta}
Let $f,h\in\shp_{S_aX,\theta}$, and let $U$ be an open sectorial neighborhood of $\theta$ where $f$ and $h$ are defined.
\begin{itemize}
\item[(i)] 
One has
\[
\nuhom[a]\bl\Ex^{\Re f}_{U| X},\Ex^{\Re h}_{U| X}\br_\theta \simeq
\begin{cases}
\field, & \text{if } h \aleq\theta f,\\
0,&\text{otherwise}
\end{cases}
\]
\item[(ii)] 
Assume that either $\Re h\leq\Re f$ at $\theta$, or $f \not\asim \theta h$.
Then, one has a natural isomorphism
\[
\nuhom[a]\bl\ex^{\Re f}_{U| X},\ex^{\Re h}_{U| X}\br_\theta
\isoto 
\nuhom[a]\bl\Ex^{\Re f}_{U| X},\Ex^{\Re h}_{U| X}\br_\theta.
\]
\end{itemize}
\end{lemma}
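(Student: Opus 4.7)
The strategy is to reduce both parts to Lemma~\ref{lem:Ephi12}, using the following asymptotic trichotomy for Puiseux germs: for any $k\in\shp_{S_aX,\theta}$, as $x\to\theta$ exactly one of the following holds -- $\Re k$ is bounded, $\Re k\to+\infty$, or $\Re k\to-\infty$. Indeed, writing a Puiseux expansion $k=\sum_{n\geq n_0}c_n z_a^{n/p}$ in some local coordinate $z_a$ and some determination of $z_a^{1/p}$ at $\theta$, the non-polar part ($n\geq 0$) stays bounded, while each polar term $c_n z_a^{n/p}$ with $n<0$ has modulus blowing up like $|z_a|^{n/p}$. The sign of the first nonzero real contribution $\Re(c_n e^{in\alpha_\theta/p})$ coming from a polar coefficient (if any) then dictates whether $\Re k\to+\infty$ or $\Re k\to-\infty$; if no polar term contributes, $\Re k$ remains bounded. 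Applied to $k=h-f$, this gives the trichotomy $h\asim\theta f$, \emph{or} $h\not\aleq\theta f$ (equivalently $\Re(h-f)\to+\infty$), \emph{or} $\Re(h-f)\to-\infty$.

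For part (i), if $h\aleq\theta f$ then $\Re(h-f)$ is bounded from above on some sectorial neighborhood of $\theta$, and Lemma~\ref{lem:Ephi12}(i) applied with $\varphi=\Re f$, $\psi=\Re h$ yields the value $\field$. If instead $h\not\aleq\theta f$, the trichotomy forces $\Re(h-f)\to+\infty$ at $\theta$, and Lemma~\ref{lem:Ephi12}(iii) gives the vanishing.

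For part (ii), I would split into cases. If $\Re h\leq\Re f$ holds on some sectorial neighborhood of $\theta$, Lemma~\ref{lem:Ephi12}(ii) with $\varphi=\Re f$, $\psi=\Re h$ immediately yields the isomorphism. Otherwise the hypothesis forces $f\not\asim\theta h$, and by the trichotomy either $\Re(h-f)\to+\infty$, in which case Lemma~\ref{lem:Ephi12}(iii) makes both source and target of the natural map vanish, so the map is trivially an isomorphism; or $\Re(h-f)\to-\infty$, in which case $\Re h<\Re f$ holds on a sufficiently small sectorial neighborhood of $\theta$, reducing us to the first subcase and Lemma~\ref{lem:Ephi12}(ii) applies.

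The only substantive ingredient is the Puiseux trichotomy stated above; once it is established both parts are direct consequences of Lemma~\ref{lem:Ephi12}, so I do not anticipate any genuine obstacle.
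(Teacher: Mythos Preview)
Your trichotomy is false precisely at Stokes directions, and this breaks the argument. Take $k=h-f=z_a^{-1}$ in a local coordinate and let $\theta$ be the direction of the positive imaginary axis. In any open sector around $\theta$ one has $\Re k(re^{i\alpha})=r^{-1}\cos\alpha$, which is unbounded above for $\alpha$ slightly below $\pi/2$ and unbounded below for $\alpha$ slightly above $\pi/2$. So $\Re k$ is neither bounded, nor tends to $+\infty$, nor tends to $-\infty$ as $x\to\theta$. Your sketch of the trichotomy implicitly assumes the leading contribution $\Re(c\,e^{-i\lambda\alpha_\theta})$ is nonzero; when it vanishes (which is exactly the Stokes condition), the sign of $\Re k$ depends on the side of the Stokes curve, and the limit does not exist.

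Consequently, in part~(i), the case $h\not\aleq\theta f$ with $\theta\in\St_a(f,h)$ is not covered: Lemma~\ref{lem:Ephi12}(iii) requires $\lim_{x\to\theta}\bl\Re h-\Re f\br=+\infty$, which fails there. Likewise in part~(ii), at a Stokes direction one has neither $\Re h\leq\Re f$ nor $\Re(h-f)\to\pm\infty$ on a sectorial neighborhood, so your case split is not exhaustive. The paper does not appeal to Lemma~\ref{lem:Ephi12}(iii) in this regime; instead it reduces (via ramification and a coordinate change) to $h-f=z_a^{-1}$ with $\theta$ in the closed half-plane $\Re z_a\geq 0$, invokes Lemma~\ref{lem:fgEx} to rewrite the stalk as $\ilim_{V,c}\RHom(\field_{V\cap\{\Re(z_a^{-1})\leq c\}},\field_X)$, and then uses the concrete geometry of the discs $\{\Re(z_a^{-1})>c\}$ tangent to the origin to show vanishing. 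That geometric computation is the missing ingredient; away from Stokes directions your reduction to Lemma~\ref{lem:Ephi12} is fine, but the boundary case needs the direct argument.
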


\begin{proof}
(i) By the definition, one has
\[
\nuhom[a]\bl\Ex^{\Re f}_{U| X},\Ex^{\Re h}_{U| X}\br_\theta
\simeq
\ilim[V\dotowns\theta] \FHom(\Ex^{\Re f}_{V|X},\Ex^{\Re h}_{V|X}).
\]
\medskip\noindent
(i-a) If $h \aleq\theta f$, the statement follows from Lemma~\ref{lem:Ephi12}~(i). 

\medskip\noindent
(i-b) Otherwise $h-f \not\aleq\theta 0$, so that in particular $h-f\in\shp_{S_aX,\theta}^\lambda$ for some $\lambda\in\Q_{>0}$. After a ramification and the choice of a local coordinate $z_a$ at $a$, we can assume that $h-f=z_a^{-1}$ and $\theta$ is a direction in the closed half-space $\Re z_a\geq 0$.
By Lemma~\ref{lem:fgEx} (i), one has
\[
\nuhom[a]\bl \Ex^{\Re f}_{U| X},\Ex^{\Re h}_{U| X} \br_\theta \simeq 
\ilim[V\dotowns\theta,\ c\to+\infty] \RHom(\field_{V\cap\{\Re(z_a^{-1})\leq c\}},\field_X).
\]
For $c>0$, $\{\Re(z_a^{-1})> c\}$ is an open disc with center $1/2c$ and radius $1/2c$. Hence there is a cofinal system of sectorial neighborhoods $V$ of $\theta$ such that for any $c\gg0$ one has
\[
\RHom(\field_{V\cap\{\Re(z_a^{-1})\leq c\}},\field_X)
\simeq 
\RHom(\rsect_c(X;\field_{V\cap\{\Re(z_a^{-1})\leq c\}}) ,\field)[-2]
\simeq 0.
\]
Indeed, we can take $V=\{z_a\in\gamma\semicolon |z_a|<\varepsilon\}$ for $\varepsilon>0$ and $\gamma$ an open convex proper cone intersecting $\Re z_a>0$ (see Figure~\ref{fig:cone}).

\begin{figure}
\def\rrss{0.684}
\def\rrdd{1}
\def\lll{115}
\def\rrr{65}
\begin{tikzpicture}[scale=1.5,
	baseline=(O.base)]
\draw (0,0) coordinate (O) ;
\begin{scope}
  \clip(-1,-.5) rectangle (1.2,1.2);
\filldraw[fill=black!20,draw=white,thick] (0,0) -- (\lll:\rrss) arc (\lll:\rrr:\rrss) -- (0,0);
\filldraw[fill=white,draw=black,thick] (\rrdd,0)circle[radius=\rrdd] ;
\draw[thick,densely dotted] (0,0) -- (\lll:\rrss) arc (\lll:\rrr:\rrss) -- (0,0);
\draw[->,thin] (-1,0)--(1.2,0) node[below left]{$\Re z_a$};
\draw[->,thin] (0,-.5)--(0,1.2) node[below left]{$\Im z_a$};
\draw[fill=white,draw=black] (O) circle (1pt) ;
\draw[fill=white,draw=black] (70:\rrss) circle (1pt) ;
\end{scope}
\end{tikzpicture}
\caption{In gray, the intersection $V\cap\{\Re(z_a^{-1})\leq c\}$ when $\theta$ is the direction of the imaginary axis..}\label{fig:cone}
\end{figure}
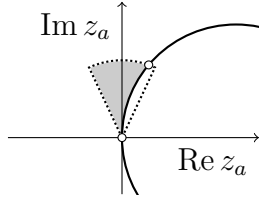

\medskip\noindent
(ii) The proof is similar to that of (i).

\medskip\noindent
(ii-a) If $\Re h\leq\Re f$ at $\theta$, the statement follows from Lemma~\ref{lem:Ephi12}~(ii). 

\medskip\noindent
(ii-b) Otherwise $\Re (h- f)>0$ at $\theta$, and $f \not\asim \theta h$. Hence,
a similar argument as in (i-b) holds, for $\Ex$ replaced by $\ex$, using Lemma~\ref{lem:fgex} instead of Lemma~\ref{lem:fgEx}.
Indeed, with the previous notations one has
\[
\nuhom[a]\bl \ex^{\Re f}_{U| X},\ex^{\Re h}_{U| X} \br_\theta \simeq 
\ilim[V\dotowns\theta] \RHom(\field_{V\cap\{\Re(z_a)\leq 0\}},\field_X).
\]
\end{proof}

Let $I\subset S_aX$ be a connected open subset, and denote by $j_I\colon I\to S_aX$ the embedding. Then, Lemma~\ref{lem:homEfgtheta} immediately implies the following lemma.

\begin{lemma}
\label{lem:homEfgI}
Let $f,h\in\shp_{S_aX}(I)$, and let $U$ be an open sectorial neighborhood of $I$ where $f$ and $h$ are defined.
Then, there is an isomorphism
\[
\opb{j_I}\nuhom[a](\Ex^{\Re f}_{U| X},\Ex^{\Re h}_{U| X})
\simeq \field_{\{\theta\in I\semicolon h \aleq\theta f\}}.
\]
\end{lemma}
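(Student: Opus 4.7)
My plan is to deduce this lemma from Lemma~\ref{lem:homEfgtheta}~(i) by verifying the isomorphism stalkwise and promoting the stalk data to a global sheaf map using the openness of the comparison $\aleq{}$.

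First, I would record the two key geometric facts. Let $A \defeq \{\theta\in I \semicolon h \aleq\theta f\}$. The set $A$ is open in $I$, because $h \aleq\theta f$ is an open condition on $\theta$ (as observed immediately after Notation~\ref{not:aleq}: it says $\Re(h-f)$ is bounded above on some sectorial neighborhood of $\theta$, and this persists on shrinking). Moreover, for any $\theta\in A$, I may pick an open sectorial neighborhood $V_\theta \dotowns \theta$ with $V_\theta\subset U$ on which $\Re(h-f)$ is bounded above.

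Next, I would construct a natural morphism
\[
\alpha \cl \field_A \longrightarrow \opb{j_I}\nuhom[a]\bl\Ex^{\Re f}_{U|X},\Ex^{\Re h}_{U|X}\br.
\]
For each $\theta\in A$, Lemma~\ref{lem:fgEx}~(ii) applied on $V_\theta$ shows that $H^0\fhom(\Ex^{\Re f}_{V_\theta|X},\Ex^{\Re h}_{V_\theta|X})$ contains a canonical non-zero class (the one coming from the characteristic function of $\overline{V_\theta}$, i.e.\ the class of the identity-like comparison morphism). Passing to the colimit in $V_\theta\dotowns\theta$ gives a distinguished element of the stalk $\nuhom[a](\Ex^{\Re f}_{U|X},\Ex^{\Re h}_{U|X})_\theta$ by Lemma~\ref{lem:nustalk}. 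These classes are compatible under restriction of sectorial neighborhoods (since the morphism $\Ex^{\Re f}_{V|X}\to\Ex^{\Re h}_{V|X}$ is functorial in $V$ and the canonical choice is preserved), hence define a section of $\nuhom[a]$ over $A$ whose image at each stalk is the chosen generator; this is exactly $\alpha$ applied to $1\in\field_A$.

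Finally, I would check that $\alpha$ is an isomorphism on stalks. For $\theta\in A$, Lemma~\ref{lem:homEfgtheta}~(i) identifies $\nuhom[a](\Ex^{\Re f}_{U|X},\Ex^{\Re h}_{U|X})_\theta$ with $\field$ via the canonical class just described, so $\alpha_\theta$ is an isomorphism. For $\theta\in I\setminus A$, the target stalk vanishes by the second case of Lemma~\ref{lem:homEfgtheta}~(i), and the source $(\field_A)_\theta = 0$ as well. The only non-routine point is the compatibility check underlying the gluing of the canonical classes; I expect this to be straightforward because the morphism between exponential ind-sheaves provided by Lemma~\ref{lem:fgEx}~(ii) is canonical and functorial in the sectorial domain, so no coherence obstruction arises.
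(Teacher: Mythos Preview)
Your proposal is correct and follows essentially the same approach as the paper: the paper simply states that Lemma~\ref{lem:homEfgtheta} ``immediately implies'' this lemma, meaning the stalkwise computation of Lemma~\ref{lem:homEfgtheta}~(i) determines the sheaf. You have spelled out the step the paper leaves implicit, namely the construction of a global morphism $\field_A \to \opb{j_I}\nuhom[a](\Ex^{\Re f}_{U|X},\Ex^{\Re h}_{U|X})$ from the canonical comparison sections and the verification that it is a stalkwise isomorphism; this is a reasonable expansion of what the paper treats as routine.
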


\begin{corollary}[{cf \cite[Lemma 3.15]{Moc16}}]\label{cor:homEfgI}
Let $I\subset S_aX$ be a connected open subset, and let $\{f_a\}_{a\in A}$ and $\{h_b\}_{b\in B}$ be finite families of elements of $\shp_{S_aX}(I)$.
If there is an isomorphism
\[
\DSum_{a\in A}\Ex_{U|X}^{f_a}\simeq\DSum_{b\in B}\Ex_{U|X}^{g_b}, 
\]
then there exists a bijection $u\colon A\to B$ such that $[f_a]=[h_{u(a)}]$ for any $a\in A$.
\end{corollary}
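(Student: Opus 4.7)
The plan is to reduce the statement to a stalk-wise dimension count via Lemma~\ref{lem:homEfgI}. First, after shrinking $U$ if necessary, I may assume all $f_a$ and $h_b$ are defined on a common sectorial neighborhood $U$ of $I$. Applying the bifunctor $\nuhom[a](-,-)$ to both sides of the given isomorphism $L \defeq \bigoplus_a \Ex^{\Re f_a}_{U|X} \simeq \bigoplus_b \Ex^{\Re h_b}_{U|X} \eqdef L'$ and using additivity, I obtain isomorphisms in $\BDC(\field_I)$, each of whose components is, by Lemma~\ref{lem:homEfgI}, the constant sheaf $\field$ on an open subset of $I$ determined by the pointwise relation $\aleq{\theta}{}$. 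It then suffices to compare these at a single, well-chosen direction $\theta \in I$.

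Next, I would pick a generic $\theta \in I$ lying outside the finite set $\St_a(\Phi,\Phi)$, where $\Phi \defeq \{f_a\}_{a\in A} \cup \{h_b\}_{b \in B}$. At such $\theta$, any two elements $\phi_1, \phi_2 \in \Phi$ with $[\phi_1] \neq [\phi_2]$ satisfy exactly one of $\phi_1 \aless{\theta}{} \phi_2$ or $\phi_2 \aless{\theta}{} \phi_1$, so the distinct equivalence classes appearing in $\Phi$ can be linearly ordered $c_1 \prec_\theta c_2 \prec_\theta \cdots \prec_\theta c_k$. Set $m_i \defeq \#\{a \in A : [f_a] = c_i\}$ and $n_i \defeq \#\{b \in B : [h_b] = c_i\}$; the goal becomes to prove $m_i = n_i$ for every $i$.

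For each $i_0 \in \{1,\dots,k\}$, choose an element $\phi \in \Phi$ with $[\phi] = c_{i_0}$. Taking the $\theta$-stalk, Lemma~\ref{lem:homEfgI} together with additivity gives
\[
\dim_\field \nuhom[a](L,\Ex^{\Re\phi}_{U|X})_\theta = \#\{a \in A : \phi \aleq{\theta}{} f_a\} = \sum_{i \geq i_0} m_i,
\]
and similarly $\dim_\field \nuhom[a](L',\Ex^{\Re\phi}_{U|X})_\theta = \sum_{i \geq i_0} n_i$. The assumed isomorphism $L \simeq L'$ forces these two numbers to coincide, so $\sum_{i \geq i_0} m_i = \sum_{i \geq i_0} n_i$ for every $i_0$, and taking successive differences yields $m_i = n_i$ for all $i$. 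A bijection $u \colon A \to B$ satisfying $[f_a] = [h_{u(a)}]$ is then obtained by picking, for each class $c_i$, an arbitrary bijection between the $m_i$-element set $\{a : [f_a] = c_i\}$ and the $n_i$-element set $\{b : [h_b] = c_i\}$.

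The main subtle point is the choice of $\theta$: the linear order on classes is $\theta$-dependent and different from the partial order one gets by comparing pole orders globally, but the counting argument only uses that at a fixed generic $\theta$ the comparison $\aleq{\theta}{}$ induces a total preorder whose equivalence relation is exactly $[-] = [-]$, which holds precisely because $\theta$ avoids the Stokes directions of the finite family $\Phi$. A second minor point to verify is the well-definedness of classes $[f_a], [h_b]$ as elements of $\oshp_{S_aX}(I)$ over the connected interval $I$; this follows from the fact that $[f] = [h]$ is the global condition $f - h \in \shp^{\leq 0}_{S_aX}(I)$.
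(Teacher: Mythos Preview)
Your proof is correct and follows precisely the approach the paper intends: the corollary is stated without proof, immediately after Lemma~\ref{lem:homEfgI}, and your argument is exactly the natural way to extract it from that lemma. The key step---choosing $\theta\in I$ outside the finite Stokes set $\St_a(\Phi,\Phi)$ so that $\aleq\theta$ becomes a total preorder on $\Phi$ with equivalence classes given by $[\,\cdot\,]$, then reading off the partial sums $\sum_{i\ge i_0} m_i$ as stalk dimensions of $\nuhom[a](L,\Ex^{\Re\phi}_{U|X})$---is clean and complete.
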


\subsection{Normal form for enhanced sheaves}\label{sse:Normenh}
Recall Definition~\ref{def:wellsep}.

A \emph{multiplicity} at $a\in X$ is a morphism of sheaves of sets
\[
N\colon \shp_{S_aX} \to (\Z_{\geq0})_{S_aX}
\]
such that $N^{>0}_\theta \defeq N_\theta^{-1}(\Z_{>0}) \subset \shp_{S_aX,\theta}$ is well separated and finite for some (hence, any) $\theta\in S_aX$.

A Puiseux germ $f \in N^{>0}_\theta$ is called an exponential factor of $N$ at $\theta$, and the positive integer $N(f)$ is called its multiplicity.

\begin{definition}\label{def:eform}
Let $a\in X$ and $F\in\dereb_\Rc(\field_X)$.
One says that $F$ has a normal form at $a$ if there exists a multiplicity $N\colon \shp_{S_aX} \to (\Z_{\geq0})_{S_aX}$ such that any $\theta\in S_a X$ has an open sectorial neighborhood $V_\theta$ such that
\[
\opb\pi\field_{V_\theta}\tens F \simeq 
\DSum_{f\in N^{>0}_\theta} \bl\ex^{\Re f}_{V_\theta|X}\br^{N(f)}.
\]
\end{definition}

Note that the multiplicity $N$ is uniquely determined by $F$.

Note also that if $F$ has a normal form at $a$, then there exists an open neighborhood $\Omega$ of $a$ such that
\[
\opb\pi\field_{\Omega\setminus\{a\}}\tens F\in\dere^0_\Rc(\field_X).
\]

\subsection{Normal form for enhanced ind-sheaves}\label{sse:normenhind}

A \emph{multiplicity class} at $a\in X$ is a morphism of sheaves of sets
\[
\overline N\colon \oshp_{S_aX} \to (\Z_{\geq0})_{S_aX}
\]
such that $\overline N_\theta^{-1}(\Z_{>0}) \subset \oshp_{S_aX,\theta}$ is a finite set for some (hence, any) $\theta\in S_aX$. For $f\in\shp_{S_aX}$, we write for short $\overline N(f) = \overline N([f])$.

\begin{notation}
If $N$ is a multiplicity, we denote by $\overline N$ its class, defined by setting $\overline N(f)=N(h)$ if there exists $h\in N_\theta^{>0}$ such that $[f]=[h]$, and $\overline N(f)=0$  otherwise.
\end{notation}

Let $\shp_{S_aX}'\subset\shp_{S_aX}$ be a representative subsheaf. Then, any multiplicity class $\overline N$ gives a multiplicity $N$ with class $\overline N$, by setting $N(f)=\overline N(f)$ if $f\in\shp_{S_aX}'$, and $N(f)=0$ otherwise.

\begin{definition}\label{def:Eform}
One says that $K\in\dereb_\Rc(\ifield_X)$ has a normal form at $a\in X$ if there exists a multiplicity $N$ such that 
any $\theta\in S_a X$ has an open sectorial neighborhood $V_\theta$ such that
\[
\opb\pi\field_{V_\theta}\tens K \simeq \DSum_{f\in N^{>0}_\theta} \bl\Ex^{\Re f}_{V_\theta|X}\br^{N(f)}.
\]
\end{definition}

Note that the class $\overline N$ of $N$ is uniquely determined by $K$. We call it the multiplicity class of $K$.

\begin{remark}
If $\field=\C$ and $K$ corresponds to a holonomic $\D_X$-module by the Riemann-Hilbert correspondence, this definition corresponds to the classical notion of quasi-normal form (see \cite[\S7.3]{DK16}).
As we deal with Puiseux germs, we do not distinguish here between normal and quasi-normal forms.
\end{remark}

\begin{lemma}
\label{lem:Klocdec}
An object $K\in\dereb_\Rc(\ind\field_X)$ has a normal form at $a\in X$ if and only if for any $\theta\in S_aX$ there exists a finite subset $\Phi_\theta\subset\shp_{S_aX,\theta}$ and integers $n_\theta(f)\in\Z_{> 0}$ for $f\in\Phi_\theta$ such that
\[
\opb\pi\field_{V_\theta}\tens K \simeq \DSum_{f\in\Phi_\theta} \bl\Ex^{\Re f}_{V_\theta|X}\br^{n_\theta(f)}
\]
for some open sectorial neighborhood $V_\theta$ of $\theta$.
\end{lemma}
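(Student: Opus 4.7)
The ``only if'' direction is immediate: given a normal form with multiplicity $N$, take $\Phi_\theta = N^{>0}_\theta$ and $n_\theta(f) = N(f)$. For the ``if'' direction, the plan is to process the pointwise data into a bona fide multiplicity sheaf.

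First, I would normalize each $\Phi_\theta$. If $f_1,f_2\in\Phi_\theta$ satisfy $[f_1]=[f_2]$ in $\oshp_{S_aX,\theta}$, then $f_1\aleq\theta f_2$ and $f_2\aleq\theta f_1$, so Lemma~\ref{lem:homEfgtheta}~(i) yields
\[
\nuhom[a]\bl\Ex^{\Re f_1}_{V_\theta|X},\Ex^{\Re f_2}_{V_\theta|X}\br_\theta\simeq\field
\]
in both directions; picking nonzero morphisms provides, after shrinking $V_\theta$, an isomorphism $\Ex^{\Re f_1}_{V'_\theta|X}\isoto\Ex^{\Re f_2}_{V'_\theta|X}$, since the composition is a nonzero scalar endomorphism and hence invertible. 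Combining such summands gives a decomposition in which $\Phi_\theta$ is well-separated. Fixing once and for all a representative subsheaf $\shp'_{S_aX}\subset\shp_{S_aX}$, the same argument lets us replace each $f\in\Phi_\theta$ by its unique representative in $\shp'_{S_aX,\theta}$ of the same class, so we may assume $\Phi_\theta\subset\shp'_{S_aX,\theta}$.

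Next, define $N\colon\shp_{S_aX}\to(\Z_{\geq 0})_{S_aX}$ stalkwise by $N_\theta(f)=n_\theta(f)$ if $f\in\Phi_\theta$, and $N_\theta(f)=0$ otherwise. By construction $N^{>0}_\theta=\Phi_\theta$ is finite and well-separated. The substantive step is to verify that $N$ is a morphism of sheaves of sets, i.e.\ that for any connected open $I\subset S_aX$ and any $f\in\shp_{S_aX}(I)$, the value $N_\theta(f|_\theta)$ is locally constant in $\theta\in I$. Fix $\theta_0\in I$. Each $g\in\Phi_{\theta_0}$ extends uniquely as a Puiseux germ along an open interval $J\ni\theta_0$ contained in $I$; after shrinking $V_{\theta_0}$ we may assume it is sectorial over $J$ and that all $g\in\Phi_{\theta_0}$ are holomorphic on it. For any $\theta\in J$ and any sectorial $V'_\theta\subset V_{\theta_0}$, restricting the decomposition at $\theta_0$ gives
\[
\opb\pi\field_{V'_\theta}\tens K\simeq\DSum_{g\in\Phi_{\theta_0}}\bl\Ex^{\Re g}_{V'_\theta|X}\br^{n_{\theta_0}(g)}.
\]
Comparing this with the decomposition provided at $\theta$ by hypothesis, Corollary~\ref{cor:homEfgI} yields a bijection preserving $[\,\cdot\,]$; since both $\Phi_\theta$ and $\Phi_{\theta_0}|_J$ are well-separated and lie in $\shp'$, this bijection must be the natural restriction, so $\Phi_\theta=\{g|_\theta\semicolon g\in\Phi_{\theta_0}\}$ with $n_\theta(g|_\theta)=n_{\theta_0}(g)$. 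Local constancy of $N_\theta(f|_\theta)$ on $J$ then follows from the fact that $\shp_{S_aX}$ is locally constant: the condition ``$f|_\theta=g|_\theta$ for some $g\in\Phi_{\theta_0}$'' either holds throughout $J$ or at no point of $J$.

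The main obstacle is the last verification, where the pointwise decompositions must be glued coherently across directions. It requires combining three ingredients: (a)~the well-separation propagates along arcs because $\shp^{\leq 0}_{S_aX}\subset\shp_{S_aX}$ is a locally constant subsheaf, so $[f]=[g]$ is a constant relation on connected intervals where $f-g$ is defined; (b)~the uniqueness from Corollary~\ref{cor:homEfgI} only determines summands up to the class $[\,\cdot\,]$; and (c)~the auxiliary choice of representative subsheaf $\shp'$ is essential, since without it $N$ would naturally descend only to a morphism out of $\oshp_{S_aX}$ rather than to a morphism of sheaves of sets from $\shp_{S_aX}$ as required by the definition of a multiplicity.
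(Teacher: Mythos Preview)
Your proof is correct and follows essentially the same strategy as the paper's: reduce to $\Phi_\theta\subset\shp'_{S_aX,\theta}$ for a representative subsheaf, then invoke Corollary~\ref{cor:homEfgI} to patch the $\{\Phi_\theta\}_\theta$ and $\{n_\theta\}_\theta$ into a locally constant subsheaf $\Phi\subset\shp'_{S_aX}$ and a morphism $n\colon\Phi\to(\Z_{\geq0})_{S_aX}$. The paper is terser in two places: it replaces your explicit argument via Lemma~\ref{lem:homEfgtheta}~(i) with the bare observation that the isomorphism class of $\Ex^{\Re f}_{V_\theta|X}$ depends only on $[f]$ (which is Corollary~\ref{cor:fgEx}), and it does not spell out the local-constancy verification you carry out in your last two paragraphs, simply asserting that Corollary~\ref{cor:homEfgI} yields the required sheaf structure.
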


\begin{proof}
As the ``only if'' part is clear, let us prove the ``if'' part.

Let $\shp_{S_aX}'\subset\shp_{S_aX}$ be a representative subsheaf. 
As the isomorphism class of $\Ex^{\Re f}_{V_\theta|X}$ only depends on $[f]\in\oshp_{S_aX,\theta}$, we can assume that $\Phi_\theta\subset\shp_{S_aX,\theta}'$.
Then, it follows from Corollary~\ref{cor:homEfgI} that $\{\Phi_\theta\}_\theta$ gives a local system $\Phi\subset\shp_{S_aX}'$, and that $\{n_\theta\}_\theta$ gives a morphism of sheaves $n\colon\Phi\to(\Z_{\geq 0})_{S_aX}$.

Consider the multiplicity given by $N(f)=n(f)$ if $f\in\Phi$, and $N(f)=0$ otherwise. Then the isomorphisms in the statement show that $K$ has normal form at $a$ with multiplicity class $\overline N$.
\end{proof}

\begin{proposition}\label{pro:KF}
Let $K\in\dereb_\Rc(\ifield_X)$ have normal form at $a\in X$ with multiplicity class $\overline N$. 
Let $N$ be a multiplicity with class $\overline N$.
Then there exist an open neighborhood $\Omega$ of $a$, and $F\in\dere^{0}_\Rc(\field_X)$ with normal form at $a$ and multiplicity $N$,
such that
\[ 
\opb\pi\field_{\Omega\setminus\{a\}}\tens K
\simeq
\field_X^\enh \ctens F.
\]
\end{proposition}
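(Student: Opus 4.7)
The plan is to construct $F$ by local-to-global gluing in the stack $\dere^0_+(\field_X)$. Using the finiteness and well-separatedness of $N^{>0}_\theta$ together with the sheaf property of $N^{>0}\subset\shp_{S_aX}$, I would cover $S_aX$ by finitely many connected open arcs $I_1,\dots,I_m$ with connected pairwise intersections, such that on each $I_j$ the subsheaf $N^{>0}$ is represented by sections $f_{j,1},\dots,f_{j,r}\in\shp_{S_aX}(I_j)$ with multiplicities $n_{j,k}\defeq N(f_{j,k})>0$. Then I would pick small enough sectorial open neighborhoods $U_j\subset X$ of $I_j$, so that Definition~\ref{def:Eform} supplies isomorphisms $\alpha_j\colon\field_X^\enh\ctens F_j\isoto\opb\pi\field_{U_j}\tens K$, where $F_j\defeq\DSum_k(\ex^{\Re f_{j,k}}_{U_j|X})^{n_{j,k}}\in\dere^0_\Rc(\field_X)$, and so that $\bigcup_jU_j\supset\Omega\setminus\{a\}$ for some open neighborhood $\Omega$ of $a$.

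On each overlap $V_{jj'}=U_j\cap U_{j'}$, the well-separatedness of $N^{>0}$ over the connected arc $I_j\cap I_{j'}$ forces a canonical bijection $\sigma$ with $f_{j,k}|_{V_{jj'}}=f_{j',\sigma(k)}|_{V_{jj'}}$, hence a tautological equality $F_j|_{V_{jj'}}=F_{j'}|_{V_{jj'}}$; composing, $\alpha_{j'}^{-1}\alpha_j$ becomes an automorphism $\beta_{j,j'}$ of $\field_X^\enh\ctens F_j|_{V_{jj'}}$.

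The crux will be to lift each $\beta_{j,j'}$ to an automorphism $\tilde\beta_{j,j'}$ of $F_j|_{V_{jj'}}$. Expanding endomorphisms of a direct sum into matrix form reduces this to proving, for every pair $k,k'$, that the natural map
\[
\Hom(\ex^{\Re f_{j,k}}_{V_{jj'}|X},\ex^{\Re f_{j,k'}}_{V_{jj'}|X})\longrightarrow\Hom(\Ex^{\Re f_{j,k}}_{V_{jj'}|X},\Ex^{\Re f_{j,k'}}_{V_{jj'}|X})
\]
is bijective. For $k\neq k'$, well-separatedness forces $[f_{j,k}]\neq[f_{j,k'}]$, so Lemma~\ref{lem:homEfgtheta}(ii) applies and yields the stalkwise isomorphism of the corresponding $\nuhom[a]$ sheaves on $S_aX$; Lemma~\ref{lem:nustalk} upgrades this to an isomorphism of Hom spaces over the sectorial neighborhood, after shrinking $V_{jj'}$ inside the cofinal system of the colimit. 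For $k=k'$, both sides identify with $\field$ via Lemmas~\ref{lem:fgex}(ii) and~\ref{lem:fgEx}(ii). Setting $\phi_{j,j'}$ equal to $\tilde\beta_{j,j'}$ composed with the canonical identification produces gluing isomorphisms, and the cocycle condition $\phi_{j',j''}\phi_{j,j'}=\phi_{j,j''}$ holds after applying $\field_X^\enh\ctens(-)$ (since the $\alpha$'s compose trivially) and transfers back by the same faithfulness.

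Finally, the stack property of $\dere^0_+(\field_X)$ descends $\{F_j,\phi_{j,j'}\}$ to a global $F\in\dere^0_\Rc(\field_X)$ on $\bigcup_jU_j$, extended by zero outside. By construction $F$ has normal form at $a$ with multiplicity $N$, and the $\alpha_j$'s glue to the required isomorphism $\field_X^\enh\ctens F\simeq\opb\pi\field_{\Omega\setminus\{a\}}\tens K$. The main obstacle is the faithfulness step above: carefully passing from the stalkwise statements in Lemmas~\ref{lem:homEfgtheta}(ii), \ref{lem:fgex}(ii), and~\ref{lem:fgEx}(ii) to a global equality of Hom spaces over the sectorial neighborhoods. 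Once this is in hand, both the lifts and the cocycle condition follow formally.
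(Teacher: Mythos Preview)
Your approach is essentially the same as the paper's: both construct $F$ by gluing the local models $\DSum_f(\ex^{\Re f}_{V_\theta|X})^{N(f)}$ in the stack $\dere^0_+(\field_X)$, lifting the transition isomorphisms from the enhanced ind-sheaf level to the enhanced sheaf level via Lemma~\ref{lem:homEfgtheta}(ii). The one organizational difference worth noting is that the paper chooses a \emph{cyclic} cover $V_{\theta_1},\dots,V_{\theta_d}$ with only consecutive sectors meeting, and each such overlap contractible; this renders the cocycle condition on triple overlaps vacuous, so only the involution condition $u_{kl}\circ u_{lk}=\id$ needs the faithfulness step (which, as you observe, follows from the same lemma). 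Your more general cover works too, but the cyclic arrangement buys a slight simplification you might as well take.
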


\begin{proof}
By definition, any $\theta\in S_a X$ has an open sectorial neighborhood $V_\theta$ such that
\begin{equation}
\label{eq:sumK}
\opb\pi\field_{V_\theta}\tens K \isoto \DSum_{f\in N^{>0}_\theta} \bl\Ex^{\Re f}_{V_\theta|X}\br^{N(f)}.
\end{equation}
Let $\Theta\subset S_aX$ be a cyclically ordered finite subset such that
\begin{itemize}
\item [(i)]
$\{a\}\union\Union\nolimits_{\theta\in\Theta}V_\theta$ is a neighborhood of $a$,
\item[(ii)]
for any $\theta,\theta'\in\Theta$ with $\theta\neq\theta'$ the intersection $V_\theta\cap V_{\theta'}$ is non empty if and only if $\theta$ and $\theta'$ are consecutive,
and in this case $V_\theta\cap V_{\theta'}$ is contractible.
\end{itemize}
Let $\theta_1<\cdots<\theta_d<\theta_{d+1}\defeq\theta_1$ be the cyclic ordering of $\Theta$.
Write for short $V_k=V_{\theta_k}$.
Set $\Omega= \{a\}\union \bl\Union\nolimits_{k=1}^d V_k\br$,
and denote by  $j_a\colon\Omega\setminus\{a\}\to X$ and $j_k\colon V_k\to X$ the embeddings.
Set
\begin{align*}
F'_k &= \DSum_{f\in N^{>0}_{\theta_k}} \bl\ex^{\Re f}_{V_k|V_k}\br^{N(f)}
&&\in\dere_\Rc^0(\field_{V_k}).
\end{align*}
The isomorphism \eqref{eq:sumK} induces an isomorphism
\[
u^\enh_k \colon \opb\pi\field_{V_k}\tens K \isoto \field_X^\enh \ctens \Eeim{{j_k}}F'.
\]
Set $u_{kl}^\enh = \opb\pi\field_{V_k\cap V_l} \tens (u^\enh_k\circ(u^\enh_l)^{-1})$.
By Lemma~\ref{lem:homEfgI}~(ii), each $u_{kl}^\enh$ induces an isomorphism $u_{kl}\colon F'_l|_{V_k\cap V_l} \isoto F'_k|_{V_k\cap V_l}$. The isomorphisms $u_{kl}$'s satisfy the usual cocycle condition $u_{kl} \circ u_{lm} = u_{km}$, since so do the isomorphisms $u^\enh_{kl}$'s. Hence, they patch the $F'_k$'s to an object $F'\in\dere_\Rc^0(\field_{\Omega\setminus\{a\}})$.

This proves the statement, with $F \defeq \Eeim{{j_a}}(F')$.
\end{proof}

\section{Stokes filtered local systems}\label{sec:FStokes}

In this section, $X$ denotes a complex analytic curve.

\medskip
To an enhanced ind-sheaf $K$ on $X$ with normal form at $a\in X$, we attach a Stokes filtered local system on $S_aX$. 
If $K=\solE[X](\shm)$ for $\shm$ a meromorphic connection, this coincides with the classical construction in \cite{Del07,Mal91} (see \cite[\S1 and \S8]{Kas16} for a detailed explanation).
Then, we introduce the multiplicity test functor, which allows to detect the multiplicities of exponential factors.

\subsection{Stokes filtrations}

Stokes filtered local systems were introduced in \cite{Del07,Mal91}, and we 
refer to \cite{Sab13} for an exposition.

\medskip

Let $a\in X$ and $L$ a local system of finite rank on $S_a X$.

\begin{definition}
\begin{itemize}
\item[(i)] A pre-Stokes filtration $\F_{\eprec \bullet}L$ on $L$ is the data of a
subsheaf $\F_{\eprec f}L\subset L|_I$ for any open subset $I\subset S_aX$ and any $f\in\shp_{S_aX}(I)$, such that for any $\theta\in S_aX$ and any $f,h\in\shp_{S_aX,\theta}$
with $f \aleq\theta h$, one has $(\F_{\eprec f}L)_\theta\subset(\F_{\eprec 
h}L)_\theta$.
\item[(ii)]
A pre-Stokes filtration $\F_{\eprec \bullet}L$ on $L$ is a Stokes filtration if
there exists a multiplicity $N\colon \shp_{S_aX} \to (\Z_{\geq0})_{S_aX}$ at $a$ such that for any $\theta\in S_aX$ there are an open neighborhood $I\subset S_aX$, and an isomorphism
\begin{equation}
\label{eq:LI}
L|_I \simeq \DSum_{h\in N^{>0}_\theta}\field_I^{N(h)}, 
\end{equation}
inducing for any $\eta\in I$ and $f\in \shp_{S_aX,\eta}$ an isomorphism
\[
(\F_{\eprec f} L)_\eta \simeq \DSum_{h\in N^{>0}_\theta,\ h\aleq\eta f} \field^{N(h)}.
\] 
\end{itemize}
\end{definition}

Note that $\F_{\eprec f}L$ only depends on the class of $f$ in $\oshp_{S_aX}(I)$.

\begin{notation}
Let $\F_{\eprec \bullet}L$ be a Stokes filtration  on $L$.
\begin{itemize}
\item[(i)] For $f\in\shp_{S_aX}(I)$, let $\F_{\prec f}L \subset \F_{\eprec f}L$ be the
subsheaf such that, for any $\theta\in I$, one has
\[
(\F_{\prec f}L)_\theta = \sum_{h\in \shp_{S_aX,\theta},\ h \aless\theta f}(\F_{\eprec h}L)_\theta.
\]
Note that this defines indeed a subsheaf since, under \eqref{eq:LI}, one has
\[
(\F_{\prec f}L)_\eta = \DSum_{h\in N_\theta^{>0},\ h \aless\eta f}\field^{N(h)}.
\]
\item[(ii)] For $f\in\shp_{S_aX}(I)$, set
\[
\Gr_fL \defeq \F_{\eprec f}L/\F_{\prec  f}L.
\]
\end{itemize}
\end{notation}

Note that $\F_{\prec f}L$ and $\Gr_fL$ only depend on the class of $f$ in $\oshp_{S_aX}(I)$.

Note also that \eqref{eq:LI} implies $\Gr_fL|_I\simeq\field_I^{\overline N(f)}$, and in particular $\Gr_fL$ is a locally constant sheaf.

\subsection{Enhanced nearby cycles}

Here, to an enhanced ind-sheaf $K$ on $X$ with normal form at $a\in X$, we associate a Stokes filtered local system on $S_aX$.

\medskip

For $a\in X$ and $I\subset S_a X$ an open subset, consider the commutative diagram
\[
\xymatrix@R=1ex{
I \ar@{^(->}[r]^-{j_I} &S_a X \ar@{^(->}[r]^-{\ti_a} & \widetilde X_a \ar[dd]_{\varpi_a} & \\
&&& X\setminus\{a\}. \ar@{_(->}[dl]^-{j_a} \ar@{_(->}[ul]_-{\tj_a} \\
&& X
}
\]
Recall Definition~\ref{def:nuhom}.

\begin{definition}
Let $a\in X$, $I\subset S_a X$ an open subset, $f\in\shp_{S_aX}(I)$, and $K\in\dereb_\Rc(\ifield_X)$. Set
\begin{align*}
\Psi_a(K) 
&\defeq \opb{\ti_a} \roim{{\tj_a}} \opb j_a \fhom(\field^\enh_X,K) &&\in \BDC(\field_{S_aX}), \\
\F_{\eprec f}\Psi_a(K) &\defeq
\opb{j_I}\nuhom[a](\Ex^{\Re f}_{U| X}, K)  &&\in \BDC(\field_I),
\end{align*}
with $U\subset X$ a sectorial neighborhood of $I$ where $f$ is defined.
\end{definition}

Note that
\[
\opb{\tj_a}\fhom(\Eopb\varpi_a\Ex^{\Re f}_{U| X},\Eepb\varpi_a K) \simeq
\opb{j_a} \fhom(\field^\enh_X,K).
\]
Hence, the adjunction morphism $\id\to\roim{{\tj_a}} \opb{\tj_a}$ induces a morphism
\begin{equation}\label{eq:FeprectoL}
\F_{\eprec f}\Psi_a(K) \to \opb{j_I}\Psi_a(K).
\end{equation}

\begin{proposition}\label{pro:Fpsi}
Let $K\in\dereb_\Rc(\ifield_X)$ have normal form at $a\in X$ with multiplicity class $\overline N$.
Then, with the above notations,
\begin{itemize}
\item [(i)] $\Psi_a(K)$ is concentrated in degree zero, and is a local system of finite rank on $S_aX$;
\item [(ii)] $\F_{\eprec f}\Psi_a(K)$ is concentrated in degree zero, and is an $\R$-constructible sheaf on $I$;
\item [(iii)] the morphism \eqref{eq:FeprectoL} is a monomorphism;
\item[(iv)] $\F_{\eprec \bullet}\Psi_a(K)$ is a Stokes filtration of $\Psi_a(K)$.
\end{itemize}
In particular, $\Gr_f\Psi_a(K)$ is a local system on $I$ of rank $\overline N(f)$.
\end{proposition}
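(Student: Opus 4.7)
The plan is to reduce the four assertions to local computations on sectorial neighborhoods, combining Proposition~\ref{pro:KF} with Lemma~\ref{lem:homEfgI}. By Proposition~\ref{pro:KF}, after shrinking an open neighborhood $\Omega$ of $a$, we may replace $K$ by $\field_X^\enh\ctens F$, where $F\in\dere^0_\Rc(\field_X)$ has normal form at $a$ with multiplicity $N$. Since all four assertions are local on $S_aX$, fix $\theta\in S_aX$ and a small arc $I\ni\theta$, together with an open sectorial neighborhood $V_\theta\subset X\setminus\{a\}$ of $I$ on which
\[
F|_{V_\theta}\simeq\DSum_{h\in N^{>0}_\theta}(\ex^{\Re h}_{V_\theta|X})^{N(h)},
\]
and such that every $h\in N^{>0}_\theta$ is defined on $I$.

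For any $f\in\shp_{S_aX}(I)$ with sectorial extension $U\subset V_\theta$, Lemma~\ref{lem:homEfgI} applied summand-wise (using linearity of $\nuhom[a]$ in the second variable) gives
\[
\F_{\eprec f}\Psi_a(K)|_I\simeq\DSum_{h\in N^{>0}_\theta}\field_{\{\theta'\in I\semicolon h\aleq{\theta'}f\}}^{N(h)},
\]
which is concentrated in degree zero and $\R$-constructible, proving (ii). This formula is precisely the local model of a Stokes filtration with multiplicity $N$, so once (i) is established, assertions (iii) and (iv) follow, and the final rank statement for $\Gr_f\Psi_a(K)$ follows from the well-separateness of $N^{>0}_\theta$ (at most one $h$ has $[h]=[f]$, and for it $N(h)=\overline N(f)$).

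For (i), I would use a stabilization argument: after further shrinking $I$ around $\theta$, pick $f\in\shp_{S_aX}(I)$ of sufficiently high pole order so that $h\aleq{\theta'}f$ for every $h\in N^{>0}_\theta$ and every $\theta'\in I$. Then the displayed formula gives $\F_{\eprec f}\Psi_a(K)|_I\simeq\DSum_h\field_I^{N(h)}$, and it remains to show that the canonical map~\eqref{eq:FeprectoL} from this object to $\opb{j_I}\Psi_a(K)$ is an isomorphism. By linearity this reduces to checking
\[
\opb{\ti_a}\roim{\tj_a}\opb{j_a}\fhom(\field_X^\enh,\Ex^{\Re h}_{V_\theta|X})|_I\simeq\field_I,
\]
which I would prove by a direct computation, using $\LE(\field_X^\enh)\simeq\indlim[c]\field_{\{t\geq c\}}$ paired against the analogous inductive presentation of $\LE\Ex^{\Re h}_{V_\theta|X}$, together with a cofinality argument akin to that in the proof of Lemma~\ref{lem:homEfgtheta}.

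The main obstacle is this last step: verifying that the stabilized filtration at large $f$ recovers all of $\opb{j_I}\Psi_a(K)$, equivalently that~\eqref{eq:FeprectoL} is an isomorphism in that regime. The delicacy is that $\fhom(\field_X^\enh,\Ex^{\Re h}_{V_\theta|X})$ on $X\setminus\{a\}$ is relatively straightforward to identify fiberwise with $\field_{V_\theta}$, but one must control $\roim{\tj_a}$ near $I\subset S_aX$ and rule out contributions from higher direct images; the growth of $\Re h$ along the boundary sphere is exactly what forces the pushforward to be a single copy of the constant sheaf along $I$.
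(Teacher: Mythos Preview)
Your approach is correct and aligns with the paper's: both reduce to the local decomposition of $K$ into summands $\Ex^{\Re h}_{V_\theta|X}$ and compute $\F_{\eprec f}\Psi_a$ summand-wise via Lemma~\ref{lem:homEfgI} (which is the content of Lemma~\ref{lem:Fpsifg}(ii)). Two simplifications are available.

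First, invoking Proposition~\ref{pro:KF} is unnecessary: the local decomposition of $K$ itself into $\Ex^{\Re h}$'s is already given directly by Definition~\ref{def:Eform}, so there is no need to pass through an auxiliary enhanced sheaf $F$.

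Second, and more to the point, the obstacle you flag for (i) dissolves with a simpler argument. The paper (Lemma~\ref{lem:Fpsifg}(i)) bypasses both your stabilization step and the inductive-limit computation: since $h$ is holomorphic on $U$, it is locally bounded there, so $\Ex^{\Re h}_{U|X}|_U\simeq\field^\enh_U$ and hence
\[
\fhom(\field^\enh_X,\Ex^{\Re h}_{U|X})|_U\simeq\fhom(\field^\enh_U,\field^\enh_U)\simeq\field_U.
\]
Then $\Psi_a(\Ex^{\Re h}_{U|X})|_I=\opb{\ti_a}\roim{{\tj_a}}\field_U|_I\simeq\field_I$ is immediate, as $U$ is a sectorial neighborhood of $I$. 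No analysis of the growth of $\Re h$ along the boundary and no control of higher direct images is required.
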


By the Definition~\ref{def:Eform}, this follows from the next lemma.

\begin{lemma}
\label{lem:Fpsifg}
Let $f,h\in\shp_{S_aX}(I)$ for $I\subset S_a X$ open,
and let $U$ be an open sectorial neighborhood of $I$ where $f$ and $h$ are defined.
Then one has
\begin{itemize}
\item[(i)] $\Psi_a(\Ex^{\Re h}_{U|X})|_I \simeq \field_I$,
\item[(ii)] $\F_{\eprec f}\Psi_a(\Ex^{\Re h}_{U|X}) \simeq
\field_{\{\theta\in I\semicolon h \aleq\theta f\}}$,
\item[(iii)] $\F_{\prec f}\Psi_a(\Ex^{\Re h}_{U|X}) \simeq
\field_{\{\theta\in I\semicolon h \aless\theta f\}}$,
\item[(iv)] $\Gr_f\Psi_a(\Ex^{\Re h}_{U|X}) \simeq
\begin{cases}
\field_I, & \text{if }[f]=[h],\\
0,&\text{otherwise}.
\end{cases}$
\end{itemize}
\end{lemma}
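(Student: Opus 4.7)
I would establish the four parts in the order (ii), (iii), (iv), (i), since the first three are essentially bookkeeping consequences of Lemma~\ref{lem:homEfgI} while (i) requires a slightly more delicate computation. Part (ii) is immediate from the definition $\F_{\eprec f}\Psi_a(K) = \opb{j_I}\nuhom[a](\Ex^{\Re f}_{U|X}, K)$ combined with Lemma~\ref{lem:homEfgI} applied to $K = \Ex^{\Re h}_{U|X}$. For (iii), I would work stalk-wise: the defining formula $(\F_{\prec f}L)_\theta = \sum_{h'\aless\theta f}(\F_{\eprec h'}L)_\theta$ together with (ii) shows that the stalk is $\field$ precisely when some $h'$ satisfies both $h'\aless\theta f$ and $h\aleq\theta h'$. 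Taking $h'=h$ yields the direction $h\aless\theta f\Rightarrow{}$nonzero; conversely, transitivity of $\aleq$ yields $h\aleq\theta f$, and $h\asim\theta f$ would force $h'\asim\theta f$, contradicting $h'\aless\theta f$, so $h\aless\theta f$. Part (iv) then follows: the graded piece has stalk $\field$ at $\theta$ exactly when $h\aleq\theta f$ holds but $h\aless\theta f$ fails, i.e.\ when $h\asim\theta f$, and on each connected component of $I$ this set is either the whole component (if $[f]=[h]$) or empty (if $[f]\neq[h]$), since $[h-f]=0$ in $\oshp_{S_aX}$ on a connected open subset is determined by $h\asim\theta f$ at any single stalk.

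For (i), I would compute the stalks of $\Psi_a(\Ex^{\Re h}_{U|X})$ at $\theta\in I$ via Lemma~\ref{lem:stalk} applied to a cofinal family of connected sectorial open sets $V\dotowns\theta$ contained in $U$:
\[
\Psi_a(\Ex^{\Re h}_{U|X})_\theta \simeq \ilim[V] \FHom(\field^\enh_V,\Ex^{\Re h}_{V|X}).
\]
By adjunction along the open inclusion $j_V\colon V\hookrightarrow X$, regarding $V$ with its intrinsic bordered structure $(V,V)$, this equals $\FHom(\field^\enh_V,\Ex^{\Re h}_{V|V})$. Since $h$ is holomorphic on $V\subset X\setminus\{a\}$, the real part $\Re h$ is continuous and hence bounded on compact subsets of $V$, so $\Re h\asim{V}0$ for the intrinsic bordered structure. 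Corollary~\ref{cor:fgEx} combined with the exact sequences of \S\ref{sse:iexp} then yields $\Ex^{\Re h}_{V|V}\simeq\field^\enh_V$, reducing the Hom to $\FHom(\field^\enh_V,\field^\enh_V)\simeq\RHom(\field_V,\field_V)\simeq\field$ for contractible connected $V$. The natural stalk isomorphisms patch to $\Psi_a(\Ex^{\Re h}_{U|X})|_I\simeq\field_I$.

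The main obstacle is the bordered-space bookkeeping in (i): the isomorphism $\Ex^{\Re h}_{V|V}\simeq\field^\enh_V$ is valid only for the intrinsic structure $(V,V)$, not when $V$ is viewed through the ambient $X$ (where $\Re h$ blows up as one approaches $a$); the adjunction along $j_V$ is precisely what transfers the Hom computation from the ambient setting, where it is opaque, to the intrinsic one, where it trivializes.
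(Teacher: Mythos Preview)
Your treatment of (ii), (iii), and (iv) is correct and matches the paper's: (ii) is exactly Lemma~\ref{lem:homEfgI}, and the paper simply says (iii) and (iv) follow from (ii), which is what your stalkwise argument spells out.

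For (i), however, there is a genuine gap in your invocation of Lemma~\ref{lem:stalk}. That lemma requires the family $(U_i)$ to satisfy $U_{i+1}\subset\subset U_i$, i.e.\ $\overline{U_{i+1}}$ compact and contained in $U_i$. Sectorial neighborhoods $V\dotowns\theta$ do not form such a family in $X$: every such $V$ has $a\in\overline V$, while $a\notin V'$ for any other sectorial neighborhood $V'$. The replacement of $\beta\field_V$ by $\field_V$ in the proof of Lemma~\ref{lem:stalk} fails here, so the displayed isomorphism $\Psi_a(\Ex^{\Re h}_{U|X})_\theta \simeq \ilim_V\FHom(\field^\enh_V,\Ex^{\Re h}_{V|X})$ is not justified by that lemma. (Compare with Lemma~\ref{lem:nustalk}, where the computation takes place on $\widetilde X_a$: there the relevant neighborhoods of $\theta$ genuinely nest $\subset\subset$, which is why the argument works for $\nuhom[a]$ but not directly for $\Psi_a$.)

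The paper's route to (i) avoids this entirely and is much shorter. Since $h$ is holomorphic on $U\subset X\setminus\{a\}$, $\Re h$ is locally bounded on $U$, so one has directly at the level of sheaves
\[
\fhom(\field^\enh_X,\Ex^{\Re h}_{U|X})\bigr|_U \simeq \fhom(\field^\enh_U,\field^\enh_U) \simeq \field_U.
\]
Applying $\opb{\ti_a}\roim{\tj_a}\opb{j_a}$ to $\field_U$ then gives $\field_I$ on $I$, since $U$ is a sectorial neighborhood of $I$. This is essentially the same ``local boundedness trivializes $\Ex^{\Re h}$'' idea you state at the end, but carried out on the sheaf $\fhom$ over $U$ rather than on global $\FHom$ groups via a stalk formula that is not available here.
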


\begin{proof}
(i) Since $h$ is locally bounded on $U$, one has
\[
\fhom(\field^\enh_X,\Ex^{\Re h}_{U|X})|_U
\simeq \fhom(\field^\enh_U,\field^\enh_U) \simeq \field_U.
\]

\medskip\noindent
(ii) follows from Lemma~\ref{lem:homEfgI} (i).

\medskip\noindent
(iii) and (iv) follow from (ii).
\end{proof}

It was shown in \cite{Del07,Mal91} that there is an equivalence between the category of germs of meromorphic connections with pole at $a$, and the category of finite rank local systems on $S_aX$ endowed with a Stokes filtration.
Through this equivalence, when $\field=\C$, Proposition~\ref{pro:eqStNorm} below corresponds to \cite[Proposition 3.28]{Moc16}.

Let $\Omega\subset X$ be a contractible open neighborhood of $a$.
Let us consider the following conditions for  $K\in\dere^0_\Rc(\ifield_\Omega)$:
\begin{itemize}
\item[(1)] $K|_{\Omega\setminus\{a\}}\simeq e(L)$, for $L$ a local system of finite rank on $\Omega\setminus \{a\}$,
\item[(2)] $K\simeq\opb\pi\field_{\Omega\setminus\{a\}}\tens K$,
\item[(3)] $K$ has normal form at $a$.
\end{itemize}

\begin{proposition} \label{pro:eqStNorm}
The functor $\Psi_a$ induces an equivalence between the full subcategory of $\dere^0_\Rc(\ifield_\Omega)$ whose objects satisfy conditions {\rm (1)--(3)} above, and the category of finite rank local systems on $S_aX$ endowed with a Stokes filtration.
\end{proposition}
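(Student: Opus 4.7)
The plan is to exhibit a quasi-inverse $\Xi$ to $\Psi_a$, by realizing a Stokes filtered local system locally as a direct sum of exponentials $\Ex^{\Re h}_{V|X}$ and gluing along overlaps using the filtration data.

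\emph{Construction of $\Xi$.} Given $(L,\F_{\eprec\bullet}L)$ with multiplicity $N$, I fix a representative subsheaf $\shp'_{S_aX}\subset\shp_{S_aX}$ containing concrete lifts of the exponential factors of $N$. I then choose a cyclically ordered finite set $\Theta=\{\theta_1<\cdots<\theta_d\}\subset S_aX$ and sectorial neighborhoods $V_k$ of $\theta_k$ arranged as in the proof of Proposition~\ref{pro:KF}, whose union covers $\Omega\setminus\{a\}$ and such that on each $I_k=S_aX\cap\tj_a^{-1}(V_k)$ the local system admits a filtered decomposition $L|_{I_k}\simeq\bigoplus_{h\in N^{>0}_{\theta_k}}\field_{I_k}^{N(h)}$. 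I set
\[
K_k \defeq \DSum_{h\in N^{>0}_{\theta_k}}(\Ex^{\Re h}_{V_k|X})^{N(h)}\ \in\ \dere^0_\Rc(\ifield_{V_k}).
\]
On each contractible overlap $V_k\cap V_{k+1}$, the comparison of the two filtered decompositions of $L$ on $I_k\cap I_{k+1}$ is a filtered automorphism which, by Lemma~\ref{lem:homEfgI}~(ii), lifts uniquely to an isomorphism $K_k|_{V_k\cap V_{k+1}}\isoto K_{k+1}|_{V_k\cap V_{k+1}}$. Since $\Theta$ has no triple overlaps, gluing produces $K^\circ\in\dere^0_\Rc(\ifield_{\Omega\setminus\{a\}})$, and writing $j_a$ for the open embedding $\Omega\setminus\{a\}\hookrightarrow\Omega$ I define $\Xi(L,\F_{\eprec\bullet}L)\defeq\opb\pi\field_{\Omega\setminus\{a\}}\tens\Eoim{{j_a}}K^\circ$; it satisfies (1)--(3) by construction.

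\emph{Mutual quasi-inverseness.} A termwise application of Lemma~\ref{lem:Fpsifg} to the decomposition $\opb\pi\field_{V_k}\tens\Xi(L,\F_{\eprec\bullet}L)\simeq K_k$ identifies $\Psi_a(\Xi(L,\F_{\eprec\bullet}L))|_{I_k}$ with $L|_{I_k}$ as a Stokes filtered object, and these identifications agree on overlaps by the construction of the gluing isomorphisms; this yields $\Psi_a\circ\Xi\simeq\id$. Conversely, given $K$ satisfying~(1)--(3) with multiplicity class $\overline N$, Proposition~\ref{pro:KF} supplies a multiplicity $N$ of class $\overline N$ and isomorphisms $\opb\pi\field_{V_k}\tens K\simeq K_k$ on each $V_k$, while Lemma~\ref{lem:homEfgI}~(ii) expresses the resulting gluing data on overlaps in terms of $\Psi_a(K)$ and its filtration; thus $\Xi(\Psi_a(K))\simeq K$ canonically.

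\emph{Full faithfulness and main obstacle.} For $K_1,K_2$ satisfying~(1)--(3), the sheaf $\nuhom[a](K_1,K_2)$ on $S_aX$ decomposes on each $I_k$ into summands of the form $\field_{\{\theta\in I_k\,:\, h\aleq\theta f\}}$ by Lemma~\ref{lem:homEfgI}~(ii), which are precisely the sheaves of Stokes-filtration-preserving maps between the $\Psi_a(K_i)$; combined with assumption~(1) on $\Omega\setminus\{a\}$ this produces a bijection $\Hom(K_1,K_2)\isoto\Hom\bl\Psi_a(K_1),\Psi_a(K_2)\br$ in the Stokes filtered category. I expect the main obstacle to be the coherence of the local-to-global gluing: the invertibility of the transition morphisms between the $K_k$ on overlaps must be read off from the graded pieces of the Stokes filtration, and this is what forces one to fix the representative subsheaf $\shp'_{S_aX}$ from the outset so that the comparisons $h\aleq\theta f$ are unambiguous.
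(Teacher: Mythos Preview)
Your proof is correct and follows the same route as the paper, which also rests on the identification $\hom(S_f,S_h)\simeq\opb{j_I}\nuhom[a](\Ex^{\Re f}_{U|\Omega},\Ex^{\Re h}_{U|\Omega})$ between the elementary building blocks on the two sides and then defers to the gluing argument of Proposition~\ref{pro:KF}, which is precisely the construction you spell out for~$\Xi$. One cosmetic slip: Lemma~\ref{lem:homEfgI} has no part~(ii), so drop the ``(ii)'' from your citations (and note that $\opb\pi\field_{\Omega\setminus\{a\}}\tens\Eoim{{j_a}}K^\circ$ simplifies to $\Eeeim{{j_a}}K^\circ$).
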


\begin{proof}
Let $I\subset S_aX$.
Denote by $S_h$ the constant sheaf $\field_I$ endowed with the Stokes filtration
\[
(\F_{\eprec k} S_h)_\theta = 
\begin{cases}
\field, &\text{if } h\aleq\theta k, \\
0, &\text{otherwise}.
\end{cases}
\]
By Lemma~\ref{lem:Fpsifg} (ii), one has $S_h\simeq\Psi_a(\Ex^{\Re h}_{U| \Omega})$.
Then, for $f,h\in\shp_{S_aX}(I)$, one has
\[
\hom(S_f,S_h) \simeq \opb{j_I}\nuhom[a](\Ex^{\Re f}_{U| \Omega}, \Ex^{\Re h}_{U| \Omega}),
\]
where $U$ is an open sectorial neighborhood of $I$ where $f$ and $h$ are defined.
Hence, the statement is obtained by similar arguments to those in the proof of Lemma~\ref{pro:KF}.
\end{proof}

\subsection{Multiplicity test functor}
Recall the remark in \S\ref{sse:ilim}.

\begin{definition}
Let $(a,\theta,f)$ be a Puiseux germ on $X$.
The multiplicity test functor at $(a,\theta,f)$ is given by
\begin{align*}
\G_{(a,\theta,f)}\colon \dereb_+(\ifield_X) &\to \BDC(\field),\\
K &\mapsto \ilim[V,c,\delta,\varepsilon]
\FHom(\ex_{V|X}^{\range{\Re f+c}{\Re f -\delta|z_a|^{-\varepsilon}}}, K),
\end{align*}
where $z_a$ is a  local coordinate at $a$,
$V$ runs over the open sectorial neighborhoods of $\theta$, $c\to+\infty$, and
$\delta,\varepsilon\to0+$.

This does not depend on the choice of the local coordinate $z_a$.
\end{definition}

Note that one has
\[
\G_{(a,\theta,f)}(K) \simeq
\ilim[c,\delta,\varepsilon\to0+] \nuhom[a]\bl\ex_{U|X}^{\range{\Re f+c}{\Re f -\delta|z_a|^{-\varepsilon}}}, K\br_\theta,
\]
for $U$ an open sectorial neighborhood of $\theta$ where $f$ is defined.

\begin{lemma}\label{lem:Gzetabdd} 
Let $(a,\theta,f)$ be a Puiseux germ on $X$, and 
$K\in\dereb_+(\ifield_X)$.
\begin{itemize}
\item[(i)]
If $K \simeq \Efield_X\ctens K$, then
\begin{align*}
\G_{(a,\theta,f)}(K) 
&\simeq \ilim[V,\delta,\varepsilon]
\FHom(\ex_{V|X}^{\range{\Re f}{\Re f -\delta|z_a|^{-\varepsilon}}}, K) \\
&\simeq \ilim[V,\delta,\varepsilon]
\FHom(\Ex_{V|X}^{\range{\Re f}{\Re f -\delta|z_a|^{-\varepsilon}}}, K).
\end{align*}
\item[(ii)]
If  $h\in\shp_{S_aX,\theta}$ satisfies $h\asim \theta f$,  
then $\G_{(a,\theta,h)}(K) \simeq \G_{(a,\theta,f)}(K)$.
\item[(iii)] 
One has $\G_{(a,\theta,f)}(K) \simeq \G_{(a,\theta,f)}(\Efield_X\ctens K)$.
\end{itemize}
\end{lemma}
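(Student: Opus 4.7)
The plan is to establish the three claims in the order (i), (ii), (iii), with (i) as the core computation. The key inputs are the short exact sequence \eqref{eq:exphipsi}, Corollary~\ref{cor:fgEx}, and a \emph{reflection identity} $\FHom(L, K) \simeq \FHom(\Efield_X \ctens L, K)$ valid whenever $K \simeq \Efield_X \ctens K$; this identity follows from the adjunction $\ctens\dashv\cihom$ together with the isomorphism $\cihom(\Efield_X, K) \simeq K$ on the essential image of $\Efield_X\ctens(-)$, which in turn is a consequence of the idempotency $\Efield_X\ctens\Efield_X\simeq\Efield_X$. An immediate corollary is that $\FHom(L, K) \simeq 0$ whenever $\Efield_X \ctens L \simeq 0$ and $K$ is $\Efield_X$-stable.

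For (i), I apply \eqref{eq:exphipsi} to the triple $\Re f + c \geq \Re f \geq \Re f - \delta|z_a|^{-\varepsilon}$, obtaining a distinguished triangle whose third term is $\ex^{\range{\Re f+c}{\Re f}}_{V|X}$. By Corollary~\ref{cor:fgEx}, $\Efield_X \ctens \ex^{\range{\Re f+c}{\Re f}}_{V|X} \simeq 0$ since $c$ is a bounded constant, so the reflection corollary gives $\FHom(\ex^{\range{\Re f+c}{\Re f}}_{V|X}, K) \simeq 0$, collapsing the triangle under $\FHom(-,K)$ and yielding the first isomorphism. For the second isomorphism, the canonical morphism $\ex^{\range{\Re f}{\Re f-\delta|z_a|^{-\varepsilon}}}_{V|X} \to \Ex^{\range{\Re f}{\Re f-\delta|z_a|^{-\varepsilon}}}_{V|X}$ becomes an isomorphism after applying $\Efield_X\ctens(-)$ by idempotency, so its cone is annihilated by the reflection corollary. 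For (ii), the hypothesis $h \asim{\theta} f$ yields $V_\circ \dotowns \theta$ and $C>0$ with $|\Re h - \Re f| \leq C$ on $V_\circ$, producing inclusions of the defining strips in both directions that render the systems computing $\G_{(a,\theta,f)}(K)$ and $\G_{(a,\theta,h)}(K)$ mutually cofinal.

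For (iii), I use the adjunction under the translation isomorphism $\mu_c(x,t)=(x,t+c)$, which gives
\[
\FHom\bl\ex^{\range{\Re f+c}{\Re f-\delta|z_a|^{-\varepsilon}}}_{V|X}, K\br \simeq \FHom\bl\ex^{\range{\Re f}{\Re f-c-\delta|z_a|^{-\varepsilon}}}_{V|X}, \quot\field_{\{t\geq c\}}\ctens K\br.
\]
Introducing a dummy parameter $c' \leq c$ and commuting $\FHom$ with the inductive limit $\indlim[c']\quot\field_{\{t\geq c'\}}\ctens K \simeq \Efield_X \ctens K$ transforms the right-hand side into $\FHom(\ex^{\range{\Re f}{\Re f-c-\delta|z_a|^{-\varepsilon}}}_{V|X}, \Efield_X\ctens K)$; applying (i) to the $\Efield_X$-stable object $\Efield_X\ctens K$ then identifies $\G_{(a,\theta,f)}(K)$ with $\G_{(a,\theta,f)}(\Efield_X\ctens K)$. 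The main obstacle will be justifying the reflection identity and the commutation of $\FHom$ with the ind-limit in the second variable; once these are in hand, the remainder reduces to a careful bookkeeping of the compound inductive limits.
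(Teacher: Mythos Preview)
Your argument for (i) is correct and takes a genuinely different route from the paper's. The paper obtains (i) and (iii) simultaneously from a single application of \cite[Proposition~4.7.9]{DK16}, namely
\[
\ilim[c\to+\infty]\FHom\bl\ex_{V|X}^{\range{\Re f+c}{\Re f-\delta|z_a|^{-\varepsilon}}}, K\br
\;\simeq\;
\FHom\bl\ex_{V|X}^{\range{\Re f}{\Re f-\delta|z_a|^{-\varepsilon}}}, \Efield_X\ctens K\br,
\]
from which (i) follows by setting $\Efield_X\ctens K\simeq K$, and (iii) by recognizing the right-hand side as the (i)-expression for $\Efield_X\ctens K$. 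Your approach to (i) instead kills the $c$-dependence via the short exact sequence and Corollary~\ref{cor:fgEx}, using only the idempotency of $\Efield_X$ and results proved within the paper; this is more self-contained and arguably more elementary. Your (ii) is the ``obvious'' argument the paper alludes to.

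Your (iii), however, is where the comparison tips the other way. After the translation step (which is correct), the two occurrences of $c$ are coupled: in the first argument increasing $c$ induces morphisms in the \emph{wrong} direction (since $\ex^{\range{\Re f}{\Re f-c_1-\delta|z_a|^{-\varepsilon}}}\hookrightarrow\ex^{\range{\Re f}{\Re f-c_2-\delta|z_a|^{-\varepsilon}}}$ for $c_1\leq c_2$), so decoupling into a ``dummy parameter $c'\leq c$'' does not yield a filtrant system in which the diagonal is cofinal in any straightforward sense. One can repair this (e.g., by absorbing $c$ into $\delta|z_a|^{-\varepsilon}$ after shrinking $V$), but the step you flag as the main obstacle---commuting $\FHom$ with the ind-limit in the second variable---\emph{is} precisely \cite[Proposition~4.7.9]{DK16}. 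So your route to (iii) invokes the same external input as the paper while adding translation and cofinality bookkeeping that the paper's one-line argument avoids entirely.
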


\begin{proof}
By \cite[Proposition 4.7.9]{DK16}, one has
\begin{align*}
\ilim[c\to+\infty] \FHom{}&(\ex_{V|X}^{\range{\Re f+c}{\Re f -\delta|z_a|^{-\varepsilon}}}, K) \\
&\simeq \FHom(\Efield_X\ctens \ex_{V|X}^{\range{\Re f}{\Re f -\delta|z_a|^{-\varepsilon}}}, \Efield_X\ctens K) \\
&\simeq \FHom(\ex_{V|X}^{\range{\Re f}{\Re f -\delta|z_a|^{-\varepsilon}}}, \Efield_X\ctens K),
\end{align*}
which implies (i) and (iii).  (ii) is obvious.
\end{proof}

\begin{proposition}\label{pro:GrK}
Let $(a,\theta,f)$ be a Puiseux germ on $X$.
Let $K\in\dereb_\Rc(\ifield_X)$ have normal form at $a$
with multiplicity class $\overline N$.
Then,
\[
\G_{(a,\theta,f)}(K) \simeq
\bigl(\Gr_f\Psi_a(K)\bigr)_\theta \simeq
\field^{\overline N(f)}.
\]
\end{proposition}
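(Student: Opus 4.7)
The plan is to use the normal-form decomposition of $K$ near $\theta$ to reduce to a computation on each exponential building block $\Ex^{\Re h}_{V|X}$, and to evaluate that building block via the short exact sequence for $\Ex^{\range{\dummy}{\dummy}}$ combined with Lemma~\ref{lem:homEfgtheta}. Observe first that $\G_{(a,\theta,f)}$ is defined as a directed colimit over open sectorial neighborhoods of $\theta$, hence depends only on the germ of $K$ at $\theta$, and that it commutes with finite direct sums. Since $K$ has normal form at $a$, Definition~\ref{def:Eform} provides a sectorial neighborhood $V_\theta$ of $\theta$ with
\[
\opb\pi\field_{V_\theta}\tens K \simeq \DSum_{h \in N^{>0}_\theta} \bl\Ex^{\Re h}_{V_\theta|X}\br^{N(h)}.
\]
Since $N^{>0}_\theta$ is well separated (Definition~\ref{def:wellsep}), at most one $h_0$ satisfies $[h_0]=[f]$, in which case $N(h_0)=\overline N(f)$ and otherwise $\overline N(f)=0$. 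Thus the proposition reduces to the claim
\[
\G_{(a,\theta,f)}(\Ex^{\Re h}_{V|X}) \simeq
\begin{cases}
\field & \text{if } [f]=[h], \\
0 & \text{otherwise.}
\end{cases}
\]

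To establish this, I use Lemma~\ref{lem:Gzetabdd}(i) (applicable since $\Efield_X\ctens\Ex^{\Re h}_{V|X}\simeq\Ex^{\Re h}_{V|X}$) together with the nuhom stalk formula of Lemma~\ref{lem:nustalk}, which give
\[
\G_{(a,\theta,f)}(\Ex^{\Re h}_{V|X}) \simeq \ilim[\delta,\varepsilon\to 0+] \nuhom[a]\bl\Ex^{\range{\Re f}{\Re f-\delta|z_a|^{-\varepsilon}}}_{U|X},\Ex^{\Re h}_{U|X}\br_\theta,
\]
for $U$ a sectorial neighborhood of $\theta$ on which $f$ is defined. Applying the contravariant functor $\nuhom[a](\dummy,\Ex^{\Re h}_{U|X})_\theta$ to the short exact sequence
\[
0\to \Ex^{\range{\Re f}{\Re f-\delta|z_a|^{-\varepsilon}}}_{U|X}\to \Ex^{\Re f}_{U|X}\to \Ex^{\Re f-\delta|z_a|^{-\varepsilon}}_{U|X}\to 0
\]
yields a distinguished triangle whose outer vertices are evaluated by Lemma~\ref{lem:homEfgtheta}(i) in terms of the relations $h\aleq\theta f$ and $h\aleq\theta f-\delta|z_a|^{-\varepsilon}$.

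A three-way case analysis then yields the claim. If $[f]=[h]$, then $h\aleq\theta f$ holds while $h\aleq\theta f-\delta|z_a|^{-\varepsilon}$ fails for every $\delta,\varepsilon>0$ (since $\Re(h-f)$ is bounded whereas $\delta|z_a|^{-\varepsilon}\to+\infty$), giving $0\to\field\to\G\to+1$ and $\G\simeq\field$. If $h\not\aleq\theta f$, both conditions fail (adding the positive $\delta|z_a|^{-\varepsilon}$ cannot make an unbounded-above quantity bounded above), so both outer vertices vanish and $\G\simeq 0$. If $h\aless\theta f$, write $\lambda\defeq\ord_a(h-f)>0$; for $\varepsilon<\lambda$ the sum $\Re(h-f)+\delta|z_a|^{-\varepsilon}$ is controlled by its leading negative contribution of pole order $\lambda$ and thus bounded above on a suitable sectorial neighborhood of $\theta$, so both outer vertices become $\field$.

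The main obstacle will be this last case: one must verify that the connecting arrow between the two copies of $\field$ is an isomorphism, forcing $\G\simeq 0$. I plan to extract this from Lemma~\ref{lem:fgEx}(i), which identifies both Hom-groups, after taking the stalk at $\theta$ and $c$ large enough, with $\rhom(\field_W,\field_X)_\theta\simeq\field$ for a common contractible sectorial neighborhood $W$ of $\theta$ contained in $\{\Re h\le \Re f-\delta|z_a|^{-\varepsilon}+c\}\subset\{\Re h\le\Re f+c\}$. The induced map between the two stalks is then induced by the identity $\field_W\to\field_W$, hence is an isomorphism, and the triangle reads $\field\isoto\field\to\G\to+1$, whence $\G\simeq 0$. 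Summing the three cases over $h\in N^{>0}_\theta$ yields $\G_{(a,\theta,f)}(K)\simeq\field^{\overline N(f)}$, and the identification with $(\Gr_f\Psi_a(K))_\theta$ is immediate from Proposition~\ref{pro:Fpsi}, which already computes the latter as $\field^{\overline N(f)}$.
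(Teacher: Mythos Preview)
Your overall strategy coincides with the paper's: decompose $K$ according to Definition~\ref{def:Eform}, use Lemma~\ref{lem:Gzetabdd}(i), and reduce to the computation of $\G_{(a,\theta,f)}(\Ex^{\Re h}_{U|X})$ via the distinguished triangle associated with $\Ex^{\range{\dummy}{\dummy}}$. The identification with $\bigl(\Gr_f\Psi_a(K)\bigr)_\theta$ via Proposition~\ref{pro:Fpsi} is also the same.

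There is, however, a genuine gap in your case analysis. You invoke Lemma~\ref{lem:homEfgtheta}(i) to evaluate
\[
\nuhom[a]\bl\Ex_{U|X}^{\Re f-\delta|z_a|^{-\varepsilon}},\Ex^{\Re h}_{U|X}\br_\theta,
\]
but that lemma is stated for a pair of \emph{Puiseux germs}, and $\Re f-\delta|z_a|^{-\varepsilon}$ is not the real part of a Puiseux germ. For general continuous exponents only Lemma~\ref{lem:Ephi12} is available, and it does not cover the intermediate regime where the difference is neither bounded above nor tends to $+\infty$. This is precisely what happens in your case ``$h\not\aleq\theta f$'' when $\theta$ is a Stokes direction of the pair $(f,h)$: then $\Re(h-f)$ changes sign in every sectorial neighborhood of $\theta$, the perturbation $\delta|z_a|^{-\varepsilon}$ with $\varepsilon<\ord_a(h-f)$ does not alter this, and neither branch of Lemma~\ref{lem:Ephi12} yields the vanishing you claim.

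The paper handles this by a small but essential trick: one chooses $\beta\in\C^\times$ with $\Re(\theta\beta)>0$, so that on a suitable sector $|z_a|^{-\varepsilon}$ and $\Re\bl(z_a\beta)^{-\varepsilon}\br$ are comparable up to multiplicative constants. The colimit defining $\G_{(a,\theta,f)}$ is therefore unchanged if one replaces $\delta|z_a|^{-\varepsilon}$ by $\delta\Re\bl(z_a\beta)^{-\varepsilon}\br$. Setting $f_{\delta,\varepsilon}(z)\defeq f(z)-\delta(z_a\beta)^{-\varepsilon}$, this is now a genuine Puiseux germ, so Lemma~\ref{lem:homEfgtheta}(i) applies directly to both outer vertices and the three cases $f\asim\theta h$, $h\aless\theta f$, $h\not\aleq\theta f$ go through uniformly without further justification of the connecting map. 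Your argument is easily repaired by inserting this replacement before the case analysis.
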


\begin{proof}
The second isomorphism follows from Proposition~\ref{pro:Fpsi}.

Then, decomposing $K$ as in Definition~\ref{def:Eform}, and using Lemma~\ref{lem:Gzetabdd}~(i),
the statement follows from Lemma~\ref{lem:GEf12} below.
\end{proof}

\begin{lemma}\label{lem:GEf12}
Let $f,h\in\shp_{S_aX,\theta}$, and let $U$ be an open sectorial neighborhood of $\theta$ where $h$ is defined. Then, one has
\[
\G_{(a,\theta,f)}(\ex^{\Re h}_{U|X}) \simeq
\G_{(a,\theta,f)}(\Ex^{\Re h}_{U|X}) \simeq
\begin{cases}
\field, &\text{if }f \asim\theta h, \\
0, &\text{otherwise.}
\end{cases}
\]
\end{lemma}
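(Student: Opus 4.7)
The plan is to reduce everything to an application of the short exact sequence \eqref{eq:exphipsi} combined with Lemma~\ref{lem:homEfgtheta}. First, observe that the identification $\Ex^{\Re h}_{U|X} \simeq \Efield_X \ctens \ex^{\Re h}_{U|X}$ together with Lemma~\ref{lem:Gzetabdd}(iii) immediately gives the first isomorphism of the statement, so it suffices to compute $\G_{(a,\theta,f)}(\Ex^{\Re h}_{U|X})$. By Lemma~\ref{lem:Gzetabdd}(i) applied with $V=U$ and using Lemma~\ref{lem:nustalk}, this quantity equals
\[
\ilim[\delta,\varepsilon \to 0+] \nuhom[a]\bl \Ex^{\range{\Re f}{\Re f - \delta|z_a|^{-\varepsilon}}}_{U|X},\ \Ex^{\Re h}_{U|X}\br_\theta.
\]

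Next I would apply $\nuhom[a]\bl -,\ \Ex^{\Re h}_{U|X}\br_\theta$ to the short exact sequence
\[
0 \to \Ex^{\range{\Re f}{\Re f - \delta|z_a|^{-\varepsilon}}}_{U|X} \to \Ex^{\Re f}_{U|X} \to \Ex^{\Re f - \delta|z_a|^{-\varepsilon}}_{U|X} \to 0
\]
to obtain a distinguished triangle whose first two terms are computed by Lemma~\ref{lem:homEfgtheta}(i): the term $\nuhom(\Ex^{\Re f}_{U|X},\Ex^{\Re h}_{U|X})_\theta$ equals $\field$ if $h\aleq\theta f$ and $0$ otherwise, while $\nuhom(\Ex^{\Re f-\delta|z_a|^{-\varepsilon}}_{U|X},\Ex^{\Re h}_{U|X})_\theta$ equals $\field$ iff $h\aleq\theta f-\delta|z_a|^{-\varepsilon}$. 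The desired stalk is thus the cone of the connecting map $\alpha_{\delta,\varepsilon}$, which I would split into three cases.

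If $f\asim\theta h$, then $h\aleq\theta f$ (right-hand term is $\field$) but $\Re(h-f)+\delta|z_a|^{-\varepsilon} \to +\infty$, so the left-hand term vanishes; the cone is $\field$, independent of $\delta,\varepsilon$, giving $\G_{(a,\theta,f)}=\field$. If $f\not\asim\theta h$ and $h\not\aleq\theta f$, both terms vanish (adding the non-negative $\delta|z_a|^{-\varepsilon}$ cannot make an unbounded-above quantity bounded), so the cone is $0$ and $\G_{(a,\theta,f)}=0$.

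The delicate case will be when $f\not\asim\theta h$ but $h\aleq\theta f$. Here, setting $\lambda \defeq \ord_a(h-f)>0$, the leading term of $\Re(h-f)$ along any small sectorial neighborhood $V'\dotowns\theta$ dominates $\delta|z_a|^{-\varepsilon}$ for $\varepsilon<\lambda$, so $\Re(h-f)+\delta|z_a|^{-\varepsilon}\to -\infty$; thus the left-hand term is also $\field$, and $\alpha_{\delta,\varepsilon}\cl\field\to\field$ must be shown to be an isomorphism. For this I would unwind the identifications of Lemma~\ref{lem:fgEx}(i): for $V'$ small contractible and $c$ large, both sublevel sets $\{\Re(h-f)\le c\}\cap V'$ and $\{\Re(h-f)+\delta|z_a|^{-\varepsilon}\le c\}\cap V'$ coincide with $V'$ itself, so both stalks identify with $\rsect(V';\field)\simeq\field$, and the map $\alpha_{\delta,\varepsilon}$ comes from the identity inclusion $\field_{V'}=\field_{V'}$ of constant sheaves; it is therefore the identity. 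The cone consequently vanishes, giving $\G_{(a,\theta,f)}=0$ in this case as well. The main obstacle is precisely this last identification: keeping careful track of how the quotient morphism $\Ex^{\Re f}_U \twoheadrightarrow \Ex^{\Re f-\delta|z_a|^{-\varepsilon}}_U$ is transported, via Lemma~\ref{lem:fgEx}, into the pullback of constant sheaves on the two sublevel sets.
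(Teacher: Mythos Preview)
Your overall architecture is the same as the paper's: pass to $\Ex$ via Lemma~\ref{lem:Gzetabdd}(iii), apply $\nuhom[a](-,\Ex^{\Re h}_{U|X})_\theta$ to the exact sequence for $\Ex^{\range{\Re f}{\Re f-\delta|z_a|^{-\varepsilon}}}_{U|X}$, and then do a three-case analysis. The difficulty is that you invoke Lemma~\ref{lem:homEfgtheta}(i) to compute
\[
\nuhom[a]\bl \Ex^{\Re f-\delta|z_a|^{-\varepsilon}}_{U|X},\ \Ex^{\Re h}_{U|X}\br_\theta,
\]
but that lemma is stated only for exponents of the form $\Re k$ with $k$ a \emph{Puiseux germ}, and $\Re f-\delta|z_a|^{-\varepsilon}$ is not of this form (the function $|z_a|$ is not holomorphic). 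So the blanket claim ``equals $\field$ iff $h\aleq\theta f-\delta|z_a|^{-\varepsilon}$'' is not supported by any lemma in the paper. In your case~(a) you can fall back on Lemma~\ref{lem:Ephi12}(iii) (since the difference genuinely tends to $+\infty$), and in your case~(b) Lemma~\ref{lem:Ephi12}(i) suffices; but in your case~(c), when $\theta$ happens to be a Stokes direction of $h-f$, the quantity $\Re(h-f)+\delta|z_a|^{-\varepsilon}$ is unbounded above without tending to $+\infty$, so neither part of Lemma~\ref{lem:Ephi12} applies and ``unbounded above $\Rightarrow$ vanishing'' is precisely what you would still have to prove.

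The paper closes this gap by a small but essential trick: it chooses $\beta\in\C^\times$ with $\Re(\theta\beta)>0$, so that $|z_a|$ and $\Re(z_a\beta)$ are comparable on a sector around $\theta$, and replaces $\delta|z_a|^{-\varepsilon}$ by $\delta\,\Re\bl(z_a\beta)^{-\varepsilon}\br$ in the inductive limit. Then
\[
f_{\delta,\varepsilon}(z)\defeq f(z)-\delta(z_a\beta)^{-\varepsilon}
\]
\emph{is} a Puiseux germ, Lemma~\ref{lem:homEfgtheta}(i) applies verbatim to the pair $(f_{\delta,\varepsilon},h)$, and the Stokes-direction case is handled by that lemma's own proof (the argument~(i-b), together with Lemma~\ref{lem:Stepsilon} to ensure $h\not\aleq\theta f_{\delta,\varepsilon}$ for $0<\varepsilon<\ord_a(h-f)$). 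With this substitution your three-case analysis goes through; your explicit verification in case~(b) that the map $\alpha_{\delta,\varepsilon}$ is an isomorphism (via the coincidence of sublevel sets with $V'$) is a pleasant elaboration of what the paper leaves implicit.
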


\begin{proof}
The first isomorphism follows from Lemma~\ref{lem:Gzetabdd} (iii).
Let us prove the second isomorphism.

Let us set
\begin{align*}
\G^+
&\defeq \nuhom[a]\bl\Ex_{U|X}^{\Re f} ,
\Ex^{\Re h}_{U|X}\br_\theta, \\
\G^-_{\delta,\varepsilon} 
&\defeq \nuhom[a]\bl\Ex_{U|X}^{\Re f -\delta|z_a|^{-\varepsilon}},
\Ex^{\Re h}_{U|X}\br_\theta, \\
\G^- &\defeq \ilim[\delta,\varepsilon\to0+] \G^-_{\delta,\varepsilon},
\end{align*}
so that there is a distinguished triangle in $\BDC(\field)$
\[
\G^-\to \G^+\to \G_{(a,\theta,f)}(\Ex^{\Re h}_{U|X}) \to[+1].
\]
Using a local coordinate $z_a$ at $a$, let $\beta\in\C^\times$ be such that
$\Re(\theta\beta)>0$. Then, after shrinking $U$, there is a constant $C>0$ such that $0 < \Re z_a\beta\leq |\beta|\,|z_a|
\leq C \Re z_a\beta$ on $U$. It follows that
\[
\G^- \simeq \ilim[\delta,\varepsilon\to0+] \G^{\prime-}_{\delta,\varepsilon},
\]
where we set $f_{\delta,\varepsilon}(z) \defeq f(z) -\delta(z_a\beta)^{-\varepsilon}$, and
\[
\G^{\prime-}_{\delta,\varepsilon} 
\defeq \nuhom[a]\bl\Ex_{U|X}^{\Re f_{\delta,\varepsilon}},
\Ex^{\Re h}_{U|X}\br_\theta.
\]

To get the statement, it is then enough to prove
\begin{itemize}
\item[(a)]
$\G^+\simeq\field$ and $\G^-\simeq 0$ if $f \asim\theta h$,
\item[(b)]
$\G^-\isoto \G^+ \simeq \field$ if $h \aless\theta f$,
\item[(c)]
$\G^-\simeq 0 \simeq \G^+$ if $h \not\aleq\theta f$.
\end{itemize}
For this, we are going to use Lemma~\ref{lem:homEfgtheta} (i).

\medskip\noindent
(a) If $f \asim\theta h$, then $f_{\delta,\varepsilon} \aless\theta h$ for any $\delta$ and $\varepsilon$. Thus, $\G^+\simeq \field$ and $\G^{\prime-}_{\delta,\varepsilon}\simeq 0$.

\medskip\noindent
(b) If $h \aless\theta f$, then $h \aless\theta f_{\delta,\varepsilon}$ for any $0<\delta,\varepsilon\ll1$. 
Thus, $\G^+\simeq \field \simeq \G^{\prime-}_{\delta,\varepsilon}$. 

\medskip\noindent
(c) If $h \not\aleq\theta f$, then $h \not\aleq\theta f_{\delta,\varepsilon}$ for any $0<\delta,\varepsilon\ll1$. 
Thus,  $\G^+\simeq 0 \simeq \G^{\prime-}_{\delta,\varepsilon}$.
\end{proof}

\subsection{A vanishing result}

Let $\V$ be a complex affine line with coordinate $z$. 
Let $\PP=\V\cup\{\infty\}$ be its projective compactification.
In this subsection, with respect to the notations in \eqref{eq:IStX},
we consider $X=\PP$ and $a\in\{0,\infty\}$.
Let us take $z_0=z$ and $z_\infty=z^{-1}$ as local coordinate at $0$ and $\infty$, respectively.
We shall write $S_\infty\V$ instead of $S_\infty\PP$.  

Let $U$ be an open sectorial neighborhood of $\theta\in S_a\V$, and
let $\varphi\colon U \to \R$ be a real analytic and globally subanalytic function,
that is, a real analytic function whose graph is subanalytic in $\PP\times\cR$.
We write $\varphi'$ for short, instead of $\partial_z\varphi(z,\overline z)$.

\begin{lemma}\label{lem:phi'rR}
Let $\theta_\circ\in S_a\V$, and $f\in\shp^{(0,+\infty)}_{S_a\V,\theta_\circ}$.
Let $\varphi\colon U \to \R$ be a real analytic and globally subanalytic
function, with $U$ a sectorial open neighborhood of $\theta_\circ$.
Assume one of the following conditions:
\begin{itemize}
\item[(a)]
$a=\infty$, $\ord_\infty(f)< 1$, and $|\varphi'|\geq r$ at $\theta_\circ$ for some $r>0$; 
\item[(b)]
$a=\infty$, $\ord_\infty(f)>1$, and $|\varphi'|\leq R$ at $\theta_\circ$ for some $R>0$;
\item[(c)]
$a=0$, and $|\varphi'|\leq R$ at $\theta_\circ$ for some $R>0$.
\end{itemize}
Then, for a generic $\theta$ near $\theta_\circ$, one has
$\G_{(a,\theta,f)}(\ex_{U|\PP}^\varphi) \simeq 0$.
\end{lemma}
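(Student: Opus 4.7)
The plan is to adapt the argument of Lemma~\ref{lem:GEf12}, with the Puiseux germ $\Re h$ replaced by the general real-analytic globally subanalytic function $\varphi$, and to check that in each of cases (a), (b), (c) the asymptotic growth of $\varphi$ is incompatible with that of $\Re f$ at generic $\theta$.

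First, by Lemma~\ref{lem:Gzetabdd}~(i) and~(iii), I would rewrite
\[
\G_{(a,\theta,f)}(\ex_{U|\PP}^\varphi) \simeq \ilim[\delta,\varepsilon\to 0+] \nuhom[a]\bl\Ex_{U|\PP}^{\range{\Re f}{\Re f - \delta|z_a|^{-\varepsilon}}},\Ex_{U|\PP}^\varphi\br_\theta,
\]
and apply $\nuhom[a](\dummy,\Ex_{U|\PP}^\varphi)_\theta$ to the short exact sequence~\eqref{eq:exphipsi} for $\Ex$ to produce a distinguished triangle
\[
\G^- \to \G^+ \to \G_{(a,\theta,f)}(\ex_{U|\PP}^\varphi)\tone,
\]
with $\G^+ \defeq \nuhom[a](\Ex^{\Re f}_{U|\PP},\Ex^\varphi_{U|\PP})_\theta$ and $\G^- \defeq \ilim[\delta,\varepsilon\to 0+]\nuhom[a](\Ex^{\Re f-\delta|z_a|^{-\varepsilon}}_{U|\PP},\Ex^\varphi_{U|\PP})_\theta$. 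It will suffice to show that either both $\G^\pm$ vanish, or that the connecting map $\G^- \to \G^+$ is an isomorphism.

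The heart of the argument is the following growth claim: for generic $\theta$ near $\theta_\circ$, $\varphi - \Re f$ diverges sectorially to $+\infty$ or $-\infty$ as $z\to\theta$, and the perturbation $\delta|z_a|^{-\varepsilon}$ is of strictly smaller order than $|\varphi - \Re f|$ once $\varepsilon < \ord_a(f)$. For case (c), the bound $|\varphi'|\le R$ forces $\varphi$ to be bounded on any relatively compact subset of $U$, while $f$ having positive pole order at $0$ makes $\Re f$ grow like $|z|^{-\ord_0 f}$ and diverge to $\pm\infty$ at every $\theta$ outside the finitely many Stokes directions of the leading monomial. For cases (a) and (b), I would exploit the subanalyticity of $\varphi'$: by the Lion--Rolin preparation theorem for globally subanalytic functions, $\varphi'(z)$ admits a limit $\alpha(\theta)\in\C\cup\{\infty\}$ as $z\to\theta$ for all but finitely many $\theta\in S_\infty\V$, and integration along the ray $z=se^{i\theta}$ yields $\varphi(se^{i\theta})\sim 2\Re(e^{i\theta}\alpha(\theta))\,s$ as $s\to\infty$. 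In case (a), $|\varphi'|\ge r$ forces $|\alpha(\theta)|\ge r$, so $\varphi$ grows at least linearly at generic $\theta$, dominating $|\Re f|=O(|z|^{\ord_\infty f})$ with $\ord_\infty f<1$; in case (b), $|\varphi'|\le R$ caps $|\varphi|$ at linear growth, which is in turn dominated by $|\Re f|$ of order strictly greater than~$1$ for $\theta$ outside the Stokes directions of the leading term of $f$.

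Granting the claim, the conclusion follows from Lemma~\ref{lem:fgEx}~(i): if $\varphi-\Re f\to+\infty$ at $\theta$, then $V\cap\{\varphi\le\Re f+c\}$ and $V\cap\{\varphi\le\Re f-\delta|z_a|^{-\varepsilon}+c\}$ are empty for small $V\dotowns\theta$ and any $c$, whence $\G^+=\G^-=0$; if $\varphi-\Re f\to-\infty$, both sets equal $V$ (chosen small and contractible), so $\G^\pm\simeq\field$ via $\RHom(\field_V,\field_\PP)\simeq\field$, with connecting morphism induced by the identity on $\field_V$ and therefore an isomorphism. In either case, $\G_{(a,\theta,f)}(\ex^\varphi_{U|\PP})\simeq 0$. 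The main obstacle will be the subanalytic asymptotic input underlying the growth claim—namely, the existence of a generic limit $\alpha(\theta)$ for $\varphi'$ along radial directions and the uniformity in the sector needed to compare $\varphi$ with $\Re f$ quantitatively in cases (a) and~(b).
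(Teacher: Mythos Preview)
Your overall architecture matches the paper's: set up the distinguished triangle $\G^-\to\G^+\to\G_{(a,\theta,f)}(\ex_{U|\PP}^\varphi)\tone$ and kill the cone by showing that $\varphi-\Re f$ diverges to a definite sign at generic $\theta$, so that Lemma~\ref{lem:Ephi12} applies uniformly to both ends. The paper works with $\ex$ rather than $\Ex$ and carries the extra $c\to+\infty$ parameter explicitly, but this is cosmetic by Lemma~\ref{lem:Gzetabdd}~(iii).

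The real divergence is in how the growth comparison is obtained. The paper does not pass through radial limits of $\varphi'$ and integration; instead it posits directly the Puiseux-type expansion $\varphi(z,\overline z)=s^\mu\widetilde\varphi(s^{-1/p},\zeta)$ with $\widetilde\varphi_0\not\equiv 0$, and then a short computation (Sublemma~\ref{sublem:phi'rR}) reads off the inequality $\mu\geq 1$ in case~(a), $\mu\leq 1$ in case~(b), $\mu\leq 0$ in case~(c) from the bound on $|\varphi'|$. For generic $\theta$ one has $\widetilde\varphi_0(\theta)\neq 0$ and $\widetilde\psi_0(\theta)\neq 0$ (where $\Re f=s^\lambda\widetilde\psi$), whence $\varphi-\Re f\sim C s^{\max(\mu,\lambda)}$ with a nonzero constant, and the sign of $C$ splits into the two subcases handled by Lemma~\ref{lem:Ephi12}~(ii) and~(iii). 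This is cleaner than your route: your integration formula $\varphi(se^{i\theta})\sim 2\Re(e^{i\theta}\alpha(\theta))\,s$ tacitly assumes $\alpha(\theta)\in\C$ and $\Re(e^{i\theta}\alpha(\theta))\neq 0$, and says nothing when $\alpha(\theta)=\infty$; to handle that you would in effect need the full Puiseux expansion of $\varphi'$ anyway, at which point you are back to the paper's Sublemma from the other side. So your approach is not wrong, but the ``main obstacle'' you flag is precisely what the paper avoids by working with the expansion of $\varphi$ rather than of $\varphi'$.
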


We need a preliminary result.

Write $z=s\zeta$, with $s=|z|$ and $\zeta\in\C$ with $|\zeta|=1$.

If $a=\infty$, let us identify $S_\infty\V$ with $\{\zeta\in\C \semicolon |\zeta|=1\}$,
so that $\R_{>0}\times S_\infty\V$ is an open subset of $\widetilde\PP_\infty$.
After shrinking $U\dotowns\theta_\circ$, for $z\in U$ one has
\begin{equation}
\label{eq:phipsimu}
\varphi(z,\overline z)
= s^\mu \widetilde\varphi(s^{-1/p},\zeta), 
\end{equation}
where $p\in\Z_{\geq 1}$, $\mu\in\frac1p\Z_{>0}$, and
$\widetilde\varphi$ is a real valued real analytic function in a connected neighborhood of $(0,\theta_\circ)\in\R\times S_\infty\V$.
If $\widetilde\varphi_0(\zeta)\defeq \widetilde\varphi(0,\zeta)\not\equiv 0$, we say that $\varphi$ has pole order $\mu$.

If $a=0$, we identify instead $S_0\V$ with $\{\zeta\in\C \semicolon |\zeta|=1\}$.
In this case, we have $\varphi(z,\overline z) = s^{-\mu} \widetilde\varphi(s^{1/p},\zeta)$.
If $\widetilde\varphi_0(\zeta)\not\equiv 0$, we say that $\varphi$ has pole order $\mu$.

\begin{sublemma}\label{sublem:phi'rR}
Let $\theta\in S_a\V$, and
let $\varphi$ be a real analytic and globally subanalytic function at
$\theta$, with pole order $\mu\in\Q$.
\begin{itemize}
\item[(i)]
If $a=\infty$ and $|\varphi'|\geq r$ at $\theta$ for some $r>0$, then $\mu\geq 1$.
\item[(ii)]
If $a=\infty$ and $|\varphi'|\leq R$ at $\theta$ for some $R>0$, then $\mu\leq 1$.
\item[(ii)]
If $a=0$ and $|\varphi'|\leq R$ at $\theta$ for some $R>0$, then $\mu\leq 0$.
\end{itemize}
\end{sublemma}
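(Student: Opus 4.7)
The plan is to compute the asymptotic behavior of $\partial_z\varphi$ in the polar coordinates $z = se^{i\theta}$ and to compare the resulting power of $s$ with the growth or boundedness hypothesis on $|\varphi'|$. From $s = \sqrt{z\bar z}$ and $2i\theta = \log(z/\bar z)$ one computes $\partial_z = \tfrac{1}{2}\bar\zeta\bigl(\partial_s - \tfrac{i}{s}\partial_\theta\bigr)$ with $\zeta = e^{i\theta}$. Inserting the polar form \eqref{eq:phipsimu} and its analogue at $a=0$, and using $\partial_s(s^{\pm 1/p}) = \pm\tfrac{1}{ps}s^{\pm 1/p}$, one obtains
\begin{equation*}
\partial_z\varphi = \tfrac{\bar\zeta}{2}\,s^{\mu-1}\Bigl(\mu\widetilde\varphi - \tfrac{t}{p}\partial_t\widetilde\varphi - i\partial_\theta\widetilde\varphi\Bigr), \quad t = s^{-1/p}, \qquad (a=\infty),
\end{equation*}
\begin{equation*}
\partial_z\varphi = \tfrac{\bar\zeta}{2}\,s^{-\mu-1}\Bigl(-\mu\widetilde\varphi + \tfrac{t}{p}\partial_t\widetilde\varphi - i\partial_\theta\widetilde\varphi\Bigr), \quad t = s^{1/p}, \qquad (a=0).
\end{equation*}
In the relevant limit ($s\to\infty$ for $a=\infty$, $s\to 0$ for $a=0$) one has $t\to 0$ and $t\,\partial_t\widetilde\varphi\to 0$ by real analyticity, so the bracket converges to $\pm\mu\widetilde\varphi_0(\zeta) - i\partial_\theta\widetilde\varphi_0(\zeta)$ and $|\partial_z\varphi|$ is asymptotic to $\tfrac{1}{2}s^{\pm\mu-1}\bigl|\pm\mu\widetilde\varphi_0(\zeta) - i\partial_\theta\widetilde\varphi_0(\zeta)\bigr|$.

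Next I would verify that this leading coefficient is not identically zero in $\zeta$. Indeed, $\pm\mu\widetilde\varphi_0 - i\partial_\theta\widetilde\varphi_0 \equiv 0$ would force $\widetilde\varphi_0(\theta) = C e^{\mp i\mu\theta}$ for some constant $C$; reality of $\widetilde\varphi_0$ combined with $\mu\in\tfrac{1}{p}\Z_{>0}$ forces $C = 0$, contradicting $\widetilde\varphi_0\not\equiv 0$ (which is part of the definition of pole order $\mu$). By real analyticity the coefficient therefore has only isolated zeros in $\theta$. Consequently, in the sectorial neighborhood $V\dotowns\theta_\circ$ provided by the hypothesis on $|\varphi'|$, one can select an open subsector $V'\subset V$ on which the coefficient is bounded below by a positive constant, while $s$ still ranges to $+\infty$ (case $a=\infty$) or down to $0$ (case $a=0$).

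The three conclusions are then immediate by comparing the exponents of $s$. For (i), if $\mu<1$ then $s^{\mu-1}\to 0$ as $s\to\infty$, so $|\partial_z\varphi|\to 0$ on $V'$, contradicting $|\varphi'|\geq r$; hence $\mu\geq 1$. For (ii), if $\mu>1$ then $s^{\mu-1}\to\infty$, contradicting $|\varphi'|\leq R$; hence $\mu\leq 1$. For (iii), $\mu>0$ yields $-\mu-1<-1$, so $s^{-\mu-1}\to\infty$ as $s\to 0$ on $V'$, again violating $|\varphi'|\leq R$; hence $\mu\leq 0$.

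The delicate point is the genericity argument: because the hypothesis on $|\varphi'|$ must hold throughout a sectorial neighborhood, one must produce at least an open subsector on which the leading coefficient of the asymptotic does not vanish. The reality obstruction above provides this, and it is precisely where the standing assumption $\mu\neq 0$ is used; this is also why the conclusion of Lemma~\ref{lem:phi'rR} downstream is only asserted for \emph{generic} $\theta$ near $\theta_\circ$, rather than for $\theta = \theta_\circ$ itself.
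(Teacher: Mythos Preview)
Your proof is correct and follows essentially the same route as the paper: compute $\partial_z\varphi$ in polar coordinates and read off the exponent of $s$. The paper writes the same formula (using $\partial_\zeta$ on the circle rather than $\partial_\theta$, which differ by a factor of $i\zeta$) and then simply records $|\varphi'|=\mathrm{O}(s^{\mu-1})$, declaring that the statement follows. That upper bound is all one needs for (i). For (ii) and (iii) a lower bound is required, and here you are more careful than the paper: you argue explicitly that the leading coefficient $\pm\mu\widetilde\varphi_0 - i\partial_\theta\widetilde\varphi_0$ is not identically zero (your ODE/reality argument is fine; more directly, since $\widetilde\varphi_0$ is real, vanishing of the complex expression forces $\mu\widetilde\varphi_0\equiv 0$ and $\partial_\theta\widetilde\varphi_0\equiv 0$ separately). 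The paper leaves this implicit.

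Two minor remarks on your write-up. First, there is no ``standing assumption $\mu\neq 0$'': the non-vanishing argument is only invoked in the contrapositive for (ii) (where you assume $\mu>1$) and (iii) (where you assume $\mu>0$), so $\mu\neq 0$ is automatic exactly where you need it; case (i) uses only the upper bound. Second, your closing comment linking this to the genericity in Lemma~\ref{lem:phi'rR} is a bit off: the Sublemma itself holds at the given $\theta$, not just generically; the genericity in Lemma~\ref{lem:phi'rR} enters for other reasons (e.g.\ arranging $\widetilde\varphi_0(\theta)\neq 0$ and $\widetilde\psi_0(\theta)\neq 0$ in that proof).
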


\begin{proof}
Since the proofs are similar, let us only consider the case $a=\infty$.

One has $\varphi' = \partial_z\varphi = \frac1{2\zeta}\partial_s\varphi + \frac1{2s}\partial_\zeta\varphi$, which is the decomposition into real and imaginary parts.
Then, by \eqref{eq:phipsimu},
\[
|\varphi'|^2 = \left|\tfrac\mu{2\zeta} s^{\mu-1}\widetilde\varphi - \tfrac1{2
p}s^{\mu-1-\frac1p}\partial_1\widetilde\varphi\right|^2 + \left|\tfrac12 s^{\mu-1}
\partial_\zeta\widetilde\varphi \right|^2,
\]
where $\partial_1\widetilde\varphi(u,\zeta)$ denotes the derivative with respect to the first variable.

Thus $|\varphi'| = \mathrm{O}(s^{\mu-1})$ at $\theta$, and the statement follows.
\end{proof}

\begin{proof}[Proof of Lemma~\ref{lem:phi'rR}]
As we are considering the case $a=\infty$, we have to prove the statement under either of the assumptions (a) or (b).

If $\varphi\equiv0$ at $\theta_\circ$, the statement hold by Lemma~\ref{lem:GEf12}.

Otherwise, with notations \eqref{eq:phipsimu}, for a generic $\theta$ near $\theta_\circ$, we may assume that $\widetilde\varphi_0(\theta)\neq0$ and 
\[
\varphi(z,\overline z) = \widetilde\varphi_0(\zeta)s^\mu\bl 1+\mathrm{O}(s^{-1/p})\br \quad
\text{for }s\to+\infty.
\]

After replacing $p$ by one of its multiples, we can assume $\lambda\in\frac1p\Z_{>0}$ and
$f(z)=c z^\lambda\bl 1+\mathrm{O}(z^{-1/p})\br$ for $z\to\infty$, with $c\in\C^\times$.
Then, $\Re f(z) = s^\lambda \widetilde\psi(s^{-1/p},\zeta)$, 
where $\widetilde\psi$ is a real valued real analytic function in a connected neighborhood of $(0,\theta_\circ)\in\R\times
S_\infty\V$ with $\widetilde\psi_0(\zeta)\defeq\widetilde\psi(0,\zeta)\not\equiv0$ at $\theta_\circ$. It follows that, for a generic $\theta$ near $\theta_\circ$, one has $\widetilde\psi_0(\theta)\neq0$ and 
\[
\Re f(z) = \widetilde\psi_0(\zeta)s^\lambda\bl 1+\mathrm{O}(s^{-1/p})\br \quad
\text{for }s\to+\infty.
\]

We will use arguments similar to those in the proof of
Lemma~\ref{lem:GEf12}. 
There is a distinguished triangle in $\BDC(\field)$
\[
\G^- \to \G^+ \to \G_{(\infty,\theta,f)}(\ex_{U|\PP}^\varphi) \to[+1],
\]
where we set
\begin{align*}
\G^+_c &\defeq \nuhom[a]\bl\ex_{U|\PP}^{\Re f+c} ,
\ex^{\varphi}_{U|\PP}\br_\theta, \\
\G^-_{\delta,\varepsilon} &\defeq \nuhom[a]\bl\ex_{U|\PP}^{\Re f
-\delta|z|^{\varepsilon}}, \ex^{\varphi}_{U|\PP}\br_\theta,
\end{align*}
and
\[
\G^+ \defeq \ilim[c\to+\infty] \G^+_c, \qquad
\G^- \defeq \ilim[\delta,\varepsilon\to0+] \G^-_{\delta,\varepsilon}.
\]

Note that $z\to\infty$ in $U$ is equivalent to $s\to+\infty$.

\medskip\noindent(a)
We have $0<\lambda<1\leq\mu$, where the last inequality follows from Sublemma~\ref{sublem:phi'rR}.

\medskip\noindent (a-1)
Assume that $\widetilde\varphi_0(\theta)>0$. Then, after shrinking $U$, for some $C>0$ one has
\[
\begin{cases}
\Re(\varphi-f-c) \geq C s^\mu, \\
\Re(\varphi-f+\delta |z|^{\varepsilon}) \geq C s^\mu,
\end{cases}
\]
when $s\to+\infty$.
Thus, both $\G^-_{\delta,\varepsilon}$ and $\G^+_c$ satisfy hypothesis (iii) of
Lemma~\ref{lem:Ephi12}. Hence $\G^-\simeq 0 \simeq \G^+$.

\medskip\noindent (a-2)
Assume that $\widetilde\varphi_0(\theta)<0$. Then, after shrinking $U$, for some $C>0$ one has
\[
\begin{cases}
\Re(\varphi-f-c) \leq
-C s^\mu, \\
\Re(\varphi-f+\delta |z|^{\varepsilon}) \leq
-C s^\mu,
\end{cases}
\]
when $s\to \infty$.
Thus, hypothesis (ii) of Lemma~\ref{lem:Ephi12} is satisfied for
$\G^-_{\delta,\varepsilon}$, for $\G^+_c$.
Hence $\G^-\isoto \G^+ \simeq \field$.

\medskip\noindent(b)
In this case we have $\mu\leq 1<\lambda$, where the first inequality
follows from Sublemma~\ref{sublem:phi'rR}.

\medskip\noindent (b-1)
Assume that $\widetilde\psi_0(\theta)<0$. Then, after shrinking $U$, for some $C>0$ one has
\[
\begin{cases}
\Re(\varphi-f-c) \geq C s^\lambda, \\
\Re(\varphi-f+\delta |z|^{\varepsilon}) \geq C s^\lambda,
\end{cases}
\]
when $s\to +\infty$.
Thus, both $\G^-_{\delta,\varepsilon}$ and $\G^+_c$ satisfy hypothesis (iii) of
Lemma~\ref{lem:Ephi12}. Hence $\G^-\simeq 0 \simeq \G^+$.

\medskip\noindent (b-2)
Assume that $\widetilde\psi_0(\theta)>0$. Then, after shrinking $U$, for some $C>0$ one has
\[
\begin{cases}
\Re(\varphi-f-c) \leq -C s^\lambda, \\
\Re(\varphi-f+\delta |z|^{\varepsilon}) \leq -C s^\lambda,
\end{cases}
\]
when $s\to +\infty$.
Thus, hypothesis (ii) of Lemma~\ref{lem:Ephi12} is satisfied for
$\G^-_{\delta,\varepsilon}$, for $\G^+_c$.
Hence $\G^-\isoto \G^+ \simeq \field$.
\end{proof}

\section{Stationary phase formula}\label{sec:statement}
After recalling in some details the Legendre transform,
we state here the stationary phase formula in terms of enhanced ind-sheaves.

\subsection{Fourier-Laplace and enhanced Fourier-Sato  transforms}

Let $\V$ be a one-dimensional complex vector space, with coordinate $z$, 
and let $\W$ be its dual, with dual coordinate $w$. 
Let $\PP=\V\cup\{\infty\}$ and $\bb=\W\cup\{\infty\}$ be the associated projective lines,
and consider the bordered spaces $\V_\infty = (\V,\PP)$, $\W_\infty = (\W,\bb)$.
Consider the morphisms
\[
\xymatrix{\V_\infty & \V_\infty\times \W_\infty \ar[l]_-p \ar[r]^-q & \W_\infty}
\]
induced by the projections.

The Fourier-Laplace transform for $\D$-modules is defined as follows.
For $\shm\in\BDC(\D_{\V_\infty})$ and $\shn\in\BDC(\D_{\W_\infty})$, set
\begin{align*}
\lap \shm &= \doim q(\she^{-zw}_{\V\times\W|\V_\infty\times\W_\infty}\dtens\dopb p \shm)
&&\in\BDC(\D_{\W_\infty}), \\
\lapa \shn &= \doim p(\she^{zw}_{\V\times\W|\V_\infty\times\W_\infty}\dtens\dopb q \shn)
&&\in\BDC(\D_{\V_\infty}).
\end{align*}
Note that $\BDC_\hol(\D_{\V_\infty})$ is equivalent to the bounded derived category of algebraic $\D_\V$-modules with holonomic cohomologies. Then (see \cite{KL85}), the above functors are compatible with the Fourier transform at the level of the Weyl algebra, given by the isomorphism $ \C[z,\partial_z] \simeq \C[w,\partial_w]$, $z\leftrightarrow-\partial_w$, $\partial_z\leftrightarrow w$.
In particular, $\Lap$ and $\Lapa$ are quasi-inverse of each other, and interchange $\Mod_\hol(\D_{\V_\infty})$ and $\Mod_\hol(\D_{\W_\infty})$.

The Fourier-Sato transform for enhanced sheaves was introduced and studied in \cite{Tam08} (see also \cite{DAg14,KS16L}). It extends to enhanced ind-sheaves as follows.
For $K\in\dereb_+(\ifield_{\V_\infty})$ and $P\in\dereb_+(\ifield_{\W_\infty})$, set
\begin{align*}
\lap K &= \Eeeim q(\ex^{-\Re zw}_{\V\times\W|\V_\infty\times\W_\infty}[1]\ctens\Eopb p K)
&&\in\dereb_+(\ifield_{\W_\infty}), \\
\lapa P &= \Eeeim p(\ex^{\Re zw}_{\V\times\W|\V_\infty\times\W_\infty}[1]\ctens\Eopb q P)
&&\in\dereb_+(\ifield_{\V_\infty}).
\end{align*}
The functors $\Lap$ and $\Lapa$ are quasi-inverse of each other and, since $p$ and $q$ are semiproper, they interchange $\dereb_\Rc(\ifield_{\V_\infty})$ and $\dereb_\Rc(\ifield_{\W_\infty})$.

Note also that $\Lap$ and $\Lapa$ interchange $\dereb_+(\field_\V)$ and $\dereb_+(\field_{\W})$, as well as $\dereb_\Rc(\field_{\V_\infty})$ and $\dereb_\Rc(\field_{\W_\infty})$.

Recall that the Riemann-Hilbert correspondence of \cite{DK16} provides a fully faithful embedding
\[
\xymatrix{
\solE[\V_\infty]\colon\BDC_\hol(\D_{\V_\infty}) \ar@{ >->}[r] & 
\dereb_\Rc(\ind\C_{\V_\infty}).
}
\]
Note that, if $K\simeq\field^\enh\ctens K$, then
\[
\lap K \simeq \Eeeim q(\Ex^{-\Re zw}_{\V\times\W|\V_\infty\times\W_\infty}[1]\ctens\Eopb p K),
\]
and similarly for $\Lapa$.
Since, for $\field=\C$,
\[
\solE[\V\times\W](\she^{\pm\Re zw}_{\V\times\W|\V_\infty\times\W_\infty}) 
\simeq \Ex^{\pm\Re zw}_{\V\times\W|\V_\infty\times\W_\infty},
\]
the next proposition immediately follows from the functoriality of $\solE[]$.
This was first observed in \cite{KS16L}, where the case of non holonomic $\D$-modules was also discussed.

\begin{proposition}\label{pro:Fou}
For $\shm\in\Mod_\hol(\D_{\V_\infty})$ and $\shn\in\Mod_\hol(\D_{\W_\infty})$ one has
\[
\solE[\W](\lap\shm) \simeq \lap\solE[\V](\shm), \quad
\solE[\V](\lapa\shn) \simeq \lapa\solE[\W](\shn).
\]
\end{proposition}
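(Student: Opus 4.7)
The strategy is to apply the enhanced solution functor $\solE[\W_\infty]$ directly to the integral-transform definition of $\lap\shm$, and then to unwind using the compatibility of the enhanced solution functor with the standard $\D$-module operations, as established by the Riemann-Hilbert correspondence of \cite{DK16}. Three formal ingredients will be invoked: the intertwining of $\solE$ with $\doim$ (giving $\Eeeim$), with $\dopb$ (giving $\Eopb$, up to a shift determined by the relative complex dimension), and with $\dtens$ (giving $\ctens$, again up to a shift). The only non-formal ingredient is the explicit identification recalled just before the proposition, namely $\solE[\V_\infty\times\W_\infty](\she^{-zw}_{\V\times\W|\V_\infty\times\W_\infty})\simeq\Ex^{-\Re zw}_{\V\times\W|\V_\infty\times\W_\infty}$.

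Concretely, the chain of isomorphisms to carry out is
\begin{myalignn}
\solE[\W_\infty](\lap\shm)
&\simeq \Eeeim q\,\solE[\V_\infty\times\W_\infty]\bl\she^{-zw}_{\V\times\W|\V_\infty\times\W_\infty}\dtens\dopb p\,\shm\br\\
&\simeq \Eeeim q\bl\solE[\V_\infty\times\W_\infty](\she^{-zw}_{\V\times\W|\V_\infty\times\W_\infty})\ctens\solE[\V_\infty\times\W_\infty](\dopb p\,\shm)\br\\
&\simeq \Eeeim q\bl\Ex^{-\Re zw}_{\V\times\W|\V_\infty\times\W_\infty}\ctens\Eopb p\,\solE[\V_\infty](\shm)\br,
\end{myalignn}
with the shifts coming from the various Riemann-Hilbert compatibilities accumulating to the $[1]$ built into the definition of $\lap$ on enhanced ind-sheaves. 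Since $\solE[\V_\infty](\shm)\simeq\C^\enh_{\V_\infty}\ctens\solE[\V_\infty](\shm)$ for $\shm$ holonomic, the remark just preceding the proposition (to the effect that $\Ex$ may replace $\ex$ inside $\Eeeim q(-\ctens\Eopb p K)$ whenever $K\simeq\C^\enh\ctens K$) shows that the resulting expression coincides with $\lap\solE[\V_\infty](\shm)$.

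The only delicate point, and the one I expect to have to handle with care rather than invoke as a black box, is the bookkeeping of the shift $[1]$: it reflects the complex dimension of $\V$ and must emerge from the correct combination of shifts intrinsic to the Riemann-Hilbert compatibilities for $\dtens$ and $\dopb$. Once these shifts are collected correctly, the first isomorphism of the proposition follows. The companion formula $\solE[\V_\infty](\lapa\shn)\simeq\lapa\solE[\W_\infty](\shn)$ then follows either by the symmetric computation (swapping $\V\leftrightarrow\W$ and reversing the sign of the kernel), or, more economically, by combining the first isomorphism with the fact that $(\Lap,\Lapa)$ and $(\lap,\lapa)$ are quasi-inverse equivalences on the relevant holonomic and $\R$-constructible subcategories.
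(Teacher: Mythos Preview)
Your proposal is correct and follows exactly the approach the paper indicates: the paper does not spell out a proof at all, but simply remarks that the proposition ``immediately follows from the functoriality of $\solE[]$'' together with the identification $\solE[\V_\infty\times\W_\infty](\she^{\pm zw}_{\V\times\W|\V_\infty\times\W_\infty})\simeq\Ex^{\pm\Re zw}_{\V\times\W|\V_\infty\times\W_\infty}$. Your chain of isomorphisms is precisely what this functoriality amounts to, and your attention to the shift $[1]$ (arising from the combination of $\dtens$ and $\dopb p$ with $\dim_\C\V=1$) is the one bookkeeping point the paper leaves implicit.
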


The next lemma will be of use later.

For $a\in\V$, let $\tau_a\colon\V_\infty\to\V_\infty$ be the morphism induced by the translation $\tau_a(z)=z+a$.

\begin{lemma}\label{lem:Foutrans}
For $K\in\dere_+(\ifield_{\V_\infty})$ one has
\[
\lap(\Eopb{{\tau_a}} K) \simeq \ex^{\Re aw}_{\W|\W_\infty}\ctens \lap K.
\]
\end{lemma}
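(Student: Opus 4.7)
The plan is to reduce everything to a change of variables in the $z$-factor of the correspondence, and then use the fact that the exponential sheaf $\ex^{\Re aw}_{\W|\W_\infty}$ can be split off from the kernel via the multiplicativity of exponentials under convolution.

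First, introduce the auxiliary isomorphism of bordered spaces $\widetilde\tau_a \colon \V_\infty\times\W_\infty \to \V_\infty\times\W_\infty$, $(z,w)\mapsto(z+a,w)$, which satisfies $p\circ\widetilde\tau_a = \tau_a\circ p$ and $q\circ\widetilde\tau_a = q$. Hence $\Eopb p\, \Eopb{{\tau_a}} K \simeq \Eopb{{\widetilde\tau_a}}\,\Eopb p K$. Since $\Eopb{{\widetilde\tau_a}}$ is an equivalence commuting with $\ctens$, one may rewrite
\[
\ex^{-\Re zw}_{\V\times\W|\V_\infty\times\W_\infty}[1] \ctens \Eopb{{\widetilde\tau_a}}\Eopb p K
\simeq
\Eopb{{\widetilde\tau_a}}\bl\Eopb{{\widetilde\tau_a^{-1}}}\ex^{-\Re zw}_{\V\times\W}[1]\ctens \Eopb p K\br.
\]
Applying $\Eeeim q$ and using $q\circ\widetilde\tau_a = q$ (so that $\Eeeim q\circ\Eopb{{\widetilde\tau_a}}\simeq\Eeeim q$, as $\widetilde\tau_a$ is an isomorphism), one obtains
\[
\lap(\Eopb{{\tau_a}}K) \simeq \Eeeim q\bl \Eopb{{\widetilde\tau_a^{-1}}}\ex^{-\Re zw}_{\V\times\W}[1]\ctens \Eopb p K\br.
\]

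Second, compute the pullback of the kernel. Since $\widetilde\tau_a^{-1}(z,w)=(z-a,w)$, the function $-\Re zw$ pulls back to $-\Re(z-a)w=-\Re zw+\Re aw$, whence
\[
\Eopb{{\widetilde\tau_a^{-1}}}\ex^{-\Re zw}_{\V\times\W|\V_\infty\times\W_\infty}
\simeq \ex^{-\Re zw+\Re aw}_{\V\times\W|\V_\infty\times\W_\infty}.
\]
A short direct calculation (using the explicit definition of $\ctens$ as $\reim\mu$ applied to the product of the indicator sheaves of epigraphs) shows that for continuous functions $\vphi,\psi\colon U\to\R$ one has $\ex^{\vphi+\psi}_{U|M}\simeq \ex^{\vphi}_{U|M}\ctens\ex^{\psi}_{U|M}$, the point being that the $\mu$-fiber of the product epigraph is a non-empty compact interval exactly when $t+\vphi+\psi\ge0$. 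Applied to $\vphi=-\Re zw$ and $\psi=\Re aw$, and noting that $\ex^{\Re aw}_{\V\times\W|\V_\infty\times\W_\infty}\simeq\Eopb q\ex^{\Re aw}_{\W|\W_\infty}$ since $\Re aw$ depends only on $w$, this gives
\[
\Eopb{{\widetilde\tau_a^{-1}}}\ex^{-\Re zw}_{\V\times\W}[1]
\simeq \ex^{-\Re zw}_{\V\times\W}[1]\ctens \Eopb q\ex^{\Re aw}_{\W|\W_\infty}.
\]

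Third, conclude by the projection formula for enhanced ind-sheaves, $\Eeeim q(A\ctens\Eopb q B)\simeq \Eeeim q(A)\ctens B$. Substituting into the formula above yields
\[
\lap(\Eopb{{\tau_a}} K)
\simeq \Eeeim q\bl\ex^{-\Re zw}_{\V\times\W}[1]\ctens\Eopb p K\br \ctens \ex^{\Re aw}_{\W|\W_\infty}
= \ex^{\Re aw}_{\W|\W_\infty}\ctens\lap K.
\]

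The only genuinely verification-heavy point is the multiplicativity identity $\ex^{\vphi+\psi}\simeq\ex^{\vphi}\ctens\ex^{\psi}$, which is the main mechanism allowing the $a$-dependence to factor out of the kernel; everything else is a formal manipulation of the six operations on bordered spaces.
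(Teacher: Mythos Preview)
Your proof is correct but takes a genuinely different route from the paper's. The paper realizes the translation as a convolution: $\Eopb{\tau_a}K\simeq\epsilon(\field_{\{z=-a\}})\cconv K$ where $\cconv$ is convolution along the additive structure of $\V$, then invokes the (standard but unproved in the paper) fact that the enhanced Fourier--Sato transform sends $\cconv$ to $\ctens$, together with the explicit computation $\lap\epsilon(\field_{\{z=-a\}})\simeq\ex^{\Re aw}_{\W|\W_\infty}$.

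Your approach is more direct and self-contained: you perform the change of variables $\widetilde\tau_a$ inside the integral kernel, compute the pulled-back kernel explicitly as $\ex^{-\Re zw+\Re aw}$, split it via the multiplicativity $\ex^{\vphi+\psi}\simeq\ex^\vphi\ctens\ex^\psi$ (which you verify by hand), and finish with the projection formula. This avoids importing the convolution-to-tensor property of the Fourier transform as a black box; in exchange, you have to check the multiplicativity identity and that $\Eopb f$ is monoidal for $\ctens$, both of which are elementary. The paper's approach is shorter to write down and more in the spirit of classical Fourier analysis, while yours makes all the ingredients visible.
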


\begin{proof}
Note that $\Eopb{\tau_a} K \simeq \Eoim{(\tau_{-a})} K \simeq \epsilon(\field_{\{z=-a\}})\cconv K$, where
the convolution functor is defined by
$K_1\cconv K_2 = \Eeeim m (K_1 \cetens K_2)$, for $m(z_1,z_2)=z_1+z_2$.
Note also that $\lap \epsilon(\field_{\{z=-a\}})\simeq\ex^{\Re aw}_{\W|\W_\infty}$.
Then one has
\begin{align*}
\lap(\Eopb{\tau_a} K) 
&\simeq \lap\bl \epsilon(\field_{\{z=-a\}})\cconv K \br \\
&\simeq \lap \epsilon(\field_{\{z=-a\}})\ctens \lap K  \\
&\simeq \ex^{\Re aw}_{\W|\W_\infty}\ctens \lap K.
\end{align*}
\end{proof}

\subsection{The case of enhanced sheaves}\label{sse:PhiL}
Consider the projections
\[
\V\times\R \from[\overline p] \V\times\W\times\R^2 \to[\overline q] \W\times\R,
\]
given by $\overline p(z,w,t,s)=(z,t)$, $\overline q(z,w,t,s)=(w,s)$.
For $F\in\BDC(\field_{\V\times\R})$ and $G\in\BDC(\field_{\W\times\R})$, set
\begin{align*}
\Phi_\Lap(F) &\defeq \reim{\overline q}( \field_{\{s-t-\Re zw \geq 0\}}[1] \tens \opb{\overline p}F )
&&\in\BDC(\field_{\W\times\R}), \\
\Phi_{\Lapa}(G) &\defeq \reim{\overline p}( \field_{\{t-s+\Re zw \geq 0\}}[1] \tens \opb{\overline q}G)
&&\in\BDC(\field_{\V\times\R}) .
\end{align*}

\begin{lemma}\label{lem:TfouFou}
For $F\in\BDC(\field_{\V\times\R})$ and $G\in\BDC(\field_{\W\times\R})$, one has
\[
\lap(\quot F) \simeq \quot\Phi_\Lap(F), \quad
\lapa(\quot G) \simeq \quot\Phi_{\Lapa}(G).
\]
\end{lemma}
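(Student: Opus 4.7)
The two assertions are symmetric, so I focus on the first one; the second is obtained by swapping the roles of $\V$ and $\W$ and changing the sign of $zw$. The plan is to unfold the definition of $\lap(\quot F)$, exploiting that each of $\Eopb p$, $\ctens$, and $\Eeeim q$ commutes with the quotient functor $\quot$ on sheaf-level inputs, and then to identify the resulting sheaf-level expression with $\Phi_\Lap(F)$ via an affine change of coordinates on $\V\times\W\times\R^2$.

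Concretely, by the definition of $\lap$, the definition of $\ex^{-\Re zw}_{\V\times\W|\V_\infty\times\W_\infty}$ recalled in \S\ref{sse:iexp}, and the compatibility $\Eopb p(\quot F)\simeq\quot\opb{p_\R}F$ with $p_\R(z,w,t)=(z,t)$, one gets
\[
\lap(\quot F)\simeq\Eeeim q\,\quot\bl\field_{\{t-\Re zw\geq 0\}}[1]\ctens\opb{p_\R}F\br.
\]
Expanding $\ctens$ as $\reim\mu(\opb{p_1}\scbul\tens\opb{p_2}\scbul)$ on $(\V\times\W)\times\R^2$ with coordinates $(z,w,t_1,t_2)$ and $\mu(z,w,t_1,t_2)=(z,w,t_1+t_2)$, then using $\Eeeim q\comp\quot\simeq\quot\comp\reim{q_\R}$ (valid because $q$ is semiproper) together with the composition of proper pushforwards, I arrive at
\[
\lap(\quot F)\simeq\quot\,\reim{q_\R\comp\mu}\bl\field_{\{t_1-\Re zw\geq 0\}}[1]\tens\opb{\tilde p}F\br,
\]
where $\tilde p\colon(z,w,t_1,t_2)\mapsto(z,t_2)$ and $q_\R\comp\mu\colon(z,w,t_1,t_2)\mapsto(w,t_1+t_2)$.

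To finish, I change coordinates on $\V\times\W\times\R^2$ via $(z,w,t_1,t_2)\mapsto(z,w,t,s)$ with $t=t_2$ and $s=t_1+t_2$. In the new coordinates $q_\R\comp\mu$ is exactly the projection $\overline q(z,w,t,s)=(w,s)$, $\tilde p$ becomes $\overline p(z,w,t,s)=(z,t)$, and $\{t_1-\Re zw\geq 0\}$ becomes $\{s-t-\Re zw\geq 0\}$; comparing with the definition of $\Phi_\Lap$ then yields $\lap(\quot F)\simeq\quot\Phi_\Lap(F)$. The only real input beyond definitions is the verification of the $\quot$-compatibilities of $\Eopb p$, $\ctens$, and $\Eeeim q$ used above, relying on the semiproperness of $q$ (checked by noting that the closure of $\Gamma_q$ in $\PP\times\bb\times\bb$ projects properly onto the second $\bb$-factor); once those are granted, the rest is bookkeeping.
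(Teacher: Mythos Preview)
Your argument is correct and follows essentially the same route as the paper: both unfold the definition of $\lap$, pass to the sheaf level, and then perform the linear change of variables on $\R^2$ (the paper encapsulates this step by citing \cite[Lemma 4.1.4]{DK16}, while you do it by hand with $(t,s)=(t_2,t_1+t_2)$). One small remark: at the enhanced-sheaf level the identity $\Eeim q\comp\quot=\quot\comp\reim{q_\R}$ is the \emph{definition} of $\Eeim q$ (see \S\ref{sse:enhs}), so your appeal to semiproperness of $q$ is unnecessary here; semiproperness only enters when one wants to stay in the $\R$-constructible or bordered setting.
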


\begin{proof}
Since the proofs are similar, let us only consider the first isomorphism.
Consider the maps $\R^2\to\R$ given by $p'(t,s) = t$, $q'(t,s) = s$, $\sigma'(t,s) = s-t$, and use the same notations for the associated maps
\[
\V\times\W\times\R^2\to[p',q',\sigma']\V\times\W\times\R.
\]
Recall that we set $p_\R = p\times\id_\R$ and $q_\R = q\times\id_\R$.
Then, one has
\begin{align*}
\lap(\quot F) 
&= \Eeim{q}(\ex^{-\Re zw}_{\V\times\W}[1]\ctens\Eopb{p}\quot F) \\
&= \quot\reim{{q_\R}}(\field_{\{t-\Re zw\geq 0\}}[1]\ctens\opb{p_\R} F) \\
&\underset{(*)}\simeq \quot \reim{{q_\R}}\reim{q'} ( \opb{\sigma'}\field_{\{t-\Re zw \geq 0\}}[1] \tens \opb{p'}\opb{p_\R}F ) \\
&\simeq \quot \reim{\overline q} ( \field_{\{s-t-\Re zw \geq 0\}}[1] \tens \opb{\overline p}F ),
\end{align*}
where $(*)$ follows from \cite[Lemma 4.1.4]{DK16}.
\end{proof}

\subsection{Microsupport and enhanced Fourier-Sato transform}\label{sse:SSenL}

Consider the symplectic coordinates 
\[
(z,t,w,s;z^*,t^*,w^*,s^*)\in T^*(\V\times\R\times\W\times\R).
\] 
Recalling the definitions of $\Phi_\Lap$ and $\Phi_{\Lapa}$ from \S\ref{sse:PhiL}, consider the Lagrangians
\begin{align*}
\Lambda_\Lap &= \{s^*>0\}\cap SS( \field_{\{s-t-\Re zw \geq 0\}}[1] ),\\
\Lambda_{\Lapa} &= \{t^*>0\}\cap SS( \field_{\{t-s+\Re zw \geq 0\}}[1] ).
\end{align*}

Let us concentrate first on $\Lambda_\Lap$.
With the notations of \S\ref{sse:SS}, one has
\[
\Lambda_\Lap = \{ s-t-\Re zw = 0,\ z^*= wt^*,\ w^* = zt^*,\ s^*=-t^*,\ s^*>0 \}.
\]
Let $\Lambda^a_\Lap$ be the image of $\Lambda_\Lap$ by the endomorphism of $T^*(\V\times\R\times\W\times\R)$ changing the sign of $z^*$ and $t^*$. 
Then $\Lambda^a_\Lap$ is the graph of the
homogeneous symplectic transformation
\begin{align*}
\overline\chi\colon T^*_{\{t^*>0\}}(\V\times\R) &\to T^*_{\{s^*>0\}}(\W\times\R) \\
(z,t;z^*,t^*) &\mapsto (z^*/t^*, t+\Re zz^*/t^* ; -zt^* , t^*).
\end{align*}

Recall the notations in \S\ref{sse:SS}.
Then $\overline\chi$ induces by $\gamma$ the contact transformation
\[
\chi\colon (T^*\V)\times \R \to (T^*\W)\times\R ,\quad ((z;z^*),t) \mapsto
((z^*;-z),t+\Re zz^*)
\]
underlying the Legendre transform. In turn, this induces by $\rho$
the symplectic transformation
\[
\chi_\rho\colon T^*\V \to T^*\W ,\quad (z,z^*) \mapsto
(z^*,-z)
\]
considered in \eqref{eq:introchi}.

Similar considerations hold for  $\Lambda_{\Lapa}$, interchanging the roles of $\V$ and $\W$, and replacing $\overline\chi$, $\chi$, and $\chi_\rho$ with their respective inverses.
Note the latter are explicitly given by
\begin{align*}
\chi^{-1}&\colon ((w;w^*),t) \mapsto ((-w^*;w),t+\Re ww^*), \\
\chi_\rho^{-1}&\colon (w,w^*) \mapsto (-w^*,w).
\end{align*}

\begin{proposition}[{\cite[Theorem 3.6]{Tam08}}]\label{thm:Tam}
For $F\in\dere_+(\field_\V)$ and $G\in\dere_+(\field_{\W})$, one has
\begin{align*}
\SSE(\lap F) &= \chi\bl \SSE(F) \br, &
\SSE(\lapa G) &= \chi^{-1}\bl \SSE(G) \br, \\ 
\SSEo(\lap F) &= \chi_\rho\bl \SSEo(F) \br, &
\SSEo(\lapa G) &= \chi_\rho^{-1}\bl \SSEo(G) \br.
\end{align*}
\end{proposition}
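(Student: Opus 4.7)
The plan is to establish the equality $\SSE(\lap F) = \chi(\SSE(F))$; the dual statement for $\Lapa$ follows by the symmetric argument (with kernel $\field_{\{t-s+\Re zw\geq 0\}}[1]$ and Lagrangian $\Lambda_{\Lapa}$), and the two $\SSEo$-statements will be deduced by applying $\rho$ and taking closures. By Lemma~\ref{lem:TfouFou} we may write $F = \quot(F')$ with $F'\in\BDC(\field_{\V\times\R})$, so that $\lap F \simeq \quot\Phi_\Lap(F')$ for
\[
\Phi_\Lap(F') \simeq \reim{\overline q}\bl \field_{\{s-t-\Re zw \geq 0\}}[1] \tens \opb{\overline p} F' \br.
\]
The kernel $\shk = \field_{\{s-t-\Re zw \geq 0\}}[1]$ has microsupport equal to $\Lambda_\Lap$ on $\{s^*>0\}$, and the computation preceding the statement identifies $\Lambda^a_\Lap$ as the graph of the homogeneous symplectic transformation $\overline\chi$. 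This is the microlocal incarnation of the Fourier kernel.

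First I would apply the standard estimates of \cite[\S5]{KS90} for the microsupport of inverse image followed by (proper) direct image to the composition defining $\Phi_\Lap$. This yields the inclusion
\[
SS(\Phi_\Lap(F'))\cap\{s^*>0\} \subset \overline\chi\bl SS(F')\cap\{t^*>0\} \br,
\]
which, translated via the map $\gamma$, reads $\SSE(\lap F) \subset \chi\bl\SSE(F)\br$. Running the same argument with $\Lap$ replaced by $\Lapa$ gives the analogous inclusion $\SSE(\lapa G) \subset \chi^{-1}\bl\SSE(G)\br$ for every $G\in\dere_+(\field_\W)$.

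Next, the reverse inclusion follows from the invertibility $\lapa\lap F \simeq F$: substituting $G = \lap F$ into the second inclusion gives
\[
\SSE(F) = \SSE(\lapa\lap F) \subset \chi^{-1}\bl\SSE(\lap F)\br \subset \chi^{-1}\chi\bl\SSE(F)\br = \SSE(F),
\]
forcing equality throughout, and applying $\chi$ yields $\SSE(\lap F) = \chi\bl\SSE(F)\br$. The $\SSEo$-statement is then immediate from the definition $\SSEo(F) = \overline{\rho(\SSE(F))}$ and the compatibility $\rho\circ\chi = \chi_\rho\circ\rho$ built into the construction of $\chi_\rho$ in \S\ref{sse:SSenL}.

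The main obstacle is the microsupport estimate for $\reim{\overline q}$: since $\supp\shk$ is unbounded in the $s$-direction, $\overline q$ is not proper on the support of the tensor product, and a naïve application of the direct image estimate fails. The resolution is intrinsic to the enhanced formalism: working in the quotient by $\{F'\semicolon SS(F')\subset\{t^*\leq 0\}\}$, and restricting attention to $\{s^*>0\}$ in the target, kills exactly the spurious directions coming from $|s|\to\infty$. Making this cut-off rigorous, via a cofinal family of properness-friendly truncations of $\shk$ whose microsupports converge to $\Lambda_\Lap$ on $\{s^*>0\}$, is the technical heart of the proof and is essentially the content of \cite[Theorem 3.6]{Tam08}.
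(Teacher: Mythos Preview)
The paper does not prove this proposition: it is stated with the attribution \cite[Theorem~3.6]{Tam08} and used as a black box, with no proof given in the text. So there is no ``paper's own proof'' to compare against.

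That said, your sketch is a faithful outline of how the result is actually established in the cited reference. The strategy of obtaining one inclusion from the functorial microsupport bounds for integral transforms (\cite[\S5--7]{KS90}) and then upgrading to equality via the quasi-inverse $\Lapa$ is exactly the right one, and your deduction of the $\SSEo$-statements from the $\SSE$-statements via $\rho\circ\chi=\chi_\rho\circ\rho$ and the fact that $\chi_\rho$ is a homeomorphism (hence commutes with closure) is correct. You also correctly isolate the genuine difficulty: $\overline q$ is not proper on the support of the kernel, so the standard estimate does not apply directly; handling this is indeed the substance of Tamarkin's argument, and you are right not to pretend otherwise.
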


\subsection{Legendre transform}\label{sse:Legendre}

Recall the notion of Puiseux germ from Definition~\ref{def:Puis}. Recall that we write $S_\infty\V$ instead of $S_\infty\PP$.  

Let $a\in\PP$, $\theta\in S_a\V$ and $f\in\shp_{S_a\V,\theta}$.

\begin{definition}\label{def:admissible}
Let us say that the Puiseux germ $(a,\theta,f)$ is:
\begin{itemize}
\item[(i)] \emph{unbounded} if $\ord_a(f)>0$;
\item[(ii)] \emph{linear} if $a=\infty$ and $f(z)-bz\in\shp_{S_\infty\V}^{\leq 0}$ for some $b\in\W$;
\item[(iii)] \emph{admissible} if it is unbounded and \emph{not} linear.	
\end{itemize}
\end{definition}

Let $\tau\in\C$ be the coordinate, and consider the complex version
\[
\chi_\C\colon (T^*\V)\times \C \to (T^*\W)\times\C ,\quad (z,w,\tau) \mapsto
(w,-z,\tau+zw),
\]
of the contact transformation $\chi$ from \S\ref{sse:SSenL}. 
We have a commutative diagram
\begin{equation}
\label{eq:chiCR}
\xymatrix{
(T^*\V)\times \C \ar[r]^-{\chi_\C} \ar[d]^\Re & (T^*\W)\times\C \ar[d]^\Re \\
(T^*\V)\times \R \ar[r]^-{\chi} & (T^*\W)\times\R ,}
\end{equation}
where the map $\Re$ is induced by $\C\owns\tau\mapsto \Re\tau\in\R$.

To the Puiseux germ $(a,\theta,f)$ on $\V$, one associates the germ of Lagrangian\footnote{On a contact manifold, Lagrangians are also called Legendrians in the literature.} submanifold $\Lambda_\C^{(a,\theta,f)} \subset (T^*\V)\times \C$ defined by
\begin{equation}
\label{eq:LambdaC}
\Lambda_\C^{(a,\theta,f)} = \{(z,w,\tau)\semicolon \tau=-f(z),\ w=f'(z) \},
\end{equation}
for $z$ near $\theta$.
Note in particular that, with notations as in \S\ref{sse:Airy}, one has
\[
\rho_\C\bl \Lambda_\C^{(a,\theta,f)} \br = C_{(a,\theta,f)},
\]
where $\rho_\C\colon (T^*\V)\times \C \to T^*\V$ is the projection.

\begin{lemmadefinition}\label{lemdef}
If $(a,\theta,f)$ is an admissible Puiseux germ on $\V$, then there exists a unique admissible Puiseux germ $(b,\eta,g)$ on $\W$, such that 
\begin{equation}\label{eq:Lfg}
\chi_\C(\Lambda_\C^{(a,\theta,f)}) = \Lambda_\C^{(b,\eta,g)}.
\end{equation}
We set $\Lap(a,\theta,f) \defeq (b,\eta,g)$, and call it the \emph{Legendre transform} of $(a,\theta,f)$.
\end{lemmadefinition}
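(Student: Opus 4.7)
The plan is to construct the triple $(b,\eta,g)$ explicitly and verify admissibility, with uniqueness coming from the geometric description of the Lagrangian. First, parameterize $\Lambda_\C^{(a,\theta,f)}$ as the image of the map $z\mapsto(z,f'(z),-f(z))$ for $z$ in a small sectorial neighborhood of $\theta$. Applying $\chi_\C$ yields the image of
\[
z\longmapsto \bl f'(z),\,-z,\,-f(z)+z f'(z)\br.
\]
To recognize this as $\Lambda_\C^{(b,\eta,g)}$ one must invert $w=f'(z)$ as a Puiseux germ, obtaining $z=z(w)$, and set $g(w)\defeq f(z(w))-w\,z(w)$. Using $f'(z(w))=w$, differentiation yields $g'(w)=-z(w)$, so the image above coincides with $\{(w,g'(w),-g(w))\}$, which is $\Lambda_\C^{(b,\eta,g)}$ as required.

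The technical core is the inversion of $w=f'(z)$, handled by case analysis on the pole order $\mu=\ord_a(f)>0$: (i) if $a\in\V$, then $\ord_a(f')=\mu+1>1$, so $w\to\infty$ and $b=\infty$; (ii) if $a=\infty$ with $\mu>1$, then $\ord_\infty(f')=\mu-1>0$ and again $b=\infty$; (iii) if $a=\infty$ with $0<\mu<1$, then $w\to 0$ and $b=0\in\W$; (iv) if $a=\infty$ with $\mu=1$, write $f=\beta z+f_1$, where admissibility forces $\ord_\infty(f_1)\in(0,1)$, so $w-\beta=f_1'(z)\to 0$, giving $b=\beta\in\W$. In each case, applying the Newton--Puiseux inversion to the leading monomial of $f'$ (respectively $f'-\beta$) produces the Puiseux germ $z(w)$, and the direction $\eta\in S_b\W$ is determined as the image of $\theta$ under the derivative map between real blow-ups.

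Admissibility of $g$ then follows by reading off leading terms: in case~(i), $\ord_\infty(g)=1$ with sub-leading term of order $\mu/(\mu+1)\in(0,1)$, so $g+aw\notin\shp^{\le0}_{S_\infty\W}$; in (ii), $\ord_\infty(g)=\mu/(\mu-1)>1$; in (iii) and (iv), $g$ has positive pole order at the finite point $b$. Uniqueness is immediate from the geometry: the projection of $\Lambda_\C^{(b,\eta,g)}$ to $\W$ determines both $b$ and $\eta$, and the equation $\tau=-g(w)$ then determines $g$ as a Puiseux germ at $\eta$. I expect the main obstacle to be the borderline sub-case (iv): admissibility of $f$ is used essentially there to ensure that $f_1'$ is a non-trivial Puiseux germ of order in $(-1,0)$, without which $w=f'(z)$ would either be constant (as for $f=bz$, where the microlocal construction fails, cf.\ \S\ref{sse:FouM}) or fail to produce a Puiseux germ $z(w)$ of positive pole order at $\beta$, making the resulting $g$ bounded at $b=\beta$ and hence not admissible.
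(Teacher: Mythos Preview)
Your proposal is correct and follows essentially the same approach as the paper: invert $w=f'(z)$, set $g(w)=f(z(w))-w\,z(w)$ so that $g'(w)=-z(w)$ and $zw-f(z)+g(w)=0$, and then verify admissibility by computing pole orders case by case. The paper's proof is more condensed in that it defers the entire case analysis and the computation of $\ord_b(g)$ to the subsequent Lemma~\ref{lem:Leg}, whereas you carry it out inline; your cases (iii) and (iv) together correspond to the paper's single case $f\in bz+\shp_{S_\infty\V}^{(0,1)}$ with $b\in\W$ (your (iii) being $b=0$). One small imprecision: in your case~(i), the claim $\ord_\infty(g)=1$ is only correct when $a\neq 0$; if $a=0$ the linear part $-aw$ vanishes and $\ord_\infty(g)=\mu/(\mu+1)$, though your admissibility conclusion $g+aw\notin\shp^{\le0}_{S_\infty\W}$ remains valid either way.
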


\begin{proof}
Since $(a,\theta,f)$ is admissible, $w=f'(z)$ is invertible for $z$ near $\theta$.
Let $b\in\bb$ and $\eta\in S_b\W$ be such that $w=f'(z)\to\eta$ when $z\to\theta$.
(See Lemma~\ref{lem:Leg} below for an explicit computation of $b$ and $\eta$.)
Let $z=\psi(w)$ be the inverse of $w=f'(z)$ for $z$ near $\theta$, and denote by $g(w)$ a primitive of $-\psi(w)$.
Thus $g$ satisfies the relations
\begin{equation}\label{eq:wfzg}
\begin{cases}
w = f'(z), \\
z = -g'(w),
\end{cases}
\end{equation}
which are equivalent to $d(zw-f(z)+g(w))=0$.
Choosing the only primitive $g$ which satisfies
\begin{equation}\label{eq:wfzg'}
zw-f(z)+g(w)=0
\end{equation}
provides the solution to \eqref{eq:Lfg}. In other words, we obtained
\[
g(w) = f(z) - z f'(z),
\quad \text{for }w=f'(z). 	
\]
We shall prove the admissibility of $(b,\eta,g)$ in Lemma~\ref{lem:Leg}, by explicit calculation of $\ord_b(g)$.
\end{proof}

With notations as in the above statement, we give in Lemma~\ref{lem:Leg} below an explicit computation of $b$, $\eta$, and of the pole order of the difference between $g$ and its linear part.

\begin{convention}\label{con:coo}
\begin{itemize}
\item[(i)]
We fix $z_a=z-a$ as  local coordinate at $a\in\V$, and
$z_\infty=z^{-1}$ as  local coordinate at $\infty$.
\item[(ii)]
For $a\in\PP$, $\theta\in S_a\V$, and $\lambda\in\Q$,
we fix a determination of $z_a^{-\lambda}$ at $\theta$. 
\end{itemize}
\end{convention}

\begin{notation}
For $f\in\shp^\lambda_{S_a\V,\theta}$, denote by $\sigma_\lambda(f)\in\C^\times$
the coefficient of $z_a^{-\lambda}$ in its Puiseux expansion (which depends on
the choices in Convention~\ref{con:coo}).
\end{notation}

Set
$\V^\times=\V\setminus\{0\}$. For $\al\in\V^\times$, we write
\[
a+\al0 \defeq \lim\limits_{t\to0+}\tj_a(a+t\al)\in S_a\V, \quad
\alpha\infty \defeq \lim\limits_{t\to+\infty}\tj_\infty(t\alpha) \in S_\infty \V.
\]
We will write for short $(a+\al0,f)$ and $(\alpha\infty,f)$ instead of $(a,a+\al0,f)$ and $(\infty,\alpha\infty,f)$, respectively.

We regard a Puiseux germ $(a,\theta,f)$ as a point of the \'etal\'e space 
\[
\etale{\shp_{S_a\V}} = \DUnion_{\theta\in S_a\V}\shp_{S_a\V,\theta},
\]
endowed with its natural topology.

\begin{lemma}\label{lem:Leg}
Let $a\in\V$ and $b\in\W$.
At the level of \'etal\'e spaces, the Legendre transform gives the 
homeomorphisms
\[
\Lap\colon
\begin{cases}
\etale{\shp_{S_a\V}^{(0,+\infty)}} \isoto \etale{-aw+\shp^{(0,1)}_{S_\infty\W}},
\\[1.5ex]
\etale{bz+\shp_{S_\infty\V}^{(0,1)}} \isoto \etale{\shp^{(0,+\infty)}_{S_b\W}},
\\[1.5ex]
\etale{\shp_{S_\infty\V}^{(1,+\infty)}} \isoto
\etale{\shp^{(1,+\infty)}_{S_\infty\W}}.
\end{cases}
\]
More precisely, one has:
\begin{itemize}
\item[(i)]
for $\lambda\in\Q_{>0}$,
\[
\Lap\colon \etale{\shp_{S_a\V}^\lambda} \isoto
\etale{-aw+\shp^{\lambda/(\lambda+1)}_{S_\infty\W}},\quad
(a+\alpha0,f)\mapsto(\beta\infty,g),
\] 
with $\beta=-\lambda\alpha^{-\lambda-1}\sigma_\lambda(f)$.
\item[(ii)]
for $\lambda\in\Q\cap(0,1)$,
\[
\Lap\colon \etale{bz+\shp_{S_\infty\V}^\lambda} \isoto
\etale{\shp^{\lambda/(1-\lambda)}_{S_b\W}},\quad
(\alpha\infty,f)\mapsto(b+\beta0,g),
\] 
with $\beta=\lambda\alpha^{\lambda-1}\sigma_\lambda(f-bz)$.
\item[(iii)]
for $\lambda\in\Q_{>1}$,
\[
\Lap\colon \etale{\shp_{S_\infty\V}^\lambda} \isoto
\etale{\shp^{\lambda/(\lambda-1)}_{S_\infty\W}},\quad
(\alpha\infty,f)\mapsto(\beta\infty,g),
\] 
with $\beta=\lambda\alpha^{\lambda-1}\sigma_\lambda(f)$.
\end{itemize}
\end{lemma}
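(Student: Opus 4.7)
The proof is a direct computation using the defining relations obtained in the proof of Lemma-Definition~\ref{lemdef}: the Legendre transform $(b,\eta,g) = \Lap(a,\theta,f)$ is characterized by $w = f'(z)$, $z = -g'(w)$, and the normalization $g(w) = f(z) - zf'(z)$. First I would verify that the three cases exhaust all admissible Puiseux germs: if $a\in\V$, unboundedness forces $\ord_a(f)>0$ (case (i)); if $a=\infty$ and $\ord_\infty(f)>1$, we are in case (iii); if $\ord_\infty(f)\in(0,1]$, then either $\ord_\infty(f)<1$ (case (ii) with $b=0$), or $\ord_\infty(f)=1$ with $b=\sigma_1(f)$ and admissibility forcing $\ord_\infty(f-bz)\in(0,1)$ (case (ii) with nonzero $b$). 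In each case, the plan is: expand $f$ in its leading Puiseux term, differentiate to read off the asymptotic direction of $w$ (which determines $b$ and $\eta$), then substitute into $g(w)=f(z)-zf'(z)$ and invert the relation $w=f'(z)$ to obtain the leading Puiseux term of $g$ in the coordinate at $b$.

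For case (i), using Convention~\ref{con:coo} I would write $f(z) = \sigma_\lambda(f)(z-a)^{-\lambda} + \text{(lower pole order)}$, so that
\[
w = f'(z) = -\lambda\sigma_\lambda(f)(z-a)^{-\lambda-1}\bl 1+\mathrm{o}(1)\br.
\]
Setting $z-a=t\alpha$ with $t\to 0+$ shows $w\to\infty$ along the direction $\beta=-\lambda\sigma_\lambda(f)\alpha^{-\lambda-1}$, giving the claim about $\eta$. To isolate the linear part of $g$, I split $z = (z-a)+a$ to get
\[
g(w) + aw = f(z) - (z-a)f'(z) = (1+\lambda)\sigma_\lambda(f)(z-a)^{-\lambda} + \text{(lower order)},
\]
and inversion of $w\sim -\lambda\sigma_\lambda(f)(z-a)^{-\lambda-1}$ yields $(z-a)^{-\lambda}\sim c\,w^{\lambda/(\lambda+1)}$ for an explicit constant. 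Hence $g+aw\in\shp^{\lambda/(\lambda+1)}_{S_\infty\W,\beta\infty}$, as required. Cases (ii) and (iii) proceed in the same spirit. In case (iii), $f(z)=\sigma_\lambda(f)z^\lambda+\text{(lower order)}$ gives $w\sim\lambda\sigma_\lambda(f)z^{\lambda-1}$, hence $\eta=\beta\infty$ with $\beta=\lambda\sigma_\lambda(f)\alpha^{\lambda-1}$, and $g(w) = (1-\lambda)\sigma_\lambda(f)z^\lambda+\text{(lower order)}$ then has pole order $\lambda/(\lambda-1)$ via $z\sim(w/(\lambda\sigma_\lambda(f)))^{1/(\lambda-1)}$.

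Case (ii) requires one extra observation: writing $f=bz+h$ with $h\in\shp^\lambda_{S_\infty\V}$, one finds $w-b=h'(z)\to 0$ since $\lambda<1$, so $b$ is a finite point of $\W$ and $\eta = b+\beta\cdot 0$ with $\beta=\lambda\sigma_\lambda(h)\alpha^{\lambda-1}$. The key cancellation is that
\[
g(w) = f(z)-zf'(z) = h(z)-zh'(z),
\]
so no linear term in $w$ appears, and the analysis then matches case (iii) after sign bookkeeping coming from $\lambda-1<0$, producing pole order $\lambda/(1-\lambda)$ at $b$. The main obstacle is therefore not analytic but purely organizational: correctly handling the linear term $-aw$ in case (i) and the transition between $a=\infty$ and finite $b$ (or vice versa), while keeping track of the determinations of roots fixed by Convention~\ref{con:coo}. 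Once these bookkeeping issues are settled and each target germ is checked to be admissible (pole orders $\lambda/(\lambda+1)\in(0,1)$, $\lambda/(1-\lambda)\in(0,+\infty)$, $\lambda/(\lambda-1)\in(1,+\infty)$ respectively), bijectivity of $\Lap$ on each stratum follows from the involutivity of the Legendre transform, which is itself immediate from the symmetry of the relation $zw-f(z)+g(w)=0$.
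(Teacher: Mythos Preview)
Your proposal is correct and follows essentially the same direct-computation approach as the paper: expand $f$ in its leading Puiseux term, read off $b$ and $\eta$ from the asymptotics of $w=f'(z)$, and invert to obtain the pole order of $g$. The only cosmetic difference is that the paper computes the pole order of $g$ via the relation $g'(w)=-z$ (so it first finds the leading exponent of $g'$ and then integrates), whereas you substitute directly into $g(w)=f(z)-zf'(z)$; both routes yield the same result with the same amount of work.
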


\begin{proof}
The inverse $\Lapa$ of $\Lap$ is obtained by replacing $\chi_\C$ with $\chi_\C^{-1}$ in Definition~\ref{lemdef}.

Concerning $\Lap$, recall that \eqref{eq:Lfg} is equivalent to \eqref{eq:wfzg'} and implies
\eqref{eq:wfzg}.
We will prove the statement using the relations \eqref{eq:wfzg}.

\medskip\noindent (i)
Since $f\in\shp_{S_a\V,a+\alpha0}^\lambda$, we can write $f(z) = c z_a^{-\lambda} + \sum_{\mu<\lambda} c_\mu z_a^{-\mu}$,
where $c=\sigma_\lambda(f)\in\C^\times$, $c_\mu\in\C$,
$\lambda,\mu\in\frac1\ram\Z$ for some $\ram\in\Z_{\geq 1}$, and we fixed a
determination of $z_a^{1/\ram}$ at $a+\alpha0$ compatible with that of $z_a^\lambda$. Hence, $w=f'(z)$ reads
\begin{equation}\label{eq:wf'1}
w = -\lambda c z_a^{-\lambda-1} - \sum_{\mu<\lambda} \mu c_\mu
z_a^{-\mu-1}.
\end{equation}
For $z_a=t\alpha$, with $t\in\R$ and $t\sim 0+$, we get $w =
t^{-\lambda-1}(-\lambda c \alpha^{-\lambda-1}+\mathrm{O}(t^\varepsilon))$ for some
$\varepsilon>0$. As $\lambda>0$, $w$ is near $\eta=\beta\infty$ with $\beta =
-\lambda c \alpha^{-\lambda-1}$.

The relation $z=-g'(w)$ reads $z_a = -(g(w)+aw)'$.
Note that \eqref{eq:wf'1} implies $(-w/\lambda c)^{-1/(\lambda+1)} =
z_a(1+\sum_{n>0}d_n z_a^{n/\ram})^{-1/(\lambda+1)} = z_a(1+\sum_{n>0}e_n
z_a^{n/\ram})$. Hence the Puiseux expansion of $(g(w)+aw)'=-z_a$ has maximum
exponent $-1/(\lambda+1)$ in $w$. Thus the Puiseux expansion of $g(w)+aw$ has
maximum exponent $\lambda/(\lambda+1)$ in $w$, that is, pole order
$\lambda/(\lambda+1)$ in $w_\infty=w^{-1}$.

\medskip\noindent (ii)
Since $f\in bz+\shp_{S_\infty\V,\alpha\infty}^\lambda$,
we can write $f(z)-bz = c z^\lambda + \sum_{\mu<\lambda} c_\mu z^\mu$.
Hence the relation $w= f'(z)$, which is equivalent to $w_b=(f(z)-bz)'$, reads
\begin{equation}\label{eq:wf'2}
w_b = \lambda c z^{\lambda-1} + \sum_{\mu<\lambda} \mu c_\mu z^{\mu-1}.
\end{equation}
For $z=t\alpha$ with $t\in\R$ and $t\sim +\infty$, we get $w_b = t^{\lambda-1}(\lambda c \alpha^{\lambda-1}+\mathrm{O}(t^{-\varepsilon}))$ for some
$\varepsilon>0$. Since $0<\lambda<1$, this implies that $w$ is near $\eta=b+\beta0$ with $\beta =
\lambda c \alpha^{\lambda-1}$.

Note that \eqref{eq:wf'2} implies $(w_b/\lambda c)^{1/(\lambda-1)} =
z(1+\sum_{n<0}d_n z^{n/\ram})^{1/(\lambda-1)} = z(1+\sum_{n<0}e_n z^{n/\ram})$. Hence $g'(w)=-z$ has pole order $1/(1-\lambda)$
at $w=b$. Thus $g(w)$ has pole order $\lambda/(1-\lambda)$ at $w=b$.

\medskip\noindent (iii)
Since $f\in \shp_{S_\infty\V,\alpha\infty}^\lambda$,
we can write $f(z) = c z^\lambda + \sum_{\mu<\lambda} c_\mu z^\mu$. Hence $w=f'(z)$ reads
\begin{equation}\label{eq:wf'3}
w = \lambda c z^{\lambda-1} + \sum_{\mu<\lambda} \mu c_\mu z^{\mu-1}.
\end{equation}

For $z=t\alpha$, with $t\in\R$ and $t\sim +\infty$, we get $w =
t^{\lambda-1}(\lambda c \alpha^{\lambda-1}+\mathrm{O}(t^{-\varepsilon}))$ for some
$\varepsilon>0$. Since $\lambda>1$, $w$ is near $\eta=\beta\infty$ with $\beta =
\lambda c \alpha^{\lambda-1}$.

Note that \eqref{eq:wf'3} implies $(w/\lambda c)^{1/(\lambda-1)} =
z(1+\sum_{n<0}d_n z^{n/\ram})^{1/(\lambda-1)} =
z(1+\sum_{n<0}e_n z^{n/\ram})$. Hence the Puiseux expansion of $g'(w)=-z$ has
maximum exponent $1/(\lambda-1)$ in $w$. Thus the Puiseux expansion of $g(w)$
has maximum exponent $\lambda/(\lambda-1)$ in $w$, that is, pole order
$\lambda/(\lambda-1)$ in $w_\infty=w^{-1}$.
\end{proof}

Let us show that the Legendre transform is compatible with the equivalence $\asim\theta$.

\begin{lemma}\label{lem:Lell}
For $i=1,2$, let $(a,\theta,f_i)$ be an admissible Puiseux germ on $\V$, and set $(b_i,\eta_i,g_i) \defeq \Lap (a,\theta,f_i)$.
Assume that $f_1 \asim\theta f_2$. Then
$b_1=b_2$, $\eta_1=\eta_2$ and $g_1 \asim{{\eta_1}} g_2$.
\end{lemma}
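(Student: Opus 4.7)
The plan is to reduce the lemma to an explicit computation using Lemma~\ref{lem:Leg} and the defining identity \eqref{eq:wfzg'} of the Legendre transform. First I would unpack the hypothesis $f_1 \asim\theta f_2$: by the lemma stated just before Definition~\ref{def:Stokes}, it amounts to $r \defeq f_1 - f_2 \in \shp^{\leq 0}_{S_a\V,\theta}$, i.e.\ to $r$ being bounded at $\theta$. Then I would verify $b_1=b_2$ and $\eta_1=\eta_2$ in parallel for the three subcases of Lemma~\ref{lem:Leg}: in each case, boundedness of $r$ forces every Puiseux term of $f_1$ and $f_2$ of strictly positive pole order to coincide (and in case (ii) forces moreover their linear coefficients $b$ to be common). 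In particular $\ord_a f_1 = \ord_a f_2 =: \lambda$ and $\sigma_\lambda(f_1) = \sigma_\lambda(f_2)$, so the explicit formulas for $b_i$ and for the Puiseux coefficient $\beta$ of $\eta_i$ in Lemma~\ref{lem:Leg} give $b_1=b_2$ and $\eta_1=\eta_2$ at once. Denote by $\eta$ this common direction.

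It then remains to show $g_1 \asim\eta g_2$. Using $g_i(w) = f_i(z_i) - z_i w$ with $z_i \defeq (f_i')^{-1}(w)$ from \eqref{eq:wfzg'}, I would compute
\[
g_1(w) - g_2(w) = r(z_2) + \bigl(f_1(z_1) - f_1(z_2) - w(z_1-z_2)\bigr) = r(z_2) - \tfrac12 f_1''(\xi)(z_1-z_2)^2
\]
for some $\xi$ between $z_1$ and $z_2$, where the last equality is the Taylor expansion of $f_1$ at $z_1$ combined with $f_1'(z_1)=w$. Furthermore, since $f_2'(z_2) = w = f_1'(z_1)$, applying the mean value theorem to $f_1'(z_1) - f_1'(z_2) = -r'(z_2)$ yields $z_1 - z_2 = -r'(z_2)/f_1''(\zeta)$ for some $\zeta$ between $z_1$ and $z_2$. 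Hence the proof reduces to showing that both $r(z_2)$ and $(r'(z_2))^2/f_1''(z_2)$ are bounded at $\theta$: the first is the hypothesis, and the second follows from a pole-order bookkeeping, since $r \in \shp^{\leq 0}$ bounds $\ord r'$ from above while admissibility of $f_1$ bounds $\ord f_1''$ from below, yielding $\ord\bigl((r')^2/f_1''\bigr) \leq -\lambda < 0$ in each of the three cases.

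The main obstacle will be organizing this pole-order bookkeeping cleanly in case (ii) of Lemma~\ref{lem:Leg}, where the linear parts $bz$ of $f_i$ cancel in $r$ (so the lower bound on $\ord f_1''$ has to be extracted from the nonlinear part $h_i = f_i - bz$) yet contribute to $f_i'$. The other cases, together with the analytic ingredients -- the Taylor and mean value estimates and the fact that $z_1(w)$ and $z_2(w)$ approach $\theta$ along nearby curves on a common sector -- are standard once a ramified coordinate is fixed.
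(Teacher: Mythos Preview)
Your reduction to $b_1=b_2$ and $\eta_1=\eta_2$ via Lemma~\ref{lem:Leg} matches the paper exactly. For the remaining assertion $g_1\asim\eta g_2$, the approaches diverge: the paper works entirely at the level of formal Puiseux series, explicitly inverting $w=f_i'(z^{(i)})$ to express $z_a^{(i)}$ as a Puiseux series in a fractional power of $w$, and then checking that the coefficients of $g_1'$ and $g_2'$ agree in every term of positive exponent, so that $g_1-g_2\in\C\{w_b^{1/(\ram+q)}\}$. Your analytic route through the identity $g_1-g_2=r(z_2)+\bigl(f_1(z_1)-f_1(z_2)-w(z_1-z_2)\bigr)$ together with the pole-order bookkeeping $\ord_a\bigl((r')^2/f_1''\bigr)\leq -\lambda<0$ is a genuinely different and more conceptual argument, and the bookkeeping itself is correct in all three cases.

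There is, however, a technical gap you should not wave away as ``standard'': the Lagrange-form Taylor remainder and mean value theorem you invoke (``$-\tfrac12 f_1''(\xi)(z_1-z_2)^2$ for some $\xi$ between $z_1$ and $z_2$'', and ``$z_1-z_2=-r'(z_2)/f_1''(\zeta)$'') are real-variable statements that fail for holomorphic functions of a complex variable. The repair is to use integral remainders, e.g.\ $f_1(z_1)-f_1(z_2)-w(z_1-z_2)=(z_1-z_2)^2\int_0^1(1-s)f_1''\bigl(z_2+s(z_1-z_2)\bigr)\,ds$, but then you must control $\sup$ and $\inf$ of $|f_1''|$ along the segment $[z_1,z_2]$ rather than its value at a single point. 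That in turn requires knowing a priori that $z_1(w)/z_2(w)\to 1$ as $w\to\eta$, which you can obtain from the common leading asymptotic of the inverse (essentially the first step of the paper's series inversion) and which also ensures the segment stays in the sector. Once this is in place your estimates do go through; the paper's purely formal computation sidesteps all of these analytic issues at the price of being less transparent about why the cancellation occurs.
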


\begin{proof}
There are three possible situations:
\begin{itemize}
\item[(i)] $a\in\V$ and $f_1,f_2\in \shp_{S_a\V,\theta}^{(0,+\infty)}$,
\item[(ii)] $f_1,f_2\in bz+\shp_{S_\infty\V,\theta}^{(0,1)}$ for some $b\in\W$,
\item[(iii)] $f_1,f_2\in \shp_{S_\infty\V,\theta}^{(1,+\infty)}$.
\end{itemize}
Since the arguments are similar, let us only discuss case (i).

Set $\lambda_i\defeq\ord_a(f_i)\in\Q_{>0}$ for $i=1,2$.
Recall that the assumption $f_1 \asim\theta f_2$ is equivalent to $f_1-f_2\in\shp_{S_a\V,\theta}^{\le0}$.
Then, we must have $\lambda_1 = \lambda_2 \eqdef \lambda$ and $\sigma_\lambda(f_1) = \sigma_\lambda(f_2) \eqdef c$.
Hence, it follows from Lemma~\ref{lem:Leg}~(i) that $b_1=b_2=\infty$,
$\eta_1=\eta_2\eqdef\eta$, and $g_1,g_2\in-aw+\shp^{(0,1)}_{S_\infty\W,\eta}$.
We are thus left to prove that $g_1-g_2\in\shp_{S_\infty\W,\eta}^{\le0}$.

Let $\ram,q\in\Z_{\geq1}$ be such that $\lambda=q/\ram$ and $f_i\in\C\{z_a^{1/\ram}\}[z_a^{-1}]$.
Write $f_i(z) = c
z_a^{-q/p} + \sum_{n>0} c_n^{(i)} z_a^{(n-q)/\ram}$, and note that
$f_1-f_2\in\shp_{S_a\V,\theta}^{\le0}$ implies $c_n^{(1)}=c_n^{(2)}$ for $n<q$.
Recall that $z^{(i)}=g'_i(w)$ is equivalent to
\begin{align*}
w = f_i'(z^{(i)})
&= -\lambda c (z_a^{(i)})^{-\frac qp-1} - \sum_{n>0} \tfrac {q-n}\ram c_n^{(i)} (z_a^{(i)})^{\frac{n-q}\ram-1}  \\
&= -\lambda c (z_a^{(i)})^{-\frac{\ram +q}p} \bl  1+ \sum_{n>0} \tfrac{q-n}{qc} c_n^{(i)} (z_a^{(i)})^{\frac n\ram}\br .
\end{align*}
This gives
\begin{align*}
(-w/\lambda c)^{-\frac \ram{\ram+q}} &=
 z_a^{(i)}\bl  1+ \sum_{n>0} \tfrac{q-n}{qc} c_n^{(i)} (z_a^{(i)})^{\frac n\ram}\br^{-\frac \ram{\ram+q}} \\
&= z_a^{(i)}\bl 1+\sum_{n>0}d_n^{(i)} (z_a^{(i)})^{\frac n\ram}\br,
\end{align*}
where the coefficients $d_n^{(i)}$ only depend on $c_{n'}^{(i)}$
for $n'\leq n$. In particular, $d_n^{(1)}=d_n^{(2)}$ for $n<q$.
Inverting the series, we get 
\begin{align*}
g_i'(w)&=a+z_a^{(i)} \\
&=a+(-w/\lambda
c)^{-\frac \ram{\ram+q}}\bl 1+\sum_{n>0}e_n^{(i)} (w^{-\frac \ram{\ram+q}})^{\frac n\ram}\br, 
\end{align*}
where the
coefficients $e_n^{(i)}$ only depend on $d_{n'}^{(i)}$ for $n'\leq n$.
In particular, $e_n^{(1)}=e_n^{(2)}$ for $n<q$.
Noticing that $-\tfrac\ram{\ram+q}-\tfrac\ram{\ram+q}\tfrac n\ram = -1+\tfrac {q-n}{\ram+q}$.
It follows that $g_1'-g_2'\in
w^{-1}\C\{w^{-\frac1{\ram+q}}\}$. This implies
$g_1-g_2\in \C\{w^{-\frac1{\ram+q}}\}$.
The statement follows, recalling that $w_\infty=w^{-1}$.
\end{proof}

\subsection{Statement of the main result}
We can now state our main result.

\begin{theorem}[Stationary phase formula]\label{thm:EFou}
Let $(a,\theta,f)$ be an admissible Puiseux germ on $\V$, 
and let $(b,\eta,g) = \Lap(a,\theta,f)$.
Let $K\in\dereb_\Rc(\ifield_{\V_\infty})$ have normal form at $a$.
Then, for generic $\eta$, one has
\[
\G_{(b,\eta,g)}(\lap K) \simeq \G_{(a,\theta,f)}(K).
\]
\end{theorem}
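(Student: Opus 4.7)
The plan is to follow the microlocal strategy sketched in \S\ref{sse:Airy}--\S1.18 of the introduction, organized in five main steps.

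\emph{Step 1 (reduction to enhanced sheaves).} By Proposition~\ref{pro:KF} there exist an open neighborhood $\Omega$ of $a$ and $F' \in \dere^0_\Rc(\field_{\V_\infty})$ with normal form at $a$ (and the same multiplicity $N$ representing the class $\overline N$ of $K$) such that $\opb\pi\field_{\Omega\setminus\{a\}}\tens K \simeq \field^\enh_{\V_\infty}\ctens F'$. Since $\G_{(a,\theta,f)}$ and $\G_{(b,\eta,g)}$ are unchanged under $\field^\enh\ctens(\cdot)$ by Lemma~\ref{lem:Gzetabdd}~(iii), and since $\lap$ commutes with convolution by $\field^\enh$ (in the sense that $\field^\enh_{\W_\infty}\ctens \lap F' \simeq \lap(\field^\enh_{\V_\infty}\ctens F')$), it is enough to prove the analogous identity $\G_{(b,\eta,g)}(\lap F') \simeq \G_{(a,\theta,f)}(F')$ for $F'$ viewed in $\dereb_\Rc(\field_{\V_\infty})$, using the enhanced-sheaf version of the test functor.

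\emph{Step 2 (localization near $a$).} Set $F = \opb\pi\field_{U}\tens F'$ where $U$ is a subanalytic punctured neighborhood of $a$ inside $\Omega\setminus\{a\}$ (e.g.\ $\{|z-a|<\varepsilon\}$ if $a\in\V$, or $\{|z|>R\}$ if $a=\infty$). Because $F'$ has normal form at $a$, a microlocal argument based on Lemma~\ref{lem:stalk} and the fact that $\G_{(a,\theta,f)}$ only tests germs at $\theta$ yields $\G_{(a,\theta,f)}(F')\simeq \G_{(a,\theta,f)}(F)$. A parallel microlocal argument (using that the contribution of $\boundary U$ to $\SSEo(\lap F)$ is swept into a set $Z$ disjoint from $C_{(b,\eta,g)}$ for generic $\eta$) will give $\G_{(b,\eta,g)}(\lap F')\simeq \G_{(b,\eta,g)}(\lap F)$.

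\emph{Step 3 (decomposition on a sectorial neighborhood of $\eta$).} Apply Lemma~\ref{lem:RcStruct} to the $\R$-constructible sheaf $\lap F$ on $\W_\infty$: for generic $\eta\in S_b\W$ there exists an open subanalytic sectorial neighborhood $W_\eta\dotowns\eta$ with
\[
\opb\pi\field_{W_\eta}\tens \lap F \simeq
\bl\DSum_{i\in I}\ex^{\varphi_i}_{W_\eta|\W}[d_i]\br \oplus
\bl\DSum_{j\in J}\ex^{\range{\varphi_j^+}{\varphi_j^-}}_{W_\eta|\W}[d_j]\br,
\]
with analytic globally subanalytic phases. By \eqref{eq:introGEV}, the computation of $\G_{(b,\eta,g)}(\lap F)$ reduces to one of $\G_{(b,\eta,g)}$ applied to each summand.

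\emph{Step 4 (microlocal constraint + vanishing).} Since $F$ has normal form at $a$, the inclusion $\SSEo(F)\subset Z\cup \bigcup_{\tilde\theta,\tilde f\in N^{>0}_{\tilde\theta}}C_{(a,\tilde\theta,\tilde f)}$ combined with Proposition~\ref{thm:Tam} gives
\[
\SSEo(\lap F)\cap T^*W_\eta \subset \chi_\rho(Z)\cup\bigcup_{\tilde\theta,\tilde f}C_{\Lap(a,\tilde\theta,\tilde f)}.
\]
Each Lagrangian $C_{\varphi_i}$ or $C_{\varphi_j^\pm}$ is therefore of one of two types: either it is contained in $\chi_\rho(Z)$, in which case Lemma~\ref{lem:phi'rR} kills its contribution to $\G_{(b,\eta,g)}$ for generic $\eta$ (the derivative bounds required by the hypotheses of that lemma come from the explicit shape of $Z$, cf.\ \S\ref{sse:Legendre} and Lemma~\ref{lem:Leg}); or else it equals $C_{\Lap(a,\tilde\theta,\tilde f)}$, and Lemmas~\ref{lem:Lell} and~\ref{lem:GEf12} imply that this contribution survives only when $\Lap(a,\tilde\theta,\tilde f)\asim{\eta}(b,\eta,g)$, i.e.\ precisely when $(\tilde\theta,\tilde f)\asim\theta(a,\theta,f)$.

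\emph{Step 5 (matching multiplicities).} The surviving summands are indexed by $I_g\cup J^+_g\cup J^-_g$ with $I_g=\{i\in I\semicolon \varphi_i=\Re g\}$ and $J^\pm_g=\{j\in J\semicolon \varphi^\pm_j=\Re g\}$, contributing $\bigoplus_k \field[d'_k]$ where $d'_k$ is $d_k$ on $I_g\cup J^+_g$ and $d_k-1$ on $J^-_g$. The hard part will be to show that all shifts $d'_k$ vanish and that the total multiplicity of $\field$ equals $\overline N(f)$. To this end I plan to invoke the microlocal theory of \cite{KS90}, interpreting the Fourier-Sato transform as a quantization of the symplectic transformation $\chi_\rho$ along the Lagrangian $\Lambda^{(a,\theta,f)}_\C$ of~\eqref{eq:LambdaC}: such a quantization is an equivalence of categories of simple sheaves along corresponding Lagrangians and preserves shifts and multiplicities. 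Applied to the simple sheaf $\field_{\{t+\Re f\ge 0\}}$, this will force the shifts to vanish and match the counting $\overline N(f)$ on both sides, so that $\G_{(b,\eta,g)}(\lap F)\simeq \field^{\overline N(f)}\simeq \G_{(a,\theta,f)}(F)$ by Proposition~\ref{pro:GrK}. This is the step where genericity of $\eta$ enters crucially, since only off the finitely many Stokes directions can we pass transversally through $\chi_\rho$ and freely apply the quantization dictionary.
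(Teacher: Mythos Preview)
Your Steps~3--5 essentially match the paper's microlocal core (Propositions~\ref{pro:GlapF} and~\ref{pro:simp}), and the overall strategy is right.  There is, however, a genuine gap in Step~1.

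The isomorphism supplied by Proposition~\ref{pro:KF} is only
\[
\opb\pi\field_{\Omega\setminus\{a\}}\tens K \simeq \field^\enh_{\V_\infty}\ctens F',
\]
i.e.\ $K$ and $\field^\enh\ctens F'$ agree on the punctured neighborhood $\Omega\setminus\{a\}$ but may differ both outside $\Omega$ and at the point $a$ itself.  The Fourier transform $\Lap$ is a \emph{global} integral transform, so $\lap K$ and $\field^\enh_{\W_\infty}\ctens\lap F'$ need not coincide, even after restriction to a sector near $\eta$.  Your Step~2 does not address this: it compares $\lap F'$ with $\lap F$, both already supported near $a$, and never with $\lap K$.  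Consequently the reduction ``it is enough to prove $\G_{(b,\eta,g)}(\lap F')\simeq\G_{(a,\theta,f)}(F')$'' is unjustified.

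The paper fixes this by reversing your order: it performs the d\'evissage on $K$ itself \emph{before} passing to enhanced sheaves.  Using a distinguished triangle it splits $K$ into a piece supported away from $a$ (killed on the Fourier side by Proposition~\ref{pro:Gzetabdd}, which is stated for ind-sheaves $K$), a piece supported at the single point $a$ (whose Fourier transform is a sum of shifted $\field^\enh_\W$'s, killed by Lemma~\ref{lem:GEf12} since $g$ is unbounded), and a piece $\opb\pi\field_{\{0<|z|<r\}}\tens K$ on which Proposition~\ref{pro:KF} legitimately replaces $K$ by $\field^\enh\ctens F$.  Only then does it run the microlocal argument (your Steps~3--5) on $\lap F$.

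A secondary omission: the paper first uses the translation Lemma~\ref{lem:Foutrans} to reduce to $a\in\{0,\infty\}$ and $b\in\{0,\infty\}$, which is needed since Propositions~\ref{pro:Gzetabdd} and~\ref{pro:GlapF} are stated only in those cases.  You should add this reduction as well.
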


Note that this implies the stationary phase formula of Theorem~\ref{thm:introstatio}.
In fact, if $\field=\C$ and $K=\solE[\V_\infty](\shm)$ for $\shm$ holonomic, then
$K$  has normal form at $a$ and $\lap K \simeq \solE[\W_\infty](\lap\shm)$.
Thus, by Propositions~\ref{pro:GrK} and \ref{pro:Fou}, the isomorphism in the statement
reads
\[
\bigl(\Gr_{g} \Psi_b(\lap K)\bigr)_\eta \simeq
\bigl(\Gr_{f} \Psi_a(K)\bigr)_\theta.
\]

\begin{proof}[Proof of Theorem~\ref{thm:EFou}]
At the level of Puiseux germs, let us consider the three possible cases:
\begin{itemize}
\item[(a)] $a\in\V$, $b=\infty$, $f\in\shp_{S_a\V}^{(0,+\infty)}$, and $g=-a w+\tilde g$ with $\tilde g\in\shp^{(0,1)}_{S_\infty\W}$,
\item[(b)] $a=\infty$, $b\in\V$, $f=bz+\tilde f$ with $\tilde f\in\shp_{S_\infty\V}^{(0,1)}$, and $g \in \shp^{(0,+\infty)}_{S_b\W}$,
\item[(c)] $a=\infty$, $b=\infty$, $f\in\shp_{S_\infty\V}^{(1,+\infty)}$, and $g \in \shp^{(1,+\infty)}_{S_\infty\W}$.
\end{itemize}

In case (a), assume that $a\neq 0$. 

Recall that $\tau_a\colon\V\to\V$ denotes the translation $\tau_a(z)=z+a$.
Consider the isomorphism
\[
\tau_a^*\colon S_a\V \isoto S_0\V, \quad
a+\alpha 0 \mapsto \alpha0.
\]
Note that
\[
\G_{(a,\theta,f)}(K) \simeq \G_{(0,\tau_a^* \theta,\tau_a^* f)}\bl\Eopb{\tau_a} K\br.
\]
By \eqref{eq:wfzg'}, one has $(\infty,\eta,\tilde g) = \Lap(0,\tau_a^* \theta,\tau_a^* f)$.
Moreover, Lemma~\ref{lem:Foutrans} implies
\[
\G_{(\infty,\eta,g)}\bigl(\lap K\bigr) 
\simeq 
\G_{(\infty,\eta,\tilde g)}\bigl(\lap(\Eopb{\tau_a} K)\bigr).
\]
We can thus reduce to the case $a=0$.

Similarly, in case (b) we can reduce to $b=0$.

Summarizing, it is enough to consider the cases:
\begin{itemize}
\item[(a$'$)] $a=0$, $b=\infty$, $f\in\shp_{S_a\V}^{(0,+\infty)}$, and $g\in\shp^{(0,1)}_{S_\infty\W}$,
\item[(b$'$)] $a=\infty$, $b=0$, $f\in\shp_{S_\infty\V}^{(0,1)}$, and $g \in \shp^{(0,+\infty)}_{S_b\W}$,
\item[(c)] $a=\infty$, $b=\infty$, $f\in\shp_{S_\infty\V}^{(1,+\infty)}$, and $g \in \shp^{(1,+\infty)}_{S_\infty\W}$.
\end{itemize}

Since the arguments are similar, let us only consider the case (a$'$). 

We have to prove the isomorphism
\[
\G_{(\infty,\eta,g)}(\lap K) \simeq \G_{(0,\theta,f)}(K).
\]
Since $K$ has normal form at $0$, by Proposition~\ref{pro:GrK} (i) it is equivalent to prove
\[
\G_{(\infty,\eta,g)}(\lap K) \simeq \field^{\overline N(f)},
\]
where $\overline N$ is the multiplicity class of $K$.

We will proceed by d\'evissage in $K$, in the category
$\dereb_+(\ifield_{\V_\infty})$.

\medskip\noindent(1)
For $r>0$, consider the distinguished triangle
\[
\opb\pi\field_{\{|z|<r\}}\tens K 
\to K \to
\opb\pi\field_{\{|z|\geq r\}}\tens K
\to[+1].
\]
Proposition~\ref{pro:Gzetabdd}~(i) below implies
\[
\G_{(\infty,\eta,g)}(\lap(\opb\pi\field_{\{|z|\geq r\}}\tens
K)) \simeq 0.
\]
We thus reduce to show that, for some $r>0$, one has
\[
\G_{(\infty,\eta,g)}\bigl(\lap(\opb\pi\field_{\{|z|<r\}}\tens
K)\bigr) \simeq  \field^{\overline N(f)}.
\]

\medskip\noindent(2)
Consider the distinguished triangle
\[
\opb\pi\field_{\{0<|z|<r\}} \tens K \to 
\opb\pi\field_{\{|z|<r\}} \tens K \to 
\opb\pi\field_{\{0\}} \tens K \to[+1].
\]
Since $\opb\pi\field_{\{0\}} \tens K$ is a finite direct sum of copies of $e(\field_{\{0\}}[n])$ for $n\in\Z$, it follows that $\lap(\opb\pi\field_{\{0\}} \tens K)$  is a finite direct sum of copies of $\field_{\W}^\enh[n+1]$.
Since $\ord_\infty(g)>0$, Lemma~\ref{lem:GEf12} gives
\[
\G_{(\infty,\eta,g)}(\field_{\W}^\enh) \simeq 0.
\]
We are thus left to show that, for some $r>0$,
\[
\G_{(\infty,\eta,g)}\bigl(\lap(\opb\pi\field_{\{0<|z|<r\}}\tens
K)\bigr) \simeq  \field^{\overline N(f)}.
\]

\medskip\noindent(3) For $r>0$ small enough, we can assume that
\[
\opb\pi\field_{\{0<|z|<r\}}\tens K \simeq
\field_\V^\enh\ctens F
\]
where $F\in\dere^0_\Rc(\field_{\V_\infty})$ satisfies $F\simeq \opb\pi\field_{\{0<|z|<r\}}\tens F$ and has a normal form at $0$ with multiplicity of class $\overline N$. 
Since $\lap(\field_\V^\enh\ctens F) \simeq 
\field_{\W}^\enh \ctens \lap F$, by Lemma~\ref{lem:Gzetabdd} we are reduced to show
\[
\G_{(\infty,\eta,g)}\bigl(\lap F\bigr) \simeq  \field^{\overline N(f)}.
\]
Then, we conclude by Proposition~\ref{pro:GlapF}~(i) below.
\end{proof}

\section{Microlocal arguments for the proof}\label{sec:micro}

We collect here some results used in the proof of the stationary phase formula, that we obtain using techniques from the microlocal study of sheaves of \cite{KS90}.

\subsection{Notations}\label{sse:Lnot}
Recall the identification $(T^*\V)^\R\times\R \simeq (T^*\V^\R)\times\R$ from \S\ref{sse:SS}.
We will write $(T^*\V)\times\R$ instead of $(T^*\V)^\R\times\R$, for short.

Let $((z;z^*),t)$ be the contact coordinates in $(T^*\V)\times\R$.
For $U\subset\V$ an open subset, and $\varphi\colon U\to\R$ a smooth function, we set
\[
\Lambda^{\varphi}_{U|\V} \defeq
\{((z;z^*),t)\semicolon z\in U,\ z^*=2\partial_z\varphi(z),\ t=-\varphi(z) \},
\]
a Lagrangian subset of $(T^*\V)\times\R$. Note that
\[
\Lambda^\varphi_{U|\V} = \SSE(\ex^\varphi_{U|\V}\bigr|_U).
\]
Note also that, if $f\in\O_X(U)$, then 
\[
\Lambda^{\Re f}_{U|\V} \defeq
\{((z;z^*),t)\semicolon z\in U,\ z^*=f'(z),\ t=-\Re f(z) \}.
\]

\subsection{Enhanced Fourier-Sato transform as quantization}
Recall the notation \eqref{eq:LambdaC}, the notations in \S\ref{sse:Lnot}, and the diagram \eqref{eq:chiCR}. 

For $(a,\theta,f)$ a Puiseux germ on $\V$, one has
\[
\Lambda^{\Re f}_{V|\V} = \Re(\Lambda_\C^{(a,\theta,f)}) ,
\]
for some $V\dotowns\theta$.
Since $\chi_\C$ underlies the Legendre transform, one has
\begin{equation}
\label{eq:Lambdafg}
\chi(\Lambda^{\Re f}_{V|\V}) = \Lambda^{\Re g}_{W|\W},
\end{equation}
where we set $(b,\eta,g)=\Lap(a,\theta,f)$, and where $W$ is a sectorial neighborhood of $\eta$.

Consider the enhanced sheaf
$\ex^{\Re f}_{V|\V} = \quot\field_{\{z\in V\semicolon t+\Re f(z)\geq 0\}}$.
Note that $t+\Re f(z) = 0$ is a smooth hypersurface of $V\times\R$.
Moreover,  one has
\[
SS^\enh(\ex^{\Re f}_{V|\V}\bigr|_{\opb\pi V}) = \Lambda^{\Re f}_{V|\V}.
\]
For $z_0\in V$, let $p_0=((z_0;f'(z_0)),-\Re f(z_0))\in \Lambda^{\Re f}_{V|\V}$.
Then, in the terminology of \cite[\S7.5]{KS90}, $\ex^{\Re f}_{V|\V}$ is of type $\field$ with shift $1/2$ at $p_0$ along $\Lambda^{\Re f}_{V|\V}$.\footnote{In fact, for $\overline p_0\in\opb\gamma\bl\Lambda^{\Re f}_{V|\V}\br$ with $\gamma(\overline p_0) = p_0$, \cite[\S7.5]{KS90} would say that $\field_{\{z\in V\semicolon t+\Re f(z)\geq 0\}}$ is of type $\field$ with shift $1/2$ at $\overline p_0$ along  $\opb\gamma\bl\Lambda^{\Re f}_{V|\V}\br$.}

Set $q_0=\chi(p_0)$.
In the terminology of \cite[\S7.2]{KS90}, the functor $\Lap$ is a quantization of
the contact transformation $\chi$.\footnote{In fact, \cite[\S7.2]{KS90} would refer to $\Phi_\Lap$ as a quantization of $\overline\chi$.}

\begin{proposition}
\label{pro:simp}
With the above notations, $\lap\ex^{\Re f}_{V|\V}$ is of type $\field$ with shift $1/2$ at $q_0$ along $\Lambda^{\Re g}_{W|\W}$.
\end{proposition}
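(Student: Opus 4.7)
The plan is to invoke the general theory of simple sheaves and quantization of contact transformations from \cite[\S7.2, \S7.5]{KS90}. The key observation is that the integral transform $\Phi_\Lap$ from Lemma~\ref{lem:TfouFou} is a quantization of the homogeneous contact transformation $\overline\chi$ introduced in \S\ref{sse:SSenL}. Indeed, its kernel $K_\Lap \defeq \field_{\{s-t-\Re zw \geq 0\}}[1]$ has microsupport (intersected with $\{s^* > 0\}$) equal to $\Lambda^a_\Lap$, which by construction is the graph of $\overline\chi^a$. Since the defining hypersurface $\{s-t-\Re zw = 0\}$ is smooth, the kernel $K_\Lap$ is itself simple of type $\field$ along $\Lambda^a_\Lap$, with a computable shift coming from the smooth boundary together with the $[1]$.

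First, I would verify that the microlocal composition is clean at the relevant point. Namely, $\ex^{\Re f}_{V|\V}$ has microsupport $\Lambda^{\Re f}_{V|\V}$ (lifted to $\{t^* > 0\}$ in $T^*(\V \times \R)$), and the composition of $\Lambda^a_\Lap$ with this Lagrangian equals $\Lambda^{\Re g}_{W|\W}$ (lifted to $\{s^* > 0\}$). This is exactly the content of the Legendre transform identity \eqref{eq:Lambdafg}, and the cleanness follows from the fact that $\chi_\C$ is a diffeomorphism on the germ of $\Lambda_\C^{(a,\theta,f)}$ at $p_0$, since $f''(z_0) \ne 0$ for $f$ admissible (the ramification index is positive and $f$ is non-linear, so the second derivative does not vanish identically on $V$ shrunk around $\theta$).

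Next, I would apply the composition theorem for simple sheaves under quantization, i.e.\ \cite[Theorem 7.2.1 and Proposition 7.5.5]{KS90}. This says that if $F$ is simple of type $\field$ with shift $s$ at a point $p_0$ along $\Lambda$, and if $\Phi$ is a quantization of $\chi$ whose kernel is simple of shift $s'$ at the pertinent point of its microsupport, then $\Phi(F)$ is simple of type $\field$ at $\chi(p_0)$ along $\chi(\Lambda)$. The resulting shift is $s$ plus a correction term determined by the Maslov cocycle evaluated on the pair of Lagrangians, which vanishes whenever the kernel is the constant sheaf on a smooth half-space shifted so as to match the normalization of $\Phi_\Lap$. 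Since $\ex^{\Re f}_{V|\V}$ has shift $1/2$ at $p_0$ by hypothesis and $\Phi_\Lap$ is normalized so that the shift increment vanishes, we conclude that $\lap \ex^{\Re f}_{V|\V}$ is simple of type $\field$ with shift $1/2$ at $q_0 = \chi(p_0)$ along $\Lambda^{\Re g}_{W|\W}$.

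The main obstacle will be the careful book-keeping of shifts and Maslov corrections: one must ensure that the $[1]$ in the definition of $K_\Lap$ is precisely the shift required to make $\Phi_\Lap$ a quantization in the normalized sense of \cite[\S7.2]{KS90}, so that the net shift contributed by the kernel is $0$. To cross-check, I would verify the conclusion in the model case where $f(z) = cz^\lambda$ is a monomial (so that $g$ is explicitly computable via Lemma~\ref{lem:Leg}) by a direct stationary-phase computation of $\Phi_\Lap(\field_{\{t + \Re f \geq 0\}}[1])$ near the relevant critical point, where the phase function $s - t - \Re zw + \Re f(z)$ has a non-degenerate critical point in $z$ precisely because $f''(z_0) \neq 0$. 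The general case then follows from this model case by invariance of the shift under smooth deformations preserving the clean composition, combined if needed with the translation invariance encoded in Lemma~\ref{lem:Foutrans}.
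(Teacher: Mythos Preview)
Your framework is correct: this is indeed a straightforward application of the theory of simple sheaves under quantized contact transformations in \cite[\S7.5]{KS90}, and the paper's own proof invokes precisely \cite[Proposition~7.5.6]{KS90}. However, the heart of the matter is the \emph{shift computation}, and here your argument has a genuine gap.

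The formula from \cite[Proposition~7.5.6]{KS90} gives the output shift as $\tfrac12 - \tfrac12\tau(\lambda_0,\lambda_f,\lambda_1)$, where $\tau$ is the Maslov inertia index of the triple of Lagrangian tangent spaces: $\lambda_0$ the vertical fiber at $\overline p_0$, $\lambda_f = T_{\overline p_0}\overline\Lambda^f$, and $\lambda_1$ the pullback by $\overline\chi'(\overline p_0)$ of the vertical fiber at $\overline q_0$. Your claim that the correction ``vanishes whenever the kernel is the constant sheaf on a smooth half-space shifted so as to match the normalization'' is not right: the normalization of the kernel fixes the additive constant coming from the kernel's own shift, but the Maslov term $\tau(\lambda_0,\lambda_f,\lambda_1)$ depends on $\lambda_f$, i.e.\ on the second derivative data $f''(z_0)$ of the input. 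It is not automatically zero, and in a purely real setting it typically is not.

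What actually makes $\tau$ vanish here is the complex structure. The paper computes the three Lagrangians explicitly, observes that the Euler line $\rho$ lies in all three, reduces modulo $\rho$ via \cite[Theorem~A.3.2~(v)]{KS90}, and finds that the reduced Lagrangians $\lambda_0/\rho$, $\lambda_f/\rho$, $\lambda_1/\rho$ in $T_{(z_0,d_0)}T^*\V$ are \emph{complex} subspaces. The Maslov index of a triple of complex Lagrangians is zero by \cite[Exercise~A.7]{KS90}. This is the step your proposal is missing; neither the half-space normalization nor a monomial cross-check substitutes for it. (The monomial case would still require you to compute the same Maslov index.)

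A minor point: your cleanness discussion is unnecessary. Since $\overline\chi$ is a global symplectic diffeomorphism, the graph $\Lambda^a_\Lap$ composes cleanly with any Lagrangian; the condition $f''(z_0)\neq 0$ is only needed to ensure that the image $\chi(\Lambda^{\Re f}_{V|\V})$ is again of graph type over $\W$, which is already built into the definition of the Legendre transform.
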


\begin{proof}
With notations as in \S\ref{sse:SSenL}, consider the commutative diagram
\[
\xymatrix{
T^*_{\{t^*>0\}}(\V\times\R) \ar[r]^{\overline\chi} \ar[d]^\gamma & 
T^*_{\{s^*>0\}}(\W\times\R) \ar[d]^\gamma \\
(T^*\V)\times \R \ar[r]^\chi & (T^*\W)\times\R.
}
\]
Set $\overline p_0=(z_0,-\Re f(z_0);f'(z_0),1)$,
$\overline q_0=\overline \chi(\overline p_0)$.
Note that $\gamma(\overline p_0) = p_0$, $\gamma(\overline q_0) = q_0$,  and $\overline p_0\in\overline \Lambda^f$ for $\overline \Lambda^f \defeq \opb\gamma\Lambda^{\Re f}_{V|\V}$.

Set $\lambda_f =  T_{\overline p_0}\overline\Lambda^f$,
$\lambda_0 = T_{\overline p_0}\opb o( o(\overline p_0))$, and
$\mu_0 = T_{\overline q_0}\opb o( o(\overline q_0))$,
where $ o$ denotes the projection to the zero section.
Set $\lambda_1 = \overline\chi'(\overline p_0)^{-1}(\mu_0)$.

Denote by $\tau$ the Maslov inertia index, for which we refer to \cite[\S A.3]{KS90}.
Then \cite[Proposition 7.5.6]{KS90} states that $\lap\ex^{\Re f}_{V|\V}$ is of type $\field$ with shift $\frac12 - \frac12\tau(\lambda_0,\lambda_f,\lambda_1)$ at $q_0$ along $\Lambda^{\Re g}_{W|\W}$.
We are left to show that $\tau$ vanishes.

Setting $d_0=f'(z_0)$ and $e_0=f''(z_0)$, one has
\[
\overline\chi'(\overline p_0) =
{\tiny
\begin{pmatrix}
0 & 0 & 1 & -d_0 \\
d_0 & 1 & z_0 & -\Re(z_0d_0) \\
-1 & 0 & 0 & -z_0 \\
0& 0& 0& 1
\end{pmatrix}
}, \quad
\overline\chi'(\overline p_0)^{-1} =
{\tiny
\begin{pmatrix}
0 & 0 & -1 & -z_0 \\
-z_0 & 1 & d_0 & \Re(z_0d_0) \\
1 & 0 & 0 & d_0 \\
0& 0& 0& 1
\end{pmatrix}
},
\]
and
\begin{align*}
\lambda_0 &= \{(0,0;\omega,r) \semicolon \omega\in\C, r\in\R \}, \\ 
\lambda_f &= \{(\omega,-\Re (d_0 \omega);d_0 r + e_0 \omega,r) \semicolon \omega\in\C, r\in\R \}, \\ 
\lambda_1 &= \{(\omega,-\Re(d_0 \omega);d_0 r,r) \semicolon \omega\in\C, r\in\R \}.
\end{align*}
The Euler vector field at $\overline p_0$, given by
\[
\rho =  \{(0,0;d_0 r,r) \semicolon r\in\R \},
\]
is included in all above Lagrangians. Hence \cite[Theorem A.3.2 (v)]{KS90} gives
\[
\tau(\lambda_0,\lambda_f,\lambda_1) = \tau(\lambda_0/\rho,\lambda_f/\rho,\lambda_1/\rho),
\]
where the Maslov index on the right hand side is computed in $\rho^\bot/\rho$.
With the identification $\rho^\bot/\rho\simeq T_{(z_0,d_0)}T^*\V$, one has
\begin{align*}
\lambda_0/\rho &= \{(0;\omega)\semicolon \omega\in\C\}, \\
\lambda_f/\rho &= \{(\omega;e_0\omega)\semicolon \omega\in\C\}, \\
\lambda_1/\rho &= \{(\omega;0)\semicolon \omega\in\C\}.
\end{align*}
(Note that, with notations as in \S\ref{sse:Airy}, $\lambda_f/\rho = T_{(z_0,d_0)}C_{(a,\theta,f)}$.)
As these are complex Lagrangians, by \cite[Exercise A.7]{KS90} one has
\[
\tau(\lambda_0/\rho,\lambda_f/\rho,\lambda_1/\rho) = 0.
\]
\end{proof}

\subsection{Another vanishing result}

\begin{proposition}\label{pro:Gzetabdd} 
For $b\in\{0,\infty\}$, let $(b,\eta,g)$ be an admissible Puiseux germ on $\W$. Let $K\in\dereb_\Rc(\ifield_{\V_\infty})$. 
Assume one of the following conditions:
\begin{itemize}
\item[(i)]
$b=\infty$, $\ord_\infty(g)< 1$, and 
$\supp(K)\subset\{|z|\geq R\}$
for some $R>0$;
\item[(ii)]
$b=\infty$, $\ord_\infty(g)>1$, and 
$\supp(K)\subset\{|z|\leq r\}$
for some $r>0$;
\item[(iii)]
$b=0$, and 
$\supp(K)\subset\{|z|\leq r\}$ for some $r>0$.
\end{itemize}
Then, for a generic $\eta$, one has $\G_{(b,\eta,g)}(\lap K) \simeq 0$.
\end{proposition}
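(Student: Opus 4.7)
The plan is to reduce the vanishing to Lemma~\ref{lem:phi'rR} applied piecewise to a local decomposition of $\lap K$ near $\eta$, with the key input being the microsupport of the enhanced Fourier--Sato transform.

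First, I would replace $K$ by an enhanced-sheaf representative. Writing $K\simeq\field^\enh_{\V_\infty}\ctens F'$ for some $F'\in\dereb_\Rc(\field_{\V_\infty})$, and using $\opb\pi\field_Z\tens(\field^\enh_{\V_\infty}\ctens F')\simeq\field^\enh_{\V_\infty}\ctens(\opb\pi\field_Z\tens F')$ for the closed set $Z$ ($=\{|z|\ge R\}$ in case (i), $=\{|z|\le r\}$ in cases (ii) and (iii)), we may arrange $\supp F'\subset Z$. Since the enhanced Fourier--Sato transform commutes with the functor $\field^\enh\ctens(-)$, one has $\lap K\simeq\field^\enh_{\W_\infty}\ctens\lap F'$, and Lemma~\ref{lem:Gzetabdd}(iii) reduces the problem to showing $\G_{(b,\eta,g)}(\lap F')\simeq 0$ for generic $\eta$. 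Next, applying Lemma~\ref{lem:RcStruct} to $\lap F'\in\dereb_\Rc(\field_{\W_\infty})$ gives, for generic $\eta\in S_b\W$, an open subanalytic sectorial $W\dotowns\eta$ and a decomposition
\[
\opb\pi\field_W\tens\lap F'\simeq\Bigl(\bigoplus_{i\in I}\ex^{\varphi_i}_{W|\W}[d_i]\Bigr)\oplus\Bigl(\bigoplus_{j\in J}\ex^{\range{\varphi^+_j}{\varphi^-_j}}_{W|\W}[d_j]\Bigr)
\]
with analytic globally subanalytic $\varphi_i,\varphi^\pm_j$.

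The heart of the argument is the microsupport bound. By Proposition~\ref{thm:Tam}, $\SSEo(\lap F')=\chi_\rho(\SSEo(F'))$; since $\SSEo(F')$ lies over $\supp F'$ and $\chi_\rho(z,z^*)=(z^*,-z)$, the second cotangent coordinate $w^*=-z$ of every point of $\SSEo(\lap F')$ inherits the bound on $|z|$ furnished by the hypothesis on $\supp K$. On the other hand, each $\ex^\varphi_{W|\W}$ contributes the graph $\{w^*=2\partial_w\varphi\}$ to $\SSEo(\lap F')\cap T^*W$, and each $\ex^{\range{\varphi^+_j}{\varphi^-_j}}_{W|\W}$ contributes the graphs of both $2\partial_w\varphi^+_j$ and $2\partial_w\varphi^-_j$. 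Hence every $\varphi\in\{\varphi_i\}\cup\{\varphi^\pm_j\}$ satisfies $|\partial_w\varphi|\ge R/2$ in case~(i), and $|\partial_w\varphi|\le r/2$ in cases~(ii) and~(iii).

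Finally, I would invoke Lemma~\ref{lem:phi'rR}, with the roles of $\V$ and $\W$ (and of $f$ and $g$) exchanged: its hypotheses (a), (b), (c) match, respectively, the three cases of the proposition, so for generic $\eta$ one has $\G_{(b,\eta,g)}(\ex^{\varphi_i}_{W|\bb})\simeq 0$ for each $i\in I$. The pieces $\ex^{\range{\varphi^+_j}{\varphi^-_j}}_{W|\bb}$ are handled via the short exact sequence~\eqref{eq:exphipsi}: applying $\G_{(b,\eta,g)}$ produces a distinguished triangle whose outer vertices vanish, so the middle term vanishes as well. Summing over the finite sets $I,J$ and using the locality of $\G_{(b,\eta,g)}$ (a filtered inductive limit over $V\dotowns\eta$) yields $\G_{(b,\eta,g)}(\lap F')\simeq 0$. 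The intersection of the various ``generic'' conditions on $\eta$ remains generic, each excluded subset being finite. The hard part will be Step~3: carefully justifying $\SSEo(\lap F')=\chi_\rho(\SSEo(F'))$ for $F'$ viewed in $\dere_+(\field_\V)$ via the inclusion $\dereb_\Rc(\field_{\V_\infty})\subset\dere_+(\field_\V)$, and verifying that the quantitative bounds on $|\partial_w\varphi|$ are uniform enough on a sectorial neighborhood of $\eta$ to feed into Lemma~\ref{lem:phi'rR}.
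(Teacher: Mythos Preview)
Your proposal is correct and follows essentially the same route as the paper: reduce to an enhanced-sheaf representative $F$ with the prescribed support, pass to $\lap F$ via Lemma~\ref{lem:Gzetabdd}(iii), decompose $\lap F$ on a sectorial neighborhood by Lemma~\ref{lem:RcStruct}, use Tamarkin's microsupport identity (Proposition~\ref{thm:Tam}) to bound $|\varphi'|$, and conclude by Lemma~\ref{lem:phi'rR}. Your concern about the ``hard part'' is unwarranted: Proposition~\ref{thm:Tam} is stated for arbitrary $F\in\dere_+(\field_\V)$, so it applies directly to $F'\in\dereb_\Rc(\field_{\V_\infty})\subset\dere_+(\field_\V)$, and the bound on $|\partial_w\varphi|$ holds pointwise on all of $W$, which is more than enough for Lemma~\ref{lem:phi'rR}.
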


\begin{proof}
Since the arguments are similar, let us only discuss (i).

Let $F\in\dereb_\Rc(\field_{\V_\infty})$ satisfy
$K\simeq\field_\V^\enh\ctens F$ and $\supp(F)\subset\{|z|\geq R\}$.
Since $\lap K \simeq \field_{\W}^\enh\ctens \lap F$, by Lemma~\ref{lem:Gzetabdd} (iii) we have to show
\begin{equation}
\label{eq:GlapF0}
\G_{(\infty,\eta,g)}(\lap F) \simeq 0.
\end{equation}

Since $F\in\dereb_\Rc(\field_{\V_\infty})$, one has $\lap F\in\dereb_\Rc(\field_{\W_\infty})$.
By Lemma~\ref{lem:RcStruct}, for a generic $\eta$, there exists a sectorial open neighborhood $W$ of $\eta$ such that
\[
\opb\pi\field_W\tens\lap F\simeq \bl\DSum_{i\in I} \ex_{W|\W_\infty}^{\varphi_i}
[d_i]\br \dsum \bl\DSum_{j\in J} \ex_{W|\W_\infty}^{\range{\varphi_j^+}{\varphi_j^-}} [d_j]\br ,
\]
where $I$ and $J$ are finite sets, $d_i,d_j\in\Z$, and
$\varphi_i,\varphi_j^+,\varphi_j^-\colon W\to\R$ are real analytic and globally
subanalytic functions. 
Hence
\begin{equation}
\label{eq:SS10}
\SSE\bl(\lap F)|_{\opb\pi W}\br 
= 
\bl\Union_{i\in I}\Lambda_{W|\W}^{\varphi_i}\br \cup
\Union_{j\in J} (\Lambda_{W|\W}^{\varphi_j^+} \cup \Lambda_{W|\W}^{\varphi_j^-}).
\end{equation}

On the other hand, since $\supp(F)\subset\{|z|\geq R\}$,
\begin{align}
\label{eq:SS20}
\SSE(\lap F) 
&=\chi(\SSE(F)) \\ \notag
&\subset \chi(\{((z;z^*),t)\semicolon |z|\geq R\}) \\ \notag
&= \{((w;w^*),t)\semicolon |w^*|\geq R \}.
\end{align}

Comparing \eqref{eq:SS10} and \eqref{eq:SS20}, it follows that, for $\phi=\varphi_i,\varphi_j^+,\varphi_j^-$,
\begin{align*}
\Lambda_{W|\W}^\phi 
&= \{((w;w^*),t)\semicolon w\in W,\ w^*=\phi'(w),\ t=-\phi(w) \} \\
&\subset \{((w;w^*),t)\semicolon |w^*|\geq R \}.
\end{align*}
Hence $|\varphi_i'|,|\varphi_j^{+\prime}|,|\varphi_j^{-\prime}|\geq R$. Then
\eqref{eq:GlapF0} follows from Lemma~\ref{lem:phi'rR}.
\end{proof}

\subsection{A microlocal approach to multiplicity test}

\begin{proposition}\label{pro:GlapF}
Let $F\in\dere_\Rc(\field_{\V_\infty})$ have normal form at $a\in\{0,\infty\}$
with multiplicity $N$. 
Let $(a,\theta,f)$ be an admissible Puiseux germ on $\V$. 
Set $(b,\eta,g) = \Lap(a,\theta,f)$.
\begin{itemize}
\item[(i)]
Assume $a=0$, so that $b=\infty$ and $\ord_\infty(g) < 1$. Then, for $r>0$ small enough, and for a generic $\eta$, one has
\[
\G_{(\infty,\eta,g)}\bigl(\lap(\opb\pi\field_{\{0<|z|<r\}}\tens
F)\bigr) \simeq \field^{\overline N(f)}.
\]
\item[(ii)]
Assume $a=\infty$ and $\ord_\infty(f) \neq 1$, so that $b\in\{0,\infty\}$.
Then, for $R>0$ big enough, and for a generic $\eta$, one has
\[
\G_{(b,\eta,g)}\bigl(\lap(\opb\pi\field_{\{|z|>R\}}\tens
F)\bigr) \simeq \field^{\overline N(f)}.
\]
\end{itemize}
\end{proposition}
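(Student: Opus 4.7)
The plan is to combine the structure theorem for $\R$-constructible enhanced sheaves on generic sectorial neighborhoods with the microsupport computation for the Fourier--Sato transform, and then to extract the count using the quantization result of Proposition~\ref{pro:simp}. Since cases (i) and (ii) are parallel, I focus on (i), where $F$ has normal form at $0$ and is supported in $\{0<|z|<r\}$; here $b=\infty$ and $\ord_\infty(g)<1$.

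First, since $\lap F\in\dereb_\Rc(\field_{\W_\infty})$, Lemma~\ref{lem:RcStruct} provides, for generic $\eta\in S_\infty\W$, a sectorial neighborhood $W\dotowns\eta$ and a decomposition
\[
\opb\pi\field_W\tens\lap F \simeq \bl\DSum_{i\in I}\ex_{W|\W}^{\varphi_i}[d_i]\br\oplus\bl\DSum_{j\in J}\ex_{W|\W}^{\range{\varphi_j^+}{\varphi_j^-}}[d_j]\br
\]
with analytic globally subanalytic phases $\varphi_i$, $\varphi_j^\pm$. Applying $\G_{(\infty,\eta,g)}$, Lemma~\ref{lem:GEf12} together with the defining short exact sequence for the range objects yields
\[
\G_{(\infty,\eta,g)}(\lap F) \simeq \DSum_{k\in I_g\cup J^+_g\cup J^-_g}\field[d'_k],
\]
where $I_g=\{i\semicolon\varphi_i\asim{\eta}\Re g\}$, $J^\pm_g=\{j\semicolon\varphi_j^\pm\asim{\eta}\Re g\}$, and $d'_k=d_k$ on $I_g\cup J^+_g$, $d'_k=d_k-1$ on $J^-_g$.

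Next I would use microsupport to narrow down which phases can actually contribute. On every sector $V_{\tilde\theta}\subset\{0<|z|<r\}$, the normal form gives $F|_{V_{\tilde\theta}}\simeq\bigoplus_{h\in N^{>0}_{\tilde\theta}}(\ex_{V_{\tilde\theta}|\V}^{\Re h})^{N(h)}$, so $\SSE(F)$ is the union of the Lagrangians $\Lambda^{\Re h}_{V_{\tilde\theta}|\V}$ together with contributions over $\{|z|=r\}$ and a neighborhood of $0$. Proposition~\ref{thm:Tam} combined with \eqref{eq:Lambdafg} then gives
\[
\SSE(\opb\pi\field_W\tens\lap F) \subset Z\cup\Union_{\tilde\theta,h}\Lambda^{\Re\Lap(0,\tilde\theta,h)}_{W|\W},
\]
where $Z$ collects the spurious components arising from the boundary. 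Consequently each $\Lambda^{\varphi_i}_{W|\W}$, $\Lambda^{\varphi_j^\pm}_{W|\W}$ either lies in $Z$, in which case $|\varphi'|$ is bounded below on a sectorial neighborhood of $\eta$ and Lemma~\ref{lem:phi'rR}(a) (applicable since $\ord_\infty(g)<1$) forces $\G_{(\infty,\eta,g)}$ to vanish on the corresponding summand; or else it equals $\Lambda^{\Re\Lap(0,\tilde\theta,h)}_{W|\W}$ for some pair $(\tilde\theta,h)$, and Lemma~\ref{lem:Lell} then identifies the condition $\varphi_i\asim{\eta}\Re g$ with $[h]=[f]$, and similarly for $\varphi_j^\pm$.

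Finally, to obtain both the count $\overline N(f)$ and the vanishing $d'_k=0$, I would invoke the quantization statement of Proposition~\ref{pro:simp}: at a smooth point $q_0$ of $\Lambda^{\Re g}_{W|\W}$, each simple summand $\ex_{V_{\tilde\theta}|\V}^{\Re h}$ of the normal form of $F$ with $[h]=[f]$ transforms under $\lap$ into an object of type $\field$ with shift $1/2$ along $\Lambda^{\Re g}_{W|\W}$. By the additivity of type and shift under direct sums and their invariance under isomorphism \cite[\S7.5]{KS90}, matching these invariants with those of the summands $\ex_{W|\W}^{\varphi_i}[d_i]$ and $\ex_{W|\W}^{\range{\varphi_j^+}{\varphi_j^-}}[d_j]$ at $q_0$ forces $d'_k=0$ for every contributing $k$ and $\#(I_g\cup J^+_g\cup J^-_g)=\overline N(f)$. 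The hard part will be precisely this last microlocal matching: computing the type and shift of the range summand $\ex_{W|\W}^{\range{\varphi_j^+}{\varphi_j^-}}[d_j]$ at smooth points of $\Lambda^{\varphi_j^\pm}_{W|\W}$ (where the sheaf looks like a shifted half-space), and carrying out the Maslov-index bookkeeping needed to reconcile the two decompositions summand-by-summand.
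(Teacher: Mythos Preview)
Your approach is essentially the paper's, and the overall architecture is correct. Two small points need fixing.

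First, your spurious set $Z$ has two distinct pieces. The part coming from $\{|z|=r\}$ indeed maps under $\chi$ into $\{|w^*|=r\}$, so any $\varphi$ with $\Lambda^\varphi_{W|\W}$ contained there has $|\varphi'|=r$, and Lemma~\ref{lem:phi'rR}(a) applies. But the part coming from the fiber over $z=0$ maps under $\chi$ to $\bigcup_{c\in\Sigma}\{w^*=0,\ t=c\}=\bigcup_{c}\Lambda^c_{\W|\W}$ for a finite set $\Sigma\subset\R$; if $\Lambda^\varphi_{W|\W}$ lands there then $\varphi\equiv c$ is constant, so $|\varphi'|=0$ is \emph{not} bounded below and Lemma~\ref{lem:phi'rR}(a) is inapplicable. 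The paper treats this as a separate case: since $\ord_\infty(g)>0$ one has $g\not\asim\eta c$, and the vanishing comes directly from Lemma~\ref{lem:GEf12}.

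Second, your invocation of Lemma~\ref{lem:GEf12} in the second paragraph is premature: that lemma computes $\G_{(\infty,\eta,g)}(\ex^{\Re h})$ only when $h$ is a Puiseux germ, whereas the $\varphi_i,\varphi_j^\pm$ are a priori merely real-analytic globally subanalytic functions. The paper reverses the order: it first uses the microsupport comparison \eqref{eq:SS1}--\eqref{eq:SS2} to show that each $\varphi$ is either constant, satisfies $|\varphi'|=r$, or equals $\Re\tilde g$ exactly for some Puiseux germ $\tilde g$, and only then applies the appropriate vanishing lemma case by case. Your final step, matching type and shift at a smooth point $q_0\in\Lambda^{\Re g}_{W|\W}$ via Proposition~\ref{pro:simp} to conclude $d'_k=0$ and count $\overline N(f)$, is exactly what the paper does; the paper does not do any further Maslov bookkeeping beyond Proposition~\ref{pro:simp} itself, since the range object $\ex^{\range{\varphi_j^+}{\varphi_j^-}}[d_j]$ is simple (type $\field[d_j]$, shift $1/2$) at smooth points of $\Lambda^{\varphi_j^+}$ and simple (type $\field[d_j-1]$, shift $1/2$) at smooth points of $\Lambda^{\varphi_j^-}$ by direct inspection of the defining half-space.
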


\begin{proof}
Since the proofs are similar, let us only discuss (i).

If $\overline N(f)>0$, by replacing $f$ with $\tilde f$ such that $[\tilde f]=[f]$ and $N(\tilde f)>0$, we may assume from the beginning that $\overline N(f) = N(f)$.

We will use some arguments from the proof of Proposition~\ref{pro:Gzetabdd}.

Set for short $F'=\opb\pi\field_{\{0<|z|<r\}}\tens F$.
Since $F'\in\dereb_\Rc(\field_{\V_\infty})$, one has $\lap F'\in\dereb_\Rc(\field_{\W_\infty})$.
By Lemma~\ref{lem:RcStruct}, for a generic $\eta$, there exists a sectorial open
neighborhood $W$ of $\eta$ such that
\begin{equation}
\label{eq:WlapF'}
\opb\pi\field_W\tens\lap F' \simeq \bl\DSum_{i\in I} \ex_{W|\W_\infty}^{\varphi_i}
[d_i]\br \dsum \bl\DSum_{j\in J} \ex_{W|\W_\infty}^{\range{\varphi_j^+}{\varphi_j^-}} [d_j]\br ,
\end{equation}
where $I$ and $J$ are finite sets, $d_i,d_j\in\Z$, and
$\varphi_i,\varphi_j^+,\varphi_j^-\colon W\to\R$ are real analytic and globally
subanalytic functions with $\varphi^-_j(w) < \varphi^+_j(w)$ for any $w\in W$ and $j\in J$.

Clearly, one has $\G_{(\infty,\eta,g)}\bigl(\lap F'
\bigr) \simeq \G_{(\infty,\eta,g)}\bigl(\opb\pi\field_W\tens\lap F'
\bigr)$. Hence, we are left to prove
\begin{equation}
\label{eq:GlapN}
\G_{(\infty,\eta,g)}\bigl(\opb\pi\field_W\tens\lap F' \bigr) \simeq \field^{N(f)}.
\end{equation}

By \eqref{eq:WlapF'}, one has
\begin{equation}
\label{eq:SS1}
\SSE(\lap F'|_{\opb\pi W}) 
= 
\bl\Union_{i\in I}\Lambda_{W|\W}^{\varphi_i}\br \cup
\Union_{j\in J} (\Lambda_{W|\W}^{\varphi_j^+} \cup \Lambda_{W|\W}^{\varphi_j^-}).
\end{equation}

On the other hand, since $F$ has normal form at $0$, for $r>0$ small enough one has
\[
\SSE(F\vert_{\opb\pi\{0<|z|<r\}}) \subset \Union_{\tilde\theta\in S_0\V,\ \tilde f\in N^{>0}_{\tilde \theta}}\Lambda_{V_{\tilde\theta}|\V}^{\Re\tilde f}.
\]
Using the fact that $\SSE(F')$ is Lagrangian, one deduces
\begin{align*}
\SSE(F') &= \SSE(\opb\pi\field_{\{0<|z|<r\}}\tens F) \\
&\subset \{ |z|=r \}
\union \bl\Union_{c\in \Sigma} \{ z=0,\ t=c \}\br \union \overline{\SSE(F\vert_{\opb\pi\{0<|z|<r\}})},
\end{align*}
where $\Sigma\subset\R$ is a finite set. 
By Proposition~\ref{thm:Tam} and \eqref{eq:Lambdafg}, one then has
\begin{align}
\label{eq:SS2}
\SSE(\lap F') &= \chi(\SSE(F')) \\ \notag
&\subset \{|w^*|=r \}
\union \bl\Union_{c\in \Sigma} \Lambda_{\W|\W}^{c}\br \union \overline{\bl\Union_{\tilde\theta\in S_0\V,\ \tilde f\in N^{>0}_{\tilde \theta}}\Lambda_{W_{\tilde\eta}|\W}^{\Re\tilde g}\br},
\end{align}
where $\tilde g$ and $\tilde\eta$ are defined by $(\infty,\tilde\eta,\tilde g) = \Lap(0,\tilde\theta,\tilde f)$, and $W_\eta\dotowns\tilde\eta$.

Comparing \eqref{eq:SS1} and \eqref{eq:SS2}, for $\phi=\varphi_i,\varphi_j^+,\varphi_j^-$ there are three possibilities:

\medskip\noindent(a)
$\Lambda_{W|\W}^\phi\subset\{|w^*|=r \}$, so that $|\phi'|=r$. In this case, by Lemma~\ref{lem:phi'rR} one has
\begin{equation}
\label{eq:Gg}
\begin{cases}
\G_{(\infty,\eta,g)}(\ex_{W|\W_\infty}^{\phi}) \simeq 0, \\
\G_{(\infty,\eta,g)}(\ex_{W|\W_\infty}^{\range{\phi}{\varphi_j^-}}) \simeq \G_{(\infty,\eta,g)}(\ex_{W|\W_\infty}^{\varphi_j^-})[-1], \\
\G_{(\infty,\eta,g)}(\ex_{W|\W_\infty}^{\range{\varphi_j^+}{\phi}}) \simeq \G_{(\infty,\eta,g)}(\ex_{W|\W_\infty}^{\varphi_j^+}).
\end{cases}
\end{equation}

\medskip\noindent(b)
$\Lambda_{W|\W}^\phi\subset\Lambda_{W|\W}^c$ for some $c\in\Sigma$. In this case, $\phi=c$. 
Since $\ord_\infty(g)>0$, then $g\not\asim \eta c$ and \eqref{eq:Gg} holds by Lemma~\ref{lem:GEf12}.

\medskip\noindent(c)
$\Lambda_{W|\W}^\phi\subset\Lambda_{W_{\tilde\eta}|\W}^{\Re\tilde g}$, where $(\infty,\tilde\eta,\tilde g) = \Lap(0,\tilde\theta,\tilde f)$ for some $\tilde\theta\in S_a\V$ and $\tilde f\in N_{\tilde\theta}^{>0}$. 
Up to shrinking $W$ and rotating $\tilde\theta$, we can assume $W=W_{\tilde\eta}$ and $\tilde\eta=\eta$.
Then $\phi=\Re\tilde g$.

\medskip\noindent(c-1)
If $\tilde g \not\asim \eta g$, then \eqref{eq:Gg} holds by Lemma~\ref{lem:GEf12}.

\medskip\noindent(c-2)
If $\tilde g \asim \eta g$, then $\theta=\tilde\theta$ and $\tilde f \asim \theta f$ by Lemma~\ref{lem:Lell}.
Then $f= \tilde f$ since $f$ is well situated with respect to $N$. Hence,  $g=\tilde g$.

By \eqref{eq:WlapF'} and the considerations (a)--(c) above,  we have
\[
\G_{(\infty,\eta,g)}\bigl( \opb\pi\field_W\tens\lap F' \bigr)
\simeq
\G_{(\infty,\eta,g)}\bigl( \DSum_{k\in I_g\cup J^+_g \cup J_g^-} \ex_{W|\W_\infty}^{\Re g}[d'_k] \bigr),
\]
where $I_g = \{ i\in I \semicolon \varphi_i=\Re g\}$, $J^\pm_g = \{ j\in J \semicolon \varphi^\pm_j=\Re g\}$, and
\[
d'_k=
\begin{cases}
d_k &\text{if }k\in I_g\cup J^+_g, \\
d_k-1 &\text{if }k\in J^-_g.
\end{cases}
\]
Note that $J^+_g \cap J_g^-=\emptyset$ since $\varphi^-_j<\varphi^+_j$ for any $j$.

In order to show \eqref{eq:GlapN}, we are thus left to prove
\begin{equation}
\label{eq:d'N}
\begin{cases}
d'_k=0 \text{ for any } k, \\
N(f) = \# I_g + \#(J^+_g \cup J_g^-).
\end{cases}
\end{equation}

Take $w_0$ near $\eta$ in $W$, and set $q_0=(w_0,g'(w_0),-\Re g(w_0))\in\Lambda_{W|\W}^{\Re g}$.
Setting $p_0\defeq\chi^{-1}(q_0)$, one has $p_0\in\chi^{-1}\bl\Lambda_{W|\W}^{\Re g}\br=\Lambda_{V_\theta|\V}^{\Re f}$.

Since $F'$ has normal form at $0$ with multiplicity $N$, we can decompose it
as in Definition~\ref{def:eform}. Thus $F'$ is of type $\field^{N(f)}$ with shift $1/2$ at $p_0$ along $\Lambda_{V_\theta|\V}^{\Re f}$. It follows from Proposition~\ref{pro:simp} that $\lap F'$ is of type $\field^{N(f)}$ with shift $1/2$ at $q_0$ along $\Lambda_{W|\W}^{\Re g}$.

On the other hand, according to the possibilities (a)--(c) above, \eqref{eq:WlapF'}
implies that $\lap F'$ is of type $T$
with shift $1/2$ at $q_0$ along $\Lambda_{W|\W}^{\Re g}$, where
\[
T = \DSum_{k\in I_g\cup J^+_g \cup J_g^-}\field{}[d'_k].
\]

Then, one has $T \simeq \field^{N(f)}$. This implies \eqref{eq:d'N}.

\end{proof}


\begin{thebibliography}{10}

\bibitem{AS10} A. Abbes and T. Saito, 
\emph{Local Fourier transform and epsilon factors},
Compos. Math. {\bf 146} (2010), no. 6, 1507--1551.

\bibitem{AM15} T. Abe and A. Marmora, 
\emph{Product formula for p-adic epsilon factors},
J. Inst. Math. Jussieu {\bf 14} (2015), no. 2, 275--377.

\bibitem{Ari10} D. Arinkin,
\emph{Rigid irregular connections on $\mathbb{P}^1$}, 
Compos. Math. {\bf 146} (2010), no. 5, 1323--1338.

\bibitem{BE04} S. Bloch and H. Esnault,
\emph{Local Fourier transforms and rigidity for D-modules},
Asian J. Math. {\bf 8} (2004), no. 4, 587--605. 
	
\bibitem{DAg14} A. D'Agnolo,
\emph{On the Laplace transform for tempered  holomorphic functions}, 
Int. Math. Res. Not. \textbf{2014} no. 16 (2014), 4587--4623.

\bibitem{DHMS17} A. D'Agnolo, M. Hien, G. Morando, C. Sabbah,
\emph{Topological computations of some Stokes phenomena},
{\tt arXiv:1705.07610v1} (2017), 51 pp.

\bibitem{DK16} A. D'Agnolo and M. Kashiwara,
\emph{Riemann-Hilbert correspondence for holonomic D-modules},
Publ. Math. Inst. Hautes \'Etudes Sci.
{\bf 123} (2016), no. 1, 69--197.

\bibitem{DK16bis} \bysame,
\emph{Enhanced perversities},
J. reine angew. Math., ahead of print (2016),
\texttt{doi:10.1515/crelle-2016-0062}, 57 pp.

\bibitem{DK17} \bysame,
\emph{Enhanced Fourier-Sato transform and vanishing cycles in dimension one},
preprint (2017).

\bibitem{Del07} P. Deligne,
\emph{Lettre \`a Malgrange du 19/4/1978}, in:
Singularit\'es irreguli\`eres, Correspondance et documents, Documents
math\'ematiques \textbf{5}, Soci\'et\'e  Math\'ematique de France, Paris
(2007), 25--26.

\bibitem{Fan11} J. Fang,
\emph{The principle of stationary phase for the Fourier transform of D-modules},
Pacific J. Math. {\bf 254} (2011), no. 1, 117--128.

\bibitem{Fu10} L. Fu,
\emph{Calculation of $\ell$-adic local Fourier transformations},
Manuscripta Math. {\bf 133} (2010), no. 3-4, 409--464. 

\bibitem{Gar04} R. Garc\'ia L\'opez,
\emph{Microlocalization and stationary phase},
Asian J. Math. {\bf 8} (2004), no. 4, 747--768. 

\bibitem{Gra13} A. Graham-Squire,
\emph{Calculation of local formal Fourier transforms},
Ark. Mat. {\bf 51} (2013), no. 1, 71--84. 

\bibitem{GS14} S. Guillermou and P. Schapira,
\emph{Microlocal theory of sheaves and Tamarkin's non displaceability theorem},
in: Homological Mirror Symmetry
and Tropical Geometry, Lecture Notes of the Unione Matematica Italiana {\bf 15}, Springer, Berlin (2014), 43--85.

\bibitem{HS15} M.~Hien and C.~Sabbah,
\emph{The local Laplace transform of an elementary irregular meromorphic
connection},
Rend. Semin. Mat. Univ. Padova \textbf{134} (2015), 133--196

\bibitem{Kas84} M. Kashiwara,
\emph{The Riemann-Hilbert problem for holonomic systems},
Publ. Res. Inst. Math. Sci. \textbf{20} (1984), no. 2, 319--365.

\bibitem{Kas03} \bysame,
\emph{D-modules and Microlocal Calculus}, 
Translations of Mathematical Monographs \textbf{217}, Am. Math.
Soc., Providence (2003), xvi + 254 pp.

\bibitem{Kas16} \bysame,
\emph{Riemann-Hilbert correspondence for irregular holonomic $\D$-modules},
 Jpn. J. Math. {\bf 11} (2016), no. 1, 113--149. 

\bibitem{KS90} M. Kashiwara and P. Schapira,
{\em Sheaves on manifolds},
Grundlehren der Mathematischen Wissenschaften \textbf{292}, Springer,
Berlin (1990), x+512 pp.

\bibitem{KS01} \bysame,
{\em Ind-sheaves},
Ast\'erisque \textbf{271} (2001), 136 pp.

\bibitem{KS16L} \bysame,
{\em Irregular holonomic kernels and Laplace transform}, 
Selecta Math. \textbf{22} (2016), no. 1, 55--109.

\bibitem{KS16D} \bysame,
\emph{Regular and irregular holonomic D-modules},
London Mathematical Society Lecture Note Series \textbf{433}, Cambridge University Press, Cambridge (2016), vi+111 pp.

\bibitem{KL85}N. Katz and G. Laumon,
\emph{Transformation de Fourier et majoration de sommes exponentielles},
Inst. Hautes \'Etudes Sci. Publ. Math. {\bf 62} (1985), 361--418.

\bibitem{Lau87} G. Laumon,
\emph{Transformation de Fourier, constantes d'\'equations fonctionnelles et conjecture de Weil},
Inst. Hautes \'Etudes Sci. Publ. Math. {\bf 65} (1987), 131--210. 

\bibitem{Mal91} B. Malgrange, 
\emph{\'Equations diff\'erentielles \`a coefficients polynomiaux}, 
Progress in Mathematics \textbf{96}, Birkh\"auser (1991), vi+232 pp.

\bibitem{Moc10} T. Mochizuki,
\emph{Note on the Stokes structure of Fourier transform},
Acta Math. Vietnam. {\bf 35} (2010), no. 1, 107--158. 

\bibitem{Moc16} \bysame,
\emph{Curve test for enhanced ind-sheaves and holonomic $D$-modules},
{\tt arXiv:1610.08572v1} (2016), 87 pp.

\bibitem{Ram12} L. Ramero,
\emph{Hasse-Arf filtrations in p-adic analytic geometry},
J. Algebraic Geom. {\bf 21} (2012), no. 1, 97--182.

\bibitem{Sab08} C. Sabbah, 
\emph{An explicit stationary phase formula for the local formal Fourier-Laplace transform}, 
in Singularities I, Contemp. Math. {\bf 474}, Amer. Math. Soc., Providence, RI (2008), 309--330.
 
\bibitem{Sab13} \bysame,
\emph{Introduction to Stokes structures}, 
Lect. Notes in Math. \textbf{2060}, Springer (2013), xiv+249 pp.

\bibitem{Sab16} \bysame,
\emph{Differential systems of pure Gaussian type},
(Russian)  Izv. Ross. Akad. Nauk Ser. Mat.  \textbf{80}  (2016),  no. 1, 201--234; translation in  Izv. Math.  \textbf{80}  (2016), 189--220.

\bibitem{Tam08} D. Tamarkin,
\emph{Microlocal condition for non-displaceability},
{\tt arXiv:0809.1584v1} (2008), 93 pp.

\end{thebibliography}
\end{document}